\documentclass[11pt]{amsart}
\usepackage[utf8]{inputenc}
\usepackage[T1]{fontenc}
\usepackage{amsmath,amssymb,amsthm}
\usepackage[margin=1.1in]{geometry}
\usepackage{xcolor}
\usepackage{hyperref}
\hypersetup{colorlinks=true,linkcolor=blue!50!black,citecolor=blue!50!black,urlcolor=blue!50!black}

\theoremstyle{plain}
\newtheorem{theorem}{Theorem}[section]
\newtheorem{lemma}[theorem]{Lemma}
\newtheorem{proposition}[theorem]{Proposition}
\newtheorem{corollary}[theorem]{Corollary}
\theoremstyle{definition}
\newtheorem{definition}[theorem]{Definition}
\newtheorem{remark}[theorem]{Remark}
\newtheorem{example}[theorem]{Example}
\newtheorem{convention}[theorem]{Convention}

\newtheorem*{theoremA}{Theorem A}
\newtheorem*{theoremB}{Theorem B}
\newtheorem*{theoremC}{Theorem C}
\newtheorem*{theoremD}{Theorem D}
\newtheorem*{theoremE}{Theorem E}
 
\newcommand{\R}{\mathbb{R}}
\newcommand{\Z}{\mathbb{Z}}
\newcommand{\N}{\mathbb{N}}
\newcommand{\Q}{\mathbb{Q}}
\newcommand{\T}{\mathbb{T}}
\newcommand{\C}{\mathbb{C}}
\newcommand{\Dist}{\operatorname{Dist}}

\newcommand{\pre}{\widetilde{\mathcal{R}}}
\newcommand{\Ren}{\mathcal{R}}
\newcommand{\Norm}{\mathcal{N}}

\title[Simultaneous renormalization of commuting circle diffeomorphisms]{Simultaneous renormalization of commuting circle diffeomorphisms}

\author{Boris Petkovi\'c}
\address{University of Banja Luka, Faculty of Natural Sciences and Mathematics, Mladena Stojanovi\'ca 2, 78 000 Banja Luka, Bosnia and Herzegovina}
\email{boris.petkovic@pmf.unibl.org}
\date{February 2026}

\begin{document}

\begin{abstract}
We define a renormalization operator for pairs of commuting orientation preserving circle diffeomorphisms of class $C^3$. The operator is driven by the accelerated Brun multidimensional continued fraction algorithm. For rotation vectors of simultaneously bounded type, under a summable strong convergence condition satisfied by all eventually periodic types with Pisot period matrix, the renormalizations converge exponentially fast in $C^2$ to pairs of rigid rotations. The pair is simultaneously $C^{1+\gamma}$ conjugate to the pair of rotations, with $\gamma>0$ depending only on the bounded type constant. A fast renormalization extends exponential convergence and $C^1$ rigidity to a class of rotation vectors of full Lebesgue measure, containing explicit vectors outside bounded type. We show that the operator is topologically hyperbolic. The partition of the renormalized pairs by rotation vector is an invariant lamination whose leaves are the topological conjugacy classes. All results hold for pairs of class $C^{2+\alpha}$.
\end{abstract}

\maketitle

\section{Introduction}\label{sec:intro}

Renormalization entered one dimensional dynamics through the work of Feigenbaum \cite{Feigenbaum} and of Coullet and Tresser \cite{CoulletTresser} on the period doubling cascade. Lanford \cite{Lanford} proved the existence and hyperbolicity of the fixed point. The theory was then developed into a general tool for rigidity. Sullivan \cite{Sullivan}, McMullen \cite{McMullen} and Lyubich \cite{Lyubich} established the hyperbolicity of renormalization for unimodal maps, Martens \cite{Martens} constructed the periodic points, and Avila and Lyubich \cite{AvilaLyubich} built the full renormalization horseshoe. For circle maps the corresponding theory grew out of the linearization theorems of Herman \cite{Herman} and Yoccoz \cite{Yoccoz}. Khanin and Sinai \cite{KhaninSinai} and Katznelson and Ornstein \cite{KatznelsonOrnstein} gave renormalization proofs of rigidity at finite regularity, Khanin and Teplinsky \cite{KhaninTeplinsky} sharpened them, and de Faria and de Melo \cite{deFariadeMelo} and Yampolsky \cite{Yampolsky} proved hyperbolicity for critical circle maps. A complete self contained account of the renormalization of circle diffeomorphisms appears in the book of Dong, Martens and Palmisano \cite{DMP} and in the exposition \cite{Dong}. The central analytic statement there reads as follows. For a $C^3$ circle diffeomorphism whose rotation number is of bounded type, the renormalizations converge to rotations at exponential speed, and exponential convergence yields $C^1$ rigidity.

All of these theories concern a single map. In this paper we construct a renormalization operator for an action of rank two, generated by two commuting circle diffeomorphisms, and we prove for it the convergence, rigidity and hyperbolicity statements that the rank one theory provides for a single map. To our knowledge this is the first renormalization operator for an action of rank larger than one, and the first hyperbolicity statement for such an operator.

The motivating result is the theorem of Fayad and Khanin \cite{FayadKhanin}. Let $f$ and $g$ be commuting orientation preserving $C^\infty$ circle diffeomorphisms with rotation numbers $\alpha$ and $\beta$. If the vector $(\alpha,\beta)$ satisfies the simultaneous Diophantine condition, that is
\[
\max\big(\|q\alpha\|,\|q\beta\|\big)\ \ge\ C\,q^{-\sigma}\qquad\text{for all } q\in\N
\]
and some $C>0$, $\sigma>0$, then $f$ and $g$ are simultaneously $C^\infty$ conjugate to the rotations $R_\alpha$ and $R_\beta$. The condition is imposed on the vector and not on the coordinates. Each of $\alpha$ and $\beta$ may be a Liouville number, so that no element of the action need be linearizable by the theory of Herman and Yoccoz, and yet the whole action is rigid. This phenomenon was first observed by Moser \cite{Moser}. The proof of Fayad and Khanin is a KAM argument and works at infinite smoothness. The present paper provides a renormalization approach at finite smoothness which was missing from the litereature.

Two questions guide the construction. How does one renormalize a commuting pair, and which arithmetic class of rotation vectors plays the role of bounded type. Our answers are as follows. Lifting the pair together with the unit translation produces a free abelian group of rank three acting on the line, and the translation number is an injective homomorphism from this group to $\R$. A prerenormalization is a change of basis of this group, dictated by the accelerated Brun algorithm applied to the ordered triple of translation numbers. Renormalization is the prerenormalization followed by restriction to a shrinking fundamntal interval and affine rescaling. A rotation vector is of simultaneously bounded type when the three translation numbers produced by the algorithm remain uniformly comparable at every stage. The class contains every vector whose Brun expansion is eventually periodic, for instance the Tribonacci vector and the supergolden vector.

The commuting setting has two structural advantages over the rank one theory. The first is that the renormalized objects are smooth commuting pairs on a quotient circle, the quotient of the line by the largest element of the current basis. The other generators commute with that element, so they descend to smooth diffeomorphisms of the quotient circle, with no condition on their derivatives at the identification point. This is in contrast with the commuting pairs of the rank one theory, where the two maps have to be glued into a circle map by hand and agreement of all derivatives at the gluing point has to be imposed as a hypothesis. The second is that every element of the lifted group with positive translation number moves every point of the line forward. This produces self similar tilings of the circle, which replace the dynamical partitions of the rank one theory.

\subsection*{Main results}
Throughout, $f$ and $g$ are commuting orientation preserving circle diffeomorphisms of class $C^3$ with rotation numbers $\alpha$ and $\beta$, and $1$, $\alpha$, $\beta$ are linearly independent over $\Q$. The renormalization operator $\Ren$ acts on normalized pairs $\xi_n=\Ren^n\xi_0$ of two branch maps of $[0,1)$, and $\mathcal{O}_n$ denotes the pair of rigid rotations with the same combinatorial data as $\xi_n$. The distance $d_1$ is the $C^1$ distance of the normalized two branch maps, which also controls the distance of the cut points. The construction depends on the choice of a base point $x_0$ on the circle. The proofs require the base point to be admissible in the sense of Convention \ref{con:basepoint}, a condition satisfied by $\mu$ almost every point of the circle, where $\mu$ is the invariant measure of the action, but the conclusion of Theorem A then holds for every base point, by Proposition \ref{prop:allbasepoints}. All notions are defined in Sections \ref{sec:operator} and \ref{sec:realbounds}.

\begin{theoremA}
Suppose that $(\alpha,\beta)$ is of simultaneously bounded type with constant $A$ and summably convergent. Then, for every admissible base point, there are $\xi\in(0,1)$, which can be chosen to depend only on $A$, and $C$ such that
\[
d_1\big(\xi_n,\mathcal{O}_n\big)\ \le\ C\,\xi^{\,n}\qquad\text{for all } n\ge0 .
\]
\end{theoremA}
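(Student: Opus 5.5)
The plan follows the rank one scheme of \cite{DMP}, with the dynamical partitions of the single map replaced by the self similar tilings generated by the rank three group. There are three stages: real bounds, a contraction estimate for nonlinearity, and conversion into $C^1$ closeness to $\mathcal{O}_n$.

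\emph{Real bounds.} Fix an admissible base point in the sense of Convention \ref{con:basepoint}. Bounded type with constant $A$ keeps the three translation numbers at each Brun stage uniformly comparable. I first show that the tiling of the fundamental interval of level $n$ by images under the lifted group has tiles whose lengths are comparable, with constants depending only on $A$ and the $C^3$ norms. To do this, I would use that every element with positive translation number moves points forward. I would combine this with the Denjoy type estimate: the sum of $\log Df$ over a disjoint orbit of intervals is bounded by the total variation of $\log Df$ and $\log Dg$. Commutativity is what makes the orbits of the generators interleave into a genuine tiling. The outcome is uniform $C^2$ (a priori) bounds on $\xi_n$ together with geometric decay: there is $\lambda=\lambda(A)<1$ such that tiles of level $n+k$ have length at most $C\lambda^k$ times the level $n$ interval. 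Bounded type ensures each Brun step refines the tiling by a bounded factor.

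\emph{Nonlinearity contraction.} For a branch map $h$ of $\xi_n$ let $\eta_h=D^2h/Dh$. Each branch of $\xi_{n+1}$ is a composition of boundedly many (in terms of $A$) branches of $\xi_n$, rescaled to the smaller interval. The chain rule for nonlinearity, plus rescaling by the length ratio, gives
\[
\|\eta(\xi_{n+1})\|_{C^0}\ \le\ \theta\,\|\eta(\xi_n)\|_{C^0}+ C\,\epsilon_n ,\qquad \theta=\theta(A)<1 .
\]
This is the usual bound of nonlinearity by the total length of the intervals it is composed over. Here $\epsilon_n$ is an error term measuring how far the combinatorics of the composed branches is from that of rotations. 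I expect $\epsilon_n$ to be the place where summable convergence enters. The hypothesis should say that the Brun convergents approximate the vector so that the resulting deviation in combinatorics is summable, and in fact geometrically small, along the orbit. The $C^3$ hypothesis controls the Schwarzian term, which bounds the variation of $\eta$ inside the estimate.

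\emph{Conversion and main obstacle.} Iterating the recursion gives $\|\eta(\xi_n)\|\le C\xi^n$, with $\xi$ the maximum of $\theta$ and the decay rate of $\epsilon_n$; choosing $\xi\in(\theta,1)$ makes it depend only on $A$. Small nonlinearity means each branch is $C\xi^n$-close in $C^1$ to an affine map. I then compare these affine maps with the branches of $\mathcal{O}_n$, which have the same combinatorial data. The cut points of $\xi_n$ are determined by the tile lengths. The tile lengths satisfy a linear relation, namely the Brun matrix acting on lengths, up to an error of size $C\xi^n$. The rotation data satisfy the same relation exactly. The difference therefore obeys a recursion driven by Brun matrices. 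Here is the step I expect to be hardest: the difference must not grow. Bounded type alone does not give contraction of the transverse directions of a product of Brun matrices, and the strong convergence hypothesis (Pisot type in the periodic case) is needed to contract the error in the cut points. My first attempt would be to apply the strong convergence estimate to the products of the Brun matrices, that is, to the transverse contraction of the matrices $M_n\cdots M_1$, and then sum the geometric errors. Putting these together gives $d_1(\xi_n,\mathcal{O}_n)\le C\xi^n$.
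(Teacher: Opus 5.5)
Your real bounds rest on a Denjoy estimate over a \emph{disjoint} orbit of intervals, and that is exactly what fails in rank two. The tiles $w(I_n^{(i)})$ are disjoint, and so are the stage-by-stage pieces of a tile word (Lemma \ref{lem:factorization}). This reduces the distortion of tile words to $\sum_m j_m\,\mu_2(m)$ with $\mu_2(m)=\int_{\widetilde I_m}|\eta_{u_2^{(m)}}|$ (Proposition \ref{prop:denjoy}). But bounding $\mu_2(m)$ means controlling a word $F^kG^l$ of length up to $H(m)$, with $k,l$ possibly of opposite signs, and the images of $\widetilde I_m$ under its prefixes overlap with unbounded multiplicity. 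Disjointness of even the $F$-part would need a one-dimensional Diophantine property of $\alpha$, which the simultaneous setting cannot assume.

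The missing idea is to average over the base point. By Lemma \ref{lem:fubini}, $\int_\T\mu_i(n;x)\,d\mu(x)=\tau(\Omega_n)\int_\T|\eta_{u_i^{(n)}}|\le 4E\,t_1(n)H(n)$, so summable convergence makes $K_0$ and $K_1$ finite for $\mu$-a.e.\ $x_0$. That is what admissibility means, and it is the only place summable convergence is used. You fix an admissible base point but never use the property. Instead you put summable convergence into an error $\epsilon_n$ measuring a ``deviation in combinatorics'' that does not exist: $\xi_n$ and $\mathcal{O}_n$ have exactly the same rotation numbers $t_2(n)/t_1(n)$ and $t_3(n)/t_1(n)$.

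The contraction $\theta(A)<1$ also does not follow from the chain rule and rescaling. In case $A$ the second branch of $Q_{n+1}$ is $(u_2^{(n)})^{-1}u_3^{(n)}$, a composition of two stage $n$ branches read on an interval of relative length $r_2(n)$. These tools therefore give a factor near $2r_2(n)$, and $r_2(n)$ is near $t_2(n)/t_1(n)>1/2$ when $a_{n+1}=1$. If the two nonlinearities had opposite signs, nothing better would hold. Contraction comes from a cancellation forced by commutativity: \eqref{eq:symmetry} gives $W_i(n)=\int_{I_n^{(i)}}\eta_{u_1^{(n)}}$. After M\"obius approximation and Lemma \ref{lem:restriction} this yields $z_i(n)\approx r_i(n)z_1(n)$ and $z_1(n+1)\approx r_2(n)^2z_1(n)$ (Proposition \ref{prop:zrecursion}).

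Small nonlinearity only makes the branches nearly affine. You must separately show that the slope $\Sigma(n)$ of the deck element tends to zero (Proposition \ref{prop:sloperecursion}). Otherwise the pieces $(u_2^{(n)})^j(I_n^{(2)})$ have geometrically varying lengths, and your linear relation for tile lengths is false. Once both quantities are small, no recursion for the cut points is needed. $\widetilde P_n$ is $C\delta_n$-close to the rotation by $r_2(n)$ and its rotation number is exactly $t_2(n)/t_1(n)$, which gives $|r_2(n)-t_2(n)/t_1(n)|\le C\delta_n$.

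The forward recursion through Brun matrices that you propose would amplify errors, since the projectivized step is expanding (Lemma \ref{lem:multipliers}). Run backward, it contracts projectively under bounded type alone (Theorem \ref{thm:reconstruction}), so strong convergence is not what that step needs. Finally, a rate depending only on $A$ requires removing the dependence on $K_0$, by restarting at a stage where its tail is at most one (Remark \ref{rem:uniformrate}). Your $\xi$ also inherits the unspecified decay rate of $\epsilon_n$.
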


The convergence holds in the $C^2$ sense as well, by Proposition \ref{prop:C2}.

\begin{theoremB}
Under the hypotheses of Theorem A the homeomorphism $h$ of the circle with $hfh^{-1}=R_\alpha$ and $hgh^{-1}=R_\beta$ is a $C^{1+\gamma}$ diffeomorphism, for some $\gamma>0$ depending only on $A$.
\end{theoremB}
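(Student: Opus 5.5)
We derive Theorem B from the exponential convergence of Theorem A (and its $C^2$ refinement, Proposition \ref{prop:C2}), by the standard route of the rank one theory \cite{DMP}: exponential convergence of renormalizations gives exponentially small distortion on the dynamical tilings, and this in turn gives Hölder control of the derivative of the conjugacy. Fix an admissible base point $x_0$. Let $\mathcal{P}_n$ be the self similar tiling of the circle at level $n$. Its tiles are the images of the level $n$ fundamental interval $I_n$ and of its complementary piece under the elements of the lifted group, as provided by the prerenormalization. By the definition of simultaneously bounded type with constant $A$, the three translation numbers at stage $n$ are uniformly comparable. Bounded combinatorics of the accelerated Brun algorithm then gives a constant $\lambda=\lambda(A)\in(0,1)$ and real bounds (Section \ref{sec:realbounds}) such that every tile of $\mathcal{P}_n$ has length comparable to $\lambda^n$. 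Moreover, each tile of $\mathcal{P}_n$ is subdivided into a uniformly bounded number of tiles of $\mathcal{P}_{n+1}$, with uniformly comparable lengths.

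\textbf{Step 1 (matching ratios).} Let $\mathcal{O}_n$ be the pair of rotations with the same combinatorial data. The tiling $h(\mathcal{P}_n)$ is the corresponding tiling for the rotation pair. From $d_1(\xi_n,\mathcal{O}_n)\le C\xi^n$ we get the following. The ratio of the length of each child tile of $\mathcal{P}_{n+1}$ to its parent tile in $\mathcal{P}_n$ differs from the corresponding ratio for the rotations by at most $C'\xi^n$. This holds for all tiles, not only those inside $I_n$. To see this, transport the estimate from $I_n$ to an arbitrary tile by the generator that maps it there. The distortion of this generator is bounded by the Denjoy type estimate on the tiling, using real bounds and the $C^2$ norm. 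Because the renormalized pair is a genuine smooth pair on the quotient circle, no gluing issue arises.

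\textbf{Step 2 (derivative of $h$).} For a point $x$ let $T_n(x)\in\mathcal{P}_n$ be the tile containing it, and set $D_n(x)=|h(T_n(x))|/|T_n(x)|$. Step 1 gives $|\log D_{n+1}(x)-\log D_n(x)|\le C''\xi^n$. Hence $\log D_n$ converges uniformly to a continuous function $\log Dh$. Since $\mathcal{P}_n$ generates, this shows that $h$ is $C^1$ with positive derivative. The derivative satisfies $|\log Dh(x)-\log D_n(x)|\le C\xi^n$.

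\textbf{Step 3 (Hölder exponent).} Take $x,y$ with $\lambda^{n+1}\lesssim|x-y|\lesssim\lambda^n$. Then they lie in the same or in adjacent tiles of $\mathcal{P}_{n-k_0}$ for a fixed $k_0$. Comparing $D_{n-k_0}$ on adjacent tiles uses Step 1 once more, through the common parent. This gives
\[
|\log Dh(x)-\log Dh(y)|\le C\xi^n\le C|x-y|^{\gamma},\qquad \gamma=\log\xi/\log\lambda>0 .
\]
Both $\xi$ and $\lambda$ depend only on $A$, so $\gamma$ does too. By Proposition \ref{prop:allbasepoints}, $h$ does not depend on the base point up to rotation, so the statement holds regardless.

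\textbf{Main obstacle.} The hardest step is Step 1 for tiles far from $I_n$. The estimate on ratios has to be uniform over the whole level $n$ tiling. This requires bounding the distortion of the long compositions that carry the fundamental interval around the circle. The bound must be summable in $n$ and not merely bounded, or the exponential rate would be lost. I would first try a Denjoy–Koksma style bound: the total length of the tiles visited is at most $1$, so $\log D$ of the composition is bounded by $\|D^2\log\|\cdot\sum|\text{tiles}|$. Then I would refine this using the fact that the nonlinearity of $\xi_n$ is $O(\xi^n)$, which is the content of Proposition \ref{prop:C2}.
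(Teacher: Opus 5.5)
Your route, matching child/parent ratios on the tilings as in the rank one theory, differs from the paper's, which works with the pointwise functions $b_n$ of Section \ref{sec:rigidity}. As written it has two gaps.

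The first gap is the comparison across tile boundaries. In Step 3 you compare $D_{n-k_0}$ on adjacent tiles ``through the common parent''. But adjacent tiles of $\mathcal{P}_n$ are in general not siblings, and their last common ancestor need not move with $n$. For every $n\ge1$ the two stage $n$ tiles on either side of $x_0$ lie in different stage $1$ tiles. The same happens at every point of $\Gamma x_0$ that becomes a tile endpoint at an early stage $m_0$. Chaining the nested estimates of Step 1 then bounds the difference of $\log D_n$ on the two sides only by a quantity of order $\xi^{m_0}$, which does not improve with $n$. This breaks Step 3. It already breaks Step 2, since a uniform limit of step functions is continuous only if the jumps at tile endpoints tend to zero.

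The missing idea is the group relation used in Lemma \ref{lem:increments}(iii). If $J=w(I_n^{(i)})$ and $J'=w'(I_n^{(i')})$ are adjacent, then $w'=wu_i^{(n)}$ (or $wu_i^{(n)}=T$ at the seam), by Lemma \ref{lem:forward} and the injectivity of $\tau$. So the jump of $b_n$ at the common endpoint is $\log Du_i^{(n)}(x_0)+\log\frac{r_{i'}(n)}{r_i(n)}-\log\frac{t_{i'}(n)}{t_i(n)}$. This is read in the stage $n$ chart and is therefore $O(\delta_n)$, however old the endpoint is. If you keep ratios instead of $b_n$, you need the same relation plus an exponentially small bound on the distortion of $w$ on $w^{-1}(J\cup J')=[x_0,u_i^{(n)}u_{i'}^{(n)}x_0)\subset\widetilde I_n$.

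The second gap is the one you flag yourself. Step 1 for tiles away from $D_n$ needs a summable, in fact exponential, bound on the distortion of every tile word on $D_n$. The Denjoy--Koksma bound you propose first does not exist here. The images of an interval under the prefixes of a word in $F^{\pm1},G^{\pm1}$ are not disjoint and have unbounded multiplicity. That is why the paper needs admissible base points and the constants $K_0,K_1$ at all.

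The refinement you hint at does work, but it requires:
\begin{itemize}
\item the factorization $w=\prod_{m<n}(u_2^{(m)})^{j_m}$ of Lemma \ref{lem:factorization};
\item the containment of the pieces in $\widetilde I_m$, from Lemma \ref{lem:windows}(b);
\item the bound $\sup_{\widetilde I_m}|\eta_{u_2^{(m)}}|\le C\xi_1^{\,m}/|D_m|$ of Proposition \ref{prop:zrecursion};
\item a bound of order $t_1(n)/t_1(m)$ on the relative length of the pieces.
\end{itemize}
In the paper this last bound comes from $|J|\asymp\mu(J)$ for all tiles, which is deduced from the $C^1$ rigidity (Lemma \ref{lem:expdistortion}). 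In your ordering this is circular, unless you extract it from the real bounds applied to the pairs restarted at every stage.

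The paper sidesteps this at the $C^1$ level. Since $w_{n+1}=w_n(u_2^{(n)})^{j}$, the term $\log Dw_n$ cancels exactly in $b_{n+1}-b_n$, so the increments are $O(\Delta_n\delta_n)$ with no tile word distortion in them. Passing from $b_n$ to $a_n$ then needs only $\delta_n^{\ast}\to0$ (Proposition \ref{prop:tiledistortion}). The exponential distortion bound is proved afterwards and used only for the H\"older exponent (Theorem \ref{thm:holder}).

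Two smaller corrections. First, stage $n$ tiles need not be comparable to a single $\lambda^n$: their $\mu$ masses range from $A^{-n-1}$ to $3(1-\frac{1}{3A})^n$. The H\"older step should use the lower bound $|J|\ge c_{\ast}A^{-n-1}$, which gives $\gamma=\log(1/\xi'')/\log A$. Second, $h$ is independent of the base point by Lemma \ref{lem:toplin}, not by Proposition \ref{prop:allbasepoints}.
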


Beyond bounded type we introduce a fast renormalization along the good stages of the Brun expansion, the stages at which the expansion looks, for a bounded number of steps ahead, like an expansion of bounded type. A vector is fast renormalizable if good stages recur with positive frequency, the drops of scale between consecutive good stages grow subexponentially, and a digit weighted form of summable convergence holds.

\begin{theoremC}
Suppose that $(\alpha,\beta)$ is fast renormalizable. Then, for every admissible base point, the renormalizations along the good stages converge exponentially to the rotation pairs, and $f$ and $g$ are simultaneously $C^1$ conjugate to $R_\alpha$ and $R_\beta$.
\end{theoremC}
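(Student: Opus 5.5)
The plan is to run the bounded type argument behind Theorems A and B along the subsequence of good stages $n_1<n_2<\cdots$ of the Brun expansion. We treat the block of Brun steps between consecutive good stages as a single fast renormalization step $\Ren_{\rm fast}$, sending $\xi_{n_k}$ to $\xi_{n_{k+1}}$. We then prove three things for $\Ren_{\rm fast}$: a real a priori bound, a contraction estimate, and a summation argument giving the $C^1$ conjugacy.

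\textbf{Step 1 (a priori bounds at good stages).} Fix an admissible base point (Convention \ref{con:basepoint}). At a good stage the next bounded number of Brun steps look like bounded type. So the self similar tilings produced by the lifted rank three group have comparable tiles over that window. The Denjoy type distortion estimates used for Theorem A then give uniform $C^2$ bounds on the normalized pair $\xi_{n_k}$, independent of $k$. Between good stages we only have the crude bound: the distortion of the long composition is controlled by the total variation of $\log Df$ and $\log Dg$ over disjoint tiles, hence uniformly. The loss in the nonlinearity is at most a factor comparable to the scale drop $\lambda_k$ between $n_k$ and $n_{k+1}$. By hypothesis, $\log\lambda_k=o(k)$.

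\textbf{Step 2 (contraction).} Write $\epsilon_k=d_1(\xi_{n_k},\mathcal{O}_{n_k})$. The first bounded-type-like stretch after $n_k$ contracts the nonlinearity by a definite factor $\theta<1$, exactly as in the one step estimate behind Theorem A. The rest of the block costs at most $C\lambda_k^{\,c}$, with some digit weighted error term $\delta_k$ coming from the strong convergence of the Brun approximants. This gives
\[
\epsilon_{k+1}\le \theta^{m}\,C\lambda_k^{\,c}\,\epsilon_k+\delta_k .
\]
Here $m$ is the number of bounded-type steps in the block.
\begin{itemize}
\item Positive frequency of good stages makes $\prod_k\theta^{m}$ decay exponentially in $k$.
\item The subexponential growth of $\lambda_k$ is absorbed by this decay.
\item The digit weighted summable convergence makes $\sum\delta_k$ converge, with exponential tail after iteration.
\end{itemize}
Together these yield $\epsilon_k\le C\eta^k$ for some $\eta<1$, which is the first claim of Theorem C.

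\textbf{Step 3 (rigidity).} Proceed as in the deduction of Theorem B from Theorem A. At each good stage, the derivative of $h$ on a tile of level $n_k$ is determined up to a factor $\exp(O(\epsilon_k))$ relative to level $n_{k+1}$. Since $\sum_k\epsilon_k<\infty$, the ratios $\mu(I)/|I|$ over nested tiles converge uniformly to a continuous positive density. Hence $h$ is $C^1$, and it simultaneously conjugates $f$ and $g$ to $R_\alpha$ and $R_\beta$. We only obtain $C^1$, not $C^{1+\gamma}$, because the scales $|I_{n_k}|$ are not comparable to $\eta^k$ in a uniform power law. By Proposition \ref{prop:allbasepoints}, the choice of admissible base point is immaterial.

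\textbf{Main obstacle.} The hardest part is Step 2. One must show that the nonlinearity gained across a long, badly distorted block between good stages grows only polynomially in the scale drop $\lambda_k$, rather than exponentially in the number of Brun steps. I would first try the rank one device: bound the nonlinearity by the sum, over the disjoint tiles visited, of $|D^2 f|$ times tile length. This total is at most $\|f\|_{C^2}$ times a constant, and the error relative to rotations is measured in the single rescaling at the end. Using the tiling rather than a product of per-step estimates is what should make the loss controlled by $\lambda_k$ alone.
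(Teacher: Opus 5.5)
Your Step 2 does not close. The missing idea is that between good stages there is no multiplicative loss at all.

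Iterating $\epsilon_{k+1}\le\theta^{m}C\lambda_k^{c}\epsilon_k+\delta_k$ produces the factor $\prod_{k<K}\theta^{m_k}C\lambda_k^{c}$. Since $\sum_{k<K}m_k\le n_K=O(K)$, the contraction is at most exponential in $K$. Hypothesis (c), $\log\lambda_k=o(k)$, allows $\sum_{k<K}\log\lambda_k$ to outgrow every multiple of $K$; for instance it grows like $K^{3/2}$ if $\log\lambda_k\asymp\sqrt k$. Even for bounded $\lambda_k$, nothing makes $\theta^{m_k}C<1$ when consecutive good stages are adjacent. An exponential rate also needs additive errors that decay exponentially in $k$, and summability of $\Sigma^\ast$ gives no rate.

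The paper recurses not on $d_1$ but on two scalars: the M\"obius parameter $z_1(n)$ of the deck element on $D_n$, and its logarithmic slope $\Sigma(n)$. The identity \eqref{eq:symmetry} and the restriction rule $z'=\rho z$ of Lemma \ref{lem:restriction} give, at every stage and under $\Sigma^\ast<\infty$ and admissibility alone, $|z_1(n+1)|\le r_2(n)^2|z_1(n)|+C\theta(n)$ and $|\Sigma(n+1)|\le\kappa(r_2(n))|\Sigma(n)|+C\epsilon_n$. The factors are at most $1$ at every stage, and at most $\rho_2^2$ and $\kappa(\rho_2)<1$ at good stages by Lemma \ref{lem:goodstage}(ii). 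For $n_k\le n<n_{k+1}$ the error $\theta(n)$ is at most $(A'+2)T_{\max}(n_k)$. The mesh $T_{\max}(n_k)$ decays exponentially because every tile is split in definite proportions within $L+3$ stages of a good stage, by Lemma \ref{lem:goodstage}(iii). The series $\Sigma^\ast$ enters only through the constants $K_0,K_1$. Summing the additive errors over the $N_k=O(k)$ stages of a block, using (b), gives $\delta_n\le C\lambda^k$ on the whole block. This proves the first claim without using (c).

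Condition (c) is needed in Step 3, where you dropped it. When a type $1$ tile splits, the leftover piece $I'=[x_0,v_nx_0)$ has $\mu$ mass $\tau(v_n)$. But $|I'|/|D_n|$ is known only up to an absolute error $O(a_{n+1}\delta_n)$, so the increment of the logarithmic density $b_n$ is $O(\Delta_n\delta_n)$, not $O(\delta_n)$ (Lemma \ref{lem:increments}(ii)). Passing from level $n_k$ to $n_{k+1}$ therefore costs a factor $\exp\big(O(N_kD_k\lambda^k)\big)$. These exponents are summable precisely because $N_kD_k=e^{o(k)}$. This also requires $\delta_n$ to be small at every intermediate stage, which the non-expanding recursions supply.

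The rank one device in your last paragraph is not available here. The images of an interval under the prefixes of the words $u_i^{(n)}$ in the letters $F^{\pm1},G^{\pm1}$ are not disjoint and have unbounded multiplicity. The paper replaces disjointness by the bound $\int_{\T}|\eta_W|\le LE$ of Lemma \ref{lem:totalnl} together with averaging over the base point (Proposition \ref{prop:basepoint}). This makes the distortion bounds uniform at every stage, good or not.

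Finally, the appeal to Proposition \ref{prop:allbasepoints} is unnecessary, since the statement concerns admissible base points and $h$ does not depend on the base point. It is also unjustified here, because the decay of $\delta_n^{\ast\ast}$ is proved only under bounded type.
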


\begin{theoremD}
For Lebesgue almost every vector $(\alpha,\beta)$, every pair of commuting orientation preserving $C^3$ circle diffeomorphisms with rotation vector $(\alpha,\beta)$ is simultaneously $C^1$ conjugate to the rotation pair.
\end{theoremD}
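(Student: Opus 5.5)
Theorem D will follow from Theorem C once we show that the set of fast renormalizable vectors has full Lebesgue measure in the square; the dynamical part is then entirely contained in Theorem C. The plan is therefore ergodic-theoretic. It rests on the Gauss-type map $T$ of the accelerated Brun algorithm acting on the simplex of normalized triples. This map is known, following Schweiger, to admit an absolutely continuous ergodic invariant probability measure $\nu$ with density bounded above and below on compact parts of the simplex. I would pass from vectors $(\alpha,\beta)$ to the ordered triple of translation numbers normalized by its largest entry. This correspondence is piecewise projective, hence preserves Lebesgue null sets, so it suffices to prove that $\nu$-almost every point of the simplex satisfies the three conditions defining fast renormalizability.

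First, recurrence of good stages with positive frequency. A stage $n$ is good when the next $k$ digits are bounded by some $B$ and the resulting triple is $A$-comparable. This is the event $T^{n}x\in G$, where $G$ is a cylinder-type set of positive $\nu$-measure for suitable $B,A,k$. Birkhoff's theorem then gives frequency $\nu(G)>0$ almost surely.

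Second, subexponential drops of scale. The scale drop from a good stage to the next is $\exp$ of the Birkhoff sum of $\log$ of the contraction factor $\phi(T^jx)$ over the gap. I would show that $\log\phi\in L^1(\nu)$; this amounts to the integrability of the logarithm of the digit, which follows from the known tail estimate for Brun digits. Birkhoff's theorem then gives $S_n\log\phi=O(n)$, and the return times to $G$ satisfy $r_{m+1}-r_m=o(r_m)$. Together these give drops of size $e^{o(n)}$ relative to the exponential rate. More precisely, one needs $S_{r_{m+1}}-S_{r_m}=o(m)$, which follows from convergence of $S_n/n$ combined with $r_{m}/m\to 1/\nu(G)$.

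Third, digit-weighted summable convergence. This is the main obstacle. I expect it to be expressed as $\sum_n w(a_n)\,\varepsilon_n<\infty$, where $\varepsilon_n$ measures how well the $n$-th Brun approximation approximates the vector, i.e.\ the size of the two smaller translation numbers relative to the denominator. For $\nu$-almost every point, Brun's algorithm is strongly convergent with exponential rate: the second Lyapunov exponent of the cocycle is negative (Ito–Keane–Ohtsuki, Schratzberger). Hence $\varepsilon_n\le Ce^{-\delta n}$ almost surely. For the weights, integrability of $\log a$ and Borel–Cantelli give $a_n\le e^{\delta n/2}$ eventually, so polynomial weights are summable against $\varepsilon_n$. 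If the weight is instead exponential in the digit, I would fall back on a direct Borel–Cantelli estimate using the explicit measure of cylinders $\{a_n=a\}$.

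Combining the three steps, almost every vector is fast renormalizable, and Theorem C applies to every pair with that rotation vector. It applies at an admissible base point, and such points exist since they have full $\mu$-measure. The $C^1$ conjugacy then holds irrespective of the base point.
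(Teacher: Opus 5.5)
Your plan follows the paper's proof: reduce to $\nu$-almost every point of $\Delta$, then verify the three conditions of Definition \ref{def:fast}. Your treatment of two of them matches the paper. For the good stages you apply Birkhoff to the indicator of $\mathcal{G}(A,L)$, a finite union of cylinders, which gives $n_k/k\to\nu(\mathcal{G})^{-1}$. For the weighted series $\Sigma^{\ast}=\sum_n a_{n+1}t_1(n)H(n)$ you use Oseledets with $\theta_2<0$ for $t_1(n)H(n)$, and Borel--Cantelli for $a_{n+1}\le e^{\epsilon n}$. Integrability of $\log a$ suffices there; the paper uses the tail bound $\nu\{a_1\ge k\}\le ck^{-2}$.

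\textbf{The gap is in your second step: as written it does not establish condition (c).} Condition (c) concerns $D_k=\max_{n_k\le n<n_{k+1}}\Delta_n$, where $\Delta_n=a_{n+1}t_1(n)/\tau(v_n)$ equals $a/(1-ax)$ at $T^n\omega=(x,y)$. This is the factor that multiplies $\delta_n$ in Lemma \ref{lem:increments}(ii).

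\begin{itemize}
\item $\Delta_n$ is large whenever the remainder $1-ax$ of a single step is small, even with digit one. So it is not a function of the digit.
\item Neither integrability of $\log a$ nor a bound on the cumulative ratio $t_1(n_k)/t_1(n_{k+1})$ controls it without a further argument.
\item The paper handles it directly. The set $\{a/(1-ax)>K\}$ has measure at most $C(2+\log K)/K$, so Borel--Cantelli gives $\log\Delta_n\le\epsilon n$ for large $n$. Hence $\log D_k\le\epsilon n_{k+1}\le C\epsilon k$.
\end{itemize}

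Your route can be repaired with one inequality. For $n_k\le n<n_{k+1}$:
\begin{itemize}
\item $\tau(v_n)\ge t_3(n+1)\ge t_3(n_{k+1})\ge t_1(n_{k+1})/A'$, because $t_3$ is nonincreasing along the expansion and the triple is comparable at the good stage $n_{k+1}$ (Lemma \ref{lem:goodstage}(i)).
\item $a_{n+1}\le t_1(n)/t_1(n+1)$.
\end{itemize}
Together these give
\[
\Delta_n\ \le\ A'\Big(\frac{t_1(n_k)}{t_1(n_{k+1})}\Big)^{2} .
\]
Hence $\log D_k\le\log A'+2\sum_{n_k\le m<n_{k+1}}\log(1/x_m)$, where $x_m$ is the first coordinate of $T^m\omega$. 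Your Birkhoff argument shows this sum is $o(k)$, since $\log(1/x)$ is $\nu$-integrable and $n_{k+1}-n_k=o(k)$.

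With this added, your proof is a valid variant. It needs only integrability of $\log(1/x)$ and comparability at the good stages. The paper's tail estimate gives the stronger stagewise bound $\log\Delta_n=o(n)$, independent of where the good stages fall.
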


The operator loses derivatives on $C^3$ pairs, so hyperbolicity is formulated topologically, in the lamination sense of \cite{DMP} and of the renormalization horseshoes of \cite{Martens,AvilaLyubich}. The operator acts on the normalized pairs of all pointed commuting pairs, that is, commuting pairs of orientation preserving $C^3$ circle diffeomorphisms whose rotation vector is rationally independent together with $1$, together with a base point. Let $\mathcal{X}^3_{A}$ be the space of those pointed pairs whose rotation vector is of simultaneously bounded type with constant $A$ and summably convergent and whose base point is admissible, and let $\mathcal{N}_A$ be the set of their normalized pairs. The periodic points of the statement are the rotation pairs of purely periodic type with Pisot period matrix.

\begin{theoremE}
The partition of the normalized pairs by rotation vector is an $\Ren$ invariant lamination whose leaves are the topological conjugacy classes. On $\mathcal{N}_A$ any two pairs on the same leaf are contracted exponentially towards each other by $\Ren$, at a rate depending only on $A$. The family of all rotation pairs is a global transversal, on which $\Ren$ acts as the accelerated Brun map. On the leaf space of $\mathcal{N}_A$ the induced action is the shift on the corresponding renormalization types. Through every periodic point there is a two dimensional local unstable manifold contained in the transversal, with multipliers the Galois quotients of the Perron eigenvalue of the period matrix, all of modulus larger than one.
\end{theoremE}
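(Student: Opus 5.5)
Theorem E has five assertions, and I would prove them in order: invariance of the lamination, identification of the leaves, contraction along leaves, the action on the transversal and on the leaf space, and finally the unstable manifolds at periodic points. Each will be reduced either to the combinatorics of the accelerated Brun algorithm or to Theorem A.

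\emph{Invariance.} By construction, a prerenormalization is a change of basis of the rank three group generated by the lifts of $f$, $g$ and the unit translation. The translation number is a homomorphism, so the rotation vector of $\Ren\xi$ is the accelerated Brun image of the rotation vector of $\xi$, read off from the renormalized triple of translation numbers. Hence $\Ren$ maps the leaf of vector $v$ into the leaf of the Brun image of $v$, and the partition is invariant. For the leaves, I would argue both inclusions. If two normalized pairs have the same rotation vector with $1,\alpha,\beta$ rationally independent, then by the Denjoy theorem for commuting $C^2$ pairs each is minimal and topologically conjugate to the corresponding rotation pair. The conjugacy can be normalized at the base point and respects the branch structure, so the two pairs are conjugate to each other. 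Conversely, a topological conjugacy preserves translation numbers of all group elements, hence the rotation vector.

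\emph{Contraction along leaves.} I would apply Theorem A to the two pairs $\xi$ and $\xi'$ on the same leaf. Both have the same renormalization type, so $\mathcal{O}_n$ is the same for both, and Theorem A gives
\[
d_1(\Ren^n\xi,\Ren^n\xi')\le d_1(\Ren^n\xi,\mathcal{O}_n)+d_1(\mathcal{O}_n,\Ren^n\xi')\le (C+C')\xi^{\,n},
\]
with $\xi$ depending only on $A$. One point needs care: Theorem A is stated for a single pair, and its constant $C$ depends on the pair. That is acceptable here, since the lamination statement only asks for the rate to be uniform, and the rate is.

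\emph{Transversal and leaf space.} Rotation pairs are fixed by the normalization up to an affine change. The renormalization of a rotation pair is again a rotation pair, with the Brun image vector, so on the transversal $\Ren$ is the accelerated Brun map. Each leaf meets the transversal exactly once, by the rigidity of rotations. The leaf space of $\mathcal{N}_A$ is coded by the sequence of renormalization types, which is the Brun digit sequence, and the accelerated Brun map acts on these sequences as the shift.

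\emph{Unstable manifolds.} Let the type be purely periodic with period matrix $M$ of length $p$. On a neighbourhood of the fixed vector $v$ in the transversal, $\Ren^p$ is the projective action of $M^{-1}$ in the Brun chart, namely $x\mapsto M^{-1}x/\langle\cdot\rangle$ normalized by the largest coordinate. This map is real analytic, and its derivative at the fixed point has eigenvalues $\lambda_i/\lambda$, where $\lambda$ is the Perron eigenvalue of $M^{-1}$ in the appropriate convention and $\lambda_i$ are its Galois conjugates. The Pisot condition gives $|\lambda_i/\lambda|>1$ in the inverted normalization. The two dimensional transversal near the periodic point is then itself the local unstable manifold: every point of it other than the periodic point is repelled at these rates, while points off it in the leaf direction contract by the previous paragraph.

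The main obstacle I expect is the bookkeeping of conventions: showing that the derivative of the projective Brun map at the fixed point has exactly the quotient eigenvalues, with the correct inversion. I would settle this first by an explicit computation of the projective derivative for a linear map with a simple dominant eigenvector.
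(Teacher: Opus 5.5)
Your proposal is correct and follows essentially the same route as the paper. The ingredients match: invariance comes from the Brun step on scale vectors, leaves are identified as conjugacy classes via Lemma~\ref{lem:toplin} normalized at the seam, and contraction along leaves is the triangle inequality through the common rotation model. The $A$-uniform rate you take from Theorem A is exactly Remark~\ref{rem:uniformrate}, applied to the stages $\xi_{n+k}$ and $\xi_{n'+k}$ of the two underlying pointed pairs. The unstable manifold comes from the projectivization of $B_w$ inverting $T^p$ on the cylinder of $w$ and contracting near the fixed point.

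The paper makes explicit a few points that you leave implicit:
\begin{itemize}
\item injectivity of the type coding on $\Pi(\mathcal{N}_A)$, by Theorem~\ref{thm:reconstruction};
\item rational independence and summable convergence of the Pisot periodic vector, so that it lies in $\mathcal{N}_A$ and the other eigenvalues are genuinely Galois conjugates;
\item invariance of $\Delta'$ under the inverse branch, so that backward orbits stay in the transversal.
\end{itemize}
Also, $|\lambda/\lambda_i|>1$ already follows from primitivity of $B_w$ (Lemma~\ref{lem:multipliers}); the Pisot condition is not needed for it.
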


We also settle the reconstruction problem for the renormalization type. The type of a vector of simultaneously bounded type determines the vector. A vector has purely periodic type if and only if it is the normalized Perron eigenvector of the period matrix of the type, and the Perron eigenvalue is then an algebraic unit, a Pisot number whenever its conjugates are nonreal.

\subsection*{Organization}
Section \ref{sec:brun} recalls the accelerated Brun algorithm as a dynamical system. Section \ref{sec:triples} sets up the group of lifts. Section \ref{sec:operator} defines the prerenormalization and renormalization operators and the class of simultaneously bounded type. Section \ref{sec:type} treats the renormalization type and the reconstruction problem. Section \ref{sec:towers} constructs the tower tilings. Section \ref{sec:realbounds} proves the real bounds. Section \ref{sec:moebius} proves Theorem A. Section \ref{sec:rigidity} proves Theorem B. Section \ref{sec:beyond} treats the lattice flow, fast renormalization, Theorems C and D, gives an explicit fast renormalizable vector which is not of simultaneously bounded type, and compares the classes of vectors treated here with the coordinatewise classes of the rank one theory. Section \ref{sec:periodic} works out the supergolden and Tribonacci periodic points and a totally real periodic point, and records the contraction factors along the leaves at a periodic point. Section \ref{sec:topohyp} proves Theorem E.

\section{The accelerated Brun algorithm}\label{sec:brun}

We recall the algorithm of Brun \cite{Brun} in its accelerated ordered form, following \cite{Schweiger}. Let
\[
D\ =\ \{\,(t_1,t_2,t_3)\in\R^3\ \colon\ t_1>t_2>t_3>0\,\}
\]
be the open cone of strictly decreasing positive triples. Given $t\in D$ set $a=\lfloor t_1/t_2\rfloor\ge1$ and $r=t_1-at_2\in[0,t_2)$. The case $r=0$ occurs only when $t_1/t_2$ is rational and is excluded in all situations considered below, so that $r\in(0,t_2)$. The accelerated Brun step is
\[
\mathcal{S}(t)\ =\
\begin{cases}
(t_2,\,r,\,t_3) & \text{if } r>t_3,\\
(t_2,\,t_3,\,r) & \text{if } r<t_3 .
\end{cases}
\]
The first case is called case $A$ and the second case $B$. The pair $(a,\varepsilon)$ with $\varepsilon\in\{A,B\}$ is the symbol of the step. The step is undefined on the set $\{r=t_3\}$, a countable union of hyperplanes, which is avoided in all situations considered below. The step subtracts the second largest entry from the largest one as many times as possible and reorders. It is the natural two dimensional analogue of the Gauss map, in which the digit is collected in one accelerated step rather than through a run of subtractive steps.

\subsection{Step matrices}
For an integer $a\ge1$ define the nonnegative integer matrices
\[
M_A(a)\ =\ \begin{pmatrix}a&1&0\\ 1&0&0\\ 0&0&1\end{pmatrix},
\qquad
M_B(a)\ =\ \begin{pmatrix}a&0&1\\ 1&0&0\\ 0&1&0\end{pmatrix}.
\]
They are obtained by inverting the step. If $t'=\mathcal{S}(t)$ with symbol $(a,A)$, then $t_1=at_1'+t_2'$, $t_2=t_1'$, $t_3=t_3'$, so $t=M_A(a)t'$. If the symbol is $(a,B)$, then $t_1=at_1'+t_3'$, $t_2=t_1'$, $t_3=t_2'$, so $t=M_B(a)t'$. Their determinants are $\det M_A(a)=-1$ and $\det M_B(a)=1$.

\begin{lemma}\label{lem:cylinders}
For every $a\ge1$ and $\varepsilon\in\{A,B\}$ the matrix $M_\varepsilon(a)$ maps $D$ bijectively onto the set $D_\varepsilon(a)$ of triples in $D$ whose step has symbol $(a,\varepsilon)$. Consequently every finite word in the symbols is realized by an open nonempty set of triples, and the set of triples whose expansion begins with the word $(a_1,\varepsilon_1)\cdots(a_p,\varepsilon_p)$ is the image of $D$ under $M_{\varepsilon_1}(a_1)\cdots M_{\varepsilon_p}(a_p)$.
\end{lemma}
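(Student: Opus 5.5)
We prove the first assertion directly by computing the image of a point of $D$ and checking it against the definition of $D_\varepsilon(a)$, and then obtain the statement about words by induction on $p$.

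\emph{Image lies in $D_\varepsilon(a)$.} Fix $a\ge1$ and take $t'=(t_1',t_2',t_3')\in D$.

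For $\varepsilon=A$ put $t=M_A(a)t'=(at_1'+t_2',\,t_1',\,t_3')$. Then $t_1>t_2$ because $at_1'+t_2'>t_1'$. Also $t_2>t_3$ because $t_1'>t_3'$, and all entries are positive, so $t\in D$. Next, $t_1/t_2=a+t_2'/t_1'$ with $0<t_2'/t_1'<1$. Hence $\lfloor t_1/t_2\rfloor=a$ and $r=t_1-at_2=t_2'\in(0,t_2)$. Since $r=t_2'>t_3'=t_3$, the step is in case $A$. Thus $t\in D_A(a)$ and $\mathcal S(t)=(t_2,r,t_3)=t'$.

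For $\varepsilon=B$ put $t=M_B(a)t'=(at_1'+t_3',\,t_1',\,t_2')$. The same checks give $t\in D$, $\lfloor t_1/t_2\rfloor=a$ and $r=t_3'$. Now $r=t_3'<t_2'=t_3$, so the step is in case $B$, and $\mathcal S(t)=(t_2,t_3,r)=t'$.

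\emph{Surjectivity and injectivity.} Conversely, let $t\in D_\varepsilon(a)$. The inversion computation preceding the lemma shows $t=M_\varepsilon(a)\,\mathcal S(t)$. We must check that $\mathcal S(t)\in D$, and this is exactly where the case distinction matters.
\begin{itemize}
\item In case $A$, $\mathcal S(t)=(t_2,r,t_3)$ with $t_2>r>t_3>0$, using $r<t_2$ and $r>t_3$.
\item In case $B$, $\mathcal S(t)=(t_2,t_3,r)$ with $t_2>t_3>r>0$, using $r>0$, which holds since $r=0$ is excluded.
\end{itemize}
So the map is onto $D_\varepsilon(a)$. Injectivity follows from invertibility of the matrix, since its determinant is $\pm1$. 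Equivalently, $\mathcal S\circ M_\varepsilon(a)=\mathrm{id}$ on $D$.

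\emph{Words.} We show by induction on $p$ that the set of $t\in D$ whose first $p$ steps have symbols $(a_1,\varepsilon_1),\dots,(a_p,\varepsilon_p)$ equals $M_{\varepsilon_1}(a_1)\cdots M_{\varepsilon_p}(a_p)D$.
\begin{itemize}
\item The case $p=1$ is the first assertion.
\item For the inductive step, $t$ has the prescribed expansion if and only if $t\in D_{\varepsilon_1}(a_1)$ and $\mathcal S(t)$ has the expansion $(a_2,\varepsilon_2)\cdots(a_p,\varepsilon_p)$. The inductive hypothesis describes the second condition.
\item On $D_{\varepsilon_1}(a_1)$ the map $\mathcal S$ is the inverse of the bijection $M_{\varepsilon_1}(a_1)\colon D\to D_{\varepsilon_1}(a_1)$. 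Therefore the set in question is $M_{\varepsilon_1}(a_1)\big(M_{\varepsilon_2}(a_2)\cdots M_{\varepsilon_p}(a_p)D\big)$.
\end{itemize}

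\emph{Openness and nonemptiness.} The image of the open nonempty cone $D$ under an invertible linear map is open and nonempty. Here "the step has symbol" means the step is defined, so the hyperplanes $r=t_3$ and the points with $r=0$ are excluded automatically: the images above satisfy strict inequalities.

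The only delicate point is this bookkeeping of strict inequalities, so that the image avoids the degenerate sets $\{r=0\}$ and $\{r=t_3\}$. It is handled by the strict inequalities in the definition of $D$.
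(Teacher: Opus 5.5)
Your proof is correct and follows essentially the same route as the paper: you compute $M_\varepsilon(a)t'$ directly to see that it lies in $D_\varepsilon(a)$ with $\mathcal{S}(M_\varepsilon(a)t')=t'$, invert the step for the converse, and induct on the length of the word. You are slightly more careful than the paper, since you check explicitly that $\mathcal{S}(t)\in D$ in the converse, using the exclusion of $r=0$ in case $B$, and you also verify that the cylinders are open.
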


\begin{proof}
Let $t'\in D$ and $t=M_A(a)t'=(at_1'+t_2',\,t_1',\,t_3')$. Then $t_1>t_1'=t_2>t_3'=t_3>0$, so $t\in D$. Since $0<t_2'/t_1'<1$ we have $\lfloor t_1/t_2\rfloor=a$, and the remainder is $t_2'$, which exceeds $t_3'=t_3$, so the symbol is $(a,A)$ and $\mathcal{S}(t)=t'$. Conversely if $t\in D_A(a)$ then $t=M_A(a)\mathcal{S}(t)$ by the inversion above. The case $B$ is identical with $t=(at_1'+t_3',\,t_1',\,t_2')$, where the remainder $t_3'$ is smaller than $t_2'=t_3$. The statement about words follows by induction on the length.
\end{proof}

\subsection{The projectivized map and its ergodic theory}
The step commutes with scaling. Normalizing by the first coordinate identifies $D$ modulo scaling with the open triangle
\[
\Delta\ =\ \{\,(x,y)\ \colon\ 1>x>y>0\,\},
\]
and the step induces the map
\[
T(x,y)\ =\
\begin{cases}
\big(\tfrac{1-ax}{x},\ \tfrac{y}{x}\big) & \text{if } 1-ax>y,\\[2pt]
\big(\tfrac{y}{x},\ \tfrac{1-ax}{x}\big) & \text{if } 1-ax<y,
\end{cases}
\qquad a=\lfloor 1/x\rfloor .
\]
This is the accelerated Brun map on the projective simplex. By Lemma \ref{lem:cylinders} it is a piecewise projective map whose branches are bijections of the cylinders $\Delta_\varepsilon(a)$ onto $\Delta$. We recall the facts about $T$ that will be used in Section \ref{sec:beyond}.

\begin{theorem}\label{thm:brunergodic}
The map $T$ preserves the probability measure $\nu$ on $\Delta$ with density $c\,h$, where
\[
h(x,y)\ =\ \frac{1}{(1+x)(1+x+y)}+\frac{1}{(1+y)(1+x+y)}\ =\ \frac{2+x+y}{(1+x)(1+y)(1+x+y)}
\]
and $c$ is a normalizing constant, and $T$ is ergodic with respect to $\nu$. Since $\tfrac13\le h\le2$ on $\Delta$, the measure $\nu$ is comparable to Lebesgue measure on all of $\Delta$. In particular $\nu\big(\{(x,y)\in\Delta\colon\lfloor1/x\rfloor\ge k\}\big)\le c\,k^{-2}$ for every $k\ge1$, and the function $(x,y)\mapsto\log\lfloor 1/x\rfloor$ is $\nu$ integrable. Moreover $T$ satisfies the Pisot condition, that is, the second Lyapunov exponent of the cocycle of step matrices is strictly negative, so the algorithm is strongly convergent with exponential rate for $\nu$ almost every point.
\end{theorem}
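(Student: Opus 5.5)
The statement collects several assertions of different depth, and I would prove them separately. \emph{Invariance} is a transfer operator identity. \emph{Ergodicity} follows from the full branch Markov structure. The \emph{comparability}, \emph{tail} and \emph{integrability} claims are elementary. The \emph{Pisot condition} is the deep part; I expect to quote it from the literature rather than reprove it.

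For invariance I would work on the cone $D$ rather than on $\Delta$. A measure on $\Delta$ with density $F$ lifts to a scaling-compatible measure on $D$. Concretely, the function $H(t)=t_1^{-3}F(t_2/t_1,t_3/t_1)$ is homogeneous of degree $-3$, and the measure $H(t)\,dt$ restricted to the truncated cone $\{t\in D\colon t_1+t_2+t_3\le 1\}$ projects to a multiple of $F\,dx\,dy$. By Lemma \ref{lem:cylinders} each branch of $\mathcal{S}$ is inverted by the linear map $M_\varepsilon(a)$, and $|\det M_\varepsilon(a)|=1$. Hence $T$-invariance of $F$ reduces to the identity
\[
\sum_{a\ge1}\ \sum_{\varepsilon\in\{A,B\}} H\big(M_\varepsilon(a)t\big)\ =\ H(t),\qquad t\in D .
\]
Homogeneity of degree $-3$ is what makes the projection compatible with scaling and the Jacobians collapse to $1$; this is the point to check carefully. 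With the candidate $H$ coming from the given $h$, the two summands of $h$ should telescope in $a$. For fixed $\varepsilon$, the term at $a$ is a difference of the form $G(at_1+\cdot)-G((a+1)t_1+\cdot)$, so summing over $a$ leaves the boundary term at $a=1$. The two cases $A$ and $B$ then recombine into $H(t)$. I would carry out this computation by partial fractions. If the naive $H$ does not telescope, I would instead take the density of the natural extension of the form $\int ds/\langle t,s\rangle^3$ and integrate it out, since this is the classical route in \cite{Schweiger}.

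The elementary facts are quick. On $\Delta$ the numerator $2+x+y$ lies in $[2,4]$ and the denominator lies in $[1,6]$, so $\tfrac13\le h\le2$ after this crude bound. The set $\{\lfloor1/x\rfloor\ge k\}$ is contained in $\{x\le 1/k\}\cap\Delta$, which has Lebesgue area at most $1/(2k^2)$; comparability then gives the tail bound. Integrability follows from
\[
\int\log\lfloor1/x\rfloor\,d\nu\ \le\ \sum_k \log(k+1)\,\nu\{\lfloor 1/x\rfloor\ge k\}\ \le\ c\sum_k \log(k+1)\,k^{-2}\ <\ \infty .
\]

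For ergodicity I would use that every branch maps its cylinder projectively and bijectively onto $\Delta$, by Lemma \ref{lem:cylinders}. The standard argument has three steps.
\begin{itemize}
\item Show that the cylinders of generation $n$ shrink to points for almost every point. This is weak convergence of Brun, classical from \cite{Brun}.
\item Prove a Rényi-type bounded distortion estimate for the inverse branches. The Jacobian of the projective inverse branch of $M$ is $\langle M^{T}e_1,(1,x,y)\rangle^{-3}$, and its ratio over $\Delta$ is bounded whenever the columns of $M$ are comparable. This fails for some products, so I would restrict to words that contain a fixed positive block, which are of full measure by recurrence.
\item Conclude that an invariant set has density one in small cylinders, hence full measure.
\end{itemize}
This is the scheme of \cite{Schweiger}, which I would follow.

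The main obstacle is the Pisot condition, namely that the second Lyapunov exponent of the step cocycle is negative. The cocycle is integrable by the $\log$-integrability above, so Oseledets applies. Negativity of $\lambda_2$, however, is a genuine theorem. For the two-dimensional Brun algorithm it was established by Broise-Alamichel and Guivarc'h, and independently via Meester's argument, and it is confirmed by the computer-assisted bounds of Hardcastle. I would cite these results. I would then observe that $\lambda_1+\lambda_2+\lambda_3=0$, because $|\det|=1$, and that $\lambda_1>0$, because positive products expand. Combined with $\lambda_2<0$, this yields exponential strong convergence: the distance from $t$ to the direction of $M_1\cdots M_p\,e$ decays like $e^{p(\lambda_2-\lambda_1)+o(p)}$ in the projective metric, and this decay transfers to the triple itself.
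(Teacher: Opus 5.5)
Your route is the paper's: invariance is checked by a direct transfer operator computation that telescopes in $a$, the tail bound comes from the area $1/(2k^2)$ of $\{x\le 1/k\}$, and ergodicity and the Pisot property are taken from the literature. Your homogeneous identity is literally the paper's $\mathcal{L}h=h$. Indeed $H(M_A(a)(1,u,v))=(a+u)^{-3}h\big(\tfrac{1}{a+u},\tfrac{v}{a+u}\big)$ and $H(M_B(a)(1,u,v))=(a+v)^{-3}h\big(\tfrac{1}{a+v},\tfrac{u}{a+v}\big)$ are the branch terms weighted by the Jacobians of the projective inverse branches. The telescoping you predict does occur, so the natural extension is not needed. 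For $s=at_1+t_2$ one has $H(M_A(a)t)=G_A(s)-G_A(s+t_1)$ with $G_A(s)=1/\big(t_1s(s+t_3)\big)$. Hence case $A$ sums to $1/\big(t_1(t_1+t_2)(t_1+t_2+t_3)\big)$. Case $B$, with $s=at_1+t_3$ and $t_2$ in place of $t_3$, sums to $1/\big(t_1(t_1+t_3)(t_1+t_2+t_3)\big)$. These are exactly the two summands of $H$.

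Three local claims are false as written, though each is easily repaired.

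First, $H(t)\,dt$ is invariant under scaling, so it has infinite mass on the truncated cone $\{t_1+t_2+t_3\le 1\}$. In the coordinates $t=s(1,x,y)$ it equals $s^{-1}F(x,y)\,ds\,dx\,dy$, which diverges at $s=0$. Project from a shell such as $\{1\le t_1+t_2+t_3\le e\}$, where the $s$ integral equals one, or simply use the Jacobian identity above, as the paper does.

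Second, the denominator $(1+x)(1+y)(1+x+y)$ tends to $12$ near the corner $(1,1)$, not $6$. Separate bounds on numerator and denominator therefore give only $\tfrac16<h<4$. The stated constants hold because each summand of $h$ is decreasing in $x$ and in $y$, so $h$ lies strictly between $h(1,1)=\tfrac13$ and $h(0,0)=2$. The crude constants would suffice for everything used later, but not for the inequality as stated.

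Third, in your ergodicity sketch Poincar\'e recurrence does not show that almost every orbit sees the fixed positive block infinitely often; that is essentially ergodicity. Recurrence shows it only for almost every point of the cylinder of the block. This is still enough. The set $G$ of points seeing the block infinitely often is $T$ invariant with $\nu(G)>0$. If $x\in E\cap G$ is a density point, along cylinders, of an invariant set $E$, then bounded distortion on the good cylinders gives $\nu(B_w\setminus E)\ge c\,\nu(\Delta\setminus E)\,\nu(B_w)$, which forces $\nu(\Delta\setminus E)=0$. Apply this to whichever of $E$ and $\Delta\setminus E$ meets $G$ in positive measure. Note also that the block must close the word, not merely occur in it. The reason is that the Jacobian of the inverse branch depends on the first row of the whole product.
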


The existence of an absolutely continuous invariant probability measure and the ergodicity of $T$ are due to Schweiger, see \cite[Chapter 6]{Schweiger}, and the invariant density was identified by Arnoux and Nogueira \cite{ArnouxNogueira}. The invariance of $h$ can also be verified directly. The inverse branches of $T$ are, by Lemma \ref{lem:cylinders}, the maps $(u,v)\mapsto\big(\tfrac{1}{a+u},\tfrac{v}{a+u}\big)$ and $(u,v)\mapsto\big(\tfrac{1}{a+v},\tfrac{u}{a+v}\big)$, $a\ge1$, with Jacobians $(a+u)^{-3}$ and $(a+v)^{-3}$, so the transfer operator $\mathcal{L}$ of $T$ acts on $h$ by
\begin{align*}
\mathcal{L}h(u,v)\ =\ \sum_{a\ge1}\Big[&\frac{1}{(a+u)(a+u+1)(a+u+v+1)}+\frac{1}{(a+u)(a+u+v)(a+u+v+1)}\\
&+\frac{1}{(a+v)(a+v+1)(a+u+v+1)}+\frac{1}{(a+v)(a+u+v)(a+u+v+1)}\Big] .
\end{align*}
Expanding each summand in partial fractions with respect to $a$, the coefficients of $\frac{1}{a+u}$ and $\frac{1}{a+u+1}$ are $\frac1v$ and $-\frac1v$, those of $\frac{1}{a+v}$ and $\frac{1}{a+v+1}$ are $\frac1u$ and $-\frac1u$, and those of $\frac{1}{a+u+v}$ and $\frac{1}{a+u+v+1}$ are $-\frac{u+v}{uv}$ and $\frac{u+v}{uv}$, so all three series telescope and
\[
\mathcal{L}h(u,v)\ =\ \frac{1}{v(1+u)}+\frac{1}{u(1+v)}-\frac{u+v}{uv(1+u+v)}\ =\ h(u,v) .
\]
The region $\{\lfloor 1/x\rfloor\ge k\}=\{x<1/k\}$ has Lebesgue measure $1/(2k^2)$, which gives the bound on its $\nu$ measure and the integrability of $\log\lfloor1/x\rfloor$. The negativity of the second Lyapunov exponent, equivalently the almost everywhere strong convergence of the algorithm, is proved by Hardcastle \cite{Hardcastle}, with the reduction to a finite computation set up in \cite{HardcastleKhanin}, and further estimates on the second exponent appear in \cite{BST}. We shall use only the statements collected in the theorem, in the form quoted, and we refer to the cited references for their proofs.

\section{Commuting triples}\label{sec:triples}

We write $\T=\R/\Z$ and $T(x)=x+1$. Let $f$ and $g$ be commuting orientation preserving circle diffeomorphisms of class $C^3$ with rotation numbers $\alpha$ and $\beta$, and assume that $1$, $\alpha$, $\beta$ are linearly independent over $\Q$. Let $F$ and $G$ be the lifts of $f$ and $g$ normalized by $F(0)\in[0,1)$ and $G(0)\in[0,1)$. For an increasing homeomorphism $W$ of $\R$ commuting with $T$ the translation number is
\[
\tau(W)\ =\ \lim_{n\to\infty}\frac{W^n(x)-x}{n}.
\]
The limit exists, is independent of $x$, and satisfies $|W^n(x)-x-n\tau(W)|<1$ for all $n$ and $x$.

\begin{lemma}\label{lem:liftscommute}
The lifts $F$ and $G$ commute.
\end{lemma}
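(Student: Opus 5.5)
The plan is to compare $F\circ G$ and $G\circ F$ directly. Both are lifts of the same circle map $f\circ g=g\circ f$. Two lifts of the same circle map differ by an integer translation, so there is an integer $k$ with
\[
F\circ G\ =\ T^k\circ G\circ F .
\]
This $k$ is constant in $x$ because $FG(x)-GF(x)$ is a continuous integer valued function on $\R$. The whole task is therefore to show $k=0$, and we will do this with translation numbers.

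First, $F$ and $G$ commute with $T$, since they are lifts of circle maps. We then iterate the relation. Because $T$ commutes with everything in sight, induction on $n$ should give
\[
(F\circ G)^n\ =\ T^{kn}\circ G^n\circ F^n .
\]
To move each new factor $G$ past the accumulated power $F^m$, we use the relation in the form $FG=T^kGF$, which yields $F^mG=T^{km}GF^m$. Collecting the resulting translations, the total exponent comes out as $kn(n+1)/2$ rather than $kn$. We will use this corrected formula
\[
(F\circ G)^n\ =\ T^{k n(n+1)/2}\circ G^n\circ F^n .
\]

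Next we evaluate both sides at $x=0$ and use the estimate $|W^n(x)-x-n\tau(W)|<1$ quoted above. Writing $y=F^n(0)$, we have $|y-n\tau(F)|<1$ and $|G^n(y)-y-n\tau(G)|<1$. Hence
\[
\big|G^nF^n(0)-n(\tau(F)+\tau(G))\big|\ <\ 2 .
\]
On the left side we have $|(FG)^n(0)-n\tau(FG)|<1$. Together these give
\[
\Big|\tfrac{kn(n+1)}{2}\ -\ n\big(\tau(FG)-\tau(F)-\tau(G)\big)\Big|\ <\ 3
\]
for all $n$. The left hand side grows quadratically in $n$ unless $k=0$, because $\tau(FG)$, $\tau(F)$ and $\tau(G)$ are finite. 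So $k=0$ and $F\circ G=G\circ F$. Note that the argument never uses the normalization $F(0),G(0)\in[0,1)$ or the irrationality hypotheses; any lifts of commuting maps commute.

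The only step needing care is the bookkeeping in the induction, namely getting the exponent of $T$ right. It is quadratic, $kn(n+1)/2$, not $kn$, and this quadratic growth is what forces $k=0$. If we wanted to avoid the computation, an alternative would be to conjugate $F$ by $G$. From $GFG^{-1}=T^{-k}F$ we get $GF^nG^{-1}=T^{-kn}F^n$, so
\[
\tau(F)\ =\ \tau(GFG^{-1})\ =\ \tau(F)-k .
\]
This again gives $k=0$, using that the translation number is invariant under conjugation by $G$ and that $\tau(T^{-k}F)=\tau(F)-k$. This second route is cleaner, and we would present it as the main argument.
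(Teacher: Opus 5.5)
Your proposal is correct, and the argument you choose to present, conjugating $F$ by $G$ to get $GFG^{-1}=T^{-k}F$ and comparing translation numbers, is the paper's proof, which conjugates by $G^{-1}$ instead. The differences are minor: the paper checks the conjugation invariance of $\tau$ in one line from the boundedness of $G^{\pm1}-\mathrm{id}$, whereas you only cite it. Your first route, using the exponent $kn(n+1)/2$ and its quadratic growth, is also correct but unnecessary.
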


\begin{proof}
The map $G^{-1}FG$ is a lift of $g^{-1}fg=f$, so $G^{-1}FG=T^cF$ for some integer $c$. Translation numbers are invariant under conjugation. Indeed, with $y=G(x)$ we have $G^{-1}F^nG(x)-x=(G^{-1}(F^ny)-F^ny)+(F^ny-y)+(G(x)-x)$. The first and third summands are bounded uniformly in $n$ because $G^{-1}-\mathrm{id}$ and $G-\mathrm{id}$ are continuous and periodic. Dividing by $n$ gives $\tau(G^{-1}FG)=\tau(F)=\alpha$. On the other hand $T^c$ and $F$ commute, so $(T^cF)^n=T^{cn}F^n$ and $\tau(T^cF)=\alpha+c$. Hence $c=0$.
\end{proof}

By Lemma \ref{lem:liftscommute} the maps $F$, $G$, $T$ generate an abelian group
\[
\Gamma\ =\ \{\,F^kG^lT^m\ \colon\ (k,l,m)\in\Z^3\,\}
\]
of increasing homeomorphisms of $\R$, each of which is a lift of a circle homeomorphism.

\begin{lemma}\label{lem:tauhom}
The map $\tau\colon\Gamma\to\R$ is a homomorphism, $\tau(F^kG^lT^m)=k\alpha+l\beta+m$, and it is injective. In particular $\Gamma\cong\Z^3$.
\end{lemma}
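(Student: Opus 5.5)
The plan is to establish, in order, a general fact about translation numbers of commuting lifts, then the formula, then injectivity.

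\emph{Additivity.} Let $W_1,W_2\in\Gamma$. By Lemma \ref{lem:liftscommute} they commute, so $(W_1W_2)^n=W_1^nW_2^n$. Write
\[
W_1^nW_2^n(x)-x=\big(W_1^n(y)-y\big)+\big(W_2^n(x)-x\big),\qquad y=W_2^n(x).
\]
Each bracket differs from $n\tau(W_1)$, respectively $n\tau(W_2)$, by less than $1$, using the uniform estimate $|W^n(x)-x-n\tau(W)|<1$ recalled before Lemma \ref{lem:liftscommute}. This estimate holds for every $x$, which is what allows evaluation at the moving point $y$. Dividing by $n$ and letting $n\to\infty$ gives $\tau(W_1W_2)=\tau(W_1)+\tau(W_2)$. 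Applying this to $W$ and $W^{-1}$, and using $\tau(\mathrm{id})=0$, gives $\tau(W^{-1})=-\tau(W)$. Hence $\tau$ is a homomorphism from $\Gamma$ to $\R$.

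\emph{The formula.} By construction $\tau(F)=\alpha$ and $\tau(G)=\beta$, since $\alpha$ and $\beta$ are the rotation numbers and $F,G$ are lifts. Also $\tau(T)=1$ directly. The homomorphism property then gives $\tau(F^kG^lT^m)=k\alpha+l\beta+m$.

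\emph{Injectivity.} The subtle point is that injectivity has to be proved on $\Gamma$ as a group of homeomorphisms, not on exponent triples. Suppose $W=F^kG^lT^m$ satisfies $\tau(W)=0$. Then $k\alpha+l\beta+m=0$, and rational independence of $1,\alpha,\beta$ forces $k=l=m=0$, so $W=\mathrm{id}$. Thus $\ker\tau$ is trivial.

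\emph{Conclusion.} The map $\phi\colon\Z^3\to\Gamma$, $(k,l,m)\mapsto F^kG^lT^m$, is a surjective homomorphism because the generators commute. The composite $\tau\circ\phi$ is $(k,l,m)\mapsto k\alpha+l\beta+m$, which is injective by rational independence. Hence $\phi$ is injective, and therefore $\Gamma\cong\Z^3$.

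The only real obstacle is the additivity step. It needs both the commutation from Lemma \ref{lem:liftscommute} and the uniformity in $x$ of the bound on $W^n(x)-x-n\tau(W)$. Everything after that is linear algebra over $\Q$.
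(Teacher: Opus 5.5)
Your proof is correct and follows the paper's argument: additivity of $\tau$ from $(UV)^n=U^nV^n$ and the uniform bound $|W^n(x)-x-n\tau(W)|<1$, the formula from the values on the generators, and injectivity from the rational independence of $1$, $\alpha$, $\beta$. Your factorization through the surjection $\Z^3\to\Gamma$ only spells out the ``in particular'' that the paper leaves implicit.
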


\begin{proof}
Let $U,V\in\Gamma$. Since $U$ and $V$ commute, $(UV)^n=U^nV^n$, and
\[
(UV)^n(x)-x\ =\ \big(U^n(V^nx)-V^nx\big)+\big(V^n(x)-x\big).
\]
Using $|U^n(y)-y-n\tau(U)|<1$ uniformly in $y$ and dividing by $n$ gives $\tau(UV)=\tau(U)+\tau(V)$. The formula on generators follows from $\tau(F)=\alpha$, $\tau(G)=\beta$, $\tau(T)=1$. Injectivity is the rational independence of $1$, $\alpha$, $\beta$.
\end{proof}

\begin{lemma}\label{lem:forward}
Let $W\in\Gamma$. If $\tau(W)>0$ then $W(x)>x$ for every $x\in\R$. If $\tau(W)<0$ then $W(x)<x$ for every $x$.
\end{lemma}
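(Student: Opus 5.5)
The plan is to argue by contradiction, using the fact that an element of $\Gamma$ with a fixed point must have translation number zero, and then to invoke the injectivity in Lemma \ref{lem:tauhom}. The case $\tau(W)<0$ will follow from the case $\tau(W)>0$ applied to $W^{-1}$. Indeed $W^{-1}\in\Gamma$, and since $\tau$ is a homomorphism, $\tau(W^{-1})=-\tau(W)>0$. So $W^{-1}(y)>y$ for all $y$. Setting $y=W(x)$ gives $x>W(x)$.

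Now suppose $\tau(W)>0$ and, for contradiction, that $W(x_1)\le x_1$ for some $x_1$. Let $\phi(x)=W(x)-x$. It is continuous and $1$-periodic, because $W$ commutes with $T$. Since $\tau(W)>0$ and $|W^n(x)-x-n\tau(W)|<1$, the function $\phi$ cannot be everywhere negative. Indeed, if $\phi<0$ on a period then $\phi<0$ everywhere, so $W(y)<y$ for all $y$. Then $W^n(x)$ is decreasing in $n$, which forces $\tau(W)\le 0$, a contradiction. Hence $\phi$ takes a value $\ge0$ somewhere. Together with $\phi(x_1)\le0$, the intermediate value theorem gives a point $x_0$ with $W(x_0)=x_0$.

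With $x_0$ a fixed point, $W^n(x_0)=x_0$ for all $n$, so $\tau(W)=\lim (W^n(x_0)-x_0)/n=0$. This contradicts $\tau(W)>0$. Note that injectivity of $\tau$ is not even needed here; it only guarantees that the dichotomy $\tau(W)>0$ or $\tau(W)<0$ covers all $W\ne\mathrm{id}$. The only mildly delicate point is the first step of the contradiction, showing that $\phi$ is somewhere nonnegative. It uses periodicity of $\phi$ and monotonicity of $W$ to convert a pointwise sign into a sign for all iterates. Everything else is routine.
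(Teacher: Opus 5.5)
Your proof is correct, and your reduction of the case $\tau(W)<0$ to $W^{-1}$ is the same as the paper's; the difference is in the main case. The paper uses the monotonicity of $W$ directly at the bad point. If $W(x_1)\le x_1$, applying the increasing map $W^n$ gives $W^{n+1}(x_1)\le W^n(x_1)$. By induction $W^n(x_1)\le x_1$ for all $n$, hence $\tau(W)\le0$, a contradiction in one line.

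You use the decreasing-iterates argument only to exclude $\phi<0$ everywhere. You then combine continuity, $1$-periodicity of $\phi$ and the intermediate value theorem to produce a fixed point, which forces $\tau(W)=0$. This is valid. In your version the iterate step applies the global inequality $W(y)<y$ at $y=W^n(x)$, so it does not even need monotonicity of $W$. It routes through the familiar principle that an element with a fixed point has translation number zero.

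It is, however, a detour. The ``mildly delicate point'' you single out disappears once you notice that monotonicity alone propagates the single inequality $W(x_1)\le x_1$ along the orbit of $x_1$. So periodicity, continuity and the intermediate value theorem are not needed at all.
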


\begin{proof}
Let $\tau(W)>0$. If $W(x_0)\le x_0$ for some $x_0$ then, since $W$ is increasing, the sequence $W^n(x_0)$ is nonincreasing. Hence $W^n(x_0)-x_0\le0$ for all $n$ and $\tau(W)\le0$, a contradiction. The second statement follows by applying the first to $W^{-1}$.
\end{proof}

Lemma \ref{lem:forward} is elementary and decisive. The order of $\Gamma$ given by translation numbers is realized pointwise on the line.

\begin{lemma}\label{lem:toplin}
There is an orientation preserving homeomorphism $h$ of $\T$ with $hfh^{-1}=R_\alpha$ and $hgh^{-1}=R_\beta$. Moreover $h$ is unique up to composition with a rotation.
\end{lemma}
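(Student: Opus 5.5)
The plan is to build $h$ directly from the ordering of $\Gamma$ supplied by Lemma \ref{lem:forward}, avoiding any separate appeal to Denjoy theory. Fix a point $x_0\in\R$ and consider its orbit $\Gamma x_0=\{W(x_0)\colon W\in\Gamma\}$. Define $H$ on the orbit by $H(W(x_0))=\tau(W)$.

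The first step is to check that $H$ is well defined and strictly increasing. If $U(x_0)<V(x_0)$, then $V^{-1}U(x_0)<x_0$. By Lemma \ref{lem:forward}, $\tau(V^{-1}U)\le 0$, and injectivity in Lemma \ref{lem:tauhom} excludes equality unless $U=V$. Hence $\tau(U)<\tau(V)$. The same argument shows that $U(x_0)=V(x_0)$ forces $U=V$: otherwise $\tau(V^{-1}U)\neq0$, and Lemma \ref{lem:forward} would move $x_0$ off itself.

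The second step extends $H$ to all of $\R$, and I expect it to be the main obstacle. The image $\tau(\Gamma)=\Z+\alpha\Z+\beta\Z$ is dense in $\R$ by rational independence. I would set $H(x)=\sup\{\tau(W)\colon W(x_0)\le x\}$. Monotonicity gives a nondecreasing map, and it is continuous because an increasing map with dense image in $\R$ has no jumps. It is surjective because $H(T^m x_0)=m$ is unbounded in both directions. The possible failure of injectivity comes from wandering intervals, that is, when $\Gamma x_0$ is not dense. In that case $H$ is only a monotone semiconjugacy collapsing the gaps.

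To get a homeomorphism, I would use that $f$ is a $C^3$ (in particular $C^2$) diffeomorphism with irrational rotation number. Denjoy's theorem then gives dense orbits of $f$ alone, hence $\Gamma x_0$ is dense, and $H$ is strictly increasing and a homeomorphism. If I want to stay within the paper, I would instead note that the semiconjugacy suffices and that injectivity follows from Denjoy applied to $f$.

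The third step verifies the equivariance $H\circ W=T_{\tau(W)}\circ H$. This holds on the orbit by the homomorphism property, $H(WU x_0)=\tau(W)+\tau(U)$, and it extends to all of $\R$ by continuity and density. Taking $W=T$ shows that $H$ commutes with the unit translation, so $H$ descends to a circle homeomorphism $h$. Taking $W=F,G$ then gives $hfh^{-1}=R_\alpha$ and $hgh^{-1}=R_\beta$.

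For uniqueness, suppose $h'$ is another such conjugacy. Then $h'h^{-1}$ commutes with $R_\alpha$, which has dense orbits. A circle homeomorphism commuting with an irrational rotation is itself a rotation, since $k(y+n\alpha)=k(y)+n\alpha$ and density give $k(y)-y$ constant. Therefore $h'=R_c h$ for some $c$.
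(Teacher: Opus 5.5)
Your proof is correct, but it runs in the opposite direction from the paper's.

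\textbf{The paper's route.} It applies Denjoy's theorem to $f$ alone to get $h$ with $hfh^{-1}=R_\alpha$. It then handles $g$ separately: $hgh^{-1}$ commutes with the minimal rotation $R_\alpha$, so $hgh^{-1}(x)-x$ is a continuous $R_\alpha$ invariant function, hence constant. Preservation of rotation numbers identifies the constant as $\beta$. The group $\Gamma$ and Lemmas \ref{lem:tauhom} and \ref{lem:forward} play no role there.

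\textbf{Your route.} You build the conjugacy for the whole group at once, by a H\"older-type construction:
\begin{itemize}
\item Lemma \ref{lem:forward} and the injectivity of $\tau$ make $Wx_0\mapsto\tau(W)$ an order isomorphism of $\Gamma x_0$ onto the dense group $\Z+\alpha\Z+\beta\Z$.
\item The supremum extension is then a continuous monotone semiconjugacy, with no smoothness used.
\item Equivariance under every $W\in\Gamma$, in particular under $G$, follows from the homomorphism property. It can even be read off the supremum formula directly, $H(Wx)=\tau(W)+H(x)$, without invoking density.
\end{itemize}
The only analytic input, Denjoy's theorem, is confined to one place: density of a single orbit, which makes $H$ strictly increasing. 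The semiconjugacy by itself would not give the statement, so that step is genuinely needed. Your sentence saying the semiconjugacy ``suffices'' should be dropped; the rest of that paragraph already uses Denjoy correctly.

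\textbf{Comparison.} Both routes rely on Denjoy, so neither is more elementary on that count, and the paper's proof is shorter. Yours gives an explicit formula for the conjugacy and never needs a separate argument to identify $hgh^{-1}$. It also identifies $H$ on $[x_0,x_0+1)$ with $x\mapsto\mu([x_0,x))$. That is exactly the normalization of $h$ used in Section \ref{sec:rigidity}, and it gives the invariant measure $\mu$ an intrinsic description, independent of Lemma \ref{lem:toplin}.

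Your uniqueness argument is the same as the paper's.
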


\begin{proof}
Since $\alpha$ is irrational and $f$ is of class $C^2$, Denjoy's theorem \cite{Denjoy} provides a homeomorphism $h$ with $hfh^{-1}=R_\alpha$. The map $g'=hgh^{-1}$ commutes with $R_\alpha$, so the continuous function $g'(x)-x$ is $R_\alpha$ invariant, hence constant by minimality of $R_\alpha$. Thus $g'$ is a rotation, and it is $R_\beta$ because rotation numbers are preserved. Any self conjugacy of the minimal rotation $R_\alpha$ commutes with it and is therefore a rotation by the same argument, which gives uniqueness up to rotation.
\end{proof}

We write $\mu$ for the unique probability measure on $\T$ invariant under both $f$ and $g$. It is the pull back of Lebesgue measure by $h$, and it is the unique $f$ invariant probability measure. We also write $\mu$ for its lift to $\R$, a $\sigma$ finite measure invariant under every element of $\Gamma$ and assigning mass $\tau(W)$ to every interval $[x,Wx)$ with $\tau(W)>0$.

\section{The prerenormalization and renormalization operators}\label{sec:operator}

\begin{definition}\label{def:triple}
A positive triple is an ordered basis $u=(u_1,u_2,u_3)$ of $\Gamma$ whose translation numbers $t_i=\tau(u_i)$ satisfy $t_1>t_2>t_3>0$. The vector $t(u)=(t_1,t_2,t_3)\in D$ is the scale vector of the triple.
\end{definition}

After replacing $F$ by $FT^{-m}$ and $G$ by $GT^{-m'}$ we may and do assume $0<\beta<\alpha<1$, exchanging the names of $f$ and $g$ if necessary. Then $(T,F,G)$ is a positive triple with scale vector $(1,\alpha,\beta)$. Strictness of the inequalities and positivity at every stage below follow from the injectivity of $\tau$ and from the linear independence of basis vectors.

\begin{definition}\label{def:prestep}
Let $u=(u_1,u_2,u_3)$ be a positive triple with scale vector $t$, let $(a,\varepsilon)$ be the symbol of $t$ under the Brun step, and put $v=u_1u_2^{-a}$, so that $\tau(v)=t_1-at_2\in(0,t_2)$. The prerenormalization of $u$ is the positive triple
\[
\pre(u)\ =\
\begin{cases}
(u_2,\,v,\,u_3) & \text{if } \varepsilon=A,\\
(u_2,\,u_3,\,v) & \text{if } \varepsilon=B .
\end{cases}
\]
\end{definition}

The scale vector of $\pre(u)$ is $\mathcal{S}(t)$, so $t(u)=M_\varepsilon(a)\,t(\pre u)$. Note that $\tau(v)\neq t_3$, since $\tau(v)=t_3$ would force $u_1u_2^{-a}u_3^{-1}=\mathrm{id}$ by injectivity of $\tau$, contradicting the linear independence of the basis. The triple $\pre(u)$ is again a basis of $\Gamma$, since the passage matrix is unimodular. Prerenormalization is a change of basis of the group $\Gamma$ and nothing else. It involves neither a return map nor a rescaling. We write
\[
u^{(n)}\ =\ \pre^{\,n}(T,F,G),
\]
with scale vector $t(n)=(t_1(n),t_2(n),t_3(n))$ and symbols $(a_n,\varepsilon_n)$, where $(a_n,\varepsilon_n)$ is the symbol of the step from $u^{(n-1)}$ to $u^{(n)}$. Thus $t(n-1)=M_{\varepsilon_n}(a_n)\,t(n)$ and the sequence $\{(a_n,\varepsilon_n)\}_{n\ge1}$ is the Brun expansion of $(1,\alpha,\beta)$. We fix once and for all a base point $x_0\in\R$ and set
\[
I_n^{(i)}\ =\ \big[x_0,\,u_i^{(n)}(x_0)\big),\qquad i=1,2,3 .
\]
By Lemma \ref{lem:forward} these are intervals with $I_n^{(3)}\subset I_n^{(2)}\subset I_n^{(1)}$, and $I_{n+1}^{(1)}=I_n^{(2)}$ for every $n$. The intervals $I_n^{(1)}$ are nested and shrink towards $x_0$. The base point is arbitrary for the constructions of this section and of Section \ref{sec:towers}. From Section \ref{sec:realbounds} on it will be required to be admissible in the sense of Convention \ref{con:basepoint}, a condition satisfied by $\mu$ almost every point of the circle.

\begin{lemma}\label{lem:decomposition}
Let $u$ be a positive triple with digit $a$, write $I^{(i)}=[x_0,u_ix_0)$ and $I'=[x_0,vx_0)$ with $v=u_1u_2^{-a}$. Then
\[
I^{(1)}\ =\ \bigsqcup_{j=0}^{a-1}u_2^{\,j}\big(I^{(2)}\big)\ \sqcup\ u_2^{\,a}\big(I'\big),
\]
a partition into consecutive intervals.
\end{lemma}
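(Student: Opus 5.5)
The plan is to write down the endpoints of the proposed pieces explicitly and check that they chain. Since $u_2$ is an increasing homeomorphism of $\R$, it maps a half open interval $[p,q)$ onto $[u_2p,u_2q)$. Therefore
\[
u_2^{\,j}\big(I^{(2)}\big)=\big[u_2^{\,j}x_0,\,u_2^{\,j+1}x_0\big),\qquad j=0,\dots,a-1,
\]
and
\[
u_2^{\,a}(I')=\big[u_2^{\,a}x_0,\,u_2^{\,a}v x_0\big).
\]
Because $\Gamma$ is abelian, $u_2^{\,a}v=u_2^{\,a}u_1u_2^{-a}=u_1$. So the last interval is $[u_2^{\,a}x_0,u_1x_0)$.

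With this in hand, the key steps are as follows.

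First, the right endpoint of the $j$th piece equals the left endpoint of the $(j+1)$th piece. This is immediate from the formulas above, and the last digit piece ends at $u_2^{\,a}x_0$, exactly where $u_2^{\,a}(I')$ begins.

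Second, the pieces are genuine nonempty intervals arranged in increasing order. By Lemma \ref{lem:forward}, $\tau(u_2)=t_2>0$ gives $u_2^{\,j+1}x_0>u_2^{\,j}x_0$. Likewise $\tau(v)=t_1-at_2>0$ gives $vx_0>x_0$, and applying the increasing map $u_2^{\,a}$ yields $u_1x_0>u_2^{\,a}x_0$.

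Third, the union runs from $x_0$ to $u_1x_0$, and it is a disjoint union because the pieces are consecutive half open intervals with matching endpoints. Hence the union is exactly $[x_0,u_1x_0)=I^{(1)}$.

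There is essentially no obstacle. The only points needing care are the commutation identity $u_2^{\,a}v=u_1$, which relies on $\Gamma$ being abelian (Lemma \ref{lem:liftscommute}), and the positivity $\tau(v)>0$, which comes from the definition of the digit $a$ as $\lfloor t_1/t_2\rfloor$ together with $r\neq 0$. The latter is guaranteed by the injectivity of $\tau$ in Lemma \ref{lem:tauhom}.
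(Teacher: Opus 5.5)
Your proposal is correct and follows essentially the same route as the paper: both write each piece by its endpoints, use commutativity to get $u_2^{a}v=u_1$, and apply Lemma \ref{lem:forward} to $u_2$ and $v$ to order $x_0<u_2x_0<\dots<u_2^{a}x_0<u_1x_0$. The consecutive half open intervals then partition $[x_0,u_1x_0)$.
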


\begin{proof}
The points $x_0<u_2x_0<\dots<u_2^ax_0$ increase by Lemma \ref{lem:forward} applied to $u_2$. Since $u_1u_2^{-a}$ has positive translation number, Lemma \ref{lem:forward} gives $u_1u_2^{-a}(u_2^ax_0)>u_2^ax_0$, that is $u_1x_0>u_2^ax_0$. The interval $[u_2^jx_0,u_2^{j+1}x_0)$ equals $u_2^j(I^{(2)})$ for $j<a$, and $[u_2^ax_0,u_1x_0)=u_2^a\big([x_0,u_2^{-a}u_1x_0)\big)=u_2^a(I')$. These consecutive intervals partition $[x_0,u_1x_0)$.
\end{proof}

\subsection{Quotient circles and normalized pairs}
Since $u_2^{(n)}$ and $u_3^{(n)}$ commute with $u_1^{(n)}$, they descend to $C^3$ diffeomorphisms $P_n$ and $Q_n$ of the quotient circle
\[
\mathcal{C}_n\ =\ \R/\langle u_1^{(n)}\rangle ,
\]
a smooth circle of total length $|I_n^{(1)}|$ in the metric induced from $\R$. The two ends of the chart project to a single point of $\mathcal{C}_n$, the image of the base point. Near this point the map $P_n$ is represented both by $u_2^{(n)}$ and by $(u_1^{(n)})^{-1}u_2^{(n)}u_1^{(n)}$, and these agree identically because the elements commute. No matching of derivatives of any order has to be imposed there. In the fundamental chart $I_n^{(1)}=[x_0,u_1^{(n)}x_0)$ the map $P_n$ has the two branches
\[
x\ \mapsto\ u_2^{(n)}(x)\quad\text{on } [x_0,c_2(n)),\qquad
x\ \mapsto\ (u_1^{(n)})^{-1}u_2^{(n)}(x)\quad\text{on } [c_2(n),u_1^{(n)}x_0),
\]
with cut point $c_2(n)=(u_2^{(n)})^{-1}u_1^{(n)}(x_0)$, which lies in the interior of $I_n^{(1)}$ by Lemma \ref{lem:forward}. Similarly $Q_n$ has cut point $c_3(n)=(u_3^{(n)})^{-1}u_1^{(n)}(x_0)$. The cut points depend on the base point $x_0$ and carry no intrinsic information. They mark where the chart breaks, not where the map breaks.

\begin{definition}\label{def:renorm}
The normalization $\Norm$ rescales the chart $I_n^{(1)}$ affinely onto $[0,1)$. The normalized pair $\xi_n=\Norm(P_n,Q_n)$ is the pair of two branch maps of $[0,1)$ so obtained. The renormalization operator is $\Ren=\Norm\circ\pre$, so that $\xi_{n+1}=\Ren\xi_n$. The rotation model $\mathcal{O}_n$ is the pair of rigid rotations by $t_2(n)/t_1(n)$ and $t_3(n)/t_1(n)$, written in the same two branch form on $[0,1)$.
\end{definition}

In the linear model, where $f$ and $g$ are themselves rotations, every $\xi_n$ equals $\mathcal{O}_n$. In general the two pairs share their combinatorial data, namely the rotation numbers of $P_n$ and $Q_n$ on $\mathcal{C}_n$, which are $t_2(n)/t_1(n)$ and $t_3(n)/t_1(n)$ because $\mu$ assigns mass $t_i(n)$ to $I_n^{(i)}$.

\begin{remark}\label{rem:welldefined}
The normalized pair $\xi_n$ determines $\xi_{n+1}$, so that $\Ren$ is a well defined self map of the set of normalized pairs arising from pointed pairs. Indeed, $\xi_n$ is a chart of the pointed pair $(\mathcal{C}_n;P_n,Q_n;x_0)$: the two branches of $P_n$ glue to a homeomorphism of the circle $[0,1)/\{0\sim1\}$, and the smooth structure of $\mathcal{C}_n$ at the point $0\sim1$ is the one for which the gluing map, namely the first branch of $P_n$ composed with the inverse of the second branch, which is $u_1^{(n)}$ in the chart, is a diffeomorphism. The universal cover of $\mathcal{C}_n$, with the deck transformation and with the lifts of $P_n$ and $Q_n$ chosen so that they move the base point into the fundamental domain, is the line with the basis $u^{(n)}$ of $\Gamma$, up to an affine change of coordinates, and stage $n+1$ is built from these data and from the base point alone.
\end{remark}

\subsection{Simultaneously bounded type}

Write $u_i^{(n)}=F^{k_i(n)}G^{l_i(n)}T^{m_i(n)}$, and call the vector $e_i(n)=(k_i(n),l_i(n))\in\Z^2$ the exponent vector of $u_i^{(n)}$. The height of the stage $n$ basis is
\[
H(n)\ =\ \max_{i=1,2,3}\big(|k_i(n)|+|l_i(n)|\big).
\]
Since $t(0)=B_nt(n)$ with $B_n=M_{\varepsilon_1}(a_1)\cdots M_{\varepsilon_n}(a_n)$, the exponent vectors are read off the rows of $B_n^{-1}$, and the heights are determined by the type.

\begin{definition}\label{def:sbt}
The vector $(\alpha,\beta)$ is of simultaneously bounded type with constant $A\ge1$ if
\[
t_1(n)\ \le\ A\,t_3(n)\qquad\text{for every } n\ge0 .
\]
\end{definition}

The second hypothesis we shall need is not a condition on the scales but on the quality of the approximation produced by the algorithm.

\begin{definition}\label{def:sc}
The vector $(\alpha,\beta)$ is summably convergent if
\[
\Sigma\ :=\ \sum_{n\ge0}t_1(n)\,H(n)\ <\ \infty .
\]
\end{definition}

The product $t_1(n)H(n)$ measures the quality of the approximation of zero by the linear forms $k\alpha+l\beta+m$ delivered by the algorithm at stage $n$, the rows of $B_n^{-1}$. This is the dual of the simultaneous approximation of $(\alpha,\beta)$ by the columns of $B_n$. The algorithm is strongly convergent at $(\alpha,\beta)$ when $t_1(n)H(n)\to0$, and summable convergence is strong convergence with a summable rate. For $\nu$ almost every vector the Oseledets theorem gives $\lim_n n^{-1}\log\big(t_1(n)H(n)\big)=\theta_2$, where $\theta_2<0$ is the second Lyapunov exponent of the cocycle of step matrices, see Theorem \ref{thm:brunergodic}, so summable convergence holds for almost every vector by the theorem of Hardcastle \cite{Hardcastle}, and it holds for every vector whose type is eventually periodic with a Pisot period matrix, by Theorem \ref{thm:periodic} below. We shall also use the digit weighted series
\[
\Sigma^{\ast}\ :=\ \sum_{n\ge0}a_{n+1}\,t_1(n)\,H(n) ,
\]
which is at most $A\Sigma$ for a vector of simultaneously bounded type with constant $A$, by Lemma \ref{lem:scales} below.

\begin{lemma}\label{lem:scales}
Suppose that $(\alpha,\beta)$ is of simultaneously bounded type with constant $A$. Then $a_n\le A$ for every $n$. With $S(n)=t_1(n)+t_2(n)+t_3(n)$ and $\lambda=1-\tfrac{1}{3A}$ we have
\[
S(n)\ \le\ S(0)\,\lambda^{\,n},\qquad \tfrac{1}{3A}\,S(n)\ \le\ t_i(n)\ \le\ S(n)\quad (i=1,2,3).
\]
Moreover $t_2(n)/t_1(n)\le A/(A+1)$ and $\tau(v_n)\ge t_2(n)/A$, where $v_n=u_1^{(n)}(u_2^{(n)})^{-a_{n+1}}$ is the element created at stage $n$.
\end{lemma}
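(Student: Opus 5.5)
The plan is to read everything off the recursion $t(n-1)=M_{\varepsilon_n}(a_n)\,t(n)$ together with the single hypothesis $t_1(n)\le A\,t_3(n)$, which holds at every stage.

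\emph{Digit bound.} Apply the hypothesis at stage $n-1$. Since $t_1(n-1)=a_n t_1(n)+(\text{positive})$ and $t_2(n-1)=t_1(n)$, we get $a_n<t_1(n-1)/t_2(n-1)$. The ordering gives $t_2(n-1)>t_3(n-1)\ge t_1(n-1)/A$. Hence $a_n< t_1(n-1)/t_2(n-1)\le A$, so $a_n\le A$ for every $n\ge1$.

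\emph{Comparability.} For each $n$ we have $t_3(n)\le t_i(n)\le t_1(n)$ and $S(n)\le 3t_1(n)\le 3A\,t_3(n)$. This gives $t_i(n)\ge t_3(n)\ge S(n)/(3A)$, while the upper bound $t_i(n)\le S(n)$ is trivial.

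\emph{Geometric decay of $S(n)$.} Compare $S(n)$ with $S(n-1)$ using the step. In both cases $A$ and $B$, the multiset of entries of $t(n)$ is $\{t_2(n-1),\,t_3(n-1),\,r\}$ with $r=t_1(n-1)-a_nt_2(n-1)$. Therefore
\[
S(n-1)-S(n)=t_1(n-1)-r=a_nt_2(n-1)\ge t_2(n-1)\ge \tfrac{1}{3A}S(n-1),
\]
where the last inequality is the comparability already proved, applied at stage $n-1$. This yields $S(n)\le\lambda S(n-1)$, and induction gives $S(n)\le S(0)\lambda^n$.

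\emph{Ratio bound.} Write $t_1(n)=a_{n+1}t_2(n)+\tau(v_n)$ with $\tau(v_n)>0$. Then $t_1(n)/t_2(n)>a_{n+1}\ge1$, which is not yet the claimed bound. To get it, note that $t_1(n)\le A t_3(n)$ and $t_3(n)<t_2(n)$ give $t_2(n)/t_1(n)\ge1/A$, again a bound in the wrong direction. The right source is instead the previous step: $t_2(n)=t_1(n+1)\le A t_3(n+1)$, and $t_3(n+1)\le\min(t_3(n),\tau(v_n))$. Hence $\tau(v_n)\ge t_3(n+1)\ge t_1(n+1)/A=t_2(n)/A$, which is the last claim. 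Substituting into the decomposition gives
\[
t_1(n)\ge t_2(n)+\tau(v_n)\ge (1+1/A)\,t_2(n),
\]
that is, $t_2(n)/t_1(n)\le A/(A+1)$.

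The only delicate point is recognizing that the bounds on $t_2/t_1$ and $\tau(v_n)$ come from applying the hypothesis at stage $n+1$ rather than at stage $n$. The remaining steps are bookkeeping with the step matrices.
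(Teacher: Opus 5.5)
Your proof is correct and follows essentially the same route as the paper. The digit bound comes from $t_1/t_2\le t_1/t_3\le A$, the decay from $S(n)-S(n+1)=a_{n+1}t_2(n)\ge S(n)/(3A)$, and the bounds on $\tau(v_n)$ and $t_2(n)/t_1(n)$ from applying the hypothesis at stage $n+1$ through $t_1(n+1)=t_2(n)$.
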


\begin{proof}
First $a_{n+1}\le t_1(n)/t_2(n)\le t_1(n)/t_3(n)\le A$. The step removes $a_{n+1}t_2(n)$ from the sum, so
\[
S(n+1)\ =\ S(n)-a_{n+1}t_2(n)\ \le\ S(n)-t_2(n)\ \le\ S(n)\Big(1-\tfrac{1}{3A}\Big),
\]
using $t_2(n)\ge t_3(n)\ge t_1(n)/A\ge S(n)/(3A)$. The comparability of the $t_i(n)$ with $S(n)$ follows from $t_1(n)\ge S(n)/3$ and $t_3(n)\ge t_1(n)/A$. The number $\tau(v_n)$ is either $t_2(n+1)$ or $t_3(n+1)$, and both are at least $t_1(n+1)/A=t_2(n)/A$. Finally $t_1(n)=a_{n+1}t_2(n)+\tau(v_n)\ge t_2(n)(1+1/A)$.
\end{proof}

\begin{lemma}\label{lem:heights}
For every $n$ one has $t_1(n)H(n)^2\ge1/6$. If $(\alpha,\beta)$ is of simultaneously bounded type with constant $A$ then every product $w=\prod_{m<n}(u_2^{(m)})^{j_m}$ with $0\le j_m\le a_{m+1}$ has exponent vector of height at most $A\sum_{m<n}H(m)$.
\end{lemma}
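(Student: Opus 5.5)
The first inequality is a determinant argument; the second follows from subadditivity of the height together with Lemma \ref{lem:scales}.

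\emph{The inequality $t_1(n)H(n)^2\ge 1/6$.} Write $u_i^{(n)}=F^{k_i}G^{l_i}T^{m_i}$ for $i=1,2,3$, suppressing $n$. Since $u^{(n)}$ is a basis of $\Gamma\cong\Z^3$ (Lemma \ref{lem:tauhom}), the integer matrix $N$ with rows $(k_i,l_i,m_i)$ is unimodular, so $\det N=\pm1$. By Lemma \ref{lem:tauhom}, $t_i=\tau(u_i^{(n)})=k_i\alpha+l_i\beta+m_i$.

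Replace the third column of $N$ by the column $(t_1,t_2,t_3)^T$. This amounts to adding $\alpha$ times the first column and $\beta$ times the second column to the third, so the determinant does not change. The new matrix therefore has determinant $\pm1$.

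Now expand this determinant along its third column. Each cofactor is a $2\times2$ minor $k_il_j-k_jl_i$ of the exponent vectors. Its absolute value is at most $(|k_i|+|l_i|)(|k_j|+|l_j|)\le H(n)^2$. It is certainly at most $2H(n)^2$, which is the bound that gives the stated constant. Using $0<t_i\le t_1$ we obtain
\[
1\ \le\ \sum_{i=1}^3 t_i(n)\cdot 2H(n)^2\ \le\ 6\,t_1(n)H(n)^2 .
\]

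\emph{The height bound.} The exponent vector is additive under products in $\Gamma$, since $\Gamma$ is abelian and exponents simply add. Hence the exponent vector of $w=\prod_{m<n}(u_2^{(m)})^{j_m}$ is
\[
\sum_{m<n} j_m\,e_2(m).
\]
The height, being an $\ell^1$ norm, is subadditive and homogeneous. Therefore the height of $w$ is at most $\sum_{m<n} j_m\big(|k_2(m)|+|l_2(m)|\big)\le\sum_{m<n} j_m H(m)$.

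Finally, $j_m\le a_{m+1}$ by hypothesis, and $a_{m+1}\le A$ by Lemma \ref{lem:scales}. This gives the bound $A\sum_{m<n}H(m)$.

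Neither part presents a real obstacle. The only point needing care is the determinant identity after the column operation, and that is a one-line computation.
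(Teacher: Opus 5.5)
Your proof is correct and follows the paper's argument. The paper expands the same unimodular determinant, the one with rows $(t_i(n),e_i(n))$ spanning the covolume-one lattice, into six products each bounded by $t_1(n)H(n)^2$; you group the same terms into three cofactors, and your height bound is identical to the paper's. Your cofactor estimate $|k_il_j-k_jl_i|\le H(n)^2$ in fact gives $t_1(n)H(n)^2\ge 1/3$, so loosening it to $2H(n)^2$ is unnecessary but harmless.
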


\begin{proof}
The vectors $b_i(n)=(t_i(n),e_i(n))\in\R^3$ form a basis of the lattice $\{(k\alpha+l\beta+m,k,l)\}$, which has covolume one. Expanding the determinant of the matrix with rows $b_i(n)$, each of the six products contains exactly one scale and two exponents, so $1\le6\,t_1(n)H(n)^2$. The second statement holds because the exponent vector of $w$ is $\sum_mj_me_2(m)$ and $j_m\le a_{m+1}\le A$ by Lemma \ref{lem:scales}.
\end{proof}

\begin{remark}\label{rem:sbtarith}
The two hypotheses play different roles. Simultaneously bounded type controls the geometry of one step, so it gives the uniform contraction of the scales and the comparability of the three nested intervals. Summable convergence controls the total nonlinearity accumulated over all stages, and it is what replaces the disjointness of the dynamical partition of the rank one theory. Proposition \ref{prop:symbolic} shows that the first condition is decided by the digits and by the gaps between occurrences of the letter $B$. The second is determined by the type as well, since for a vector of simultaneously bounded type the type determines the vector by Theorem \ref{thm:reconstruction}, but it is a condition on the whole type and not on finitely many symbols, and we do not know whether it follows from the first. Section \ref{sec:beyond} explains why the arithmetic of a vector is visible in the space of lattices rather than in the symbolic data.
\end{remark}

\section{The renormalization type and the reconstruction problem}\label{sec:type}

\begin{definition}\label{def:type}
The simultaneous renormalization type of the pair $(f,g)$, or of the vector $(\alpha,\beta)$, is the sequence of symbols
\[
(a_n,\varepsilon_n)\ \in\ \N_+\times\{A,B\},\qquad n\ge1,
\]
of the Brun expansion of $(1,\alpha,\beta)$.
\end{definition}

The terminology follows the rank one theory of \cite{DMP}. In the rank one theory the renormalization operator is the first return map to an interval around the cutting point, and at every stage the renormalized map lies in one of two chambers, according to the position of the cutting point relative to the image of the endpoint. This produces a sign sequence, and the renormalization type of \cite{DMP} is the sequence of run lengths of the sign sequence. A continued fraction digit is collected only at a chamber switch. The type determines the rotation number, $\rho=[a_1+1,a_2,a_3,\dots]$, and conversely.

Every item of this structure has a counterpart here. In the subtractive form of the Brun algorithm, in which one copy of $u_2$ is removed at each move and the triple is then reordered, each move records one of three letters, according to the position of the remainder in the reordered triple. A maximal run of the letter meaning that the remainder is still the largest has length $a_n-1$ and is closed by one of the letters $A$ or $B$. So the accelerated symbol $(a_n,\varepsilon_n)$ is the run length structure of the subtractive move sequence, the only difference with rank one being that a run can be closed in two ways rather than one. The two chambers correspond to the two cases of the reordering, the sign sequence to the case sequence, the run lengths to the digits, and the two return times of the rank one theory to the three tower heights of Section \ref{sec:towers}. Periodicity of the type corresponds to periodicity under renormalization. The golden mean, of constant type one, in rank one theory is replaced by the supergolden vector with symbol $(1,B)$ at every step and by the Tribonacci vector with period three, treated in Section \ref{sec:periodic}.

\subsection{Positivity of blocks}

\begin{lemma}\label{lem:positivity}
The letter $B$ occurs infinitely often in the type of every vector. The product of the step matrices over any block of the expansion containing at least four occurrences of $B$ is a strictly positive matrix. If a block product is strictly positive, then so is its product with any further step matrices on either side.
\end{lemma}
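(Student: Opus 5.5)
The plan treats the three assertions of Lemma \ref{lem:positivity} separately: the first from the scale dynamics, the second as a finite combinatorial check on supports, the third as a general fact about nonnegative matrices.

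\emph{Infinitely many $B$.} I argue by contradiction. Suppose that $\varepsilon_n=A$ for all $n>N$. A step of case $A$ leaves the third coordinate unchanged, so $t_3(n)=t_3(N)>0$ for all $n\ge N$. As in the proof of Lemma \ref{lem:scales}, $S(n+1)=S(n)-a_{n+1}t_2(n)\le S(n)-t_3(N)$. Hence $S(n)\to-\infty$, which contradicts positivity of the scales. This argument uses no bounded type hypothesis.

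\emph{Positivity after four $B$'s.} Since all entries are nonnegative and $a\ge1$, the support of a product depends only on the supports of the factors, and the support of $M_\varepsilon(a)$ does not depend on $a$. I encode supports as directed graphs on $\{1,2,3\}$, with an edge $i\to j$ when the $(i,j)$ entry is positive:
\[
A:\ 1\to1,\ 1\to2,\ 2\to1,\ 3\to3;\qquad B:\ 1\to1,\ 1\to3,\ 2\to1,\ 3\to2 .
\]
The $(i,j)$ entry of a block product is positive exactly when there is a path from $i$ to $j$ whose successive edges follow the letters of the block. I therefore track, for each starting index $i$, the reachable set $R\subset\{1,2,3\}$ as the letters are read, and I need to show that $R=\{1,2,3\}$ at the end of any block containing four $B$'s.

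This is organized by four observations.
\begin{itemize}
\item The loop $1\to1$ is present for both letters, so once $1\in R$ it stays in $R$.
\item The full set is absorbing under both letters.
\item From $R=\{3\}$, the letter $A$ fixes $R$ and the letter $B$ sends it to $\{2\}$. From $\{2\}$, either letter produces $1$. So $1$ is reached at the latest one letter after the first $B$, and immediately if $i\in\{1,2\}$.
\item Once $1\in R$, the sets $\{1\},\{1,2\},\{1,3\},\{1,2,3\}$ evolve as follows:
\[
\{1\}\xrightarrow{A}\{1,2\},\quad \{1\}\xrightarrow{B}\{1,3\},\quad \{1,2\}\xrightarrow{A}\{1,2\},\quad \{1,2\}\xrightarrow{B}\{1,3\},\quad \{1,3\}\xrightarrow{A}\{1,2,3\},\quad \{1,3\}\xrightarrow{B}\{1,2,3\}.
\]
\end{itemize}
From the last observation, any state containing $1$ reaches $\{1,3\}$ at the first subsequent $B$, and then reaches the full set at the next letter of either kind.

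Counting in the worst case $i=3$: the first $B$ gives $\{2\}$, and the next letter gives a set containing $1$. If that letter is an $A$ the set is $\{1\}$; if it is the second $B$ the set is $\{1\}$ as well. In either case the next $B$ (the second or third) gives $\{1,3\}$, and the letter after it gives $\{1,2,3\}$. That letter is at latest the fourth $B$, or an intermediate $A$. The cases $i=1,2$ are faster. I expect this bookkeeping to be the main obstacle, namely making sure the count of four $B$'s really covers the worst path for every starting index. I would present it as the small table of transitions above and verify it exhaustively.

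\emph{Extension.} Each step matrix is invertible, so it has no zero row and no zero column. If $P>0$ and $M\ge0$ has no zero column, then every entry of $PM$ is a sum containing some positive term, since $(PM)_{ij}=\sum_k P_{ik}M_{kj}$ and some $M_{kj}>0$. The case of $MP$ is symmetric, using that $M$ has no zero row. Induction over the additional factors on either side gives the last assertion.
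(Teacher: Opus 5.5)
Your proposal is correct and follows essentially the same route as the paper: the first assertion by contradiction from an all-$A$ tail that fixes $t_3$, the second by a finite support-tracking check, and the third from the absence of zero rows and columns in the step matrices. The differences are minor. You follow row supports as reachable sets, where the paper follows column supports via $(S_1\cup S_2,S_1,S_3)$ and $(S_1\cup S_2,S_3,S_1)$. Your bound $S(n+1)\le S(n)-t_3(N)$ reaches the contradiction in the first part more directly than the paper's appeal to the subtractive Euclidean algorithm on $(t_1,t_2)$, and it does not need the irrationality of $t_1/t_2$.
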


\begin{proof}
Suppose that the letter $A$ occurred at every step from some stage $n$ on. Then $t_3(m)=t_3(n)$ for all $m\ge n$, while the pair $(t_1(m),t_2(m))$ undergoes the subtractive Euclidean algorithm of the irrational ratio $t_1(n)/t_2(n)$, whose entries tend to zero. Eventually $t_2(m)<t_3(n)$, which contradicts the ordering of the triple. Hence $B$ occurs infinitely often.

For the second statement we track the supports of the columns of the growing product $B_k=M_1\cdots M_k$, where $M_j$ is the step matrix of the $j$th step of the block. Let $S_i(k)\subset\{1,2,3\}$ be the set of indices of nonzero entries of the $i$th column of $B_k$, with $S_i(0)=\{i\}$. Multiplying on the right by $M_A(a)$ replaces the columns $(c_1,c_2,c_3)$ by $(ac_1+c_2,\,c_1,\,c_3)$, and multiplying by $M_B(a)$ replaces them by $(ac_1+c_2,\,c_3,\,c_1)$. Since all entries are nonnegative, the supports are replaced by $(S_1\cup S_2,\,S_1,\,S_3)$ in case $A$ and by $(S_1\cup S_2,\,S_3,\,S_1)$ in case $B$. The first step of either kind gives $S_1=\{1,2\}$. At the first occurrence of $B$ afterwards, $S_3$ becomes the previous $S_1$, which contains $1$, and $S_2$ becomes the previous $S_3=\{3\}$. The following step of either kind replaces $S_1$ by $S_1\cup S_2\supset\{1,2,3\}$. From then on $S_1=\{1,2,3\}$, since $S_1$ can only grow. The next occurrence of $B$ transports the full support into $S_3$, and the following occurrence of $B$ transports it from $S_3$ into $S_2$. Four occurrences of $B$ therefore suffice, allowing for the case in which the first $B$ is the very first step. The last statement holds because no step matrix has a zero row or a zero column, and the product of a strictly positive matrix with such a matrix, on either side, is strictly positive.
\end{proof}

\subsection{A symbolic characterization of simultaneously bounded type}

\begin{proposition}\label{prop:symbolic}
The scales of $(\alpha,\beta)$ satisfy $t_1(n)\le At_3(n)$ for all $n$ and some $A$ if and only if its type has bounded digits and the gaps between consecutive occurrences of the letter $B$ are bounded.
\end{proposition}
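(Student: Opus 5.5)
The plan is to prove the two implications separately. The forward direction uses Lemma \ref{lem:scales} together with a Euclidean-halving estimate along runs of the letter $A$. The converse uses the positivity of block products from Lemma \ref{lem:positivity}.

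\emph{Bounded scales imply bounded digits and gaps.} Suppose $t_1(n)\le At_3(n)$ for all $n$. Lemma \ref{lem:scales} already gives $a_n\le A$ for every $n$. For the gaps, consider a run of consecutive steps with letter $A$, say from stage $n$ to stage $n+m$. In case $A$ the third entry is untouched, so $t_3(n+m)=t_3(n)$. Meanwhile the pair $(t_1,t_2)$ undergoes one accelerated Euclidean step at a time: $t_1(j+1)=t_2(j)$ and $t_2(j+1)=t_1(j)-a_{j+1}t_2(j)$. The standard Euclidean estimate gives $t_1(j+2)=t_1(j)-a_{j+1}t_2(j)<t_1(j)/2$. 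Hence after $m=2k$ steps of type $A$,
\[
t_3(n)=t_3(n+m)<t_1(n+m)\le 2^{-k}t_1(n)\le 2^{-k}A\,t_3(n),
\]
so $2^k<A$. The length of any run of $A$'s is therefore at most $2\log_2A+1$. This bounds the gaps between consecutive occurrences of $B$; the first occurrence of $B$ is covered as well, by applying the same estimate to the initial run.

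\emph{Bounded digits and gaps imply bounded scales.} Suppose $a_n\le K$ and that every window of $L$ consecutive steps contains a $B$. Fix $n$. The block of steps $n+1,\dots,n+N$ with $N=4L$ contains at least four occurrences of $B$. By Lemma \ref{lem:positivity} the product
\[
P=M_{\varepsilon_{n+1}}(a_{n+1})\cdots M_{\varepsilon_{n+N}}(a_{n+N})
\]
is strictly positive, so its integer entries are at least $1$. Its entries are also at most $(K+2)^N$, since every step matrix has row sums at most $K+1$. From $t(n)=P\,t(n+N)$ we get, for each $i$,
\[
\sum_j t_j(n+N)\ \le\ t_i(n)\ \le\ (K+2)^N\sum_j t_j(n+N).
\]
Therefore $t_1(n)/t_3(n)\le (K+2)^{4L}$ uniformly in $n$, and $A=(K+2)^{4L}$ works.

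The only point requiring care is the forward direction's halving estimate combined with the constancy of $t_3$ along $A$-runs, together with checking that $t_1\ge t_3$ throughout the run so that the inequality chain closes. The converse is essentially immediate from Lemma \ref{lem:positivity}, once one notes that bounded gaps give a uniform block length $N$ and bounded digits give a uniform bound on the entries of $P$.
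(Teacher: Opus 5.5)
Your proof is correct. The converse is the paper's argument. A block of $4L$ steps contains four letters $B$, so Lemma \ref{lem:positivity} makes the block product strictly positive, with entries bounded in terms of the digits. Then $t(n)=P\,t(n+4L)$ squeezes every coordinate of $t(n)$ between the sum of the coordinates of $t(n+4L)$ and a fixed multiple of that sum. Your entry bound $(K+2)^{N}$ is looser than needed, since row sums at most $K+1$ already give $(K+1)^{N}$, but this is harmless.

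The forward direction takes a different route. The paper also uses that $t_3$ is frozen along a run of the letter $A$. It then bounds $t_1$ through the uniform contraction $S(m+1)\le\lambda S(m)$, $\lambda=1-\tfrac{1}{3A}$, of Lemma \ref{lem:scales}. This gives a run length of at most $\log(3A)/\log(1/\lambda)$, which is of order $A\log A$.

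You use instead the Euclidean fact that a step of type $A$ at stage $j$ gives $t_1(j+2)=t_2(j+1)=t_1(j)-a_{j+1}t_2(j)<t_1(j)/2$. This yields the bound $2\log_2A+1$. That bound is logarithmic in $A$ and of the right order, since a run of digit one steps shrinks $t_1$ only by the golden ratio per step. It also uses only the digit bound of Lemma \ref{lem:scales}, not its contraction estimate.

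The paper's version has the economy of reusing an estimate it needs elsewhere. Yours is sharper and self-contained.
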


\begin{proof}
Suppose the scales satisfy $t_1(n)\le At_3(n)$ for all $n$. Digits are bounded by $A$ by Lemma \ref{lem:scales}. During a run of the letter $A$ starting at stage $n$ the third scale is fixed, while $S$ contracts by the factor $\lambda$ of Lemma \ref{lem:scales} at each step. Since $t_1(m)\ge t_3(m)=t_3(n)$ and $t_1(m)\le S(m)\le S(n)\lambda^{m-n}\le 3A\,t_3(n)\lambda^{m-n}$, the run has length at most $\log(3A)/\log(1/\lambda)$.

Conversely suppose that the digits are bonded by $a^*$ and that the gaps between occurrences of $B$ are at most $L$. Then every block of $4L$ consecutive steps contains at least four occurrences of $B$, so its matrix product $B$ is strictly positive by Lemma \ref{lem:positivity}, and its entries lie between $1$ and $(a^*+1)^{4L}$. For every $n$ we have $t(n)=B\,t(n+4L)$ with such a block product, so every coordinate of $t(n)$ lies between the sum of the coordinates of $t(n+4L)$ and $3(a^*+1)^{4L}$ times that sum. Hence $t_1(n)\le 3(a^*+1)^{4L}t_3(n)$ for every $n$.
\end{proof}

\begin{remark}\label{rem:periodictype}
If a vector has type of the form $(a_n,\varepsilon_n)_n$ where $a_n=a$, for some integer $a \geq 1$, and $\varepsilon_n=A$ for every $n$ we say that it has purely periodic type and write it as $(a,A)^{\infty}$. Similarly, for the constant case $B$ we denote the type with $(a,B)^{\infty}$.
\end{remark}

\begin{corollary}\label{cor:cantor}
For every $a\ge1$ the vector of purely periodic type $(a,B)^{\infty}$ is of simultaneously bounded type and summably convergent. For fixed $a^{\ast}$ and $L$ the set $\mathcal{K}(a^{\ast},L)$ of vectors in $\Delta$ whose type has digits at most $a^{\ast}$ and gaps between occurrences of $B$ at most $L$ is a compact subset of $\Delta$ of Lebesgue measure zero. By Proposition \ref{prop:symbolic} the set of vectors of simultaneously bounded type with constant $A$ is contained in $\mathcal{K}(a^{\ast},L)$ for suitable $a^{\ast}=a^{\ast}(A)$ and $L=L(A)$, and $\mathcal{K}(a^{\ast},L)$ is contained in the set of vectors of simultaneously bounded type with constant $3(a^{\ast}+1)^{4L}$.
\end{corollary}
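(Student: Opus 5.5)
The corollary has four parts: the periodic vector $(a,B)^\infty$, compactness of $\mathcal{K}(a^{\ast},L)$, its null measure, and the two inclusions. I would prove them in the order inclusions, periodic vector, compactness, measure, since the first is read off from the proof of Proposition \ref{prop:symbolic}.

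\emph{Inclusions.} Let $(\alpha,\beta)$ be of simultaneously bounded type with constant $A$. Lemma \ref{lem:scales} gives digits at most $a^{\ast}=A$. The run-length argument in the proof of Proposition \ref{prop:symbolic} bounds runs of $A$ by $\log(3A)/\log(1/\lambda)$ with $\lambda=1-\tfrac1{3A}$. So the gaps between occurrences of $B$ are at most $L(A)=\lfloor\log(3A)/\log(1/\lambda)\rfloor+1$. The converse half of that proof gives the constant $3(a^{\ast}+1)^{4L}$.

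\emph{The periodic vector.} The type $(a,B)^\infty$ has digits $a$ and gaps $1$, so it is of simultaneously bounded type by Proposition \ref{prop:symbolic}. For summable convergence, let $M=M_B(a)$. Its characteristic polynomial is $x^3-ax^2-1$. Its derivative vanishes at $0$, where the value is $-1$, and at $2a/3$, so it has exactly one real root, the Perron root $\theta>1$. The other two roots are complex conjugate of modulus $\theta^{-1/2}<1$, since the product of the roots is $1$. The vector is the fixed point of $T$ in the cylinder $\Delta_B(a)$. Since $M^4>0$ by Lemma \ref{lem:positivity}, $M$ contracts the Hilbert metric. Hence the fixed point is unique and equals the normalized Perron eigenvector. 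Its coordinates span a cubic field, because the polynomial has no rational root ($\pm1$ fail), which gives the rational independence of $1,\alpha,\beta$. Then $t(n)=\theta^{-n}t(0)$. The exponent vectors are rows of $B_n^{-1}=M^{-n}$, whose eigenvalues are $\theta^{-1}$ and two of modulus $\theta^{1/2}$, so $H(n)\le C\theta^{n/2}$. Hence $t_1(n)H(n)\le C\theta^{-n/2}$ is summable.

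\emph{Compactness.} First I would place $\mathcal{K}=\mathcal{K}(a^{\ast},L)$ inside a compact $K'\subset\Delta$, using the constant $A'=3(a^{\ast}+1)^{4L}$. The bounds $y=t_3/t_1\ge1/A'$ and $x=t_2/t_1\le A'/(A'+1)$ keep $(x,y)$ away from the sides $y=0$ and $x=1$. For the side $x=y$, write $t(0)=M_{\varepsilon_1}(a_1)t(1)$. In case $A$ one has $t_2-t_3=t_1'-t_3'$, and in case $B$ one has $t_2-t_3=t_1'-t_2'$. In both cases the difference is at least $t_1'/(A'+1)\ge t_1/(A'+1)^2$, using $t_1\le (a_1+1)t_1'$, so $x-y\ge (A'+1)^{-2}$.

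Closedness is the main obstacle, because cylinder boundaries carry points with undefined expansion. I would take a sequence in $\mathcal{K}$ converging to a point $p$, and extract by a diagonal argument an admissible infinite word $w$ such that $p$ lies in the closed cylinders $M_{w_1\cdots w_n}\bar D$ for every $n$. Each further block of $4L$ letters is a strictly positive matrix, and it maps $\bar D\setminus\{0\}$ into the open cone $D$. Hence the closed cylinder of level $n+4L$ lies in the open cylinder of level $n$. By Lemma \ref{lem:cylinders}, $p$ therefore lies in every open cylinder of $w$, so its expansion is defined and equals $w$. Since $w$ is admissible, $p\in\mathcal{K}$.

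\emph{Measure zero.} The set $E=\{\lfloor1/x\rfloor\ge a^{\ast}+1\}$ has positive $\nu$ measure. By ergodicity of $T$ (Theorem \ref{thm:brunergodic}), $\nu$-almost every orbit visits $E$. Every point of $\mathcal{K}$ has an orbit avoiding $E$, so $\nu(\mathcal{K})=0$, and since $\nu$ is comparable to Lebesgue measure, $\mathcal{K}$ is Lebesgue null.
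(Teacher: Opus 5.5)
One step needs an argument you have not given, and it is the step your closedness proof rests on. You assert that each block of $4L$ letters, being strictly positive, maps $\overline{D}\setminus\{0\}$ into the open cone $D$. Strict positivity, together with $P(\overline{D})\subset\overline{D}$, only shows that the image lies in $\overline{D}$ and has positive coordinates. It does not rule out $(Pt)_1=(Pt)_2$ or $(Pt)_2=(Pt)_3$, and avoiding exactly these faces is what makes the expansion of the limit point defined.

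The claim does hold here, but the reason is the computation in the proof of Theorem \ref{thm:periodic}. If $r\in\overline{D}$ has positive coordinates, then $s=M_{\varepsilon'}(a')r$ has positive coordinates and $s_1-s_2=(a'-1)r_1+r_2$ or $(a'-1)r_1+r_3$, which is positive. Then $M_\varepsilon(a)s\in D$, because its consecutive differences are $(a-1)s_1+s_2$ and $s_1-s_3$ in case $A$, and $(a-1)s_1+s_3$ and $s_1-s_2$ in case $B$. This is why the paper proves $\overline{C_{n+4L+2}(s)}\setminus\{0\}\subset C_n(s)$, with two extra letters in front of the strictly positive block. With that change your argument goes through.

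In the same spirit, for the periodic vector you should record that the Perron eigenvector is $(\theta^2,\theta,1)\in D$. That is what gives it type $(a,B)^\infty$ via Lemma \ref{lem:cylinders}. Note also that only $M_B(a)^4$, not $M_B(a)$ itself, strictly contracts the Hilbert metric.

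With this repaired, your route to compactness differs genuinely from the paper's. The paper builds the coding map $s\mapsto\omega(s)$ on the compact subshift and shows via Theorem \ref{thm:reconstruction} that its image is exactly $\mathcal{K}(a^{\ast},L)$. It shows via the Birkhoff contraction rate that the map is continuous, so $\mathcal{K}(a^{\ast},L)$ is a continuous image of a compact set. You instead trap $\mathcal{K}(a^{\ast},L)$ in an explicit compact $K'\subset\Delta$, where your bound $x-y\ge(A'+1)^{-2}$ is correct. You then prove sequential closedness by extracting a limiting word, which needs neither Theorem \ref{thm:reconstruction} nor any contraction estimate. In fact $K'$ is superfluous: the same argument applied to a limit in $\overline{D}\setminus\{0\}$ already places it in $D$. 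The paper's route buys more, since the continuous injective coding map is reused in Remark \ref{rem:mixedpisot}.

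For the periodic vector, your argument is a cleaner substitute for the paper's discriminant estimate and its appeal to Theorem \ref{thm:periodic}. There is one real root because the local maximum value $f(0)=-1$ is negative, and $H(n)\le C\theta^{n/2}$ follows from $B_n^{-1}=M_B(a)^{-n}$. Your measure zero argument via the set $\{\lfloor 1/x\rfloor\ge a^{\ast}+1\}$ is equivalent to the paper's use of $\{y<1/A'\}$.
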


\begin{proof}
The matrix $M_B(a)$ has characteristic polynomial $x^3-ax^2-1$, which is irreducible over $\Q$ because it has no integer root, and $M_B(a)^4$ is strictly positive by Lemma \ref{lem:positivity}. Its Perron eigenvalue $\lambda$ exceeds $a$, and the remaining eigenvalues have sum $a-\lambda<0$ and product $\lambda^{-1}$. Since $f(x)=x^3-ax^2-1$ satisfies $f(a)<0<f(a+a^{-2})$, we have $\lambda<a+a^{-2}$, hence $(a-\lambda)^2<a^{-4}<4/(a+1)<4/\lambda$. So the discriminant of the quadratic factor is negative and the eigenvalues are nonreal, and Theorem \ref{thm:periodic} below gives the first statement.

For the second statement let $\mathcal{S}$ be the set of sequences in $\big(\{1,\dots,a^{\ast}\}\times\{A,B\}\big)^{\N}$ in which the gaps between occurrences of $B$, including the gap before the first occurrence, are at most $L$. It is a compact subshift. For $s\in\mathcal{S}$ let $C_n(s)=M_{\varepsilon_1}(a_1)\cdots M_{\varepsilon_n}(a_n)(D)$. The closed cones $\overline{C_n(s)}$ are nested, and their intersections with the unit sphere are nonempty compact sets whose diameters tend to zero by the proof of Theorem \ref{thm:reconstruction} below, so $\bigcap_n\overline{C_n(s)}$ is a single ray $\R_+\omega(s)$. We claim that $\omega(s)$ lies in every open cone $C_n(s)$. Every block of $4L$ consecutive symbols of $s$ contains at least four occurrences of $B$, so the product of the step matrices over it is strictly positive by Lemma \ref{lem:positivity}, and the computation in the proof of Theorem \ref{thm:periodic} shows that a product $M_{\varepsilon}(a)M_{\varepsilon'}(a')B''$ with $B''$ strictly positive maps $\overline{D}\setminus\{0\}$ into $D$. Hence $\overline{C_{n+4L+2}(s)}\setminus\{0\}\subset C_n(s)$, which proves the claim. By Lemma \ref{lem:cylinders} the expansion of $\omega(s)$ is then $s$, and by Theorem \ref{thm:reconstruction} every vector of $\mathcal{K}(a^{\ast},L)$ is of the form $\omega(s)$. The map $s\mapsto\omega(s)$, normalized so as to take values in $\Delta$, is continuous, because two sequences agreeing on their first $4Lk$ symbols give vectors in a common cone of projective diameter at most $2\log K\tanh(\log K/2)^{k-1}$, with $K=(a^{\ast}+1)^{4L}$, see the proof of Theorem \ref{thm:reconstruction}. So $\mathcal{K}(a^{\ast},L)=\omega(\mathcal{S})$ is compact. The two inclusions are the two halves of Proposition \ref{prop:symbolic}. Finally, every vector of $\mathcal{K}(a^{\ast},L)$ satisfies $t_1(n)\le A't_3(n)$ for all $n$ with $A'=3(a^{\ast}+1)^{4L}$, so its $T$ orbit stays in the set $\{y\ge1/A'\}$, whose complement in $\Delta$ is open and nonempty, hence of positive $\nu$ measure. By the ergodicity of $T$, $\nu$ almost every orbit enters this complement, so $\nu(\mathcal{K}(a^{\ast},L))=0$, and $\mathcal{K}(a^{\ast},L)$ has Lebesgue measure zero because $\nu$ is equivalent to Lebesgue measure.
\end{proof}

\subsection{Reconstruction of the vector from the type}

\begin{theorem}\label{thm:reconstruction}
Two vectors of simultaneously bounded type with the same simultaneous renormalization type coincide. More precisely, if the digits of the type are bounded by $a^*$ and the gaps between occurrences of $B$ by $L$, then the intersection of the nested cones $M_{\varepsilon_1}(a_1)\cdots M_{\varepsilon_n}(a_n)(D)$, $n\ge1$, is a single ray, and $(1,\alpha,\beta)$ spans it.
\end{theorem}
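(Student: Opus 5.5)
The plan is to use the Hilbert (projective) metric on the positive cone and Birkhoff's contraction theorem. By Proposition \ref{prop:symbolic}, a vector of simultaneously bounded type has digits bounded by some $a^*$ and gaps between occurrences of $B$ bounded by some $L$. Group the expansion into consecutive blocks of $4L$ steps. Each block contains at least four occurrences of $B$, so by Lemma \ref{lem:positivity} its product $P_k$ is strictly positive. Since every step matrix has entries at most $a^*+1$, the entries of $P_k$ lie between $1$ and $K=(a^*+1)^{4L}$.

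For a strictly positive matrix with entries in $[1,K]$, the image of the closed cone $\overline{D}\setminus\{0\}\subset\overline{\R^3_+}\setminus\{0\}$ has Hilbert diameter at most $\Delta_0=2\log K$. Indeed, for $x,y\ge0$ nonzero, each coordinate ratio $(Px)_i/(Py)_i$ lies within a factor $K$ of $\sum x_j/\sum y_j$. By Birkhoff's theorem, $P_k$ then contracts the Hilbert metric of the open positive cone by the factor $\tanh(\Delta_0/4)=\tanh(\log K/2)<1$.

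Let $B_n=M_{\varepsilon_1}(a_1)\cdots M_{\varepsilon_n}(a_n)$. The cones $B_n(D)$ are nested, because each $M_\varepsilon(a)$ maps $D$ into $D$ by Lemma \ref{lem:cylinders}. With $n=4Lk$ we write $B_n(\overline D)=P_1\cdots P_k(\overline D)$. The set $P_k(\overline D\setminus\{0\})$ has Hilbert diameter at most $2\log K$ in the positive orthant. Applying $P_1\cdots P_{k-1}$ multiplies this by at most $\tanh(\log K/2)^{k-1}$, so the projective diameter of $B_{4Lk}(D)$ is at most $2\log K\tanh(\log K/2)^{k-1}\to0$. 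This is the bound quoted in the proof of Corollary \ref{cor:cantor}.

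Nested nonempty compact projectivized sets with diameters tending to zero intersect in a single ray. Since the Hilbert metric dominates the angular metric on compact subsets of the open orthant, the intersection with the unit sphere is a single point. The vector $(1,\alpha,\beta)=t(0)=B_nt(n)$ with $t(n)\in D$ lies in every $B_n(D)$, so it spans the ray.

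Finally, let two vectors of simultaneously bounded type have the same type. Their constants may differ, but the digit bound and gap bound can be taken as the maximum of the two. Both vectors then lie in the same nested cones, so they span the same ray. Since both have first coordinate $1$, they coincide.

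The main obstacle is the Birkhoff step, which is not proved in the paper. I would either cite it or give a short direct proof of the diameter bound. The direct route would use coordinatewise ratios in $[1/K,K]$ and the standard estimate $\tanh(\Delta/4)$. A cruder geometric contraction factor would also suffice for uniqueness.
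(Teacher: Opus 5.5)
Your proposal is correct and follows the paper's proof essentially step for step. Both use the Hilbert metric on the open positive orthant and blocks of $4L$ steps whose products are strictly positive with entries in $[1,K]$, $K=(a^*+1)^{4L}$, so that the image of the cone has diameter at most $2\log K$ and Birkhoff's contraction factor $\tanh(\log K/2)$ gives the bound $2\log K\tanh(\log K/2)^{k-1}$ on the diameter of $B_{4Lk}(D)$. Your coordinate-ratio check of the $2\log K$ bound and your explicit appeal to Proposition~\ref{prop:symbolic} for the first sentence of the statement are small details the paper leaves implicit.
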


\begin{proof}
By Lemma \ref{lem:cylinders} the vector $(1,\alpha,\beta)$ lies in every cone $C_n=M_{\varepsilon_1}(a_1)\cdots M_{\varepsilon_n}(a_n)(D)$, and these cones are nested. Let $d$ denote the Hilbert projective metric on the open positive cone $C^+$. Recall that a strictly positive matrix $B$ maps $C^+$ into itself and contracts $d$ by a factor $\tanh(\Delta(B)/4)<1$, where $\Delta(B)=\sup_{x,y\in C^+}d(Bx,By)$ is the projective diameter of the image, which is finite for strictly positive $B$, see Birkhoff \cite{Birkhoff}. If the entries of $B$ lie in an interval $[1,K]$ then $\Delta(B)\le 2\log K$, so the contraction factor is bounded by $\tanh(\log K/2)<1$. Group the expansion into consecutive blocks of $4L$ steps. By Lemma \ref{lem:positivity} and the bound on the digits every block product is strictly positive with entries in $[1,(a^*+1)^{4L}]$. The $d$ diameter of $D$ is infinite, but the diameter of the image of $D$ under a strictly positive matrix is finite and bounded by $2\log K$. Hence the projective diameters of the cones $C_{4Lk}$ are bounded by $2\log K\cdot\tanh(\log K/2)^{k-1}$, which tends to zero. Two rays lying in all the $C_n$ therefore coincide.
\end{proof}

\subsection{Periodic types and Perron eigenvectors}

For a finite word $w=(a_1,\varepsilon_1)\cdots(a_p,\varepsilon_p)$ we write $B_w=M_{\varepsilon_1}(a_1)\cdots M_{\varepsilon_p}(a_p)$ for its period matrix. It is a nonnegative integer matrix of determinant $\pm1$.

\begin{theorem}\label{thm:periodic}
Let $w$ be a finite word containing the letter $B$, so that $B_w^4$ is strictly positive by Lemma \ref{lem:positivity}, and let $\lambda$ be the Perron eigenvalue of $B_w$ with Perron eigenvector $\omega=(\omega_1,\omega_2,\omega_3)$, all coordinates positive. Then $\lambda>1$, $\omega\in D$, the vector $(1,\omega_2/\omega_1,\omega_3/\omega_1)$ has purely periodic type $w^\infty$, its scale vectors satisfy $t(kp)=\lambda^{-k}t(0)$, and its scales are uniformly comparable. Conversely every vector with purely periodic type $w^\infty$ is proportional to the Perron eigenvector of $B_w$. Let $\mu_2$ and $\mu_3$ be the two remaining eigenvalues of $B_w$, so that $\lambda|\mu_2||\mu_3|=1$. The three eigenvalues are distinct. The vector is summably convergent in the sense of Definition \ref{def:sc} if and only if $\lambda\min(|\mu_2|,|\mu_3|)>1$, that is, if and only if $\max(|\mu_2|,|\mu_3|)<1$. This holds in particular whenever $\mu_2$ and $\mu_3$ are nonreal, since then $|\mu_2|=|\mu_3|=\lambda^{-1/2}$ and $t_1(kp+j)H(kp+j)$ is comparable to $\lambda^{-k/2}$. The quantity $t_1(n)H(n)^2$ stays bounded if and only if $|\mu_2|=|\mu_3|$. The same statements hold for every vector with eventually periodic type whose periodic block has these properties. If the characteristic polynomial of $B_w$ is irreducible over $\Q$, then $1$, $\omega_2/\omega_1$, $\omega_3/\omega_1$ are linearly independent over $\Q$ and span a real cubic field. The eigenvalue $\lambda$ is a unit of this field. It is a Pisot number whenever the two remaining eigenvalues of $B_w$ lie in the open unit disk, in particular whenever they are nonreal.
\end{theorem}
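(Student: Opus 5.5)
The plan is to work entirely with the period matrix $B_w$ and reduce each claim to Perron--Frobenius theory, Lemma \ref{lem:cylinders}, or Theorem \ref{thm:reconstruction}.

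\emph{Perron data and the periodic type.} Since $B_w^4$ is strictly positive, Perron--Frobenius gives a simple eigenvalue $\lambda$ with a positive eigenvector $\omega$.
\begin{itemize}
\item $\lambda>1$: $B_w^4$ is an integer matrix with all entries at least $1$, so $\lambda^4\ge3$.
\item Determinant: $\det B_w=\pm1$ gives $\lambda|\mu_2||\mu_3|=1$.
\item $\omega\in D$: write $\omega=\lambda^{-m}B_w^m\omega$ for $m$ large. The word $w^m$ begins with two step matrices followed by a strictly positive block. By the computation used in Corollary \ref{cor:cantor}, such a product maps $\overline D\setminus\{0\}$ into $D$. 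I will redo this check directly: $M_\varepsilon(a)$ sends a positive vector to one whose first coordinate strictly dominates the second, and two such factors also order the third coordinate.
\item Type $w^\infty$: we have $B_w^{-1}\omega=\lambda^{-1}\omega\in D$, so $\omega\in B_w(D)$. Iterating Lemma \ref{lem:cylinders} shows that the expansion of $\omega$ is $w^\infty$ and that $t(kp)=\lambda^{-k}t(0)$.
\item Comparability: within a period there are finitely many stages $j<p$, and $t(kp+j)$ is a fixed positive vector scaled by $\lambda^{-k}$, so the scales are uniformly comparable.
\item Converse: a periodic type has bounded digits and bounded gaps between occurrences of $B$. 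Theorem \ref{thm:reconstruction} then makes the vector unique, so it is proportional to $\omega$.
\end{itemize}

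\emph{Distinctness and arithmetic.} Suppose the characteristic polynomial $\chi$ of $B_w$ had a repeated root. Since $\lambda$ is simple, the repeated root would be $\mu=\mu_2=\mu_3$, and it is a root of $\gcd(\chi,\chi')\in\Q[x]$, which has degree one. So $\mu$ is rational, and being an algebraic integer it is an integer dividing $\pm1$. Then $\mu=\pm1$ and $\lambda=1$, a contradiction.

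If $\chi$ is irreducible, solve $(B_w-\lambda)\omega=0$ to get $\omega_2/\omega_1,\omega_3/\omega_1\in\Q(\lambda)$. A rational relation $c\cdot\omega=0$ with $c\neq0$ would give, after applying the Galois conjugations, a rational vector orthogonal to all three eigenvectors. That is impossible because the eigenvectors form a basis. Since $1,\omega_2/\omega_1,\omega_3/\omega_1$ are then three independent elements of the cubic field $\Q(\lambda)$, they span it. The eigenvalue $\lambda$ is a unit because the constant term of $\chi$ is $\pm\det B_w=\pm1$, and it is Pisot exactly when $|\mu_2|,|\mu_3|<1$. If $\mu_2,\mu_3$ are nonreal they are complex conjugate, so $|\mu_2|=|\mu_3|=\lambda^{-1/2}<1$.

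\emph{Summable convergence (main obstacle).} The exponent vectors at stage $kp+j$ are read off the rows of $B_{kp+j}^{-1}=M_j^{-1}B_w^{-k}$, where $M_j$ is the product of the first $j$ step matrices of $w$. The eigenvalues of $B_w^{-k}$ are $\lambda^{-k},\mu_2^{-k},\mu_3^{-k}$, so the rows have size $O(\max_i|\mu_i|^{-k})=O(\min(|\mu_2|,|\mu_3|)^{-k})$. Since $t_1(kp+j)\asymp\lambda^{-k}$, this gives
\[
t_1H\ \lesssim\ \big(\lambda\min(|\mu_2|,|\mu_3|)\big)^{-k},
\]
and $\lambda\min(|\mu_2|,|\mu_3|)=1/\max(|\mu_2|,|\mu_3|)$. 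The difficulty is the matching lower bound $H\gtrsim\min|\mu|^{-k}$, which is needed for the "only if" direction and for the comparability statements. For it I would decompose the fixed integer row vectors along the left eigenvectors of $B_w$. The component along the eigenvector dual to the smallest $|\mu_i|$ is nonzero for at least one row, because the rows form a basis.

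One must also check that this growth is visible in the exponent coordinates $(k,l)$ and not only in the $T$ coordinate $m$. Here I would use $m=\tau-k\alpha-l\beta$ with $\tau=t_i$ small: $m$ is then controlled by $(k,l)$, so $H$ is comparable to the full row norm. Lemma \ref{lem:heights}, giving $t_1H^2\ge1/6$, serves as a consistency check.

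Consequences of the matching bounds:
\begin{itemize}
\item In the nonreal case the rows grow like $\lambda^{k/2}$, so $t_1H\asymp\lambda^{-k/2}$.
\item $t_1H^2\asymp\lambda^{-k}\min|\mu|^{-2k}$ is bounded exactly when $\min|\mu|=\lambda^{-1/2}$, which, given $|\mu_2||\mu_3|=\lambda^{-1}$, holds exactly when $|\mu_2|=|\mu_3|$.
\end{itemize}

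\emph{Eventually periodic types.} A finite prefix multiplies the data by a fixed unimodular matrix. This changes $t$ and $H$ only by bounded factors, so every statement transfers.
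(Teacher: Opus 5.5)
You follow the paper's proof for most of the statement. You use $\lambda^4\ge3$, Lemma \ref{lem:cylinders} for the type once $\omega\in D$ is known, Theorem \ref{thm:reconstruction} for the converse, Galois conjugation for rational independence, and two-sided bounds on $\|B_w^{-k}\|$ for the heights. Your $\gcd(\chi,\chi')$ argument for distinctness is valid. So is your remark that the $T$-exponent $m_i=t_i-k_i\alpha-l_i\beta$ is controlled by $(k_i,l_i)$; the paper only records that point in Remark \ref{rem:mixedpisot}.

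\textbf{The gap is in the step $\omega\in D$.} You apply the fact that $M_\varepsilon(a)M_{\varepsilon'}(a')B''$ maps $\overline D\setminus\{0\}$ into $D$. At that point $\omega$ is only known to have positive coordinates, not to lie in $\overline D$. Strict positivity of $B''$ does not make $B''\omega$ ordered either. Your proposed direct check does not close the gap, because two step matrices do not order the third coordinate of an arbitrary positive vector. For example, $M_A(1)M_A(1)(1,1,100)^{\top}=(3,2,100)^{\top}$ and $M_A(1)M_B(1)(1,100,1)^{\top}=(3,2,100)^{\top}$. The two-factor computation works only when its input is already in $\overline D$. Also, Corollary \ref{cor:cantor} itself quotes this computation from the present proof, so it cannot be cited here.

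\textbf{What is missing is an a priori reason for $\omega\in\overline D$.} The paper gets it from Brouwer. $B_w$ maps $\overline D$ into itself, so its projectivization has a fixed point on the triangle $\overline D/\R_+$. Strict positivity of $B_w^4$ forces that eigenvector to be positive, hence to be the Perron eigenvector.

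Primitivity works equally well: for any $x\in D$ the normalized vectors $B_w^mx/\|B_w^mx\|$ lie in $D$ and converge to $\omega/\|\omega\|$.

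Once $\omega\in\overline D$ is known, your idea goes through. Write $\omega=\lambda^{-m}B_w^m\omega$ with $m\ge2$ and let $r$ be the image of $\lambda^{-m}\omega$ under all but the first two factors. Then $r$ is positive and lies in $\overline D$. Next, $s=M_{\varepsilon_2}(a_2)r$ satisfies $s_1>s_2\ge s_3$. Finally $\omega=M_{\varepsilon_1}(a_1)s$ has $\omega_1>\omega_2$, and $\omega_2-\omega_3$ equals $s_1-s_3>0$ in case $A$ and $s_1-s_2>0$ in case $B$. This also avoids the separate treatment of $p=1$ that the paper needs.
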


\begin{proof}
By Lemma \ref{lem:cylinders} the matrix $B_w$ maps $D$ into $D$, so by continuity it maps the closed cone $\overline{D}$ into itself, and the induced map on the compact triangle $\overline{D}/\R_+$ has a fixed point by the Brouwer theorem. Let $t\in\overline{D}\setminus\{0\}$ be a corresponding eigenvector, $B_wt=\lambda't$ with $\lambda'>0$. All coordinates of $B_w^4t=\lambda'^4t$ are positive because $B_w^4$ is strictly positive and $t$ is nonzero and nonnegative, so $t$ has positive coordinates, and it is the Perron eigenvector of $B_w$ by the uniqueness of the positive eigenvector of a primitive matrix. Hence $\lambda'=\lambda$ and $\omega\in\overline{D}$ with positive coordinates. Since $B_w^4$ is an integer matrix with all entries at least one, its Perron eigenvalue $\lambda^4$ is at least three, so $\lambda>1$.

We show that $\omega$ lies in the open cone $D$. Put
\[
s\ =\ M_{\varepsilon_2}(a_2)\cdots M_{\varepsilon_p}(a_p)\,\omega/\lambda ,
\]
so that $\omega=M_{\varepsilon_1}(a_1)s$, and note that $s$ lies in $\overline{D}$ and has positive coordinates. In case $\varepsilon_1=A$ we have $\omega_1-\omega_2=(a_1-1)s_1+s_2>0$ and $\omega_2-\omega_3=s_1-s_3\ge0$, with equality only if $s_1=s_2=s_3$. In case $\varepsilon_1=B$ we have $\omega_1-\omega_2=(a_1-1)s_1+s_3>0$ and $\omega_2-\omega_3=s_1-s_2\ge0$, with equality only if $s_1=s_2$. In both cases equality forces $s_1=s_2$. But $s=M_{\varepsilon_2}(a_2)r$ with $r\in\overline{D}$ of positive coordinates, and the same computation gives $s_1-s_2>0$ if $p\ge2$. If $p=1$ then $s=\omega/\lambda$ and $s_1=s_2$ would give $\omega_1=\omega_2$, so $\omega_1-\omega_2=0$, which contradicts the strict inequality already established. Hence $\omega\in D$.

Since $\omega\in B_w(D)$ and $B_w(D)$ is the cylinder of the word $w$ by Lemma \ref{lem:cylinders}, the first $p$ symbols of the expansion of $\omega$ form the word $w$, and the scale vector after $p$ steps is $B_w^{-1}\omega=\lambda^{-1}\omega$, which is proportional to $\omega$. So the type is $w^\infty$ and $t(kp)=\lambda^{-k}t(0)$. Comparability of the scales holds at every stage with constant $\max_{0\le n<p}t_1(n)/t_3(n)$. The three eigenvalues of $B_w$ are distinct: a repeated root of the characteristic polynomial $\chi$ would make $\chi$ reducible over $\Q$, hence $\chi$ would have a rational root, which divides the constant term $\pm1$. So $\pm1$ would be a root, and then either $\pm1$ is the repeated root and $\lambda=\pm1$, or the other two roots $\lambda$ and $\mu$ satisfy $\lambda\mu=\pm1$ with $\lambda>1$, so they are distinct. Both cases are impossible. For the heights, the exponent vectors at stage $kp+j$ are read off the rows of $B_{kp+j}^{-1}=B_j'^{-1}B_w^{-k}$ with $B_j'$ one of finitely many initial products, so $H(kp+j)$ is bounded above and below by constant multiples of $\|B_w^{-k}\|$. Since $B_w^{-1}$ is diagonalizable with eigenvalues $\lambda^{-1},\mu_2^{-1},\mu_3^{-1}$, the norm $\|B_w^{-k}\|$ is bounded above and below by constant multiples of $\min(|\mu_2|,|\mu_3|)^{-k}$. Since $t_1(kp+j)$ is comparable to $\lambda^{-k}$, the terms $t_1(n)H(n)$ are comparable to $\big(\lambda\min(|\mu_2|,|\mu_3|)\big)^{-\lfloor n/p\rfloor}$, so the series $\sum_nt_1(n)H(n)$ converges if and only if $\lambda\min(|\mu_2|,|\mu_3|)>1$. Since $\lambda|\mu_2||\mu_3|=1$, this is the same as $\max(|\mu_2|,|\mu_3|)<1$. When $\mu_2,\mu_3$ are nonreal they are complex conjugate, so $|\mu_2|=|\mu_3|=\lambda^{-1/2}<1$ and $\lambda\min(|\mu_2|,|\mu_3|)=\lambda^{1/2}$. Similarly $t_1(n)H(n)^2$ is comparable to $\big(\lambda\min(|\mu_2|,|\mu_3|)^2\big)^{-\lfloor n/p\rfloor}$, which stays bounded if and only if $\min(|\mu_2|,|\mu_3|)^2\ge\lambda^{-1}=|\mu_2||\mu_3|$, that is, if and only if $|\mu_2|=|\mu_3|$. Conversely if a vector has type $w^\infty$ then $t(0)=B_wt(p)$ and $t(p)$ has the same type, so $t(0)$ and $t(p)$ lie on the same ray by Theorem \ref{thm:reconstruction}, whose hypotheses are satisfied because a periodic type has bounded digits and bounded gaps. Thus $t(0)$ is a positive eigenvector of $B_w$, hence the Perron eigenvector. A vector with eventually periodic type has, after finitely many steps, a purely periodic type, and comparability at the finitely many initial stages is automatic.

Suppose the characteristic polynomial $\chi$ of $B_w$ is irreducible over $\Q$. Then $\lambda$ has degree three and $\omega$ can be chosen with coordinates in $\Q(\lambda)$. If $c_1\omega_1+c_2\omega_2+c_3\omega_3=0$ with rational $c_i$ not all zero, then applying the two nontrivial embeddings of $\Q(\lambda)$ shows that the three eigenvectors of $B_w$ lie in the rational plane $c_1x_1+c_2x_2+c_3x_3=0$, which is impossible since they span $\C^3$. So $1$, $\omega_2/\omega_1$, $\omega_3/\omega_1$ are rationally independent, and they lie in the cubic field $\Q(\lambda)$. Since $\det B_w=\pm1$, the constant term of $\chi$ is $\pm1$ and $\lambda$ is a unit. The remaining two eigenvalues have product $\pm\lambda^{-1}$. If they are nonreal they are complex conjugate of modulus $\lambda^{-1/2}<1$, and $\lambda$ is a Pisot number. The same conclusion holds whenever both lie in the open unit disk.
\end{proof}

\begin{remark}\label{rem:pisotnot}
Theorem \ref{thm:periodic} is stated so as to cover period matrices with a real conjugate of modulus larger than one, but we do not know whether such period matrices exist. Let $\chi(x)=\det(xI-B_w)$. Since $\lambda>1$ and the product of the two remaining eigenvalues has modulus $\lambda^{-1}<1$, the matrix $B_w$ is Pisot if and only if no eigenvalue other than $\lambda$ is real of modulus at least one, that is, if and only if $\chi(1)<0$ and $\chi(-1)<0$. As $\chi(1)=-\det(B_w-I)$ and $\chi(-1)=-\det(B_w+I)$, the Pisot property is equivalent to the two inequalities
\[
\det(B_w-I)\ >\ 0\qquad\text{and}\qquad\det(B_w+I)\ >\ 0 .
\]
The inequalities can be sharp. For $w=(a,B)(1,B)$ one has $\det(B_w+I)=1$ for every $a$, and the characteristic polynomial $x^3-ax^2-(a+1)x-1$ has, for large $a$, a real root $-1+(a+2)^{-1}+O(a^{-2})$, so the second eigenvalue tends to $-1$. Whether every period matrix is Pisot is left open. If it is, every vector with eventually periodic type satisfies both hypotheses of Section \ref{sec:operator}. In the examples of Section \ref{sec:periodic} the remaining eigenvalues are nonreal in the first two cases and real in the third, and the Perron eigenvalues are Pisot numbers in all three.
\end{remark}

\begin{remark}\label{rem:asymmetry}
The dictionary with the rank one theory is complete at the topological level and breaks at the arithmetic level. In rank one the renormalization type is a complete arithmetic invariant. Bounded type, the Diophantine classes and the Liouville numbers are all read off the continued fraction digits, so the symbolic dynamics of renormalization carries the arithmetic. In rank two the type determines the vector by Theorem \ref{thm:reconstruction}, but the arithmetic class is not readable from the symbols, as Remark \ref{rem:sbtarith} explains. Section \ref{sec:beyond} introduces the space of lattices to repair this.
\end{remark}

\begin{remark}\label{rem:sbt_and_summability}
For a vector with eventually periodic type the two hypotheses of Section \ref{sec:operator} read as follows. Simultaneously bounded type always holds, with constant the maximum of $t_1(n)/t_3(n)$ over the preperiod and one period, since $t(n+p)=\lambda^{-1}t(n)$ along the periodic part. Summable convergence holds if and only if $\lambda$ is a Pisot number, that is, if and only if the two non-Perron eigenvalues of the period matrix lie in the open unit disk. In that case $t_1(n)H(n)$ decays at the exponential rate $\big(\lambda\min(|\mu_2|,|\mu_3|)\big)^{-n/p}$. In the examples of Section \ref{sec:periodic} the non-Perron eigenvalues are nonreal, so both hypotheses hold. The stronger property that $t_1(n)H(n)^2$ stays bounded, which is a reducedness property of the Brun basis discussed in Section \ref{sec:beyond}, holds exactly when $|\mu_2|=|\mu_3|$, in particular in the nonreal case, and it fails for totally real Pisot period matrices, which exist. For instance the word $(1,B)(1,A)(1,B)(1,A)$ has a strictly positive period matrix with eigenvalues approximately $5.049$, $0.643$ and $0.308$, so the corresponding vector is of simultaneously bounded type and summably convergent while $t_1(n)H(n)^2$ is unbounded along its expansion.
\end{remark}

\begin{remark}\label{rem:mixedpisot}
The set of vectors that are both of simultaneously bounded type and summably convergent is much larger than the countable set of eventually periodic types with Pisot period matrix. It contains a Cantor set. Let $w_1,\dots,w_r$, $r\ge2$, be finite words containing the letter $B$ whose period matrices $B_{w_i}$ have Pisot Perron eigenvalue, for instance the words $(1,B)$ and $(1,A)(1,B)(1,A)$ of Section \ref{sec:periodic}. Then there is $k_0$ such that every sequence
\[
s\ =\ w_{i_1}^{k_1}\,w_{i_2}^{k_2}\,w_{i_3}^{k_3}\cdots,\qquad i_j\in\{1,\dots,r\},\quad k_j\ge k_0 ,
\]
is the type of exactly one vector $\omega(s)\in\Delta$, and $\omega(s)$ is of simultaneously bounded type and summably convergent. The vectors obtained with all $k_j=k_0$ already form a Cantor set, the image of the full shift on $r$ symbols under the continuous injective map $s\mapsto\omega(s)$, and all but countably many of them have a type that is not eventually periodic. Theorems \ref{thm:main} and \ref{thm:rigidity} therefore apply to a Cantor set of rotation vectors, not only to the eventually periodic ones.

Indeed, every word $w_i$ contains the letter $B$, so $s$ has digits bounded by the largest digit of the words and gaps between occurrences of $B$ bounded by twice the largest length of the words. By the proof of Corollary \ref{cor:cantor} such a sequence is the type of a unique vector $\omega(s)$, which is of simultaneously bounded type by Proposition \ref{prop:symbolic}, and $s\mapsto\omega(s)$ is continuous and injective. For summable convergence fix the $j$th run, write $w=w_{i_j}$, $p=|w|$, let $\lambda>1$ be the Perron eigenvalue of $B_w$ and $\mu_2,\mu_3$ its other eigenvalues, and put $\gamma_w=\lambda\min(|\mu_2|,|\mu_3|)=\max(|\mu_2|,|\mu_3|)^{-1}>1$, by Theorem \ref{thm:periodic}. Let $n_{j-1}$ and $n_j=n_{j-1}+k_jp$ be the stages at which the run begins and ends, so that $t(n_{j-1}+mp)=B_w^{\,k_j-m}\,t(n_j)$ for $0\le m\le k_j$. We use the norm $\|t\|_\infty$, which equals $t_1$ on $\overline D$. Since $B_w$ is diagonalizable with spectral radius $\lambda$, $\|B_w^{k}t\|_\infty\le C\lambda^{k}\|t\|_\infty$, and if $\ell$ is the positive left Perron eigenvector of $B_w$, then $\ell(B_w^{k}t)=\lambda^{k}\ell(t)$ gives $\|B_w^{k}t\|_\infty\ge c\lambda^{k}\|t\|_\infty$ for $t\ge0$, with $c=\min_i\ell_i/\sum_i\ell_i$. Applying the two bounds with $k=k_j-m$ and $k=k_j$ yields
\[
t_1(n_{j-1}+mp)\ \le\ (C/c)\,\lambda^{-m}\,t_1(n_{j-1}),\qquad 0\le m\le k_j .
\]
The rows of $B_n^{-1}$ consist of the exponent vectors and of the exponents of $T$, and the latter are bounded by $H(n)+1$ because $t_i(n)\in(0,1)$, so $H(n)\le\|B_n^{-1}\|_\infty\le3H(n)$. Since $B_{n_{j-1}+mp}^{-1}=B_w^{-m}B_{n_{j-1}}^{-1}$ and $B_w^{-1}$ is diagonalizable with spectral radius $\min(|\mu_2|,|\mu_3|)^{-1}$,
\[
H(n_{j-1}+mp)\ \le\ 3C\,\min(|\mu_2|,|\mu_3|)^{-m}\,H(n_{j-1}) .
\]
Multiplying the two displays, $t_1(n)H(n)\le C_1\gamma_w^{-m}\,t_1(n_{j-1})H(n_{j-1})$ for $n=n_{j-1}+mp$, with $C_1$ depending only on the words. At the intermediate stages $n_{j-1}+mp+i$, $0<i<p$, the scale $t_1$ is not larger and $H$ is larger by at most a factor $C_2$, the largest norm of a product of fewer than $p$ inverse step matrices with digits from the words. Choose $k_0$ with $C_1\gamma_{w_i}^{-k_0}\le\tfrac12$ for all $i$. Then $t_1(n_j)H(n_j)\le\tfrac12\,t_1(n_{j-1})H(n_{j-1})$, hence $t_1(n_j)H(n_j)\le2^{-j}\,t_1(0)H(0)$, and the stages of the $j$th run contribute at most $pC_1C_2\,2^{1-j}\,t_1(0)H(0)\sum_{m\ge0}\gamma_w^{-m}$ to $\Sigma$. Summing over $j$ gives $\Sigma<\infty$.
\end{remark}

\section{Tower tilings}\label{sec:towers}

In the rank one theory the distortion estimates rest on the dynamical partitions, whose intervals are pairwise disjoint because renormalization is a first return map. In the simultaneous setting the substitute is a self similar system of tilings of the circle produced by Lemma \ref{lem:decomposition}. We work on the original circle $\T$ and identify subintervals of $[x_0,x_0+1)$ with their projections.

\begin{theorem}\label{thm:tiling}
For every $n\ge0$ there are finite subsets $\mathcal{W}_n^{(1)},\mathcal{W}_n^{(2)},\mathcal{W}_n^{(3)}\subset\Gamma$ such that
\[
[x_0,x_0+1)\ =\ \bigsqcup_{i=1}^{3}\ \bigsqcup_{w\in\mathcal{W}_n^{(i)}}\ w\big(I_n^{(i)}\big)
\]
is a partition. The families are defined recursively by $\mathcal{W}_0^{(1)}=\{\mathrm{id}\}$, $\mathcal{W}_0^{(2)}=\mathcal{W}_0^{(3)}=\emptyset$, and at each step every tile of type $1$ is decomposed according to Lemma \ref{lem:decomposition} while the tiles of types $2$ and $3$ are relabelled according to the reordering of the triple. The tiling $\mathcal{T}_{n+1}$ refines $\mathcal{T}_n$.
\end{theorem}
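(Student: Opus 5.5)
The proof is by induction on $n$, with the recursion in the statement as the definition of the families. At $n=0$ the triple is $(T,F,G)$ and $I_0^{(1)}=[x_0,x_0+1)$, so the single tile $\mathrm{id}(I_0^{(1)})$ is the whole fundamental domain and the partition is trivial. For the inductive step, assume the partition holds at stage $n$. Write $a=a_{n+1}$ and $v=v_n=u_1^{(n)}(u_2^{(n)})^{-a}$. Lemma \ref{lem:decomposition} applied to $u^{(n)}$ gives the decomposition
\[
I_n^{(1)}=\bigsqcup_{j=0}^{a-1}(u_2^{(n)})^j\big(I_n^{(2)}\big)\sqcup (u_2^{(n)})^a\big([x_0,vx_0)\big).
\]
Applying $w\in\mathcal W_n^{(1)}$ to it, and using that $w$ is an increasing homeomorphism, decomposes each tile $w(I_n^{(1)})$ into consecutive disjoint intervals, namely images of stage $n$ intervals under elements of $\Gamma$.

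Next I relabel according to Definition \ref{def:prestep}. In case $A$ we have $u^{(n+1)}=(u_2^{(n)},v,u_3^{(n)})$, so $I_{n+1}^{(1)}=I_n^{(2)}$, $I_{n+1}^{(2)}=[x_0,vx_0)$ and $I_{n+1}^{(3)}=I_n^{(3)}$. In case $B$ the last two are swapped. The new families are then, in case $A$,
\[
\mathcal W_{n+1}^{(1)}=\mathcal W_n^{(2)}\cup\{w(u_2^{(n)})^j:w\in\mathcal W_n^{(1)},\,0\le j<a\},\qquad \mathcal W_{n+1}^{(2)}=\{w(u_2^{(n)})^a:w\in\mathcal W_n^{(1)}\},\qquad \mathcal W_{n+1}^{(3)}=\mathcal W_n^{(3)}.
\]
In case $B$ the roles of the indices $2$ and $3$ are exchanged in the last two families, and $\mathcal W_{n+1}^{(2)}=\mathcal W_n^{(3)}$. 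Every tile of stage $n$ is thus either kept unchanged, for types $2$ and $3$, or replaced by a disjoint union of stage $n+1$ tiles, for type $1$. Hence the stage $n+1$ tiles again partition $[x_0,x_0+1)$, and $\mathcal T_{n+1}$ refines $\mathcal T_n$. All families are finite by induction.

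The one point needing care is bookkeeping rather than analysis. The families must be defined as multisets, or indexed by words, so that distinct labels giving the same element of $\Gamma$ do not collapse. This cannot happen anyway: two coinciding tiles $w(I^{(i)})=w'(I^{(j)})$ would overlap, which is excluded because the decomposition is disjoint at each step. I will also check that all tiles are honest subintervals of $[x_0,x_0+1)$ and not merely of $\R$. This holds because each new tile lies inside an old tile, which lies in the fundamental domain, so no reduction modulo $T$ is needed. The interpretation on the circle $\T$ then follows by projection, since $[x_0,x_0+1)$ projects bijectively.
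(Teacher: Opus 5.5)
Your proof is correct and is the paper's argument: induction with the trivial base case, Lemma \ref{lem:decomposition} applied inside each type $1$ tile, and relabelling of the unchanged type $2$ and $3$ tiles through $I_{n+1}^{(1)}=I_n^{(2)}$ and the case $A$/$B$ reordering. Your explicit formulas for $\mathcal W_{n+1}^{(i)}$ and your remark that distinct labels cannot give the same tile are a harmless elaboration, and they agree with the counting in Lemma \ref{lem:tilecount}.
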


\begin{proof}
The proof goes by induction on $n$. For $n=0$ the statement is the trivial partition of the fundamental domain by itself, since $I_0^{(1)}=[x_0,x_0+1)$. Assume the partition holds at stage $n$. Apply Lemma \ref{lem:decomposition} to the triple $u^{(n)}$. For each $w\in\mathcal{W}_n^{(1)}$ the tile $w(I_n^{(1)})$ decomposes as the disjoint union of $w\,(u_2^{(n)})^{j}(I_n^{(2)})$ for $0\le j<a_{n+1}$ and of $w\,(u_2^{(n)})^{a_{n+1}}(I')$, where $I'$ is the interval of the new element $v_n$. Now $I_n^{(2)}=I_{n+1}^{(1)}$, the interval $I_n^{(3)}$ equals $I_{n+1}^{(3)}$ in case $A$ and $I_{n+1}^{(2)}$ in case $B$, and $I'$ is the remaining stage $n+1$ interval. Therefore the collection of all pieces of all decomposed type $1$ tiles, together with all old tiles of types $2$ and $3$ relabelled by the stage $n+1$ types of their intervals, is a partition of $[x_0,x_0+1)$ into images of the three stage $n+1$ intervals under elements of $\Gamma$. This defines the sets $\mathcal{W}_{n+1}^{(i)}$. Every stage $n+1$ tile is contained in a stage $n$ tile by construction.
\end{proof}

\begin{lemma}\label{lem:tilecount}
Let $N_i(n)$ be the number of tiles of type $i$ at stage $n$, and let $N(n)$ be the column vector with entries $N_1(n)$, $N_2(n)$, $N_3(n)$. Then
\[
N(n+1)\ =\ M_{\varepsilon_{n+1}}(a_{n+1})^{\top}N(n),
\]
so that $N(n)$ is the first row of $M_{\varepsilon_1}(a_1)\cdots M_{\varepsilon_n}(a_n)$, written as a column. The identities
\[
\sum_{i=1}^3 N_i(n)\,t_i(n)\ =\ 1,\qquad \sum_{i=1}^3\ \sum_{w\in\mathcal{W}_n^{(i)}}\big|w(I_n^{(i)})\big|\ =\ 1
\]
hold at every stage.
\end{lemma}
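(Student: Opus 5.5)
The plan is to read off the tile count recursion directly from the construction in the proof of Theorem \ref{thm:tiling}, then identify $N(n)$ with a row of the matrix product by induction, and finally obtain the two identities from $t(0)=B_nt(n)$ and from the partition property.

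\emph{Recursion.} At the passage from stage $n$ to stage $n+1$, with symbol $(a,\varepsilon)=(a_{n+1},\varepsilon_{n+1})$, each type $1$ tile is split by Lemma \ref{lem:decomposition}. It produces $a$ tiles that are images of $I_n^{(2)}=I_{n+1}^{(1)}$, hence new type $1$ tiles, and one tile that is an image of $I'=[x_0,v_nx_0)$. This last tile is of new type $2$ in case $A$ and of new type $3$ in case $B$. Old type $2$ tiles are images of $I_n^{(2)}=I_{n+1}^{(1)}$, so they become new type $1$ tiles. Old type $3$ tiles become new type $3$ in case $A$ and new type $2$ in case $B$. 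Counting gives, in case $A$,
\[
N_1(n+1)=aN_1(n)+N_2(n),\qquad N_2(n+1)=N_1(n),\qquad N_3(n+1)=N_3(n),
\]
which is $M_A(a)^{\top}N(n)$. In case $B$ it gives
\[
N_1(n+1)=aN_1(n)+N_2(n),\qquad N_2(n+1)=N_3(n),\qquad N_3(n+1)=N_1(n),
\]
which is $M_B(a)^{\top}N(n)$. One only needs to check these against the columns of $M_A(a)$ and $M_B(a)$.

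\emph{Row formula.} Since $N(0)=(1,0,0)^{\top}$, induction gives $N(n)^{\top}=N(0)^{\top}M_{\varepsilon_1}(a_1)\cdots M_{\varepsilon_n}(a_n)$. This is exactly the first row of $B_n$.

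\emph{Identities.} The relation $t(0)=B_nt(n)$ with $t(0)=(1,\alpha,\beta)$ has first coordinate $1=\sum_iN_i(n)t_i(n)$. For the length identity, Theorem \ref{thm:tiling} shows that the tiles partition $[x_0,x_0+1)$, and they are finitely many disjoint intervals, so their Lebesgue lengths sum to $1$.

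As a consistency check, the $\mu$ masses can also be used. Each tile $w(I_n^{(i)})$ has $\mu$ mass $t_i(n)$, by invariance of $\mu$ under $\Gamma$ and the fact that $\mu$ assigns mass $t_i(n)$ to $I_n^{(i)}$. Summing over the partition then recovers the first identity independently.

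There is no real obstacle here. The only care needed is the bookkeeping of the relabelling in case $B$, where new type $2$ comes from old type $3$ and the created piece goes to new type $3$. This is what produces the transpose rather than $M_\varepsilon(a)$ itself.
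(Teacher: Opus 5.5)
Your proof is correct and follows the paper's argument: you obtain the recursion from the same bookkeeping of how type $1$ tiles split and how the old type $2$ and $3$ tiles are relabelled, and the length identity is the partition of Theorem \ref{thm:tiling}. The only variation is that you derive $\sum_i N_i(n)t_i(n)=1$ algebraically, from the first coordinate of $t(0)=B_nt(n)$ combined with the row formula. The paper instead reads it off as the total $\mu$ mass of the tiling, which you also give as a consistency check.
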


\begin{proof}
In the recursion each type $1$ tile produces $a_{n+1}$ tiles over $I_{n+1}^{(1)}$ and one tile over $I'$, and the old type $2$ tiles become type $1$. In case $A$ the tiles over $I'$ are of type $2$ and the old type $3$ tiles remain of type $3$, giving $N_1'=a_{n+1}N_1+N_2$, $N_2'=N_1$, $N_3'=N_3$, which is multiplication by $M_A(a_{n+1})^{\top}$, a symmetric matrix. In case $B$ the old type $3$ tiles become type $2$ and the tiles over $I'$ are of type $3$, giving $N_1'=a_{n+1}N_1+N_2$, $N_2'=N_3$, $N_3'=N_1$, which is multiplication by $M_B(a_{n+1})^{\top}$. The first identity is the statement that $\mu$, which assigns mass $t_i(n)$ to each tile of type $i$, has total mass one on the circle. The second identity is the partition of the fundamental domain.
\end{proof}

\begin{lemma}\label{lem:factorization}
Every $w\in\mathcal{W}_n^{(i)}$ is a product
\[
w\ =\ \big(u_2^{(0)}\big)^{j_0}\big(u_2^{(1)}\big)^{j_1}\cdots\big(u_2^{(n-1)}\big)^{j_{n-1}},\qquad 0\le j_m\le a_{m+1},
\]
and for $J\subset I_n^{(i)}$ the image $\big(u_2^{(m)}\big)^{j_m}\cdots\big(u_2^{(n-1)}\big)^{j_{n-1}}(J)$ lies in $I_m^{(1)}$ for every $m\le n$. Within a fixed stage $m$ the pieces $\big(u_2^{(m)}\big)^{k}(J_m)$, $0\le k<j_m$, where $J_m$ denotes the image of $J$ under the factors of stages larger than $m$, are pairwise disjoint subintervals of $I_m^{(1)}$, contained in the pairwise disjoint intervals $\big(u_2^{(m)}\big)^{k}(I_m^{(2)})$ of Lemma \ref{lem:decomposition}.
\end{lemma}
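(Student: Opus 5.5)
We prove Lemma \ref{lem:factorization} by induction on $n$, following the recursion of Theorem \ref{thm:tiling}. The natural statement to carry through the induction is slightly stronger: the factorization as stated, together with the fact that for $J\subset I_n^{(i)}$ the partial images $J_m$ lie in $I_m^{(1)}$. At $n=0$ the only element is $w=\mathrm{id}$, the empty product, and the claims are trivial.

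\textbf{Inductive step.} Take a stage $n+1$ tile $w'(I_{n+1}^{(i')})$. There are two cases.

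\emph{Relabelled old tile.} If the tile is an old tile of type $2$ or $3$, then $w'=w$ with $w\in\mathcal{W}_n^{(2)}\cup\mathcal{W}_n^{(3)}$, and $I_{n+1}^{(i')}$ is the same interval as $I_n^{(i)}$. We set $j_n=0$. The containment claims are inherited from stage $n$, with $J_{n}=J\subset I_n^{(i)}\subset I_n^{(1)}$. The remaining claim is $J_{n+1}=J\subset I_{n+1}^{(1)}$. This holds because $I_{n+1}^{(1)}=I_n^{(2)}\supset I_n^{(3)}$ and $J\subset I_n^{(i)}$ with $i\in\{2,3\}$, using the nesting $I^{(3)}\subset I^{(2)}\subset I^{(1)}$.

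\emph{Piece of a decomposed type $1$ tile.} If the tile comes from decomposing $w(I_n^{(1)})$, then $w'=w\,(u_2^{(n)})^{j}$ with $0\le j\le a_{n+1}$ by Lemma \ref{lem:decomposition}. We set $j_n=j$. For $J\subset I_{n+1}^{(i')}$, the image $(u_2^{(n)})^jJ$ lies in $I_n^{(1)}$, again by Lemma \ref{lem:decomposition}, since $I_{n+1}^{(i')}$ is either $I_n^{(2)}$ or $I'$. Applying the inductive hypothesis to $(u_2^{(n)})^jJ\subset I_n^{(1)}$ gives the containments for all $m\le n$.

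The containment at $m=n+1$ is immediate. Here $J\subset I_{n+1}^{(i')}\subset I_{n+1}^{(1)}$, again by nesting.

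\textbf{Disjointness.} Fix $m$ and let $J_m\subset I_{m+1}^{(1)}=I_m^{(2)}$. This is exactly the $m+1$ containment just established. Then $(u_2^{(m)})^kJ_m\subset(u_2^{(m)})^k(I_m^{(2)})$ for every $k$. For $0\le k<j_m\le a_{m+1}$ these target intervals are pairwise disjoint consecutive pieces of the partition in Lemma \ref{lem:decomposition}, and all of them lie in $I_m^{(1)}$.

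The only delicate point is bookkeeping. Each factor must be indexed by the stage at which it was created. The containment $J_m\subset I_{m+1}^{(1)}$, rather than merely $J_m\subset I_m^{(1)}$, is what makes the disjointness work, so the induction must carry that stronger statement.
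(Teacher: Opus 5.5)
Your proof is correct and follows the paper's own argument: induction along the recursion of Theorem \ref{thm:tiling}, with Lemma \ref{lem:decomposition} giving both the containment of the finest image in $I_n^{(1)}$ and the disjointness within each stage. You are slightly more explicit than the paper, since you treat the relabelled type $2$ and type $3$ tiles (with $j_n=0$) separately and note that disjointness uses the containment at index $m+1$, namely $J_m\subset I_{m+1}^{(1)}=I_m^{(2)}$. That containment is already part of the lemma's claim for all $m\le n$, so the induction does not need a stronger hypothesis than the lemma itself.
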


\begin{proof}
By the recursion of Theorem \ref{thm:tiling} every word of stage $n+1$ is either an old word of stage $n$ of type $2$ or $3$, unchanged, or a word $w'(u_2^{(n)})^{j}$ with $w'\in\mathcal{W}_n^{(1)}$ and $0\le j\le a_{n+1}$. Induction on $n$ gives the factorization. For the statement about images we argue as follows. The base interval $I_{n+1}^{(i)}$ is contained in $I_n^{(1)}$, and $(u_2^{(n)})^{j}$ maps $I_{n+1}^{(1)}=I_n^{(2)}$ onto the piece $(u_2^{(n)})^{j}(I_n^{(2)})\subset I_n^{(1)}$ for $j<a_{n+1}$ and maps $I'$ onto $(u_2^{(n)})^{a_{n+1}}(I')\subset I_n^{(1)}$ for $j=a_{n+1}$, by Lemma \ref{lem:decomposition}. So the image under the finest factor lies in $I_n^{(1)}$, and the claim for the coarser factors follows by induction, since $w'\in\mathcal{W}_n^{(1)}$ is a word of the previous stage applied to a subset of $I_n^{(1)}$. Disjointness within a stage is the disjointness in Lemma \ref{lem:decomposition}.
\end{proof}

\section{Real bounds}\label{sec:realbounds}

\subsection{Distortion and nonlinearity}
For a $C^1$ diffeomorphism $\varphi$ of an interval $J$ onto its image put
\[
\Dist(\varphi|_J)\ =\ \sup_{x,y\in J}\log\frac{D\varphi(x)}{D\varphi(y)} ,
\]
and for a $C^2$ diffeomorphism let $\eta_\varphi=D^2\varphi/D\varphi=D\log D\varphi$ be its nonlinearity. The following holds
\[
\Dist(\varphi|_J)\le\int_J|\eta_\varphi|,\quad
\Dist(\varphi\circ\psi|_J)\le\Dist(\psi|_J)+\Dist(\varphi|_{\psi(J)}),\quad
\Dist(\varphi^{-1}|_{\varphi(J)})=\Dist(\varphi|_J),
\]
and the chain rule is $\eta_{\varphi\circ\psi}=(\eta_\varphi\circ\psi)\,D\psi+\eta_\psi$. Every element of $\Gamma$ commutes with $T$, so $\eta_W$ is $1$ periodic for $W\in\Gamma$. Set
\[
E\ =\ \int_\T|\eta_F|+\int_\T|\eta_G|,\qquad
E'\ =\ \max_{\ell}\sup_\T|\eta_\ell|,\qquad
E''\ =\ \max_{\ell}\sup_\T|D\eta_\ell| ,
\]
where $\ell$ runs over the four letters $F^{\pm1},G^{\pm1}$. All three constants are finite because $f$ and $g$ are of class $C^3$. The third constant is used only in Section \ref{sec:moebius}.

Commutativity gives the following identity which is used repeatedly. If $U,W\in\Gamma$ then $D(UW)=D(WU)$, so
\begin{equation}\label{eq:symmetry}
\log DU(Wx)-\log DU(x)\ =\ \log DW(Ux)-\log DW(x)\qquad\text{for all } x\in\R .
\end{equation}
Equivalently $\int_{[x,Wx)}\eta_U=\int_{[x,Ux)}\eta_W$ whenever both translation numbers are positive.

\begin{lemma}\label{lem:zeromean}
For every $W\in\Gamma$ one has $\int_\T\log DW\,d\mu=0$. In particular $\log DW$ vanishes at some point of $\T$.
\end{lemma}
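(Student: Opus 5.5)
The plan is to use the unique invariant measure $\mu$ and the cocycle property of $\log DW$. Write $\phi(W)=\int_\T\log DW\,d\mu$. For this to make sense, note that $\log DW$ is continuous and $1$ periodic, because $W$ commutes with $T$ and is a $C^1$ diffeomorphism. The chain rule gives
\[
\log D(UW)=\log DU\circ W+\log DW .
\]
The measure $\mu$ on $\T$ is invariant under every element of $\Gamma$, since it is invariant under $f$ and $g$ and $T$ acts trivially on $\T$. Hence $\int\log DU\circ W\,d\mu=\int\log DU\,d\mu$, and $\phi\colon\Gamma\to\R$ is a homomorphism. It therefore suffices to prove $\phi(F)=\phi(G)=0$; the value $\phi(T)=0$ is trivial.

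For $\phi(F)=0$ I would use unique ergodicity of $f$, recalled after Lemma \ref{lem:toplin}. Birkhoff sums of $\log Df$ satisfy
\[
\frac1n\sum_{k<n}\log Df(f^kx)=\frac1n\log Df^n(x),
\]
and by unique ergodicity these converge uniformly to $\phi(F)$. If $\phi(F)>0$, then $Df^n\ge e^{n\phi(F)/2}$ everywhere for large $n$. This contradicts $\int_\T Df^n\,dx=1$, which holds because $f^n$ is a diffeomorphism of the circle of degree one. If $\phi(F)<0$, the same argument applies to $f^{-n}$, using $\log Df^{-n}(x)=-\log Df^n(f^{-n}x)$. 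Hence $\phi(F)=0$. The same argument applied to $g$, which is also uniquely ergodic with invariant measure $\mu$ by the same Denjoy argument, gives $\phi(G)=0$.

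Alternatively, one can avoid the homomorphism step entirely. Each $W\in\Gamma$ projects to a circle diffeomorphism $w$ commuting with $f$, so $\mu$ is $w$ invariant. If the rotation number of $w$ is irrational, which happens whenever $W\notin\langle T\rangle$ by Lemma \ref{lem:tauhom}, then $w$ is uniquely ergodic with measure $\mu$ and the argument above applies directly. If $W\in\langle T\rangle$, then $\log DW\equiv0$.

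For the final claim, $\log DW$ is continuous with zero $\mu$ mean. It cannot be of one strict sign everywhere, since that would give a nonzero integral against the probability measure $\mu$. By the intermediate value theorem it therefore vanishes somewhere.

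The only delicate point is the uniform convergence of the Birkhoff averages, and unique ergodicity of $f$ supplies it directly, so I expect no real obstacle.
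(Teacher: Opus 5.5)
Your argument is correct and is essentially the paper's: unique ergodicity makes $\tfrac1n\log D\bar W^n$ converge uniformly to the $\mu$-mean, which contradicts $\int_\T D\bar W^n\,dx=1$ unless the mean vanishes, and your alternative route is exactly the paper's proof. The homomorphism reduction to $F$ and $G$ is only a repackaging that needs unique ergodicity of $f$ and $g$ alone, and your explicit treatment of $W\in\langle T\rangle$, where $\log DW\equiv0$, covers the trivial case that the paper's claim that every $\tau(W)$ is irrational modulo one passes over.
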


\begin{proof}
The element $W$ is the lift of a circle diffeomorphism $\bar W$ whose rotation number $\tau(W)$ modulo one is irrational, so $\bar W$ is uniquely ergodic with invariant measure $\mu$. The function $\log D\bar W$ is continuous, so by unique ergodicity $\tfrac1n\log D\bar W^n=\tfrac1n\sum_{j<n}\log D\bar W\circ\bar W^j$ converges uniformly to $c=\int\log D\bar W\,d\mu$. If $c>0$ then $D\bar W^n\ge e^{nc/2}$ on $\T$ for large $n$, which contradicts $\int_\T D\bar W^n\,dx=1$. If $c<0$ the same argument applies to $\bar W^{-1}$. The last statement follows since $\log D\bar W$ is continuous with vanishing $\mu$ mean and $\mu$ has full support.
\end{proof}

\subsection{Total nonlinearity, enlarged intervals and the choice of base point}
The Denjoy estimate of the rank one theory rests on the disjointness of the intervals $f^j(I_n)$, $0\le j<q_n$, of the dynamical partition. In rank two the images of an interval under the prefixes of a word are not disjoint, and their multiplicity is not bounded. We replace disjointness by three elementary facts. The first bounds the total nonlinearity of a word over the whole circle by its length. The second is a nesting property of of certain enlargements of the fundamental intervals, which keeps all the pieces produced by the chain rule inside an interval of $\mu$ mass at most four times the mass of the fundamental domain. The third averages the nonlinearity over these intervals with respect to the base point, which is at our disposal.

\begin{lemma}\label{lem:totalnl}
Let $W\in\Gamma$ be a word of length $L$ in the letters $F^{\pm1}$, $G^{\pm1}$. Then $\int_\T|\eta_W|\le L\,E$.
\end{lemma}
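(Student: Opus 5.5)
The plan is induction on the length $L$, using the chain rule for the nonlinearity together with the periodicity of $\eta$ for elements of $\Gamma$. Write $W=\ell_1\ell_2\cdots\ell_L$ with letters $\ell_k\in\{F^{\pm1},G^{\pm1}\}$, and put $W'=\ell_2\cdots\ell_L$, so that $W=\ell_1\circ W'$. The chain rule gives
\[
\eta_W\ =\ (\eta_{\ell_1}\circ W')\,DW'+\eta_{W'} .
\]
Integrating over one period $[0,1)$ and using the triangle inequality, the second term contributes $\int_\T|\eta_{W'}|\le (L-1)E$ by the induction hypothesis. The first term is handled by the change of variables $y=W'(x)$. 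Since $W'$ is an increasing homeomorphism of $\R$ commuting with $T$, it maps $[0,1)$ onto an interval of length one, $[W'(0),W'(0)+1)$. By $1$-periodicity of $\eta_{\ell_1}$ we then get
\[
\int_0^1|\eta_{\ell_1}(W'x)|\,DW'(x)\,dx\ =\ \int_{W'(0)}^{W'(0)+1}|\eta_{\ell_1}(y)|\,dy\ =\ \int_\T|\eta_{\ell_1}| .
\]

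It remains to bound $\int_\T|\eta_{\ell}|$ for a single letter by $E$. For $\ell=F$ or $G$ this is immediate from the definition of $E$. For inverse letters, we apply the chain rule to $F^{-1}\circ F=\mathrm{id}$, which gives $\eta_{F^{-1}}\circ F\cdot DF=-\eta_F$. The same change of variables then yields $\int_\T|\eta_{F^{-1}}|=\int_\T|\eta_F|$, and likewise for $G^{-1}$. Since each single-letter integral is at most $\int_\T|\eta_F|+\int_\T|\eta_G|=E$, summing gives $\int_\T|\eta_W|\le E+(L-1)E=LE$. The base case $L=0$ (identity, $\eta=0$) or $L=1$ is exactly this letter estimate.

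There is no real obstacle here. The only point requiring care is that the change of variables uses a full period, which holds because $W'\in\Gamma$ commutes with $T$. Note that the bound could be sharpened to $L_F\int|\eta_F|+L_G\int|\eta_G|$, but the stated form suffices.
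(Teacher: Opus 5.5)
Your proposal is correct and matches the paper's proof. The paper unrolls the chain rule into the sum $\eta_W=\sum_s(\eta_{\ell_s}\circ p_{s-1})\,Dp_{s-1}$ over prefixes instead of inducting on the length. It then uses the same full-period change of variables and the same identity $\int_\T|\eta_{F^{-1}}|=\int_\T|\eta_F|$ for the inverse letters.
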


\begin{proof}
Write $W=\ell_L\circ\cdots\circ\ell_1$ and $p_s=\ell_s\circ\cdots\circ\ell_1$, with $p_0$ the identity. The chain rule $\eta_{\varphi\circ\psi}=(\eta_\varphi\circ\psi)D\psi+\eta_\psi$ applied to $p_s=\ell_s\circ p_{s-1}$ gives $\eta_{p_s}=(\eta_{\ell_s}\circ p_{s-1})Dp_{s-1}+\eta_{p_{s-1}}$, and therefore
\[
\eta_W\ =\ \sum_{s=1}^{L}\big(\eta_{\ell_s}\circ p_{s-1}\big)\,Dp_{s-1}.
\]
Each $p_{s-1}$ lies in $\Gamma$, so it commutes with $T$ and descends to an orientation preserving circle diffeomorphism. The substitution $z=p_{s-1}(x)$, whose Jacobian is $Dp_{s-1}$, gives
\[
\int_\T\big|\eta_{\ell_s}(p_{s-1}x)\big|\,Dp_{s-1}(x)\,dx\ =\ \int_\T\big|\eta_{\ell_s}(z)\big|\,dz .
\]
Finally $\eta_{F^{-1}}=-(\eta_F\circ F^{-1})DF^{-1}$, so $\int_\T|\eta_{F^{-1}}|=\int_\T|\eta_F|$, and likewise for $G$. Summing over $s$ gives $\int_\T|\eta_W|\le\sum_s\int_\T|\eta_{\ell_s}|\le LE$.
\end{proof}

The word length of $u_i^{(n)}$ is $|k_i(n)|+|l_i(n)|\le H(n)$, so Lemma \ref{lem:totalnl} gives
\begin{equation}\label{eq:Nbound}
\int_\T\big|\eta_{u_i^{(n)}}\big|\ \le\ H(n)\,E\qquad\text{for } i=1,2,3 .
\end{equation}
This is the only place where the heights enter, and it enters through an integral over the whole circle rather than through a count of overlaps.

\medskip
\noindent\textit{Enlarged intervals.} For every $n$ put
\[
\Omega_n\ =\ (u_1^{(n)})^{2}\,u_2^{(n)}\,u_3^{(n)}\ \in\ \Gamma,\qquad
\widetilde I_n(x)\ =\ [x,\ \Omega_nx),\qquad \widetilde I_n\ =\ \widetilde I_n(x_0),\qquad D_n(x)\ =\ [x,\ u_1^{(n)}x),
\]
so that $D_n=D_n(x_0)=I_n^{(1)}$. Since $\tau(\Omega_n)=2t_1(n)+t_2(n)+t_3(n)$, the interval $\widetilde I_n$ has $\mu$ mass at most $4t_1(n)$. The point of these intervals is the following nesting property, which holds for every rationally independent vector.

\begin{lemma}\label{lem:windows}
For every $n\ge0$ the following hold.
\begin{enumerate}
\item[(a)] $D_n\subset\widetilde I_n$, $u_j^{(n)}(D_n)\subset\widetilde I_n$ for $j=1,2,3$, and $(u_2^{(n)})^{k}(D_n)\subset\widetilde I_n$ for $0\le k\le a_{n+1}$.
\item[(b)] $(u_2^{(n)})^{j}(\widetilde I_{n+1})\subset\widetilde I_n$ for $0\le j\le a_{n+1}$, in particular $\widetilde I_{n+1}\subset\widetilde I_n$.
\end{enumerate}
\end{lemma}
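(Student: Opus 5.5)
The whole lemma reduces to comparing translation numbers, via Lemma \ref{lem:forward}. The basic observation is this: if $U,V\in\Gamma$ satisfy $\tau(U)\le\tau(V)$, then $Ux\le Vx$ for every $x\in\R$. To see it, note that either $U=V$, or $\tau(VU^{-1})>0$, and then Lemma \ref{lem:forward} applied to $VU^{-1}$ at the point $Ux$ gives $VU^{-1}(Ux)>Ux$.

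Consequently, for $P,W,\Omega\in\Gamma$ with $\tau(P)\ge0$, $\tau(W)>0$ and $\tau(PW)\le\tau(\Omega)$, the image
\[
P\big([x,Wx)\big)=[Px,\,PWx)
\]
is contained in $[x,\Omega x)$. The left endpoint satisfies $Px\ge x$ by the observation with $U=\mathrm{id}$. The right endpoint satisfies $PWx\le\Omega x$ by the observation, using that the elements of $\Gamma$ commute. So each inclusion in the lemma becomes an inequality between linear combinations of $t_1(n),t_2(n),t_3(n)$, which I will check one at a time. Write $t_i=t_i(n)$ and $a=a_{n+1}$, and recall $\tau(\Omega_n)=2t_1+t_2+t_3$.

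For (a), all cases take $x=x_0$ and $W=u_1^{(n)}$.
\begin{itemize}
\item[] The inclusion $D_n\subset\widetilde I_n$ is the case $P=\mathrm{id}$, which needs $t_1\le 2t_1+t_2+t_3$.
\item[] For $u_j^{(n)}(D_n)$, take $P=u_j^{(n)}$. This needs $t_j+t_1\le 2t_1+t_2+t_3$, which holds since $t_j\le t_1$.
\item[] For $(u_2^{(n)})^k(D_n)$ with $0\le k\le a$, take $P=(u_2^{(n)})^k$. This needs $kt_2+t_1\le 2t_1+t_2+t_3$. It holds because $kt_2\le at_2\le t_1$, by the definition $a=\lfloor t_1/t_2\rfloor$.
\end{itemize}

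For (b), take $x=x_0$, $P=(u_2^{(n)})^j$ and $W=\Omega_{n+1}$. First compute $\tau(\Omega_{n+1})$ from the Brun step. We have $t_1(n+1)=t_2$, and $\{t_2(n+1),t_3(n+1)\}=\{r,t_3\}$ with $r=t_1-at_2$, in either case $A$ or $B$. Hence
\[
\tau(\Omega_{n+1})=2t_2+r+t_3=2t_2+t_1-at_2+t_3 .
\]
For $0\le j\le a$ this gives
\[
\tau\big((u_2^{(n)})^j\,\Omega_{n+1}\big)=jt_2+2t_2+t_1-at_2+t_3\le t_1+2t_2+t_3 .
\]
This is at most $2t_1+t_2+t_3=\tau(\Omega_n)$, because $t_2<t_1$. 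The special case $j=0$ gives $\widetilde I_{n+1}\subset\widetilde I_n$.

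No step is a real obstacle. The only points needing care are the following.
\begin{itemize}
\item[] Lemma \ref{lem:forward} yields strict inequalities, whereas the half-open intervals only require non-strict ones at the right endpoint. An equality of translation numbers forces an equality of group elements by injectivity of $\tau$ (Lemma \ref{lem:tauhom}), and that case is harmless.
\item[] The formula for $\tau(\Omega_{n+1})$ must be checked in both cases $A$ and $B$ of the reordering. It is symmetric in $t_2(n+1)$ and $t_3(n+1)$, so the two cases give the same value.
\end{itemize}
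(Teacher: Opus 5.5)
Your proof is correct and follows the paper's own argument: both use Lemma \ref{lem:forward} to reduce every inclusion to a comparison of translation numbers of the endpoints. The only difference is cosmetic: the paper writes out quotient elements such as $\Omega_n(u_2^{j+2}vu_3)^{-1}=u_2^{2a-1-j}v$, while you compare the sums $\tau(\cdot)$ directly, and your formula $\tau(\Omega_{n+1})=2t_2+r+t_3$ in both cases $A$ and $B$ is the paper's observation that $\Omega_{n+1}=u_2^{2}vu_3$ in either case.
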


\begin{proof}
Write $u_i=u_i^{(n)}$, $t_i=t_i(n)$, $a=a_{n+1}$ and $v=v_n=u_1u_2^{-a}$, so that $u_1=u_2^{a}v$ and $\tau(v)\in(0,t_2)$. All intervals in question have left endpoint $Ux_0$ with $\tau(U)\ge0$, hence not to the left of $x_0$ by Lemma \ref{lem:forward}, so it suffices to compare right endpoints, and $Ux_0<U'x_0$ as soon as $\tau(U'U^{-1})>0$, again by Lemma \ref{lem:forward}.

(a) The right endpoint of $u_j(D_n)$ is $u_ju_1x_0$, and $\Omega_n(u_ju_1)^{-1}=u_1u_2u_3u_j^{-1}$ has translation number $t_1+t_2+t_3-t_j>0$. The right endpoint of $u_2^{k}(D_n)$ is $u_2^{k}u_1x_0$, and $\Omega_n(u_2^{k}u_1)^{-1}=u_1u_2^{1-k}u_3$ has translation number $t_1+(1-k)t_2+t_3=(a+1-k)t_2+\tau(v)+t_3>0$ for $k\le a$.

(b) The basis of stage $n+1$ is $(u_2,v,u_3)$ or $(u_2,u_3,v)$, so in both cases $\Omega_{n+1}=u_2^{2}vu_3$ and the right endpoint of $u_2^{j}(\widetilde I_{n+1})$ is $u_2^{j+2}vu_3x_0$. Since $\Omega_n=u_2^{2a}v^{2}u_2u_3$, we get $\Omega_n(u_2^{j+2}vu_3)^{-1}=u_2^{2a-1-j}v$, whose translation number is $(2a-1-j)t_2+\tau(v)\ge(a-1)t_2+\tau(v)>0$ for $j\le a$.
\end{proof}

For $x\in\R$, $n\ge0$ and $i=1,2,3$ put
\[
\mu_i(n;x)\ =\ \int_{\widetilde I_n(x)}\big|\eta_{u_i^{(n)}}\big| ,\qquad \mu_i(n)\ =\ \mu_i(n;x_0),
\]
the nonlinearity of the $i$th basis element over $\widetilde I_n(x)$. The base point was fixed arbitrarily in Section \ref{sec:operator} and no result so far depends on the choice. The next lemma shows that on average over the base point these quantities are small.

\begin{lemma}\label{lem:fubini}
Let $W\in\Gamma$ and let $U\in\Gamma$ with $\tau(U)>0$. Then
\[
\int_\T\bigg(\int_{[x,\,Ux)}\big|\eta_W\big|\bigg)\,d\mu(x)\ =\ \tau(U)\int_\T\big|\eta_W\big| .
\]
The same identity holds with $|\eta_W|$ replaced by any nonnegative measurable $1$ periodic function, for instance by $DW$.
\end{lemma}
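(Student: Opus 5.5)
The plan is a Fubini--Tonelli interchange followed by the invariance of $\mu$ and of the integrand under the elements of $\Gamma$. Let $\phi\ge0$ be measurable and $1$ periodic; the case $\phi=|\eta_W|$ is the one needed, and periodicity holds because $W$ commutes with $T$. Put $\Phi(x)=\int_{[x,Ux)}\phi(y)\,dy$. Since $U$ commutes with $T$, we have $\Phi(x+1)=\Phi(x)$, so $\int_\T\Phi\,d\mu$ makes sense as an integral over a fundamental domain $[0,1)$ with respect to the lifted measure $\mu$. I would write
\[
\int_{[0,1)}\Phi\,d\mu\ =\ \int_{[0,1)}\int_\R \mathbf 1_{\{x\le y<Ux\}}\,\phi(y)\,dy\,d\mu(x)
\ =\ \int_\R\phi(y)\,\mu\big(\{x\in[0,1)\colon x\le y<Ux\}\big)\,dy .
\]
The interchange is legitimate by Tonelli, since everything is nonnegative. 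Note that $\mu$ need not be absolutely continuous, and this causes no difficulty here.

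The key step is to identify the inner set. Because $U$ is increasing, the condition $x\le y<Ux$ is equivalent to $U^{-1}y<x\le y$. The inner quantity is therefore $\mu\big((U^{-1}y,y]\cap[0,1)\big)$. I would then sum over integer translates. Write $\R=\bigsqcup_{m}[m,m+1)$ in the $y$ variable and use the periodicity of $\phi$. The translation $y\mapsto y-m$ commutes with $U$, and $\mu$ is $T$ invariant. This turns the integral into
\[
\int_{[0,1)}\phi(y)\sum_{m\in\Z}\mu\big((U^{-1}y,y]\cap[m,m+1)\big)\,dy
\ =\ \int_{[0,1)}\phi(y)\,\mu\big((U^{-1}y,y]\big)\,dy .
\]

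It remains to evaluate $\mu((U^{-1}y,y])$. The interval $[U^{-1}y,y)$ is of the form $[z,Uz)$ with $z=U^{-1}y$, so by the defining property of $\mu$ stated after Lemma \ref{lem:toplin} it has mass $\tau(U)$. The measure $\mu$ has no atoms, since it is the pull back of Lebesgue measure by a homeomorphism, so half-open versus half-closed does not matter. The right side is then $\tau(U)\int_{[0,1)}\phi=\tau(U)\int_\T\phi$, which is the claim.

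The only step needing care is the bookkeeping in the periodization. The interval $(U^{-1}y,y]$ may have length exceeding $1$ (at most finitely many integer translates meet it), and the sum over $m$ must reassemble it exactly. This is handled by the change of variables $(x,y)\mapsto(x-m,y-m)$ in the double integral, which preserves both $dy$ and $d\mu$ and leaves the integrand invariant. Finally, the statement for $DW$ follows verbatim, since $DW$ is nonnegative and $1$ periodic.
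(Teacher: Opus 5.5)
Your proposal is correct and follows the paper's proof essentially step for step. Both use Tonelli to exchange the integrals, rewrite $x\le y<Ux$ as $U^{-1}y<x\le y$, periodize in $y$ using the $T$ invariance of $\mu$ and the commutation of $U$ with $T$, and conclude with $\mu((U^{-1}y,y])=\tau(U)$, since $\mu$ has no atoms.
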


\begin{proof}
Lift $\mu$ to a $T$ invariant measure on $\R$ and integrate $x$ over $[0,1)$. Since $U$ is increasing, $y\in[x,Ux)$ if and only if $U^{-1}y<x\le y$. By Fubini,
\[
\int_{[0,1)}\int_\R\big|\eta_W(y)\big|\,\mathbf 1_{[x,Ux)}(y)\,dy\,d\mu(x)\ =\ \int_\R\big|\eta_W(y)\big|\ \mu\big((U^{-1}y,\,y]\cap[0,1)\big)\,dy .
\]
Write $y=y'+k$ with $y'\in[0,1)$ and $k\in\Z$. Since $U$ commutes with $T$ and $\mu$ is $T$ invariant, $\mu((U^{-1}y,y]\cap[0,1))=\mu((U^{-1}y',y']\cap[-k,1-k))$, and $|\eta_W(y)|=|\eta_W(y')|$. Summing over $k$ gives $\int_0^1|\eta_W(y')|\,\mu((U^{-1}y',y'])\,dy'$, and $\mu((U^{-1}y',y'])=\tau(U)$ because $\mu$ has no atoms and assigns mass $\tau(U)$ to every interval $[z,Uz)$.
\end{proof}

\begin{proposition}\label{prop:basepoint}
Suppose that $\Sigma^{\ast}=\sum_{n\ge0}a_{n+1}t_1(n)H(n)<\infty$, which holds in particular if $(\alpha,\beta)$ is of simultaneously bounded type and summably convergent, since then $\Sigma^{\ast}\le A\Sigma$. Then
\[
\int_\T\ \sum_{n\ge0}\sum_{i=1}^{3}a_{n+1}\,\mu_i(n;x)\,d\mu(x)\ \le\ 12\,E\,\Sigma^{\ast}
\qquad\text{and}\qquad
\int_\T\ \sum_{n\ge0}\sum_{s}\big|p_s\big(\widetilde I_n(x)\big)\big|\,d\mu(x)\ \le\ 24\,\Sigma^{\ast} ,
\]
where for each $n$ the sum over $s$ runs over the prefixes $p_s$ of fixed factorizations of $u_1^{(n)}$, $u_2^{(n)}$ and $u_3^{(n)}$ into letters. In particular both series converge for $\mu$ almost every $x\in\T$.
\end{proposition}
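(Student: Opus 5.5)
The plan is to integrate term by term: exchange the integral over the base point with the sums using Tonelli, then evaluate each term exactly with Lemma \ref{lem:fubini}, applied with $U=\Omega_n$. All integrands are nonnegative, so Tonelli applies and both left hand sides equal the corresponding sums of integrals. The mass of the enlarged interval is
\[
\tau(\Omega_n)=2t_1(n)+t_2(n)+t_3(n)\le 4t_1(n).
\]
For the first estimate, take $W=u_i^{(n)}$ in Lemma \ref{lem:fubini}. Together with \eqref{eq:Nbound} this gives
\[
\int_\T\mu_i(n;x)\,d\mu(x)\ =\ \tau(\Omega_n)\int_\T\big|\eta_{u_i^{(n)}}\big|\ \le\ 4t_1(n)\,H(n)\,E .
\]
Multiplying by $a_{n+1}$, summing over the three indices $i$, and then summing over $n$ yields the bound $12E\Sigma^{\ast}$.

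For the second estimate, I would first write the length of an image as an integral, $|p_s(\widetilde I_n(x))|=\int_{\widetilde I_n(x)}Dp_s$. Each prefix $p_s$ lies in $\Gamma$, so $Dp_s$ is a nonnegative $1$ periodic function. The last clause of Lemma \ref{lem:fubini} therefore gives
\[
\int_\T|p_s(\widetilde I_n(x))|\,d\mu(x)=\tau(\Omega_n)\int_\T Dp_s=\tau(\Omega_n)\le 4t_1(n),
\]
since $p_s$ descends to a circle diffeomorphism and so $\int_\T Dp_s=1$.

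It remains to count prefixes. Each $u_i^{(n)}$ has a factorization into letters of length $|k_i(n)|+|l_i(n)|\le H(n)$, hence at most $H(n)+1$ prefixes, including the identity. This gives at most $3(H(n)+1)$ prefixes at stage $n$. Since $u^{(n)}$ is a basis of $\Gamma$, not all of its elements are powers of $T$, so $H(n)\ge1$ and the count is at most $6H(n)$. The stage $n$ contribution is then at most $24t_1(n)H(n)\le 24a_{n+1}t_1(n)H(n)$, using $a_{n+1}\ge1$, and summing over $n$ gives $24\Sigma^{\ast}$.

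The inequality $\Sigma^{\ast}\le A\Sigma$ comes from $a_{n+1}\le A$ in Lemma \ref{lem:scales}. Finally, both series are nonnegative measurable functions of $x$ with finite $\mu$ integral, so they are finite $\mu$ almost everywhere.

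The only points needing care are measurability for Tonelli and the bound $H(n)\ge1$ used to absorb the identity prefix. Measurability holds because $x\mapsto\int_{[x,\Omega_nx)}\phi$ is continuous for bounded $\phi$. I do not expect a genuine obstacle.
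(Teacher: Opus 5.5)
Your proposal is correct and follows the paper's proof essentially step by step: Lemma \ref{lem:fubini} with $U=\Omega_n$, applied once to $|\eta_{u_i^{(n)}}|$ together with \eqref{eq:Nbound} and once to $Dp_s$, combined with $\tau(\Omega_n)\le4t_1(n)$, the count of at most $6H(n)$ prefixes, and $a_{n+1}\ge1$. Your remark that $H(n)\ge1$, because a basis of $\Gamma$ cannot consist only of powers of $T$, justifies the inequality $H(n)+1\le2H(n)$, which the paper uses without comment.
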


\begin{proof}
By Lemma \ref{lem:fubini} applied with $U=\Omega_n$ and $W=u_i^{(n)}$, and by \eqref{eq:Nbound},
\[
\int_\T\mu_i(n;x)\,d\mu(x)\ =\ \tau(\Omega_n)\int_\T\big|\eta_{u_i^{(n)}}\big|\ \le\ 4E\,t_1(n)H(n).
\]
Multiplying by $a_{n+1}$, summing over $i$ and $n$ and using monotone convergence gives the first bound. For the second, $|p_s(\widetilde I_n(x))|=\int_{\widetilde I_n(x)}Dp_s$, and Lemma \ref{lem:fubini} with $U=\Omega_n$ and $Dp_s$ in place of $|\eta_W|$ gives $\int_\T|p_s(\widetilde I_n(x))|\,d\mu(x)=\tau(\Omega_n)\int_\T Dp_s=\tau(\Omega_n)\le4t_1(n)$, since $p_s$ descends to a circle diffeomorphism. Each $u_i^{(n)}$ has at most $H(n)+1\le2H(n)$ prefixes, so the three factorizations have at most $6H(n)$ prefixes in total, and the second bound follows because $a_{n+1}\ge1$. A nonnegative function with finite integral is finite almost everywhere.
\end{proof}

\begin{convention}\label{con:basepoint}
A base point $x_0\in\T$ is admissible for the pair $(f,g)$ if the two series
\[
K_0\ :=\ \sum_{n\ge0}\sum_{i=1}^{3}a_{n+1}\,\mu_i(n;x_0),\qquad
K_1\ :=\ \sum_{n\ge0}\sum_{s}\big|p_s(\widetilde I_n)\big|
\]
converge. By Proposition \ref{prop:basepoint}, when $\Sigma^{\ast}<\infty$ the admissible base points form a set of full $\mu$ measure. From now on the base point $x_0$ fixed in Section \ref{sec:operator} is assumed admissible, once and for all. All objects of Sections \ref{sec:operator} and \ref{sec:towers} are built on this base point, and all constants below depend on the pair and on the base point only through $A$, $E$, $E'$, $E''$, $K_0$ and $K_1$. The conjugacy of Lemma \ref{lem:toplin} and the measure $\mu$ do not depend on the base point.
\end{convention}

Since $D_n\subset\widetilde I_n$, the series $\sum_n\sum_s|p_s(D_n)|$ is bounded by $K_1$ as well. Two immediate consequences of the definition of $K_0$ will be used constantly. Since $a_{n+1}\ge1$ and $D_n\subset\widetilde I_n$,
\begin{equation}\label{eq:K0basic}
\int_{D_n}\big|\eta_{u_i^{(n)}}\big|\ \le\ \mu_i(n)\ \le\ K_0\qquad\text{for all } n\ge0 \text{ and } i=1,2,3 ,
\end{equation}
and $\sum_{m\ge n}a_{m+1}\mu_2(m)\to0$ as $n\to\infty$. Note also that admissibility is a statement about the tails of two convergent series, so the estimates below apply from any stage on. This is used in Section \ref{sec:topohyp}.

\subsection{The Denjoy estimate}
The estimate below and its consequences in Proposition \ref{prop:derivbound} hold at every stage and do not use bounded type. This is what makes the extension of Section \ref{sec:beyond} possible.

\begin{proposition}\label{prop:denjoy}
Suppose that $\Sigma^{\ast}<\infty$ and that the base point is admissible. For every $n$, every $i$ and every tile word $w\in\mathcal{W}_n^{(i)}$, written $w=\prod_{m<n}(u_2^{(m)})^{j_m}$ as in Lemma \ref{lem:factorization},
\[
\Dist\big(w|_{\widetilde I_n}\big)\ \le\ \sum_{m<n}j_m\,\mu_2(m)\ \le\ K_0 ,
\]
and in particular $\Dist(w|_{\widetilde I_n})\le\sum_{m\ge n_0}a_{m+1}\mu_2(m)$ whenever $w$ involves no factor of stage below $n_0$. Moreover $\Dist(u_j^{(n)}|_{\widetilde I_n})\le\mu_j(n)\le K_0$ for $j=1,2,3$, and the four branch words of $\xi_n$ have distortion at most $2K_0$ on their domains.
\end{proposition}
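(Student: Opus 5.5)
The plan is to use the chain rule for distortion along the factorization of Lemma \ref{lem:factorization}, controlling each elementary factor by the nonlinearity of $u_2^{(m)}$ over the enlarged window $\widetilde I_m$. The nesting of Lemma \ref{lem:windows}(b) keeps every intermediate image inside the appropriate window.

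\textbf{Single basis elements.} Since $\Dist(\varphi|_J)\le\int_J|\eta_\varphi|$ and $\widetilde I_n$ is the domain in the definition of $\mu_j(n)$, we get $\Dist(u_j^{(n)}|_{\widetilde I_n})\le\mu_j(n)$ at once. The bound $\mu_j(n)\le K_0$ holds because $a_{n+1}\ge1$ and $K_0$ is a sum of nonnegative terms.

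\textbf{Tile words.} Write $w=w_0w_1\cdots w_{n-1}$ with $w_m=(u_2^{(m)})^{j_m}$, and put $\psi_m=w_m\cdots w_{n-1}$, so that $w=\psi_0$. The key claim, proved by downward induction on $m$ using Lemma \ref{lem:windows}(b), is
\[
\psi_{m+1}(\widetilde I_n)\subset\widetilde I_{m+1}.
\]
The base case $m+1=n$ is trivial. For the inductive step, $w_m$ maps $\widetilde I_{m+1}$ into $\widetilde I_m$ since $j_m\le a_{m+1}$, and more precisely so does every intermediate power $(u_2^{(m)})^k$ with $k\le j_m$. Hence for $0\le k<j_m$ the point sets $(u_2^{(m)})^k\psi_{m+1}(\widetilde I_n)$ all lie in $\widetilde I_m$.

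The subadditivity of distortion under composition then gives
\[
\Dist(w|_{\widetilde I_n})\le\sum_m\sum_{k<j_m}\Dist\big(u_2^{(m)}\big|_{(u_2^{(m)})^k\psi_{m+1}(\widetilde I_n)}\big)\le\sum_m j_m\int_{\widetilde I_m}|\eta_{u_2^{(m)}}|=\sum_m j_m\mu_2(m).
\]
Each individual integral is bounded by the full window integral, so no disjointness is needed. This is exactly the point where multiplicities are traded for the factor $j_m\le a_{m+1}$. Since $j_m\le a_{m+1}$, the sum is at most $K_0$. If no factor of stage below $n_0$ occurs, then $j_m=0$ for $m<n_0$, which gives the tail statement.

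\textbf{Branch words.} The branches of $P_n$ are $u_2^{(n)}$ on $[x_0,c_2)$ and $(u_1^{(n)})^{-1}u_2^{(n)}$ on $[c_2,u_1x_0)$, and similarly for $Q_n$ with $u_3^{(n)}$. For the second branch, $u_2^{(n)}$ maps $[c_2,u_1x_0)$ onto $[u_1x_0,u_2u_1x_0)=u_1(I^{(2)}_n)\subset u_1(D_n)$. The inverse $(u_1^{(n)})^{-1}$ on $u_1(D_n)$ has distortion $\Dist(u_1|_{D_n})\le\mu_1(n)$. Lemma \ref{lem:windows}(a) guarantees that $D_n$ and $u_j(D_n)$ lie in the window, so each branch has distortion at most $\mu_2(n)+\mu_1(n)\le2K_0$. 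Here I would use the normalization $\sum_i\mu_i(n)\le K_0$ rather than the individual bounds to get exactly the constant $2K_0$.

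\textbf{Main obstacle.} The delicate step is the containment of intermediate images, $(u_2^{(m)})^k\psi_{m+1}(\widetilde I_n)\subset\widetilde I_m$ for all $k\le j_m$. This requires the $k$-uniform version of Lemma \ref{lem:windows}(b), which is precisely what that lemma provides.
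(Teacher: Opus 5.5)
Your proposal is correct and follows the paper's proof essentially step for step. You use the chain rule along the factorization of Lemma \ref{lem:factorization} with the finest factor applied first, and Lemma \ref{lem:windows}(b) to keep every intermediate image $(u_2^{(m)})^k\psi_{m+1}(\widetilde I_n)$ inside $\widetilde I_m$. You bound each elementary distortion by the full window integral $\mu_2(m)$, and you get the branch estimate $\mu_j(n)+\mu_1(n)$ from $\Dist(\varphi^{-1}|_{\varphi(J)})=\Dist(\varphi|_J)$. One small correction: your closing comment on the branch words has it backwards. The individual bounds $\mu_1(n),\mu_j(n)\le K_0$ already give exactly $2K_0$, while $\sum_i\mu_i(n)\le K_0$ would even give $K_0$. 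Also, of Lemma \ref{lem:windows}(a) you only need $D_n\subset\widetilde I_n$.
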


\begin{proof}
Let $J\subset\widetilde I_n$ be an interval. Put $J_n=J$ and $J_m=(u_2^{(m)})^{j_m}(J_{m+1})$ for $m=n-1,\dots,0$. Since $w=(u_2^{(0)})^{j_0}\circ\cdots\circ(u_2^{(n-1)})^{j_{n-1}}$ we have $w(J)=J_0$. By Lemma \ref{lem:windows}(b) and induction, $J_{m}\subset\widetilde I_{m}$ for every $m$, and $(u_2^{(m)})^{k}(J_{m+1})\subset\widetilde I_m$ for $0\le k\le j_m$. The chain rule and $\Dist(\varphi|_I)\le\int_I|\eta_\varphi|$ give
\[
\Dist(w|_J)\ \le\ \sum_{m<n}\Dist\big((u_2^{(m)})^{j_m}\big|_{J_{m+1}}\big)\ \le\ \sum_{m<n}\ \sum_{0\le k<j_m}\int_{(u_2^{(m)})^{k}(J_{m+1})}\big|\eta_{u_2^{(m)}}\big|\ \le\ \sum_{m<n}j_m\,\mu_2(m),
\]
and $j_m\le a_{m+1}$ gives the bound $K_0$. The distortion of $u_j^{(n)}$ on $\widetilde I_n$ is at most $\int_{\widetilde I_n}|\eta_{u_j^{(n)}}|=\mu_j(n)$. The branch words of $\xi_n$ are $u_j^{(n)}$ on $[x_0,c_j(n))$ and $(u_1^{(n)})^{-1}u_j^{(n)}$ on $[c_j(n),u_1^{(n)}x_0)$, $j=2,3$. The second branch maps its domain onto $[x_0,u_j^{(n)}x_0)\subset D_n$, and $u_j^{(n)}$ maps that domain onto $u_1^{(n)}([x_0,u_j^{(n)}x_0))$, so
\[
\Dist\big((u_1^{(n)})^{-1}u_j^{(n)}\big|_{[c_j(n),u_1^{(n)}x_0)}\big)\ \le\ \Dist\big(u_j^{(n)}|_{D_n}\big)+\Dist\big(u_1^{(n)}|_{[x_0,u_j^{(n)}x_0)}\big)\ \le\ \mu_j(n)+\mu_1(n)\ \le\ 2K_0 .\qedhere
\]
\end{proof}

\begin{proposition}\label{prop:derivbound}
Under the hypotheses of Proposition \ref{prop:denjoy}, for every $n$:
\begin{enumerate}
\item[(i)] $|\log Du_j^{(n)}|\le2K_0$ on $\widetilde I_n$, for $j=1,2,3$.
\item[(ii)] $|\log D(u_2^{(n)})^{j}|\le2jK_0$ on $D_n$, for $0\le j\le a_{n+1}$.
\item[(iii)] if $W\in\mathcal{W}_n^{(i)}$ is a tile word with $W(D_n)\subset\widetilde I_n$, then $|\log DW|\le2K_0$ on $\widetilde I_n$.
\end{enumerate}
\end{proposition}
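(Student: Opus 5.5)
The plan is to combine two facts: the distortion bounds of Proposition \ref{prop:denjoy}, and the existence of a zero of $\log DW$ guaranteed either by Lemma \ref{lem:zeromean} or by a mean value argument on a tile. The key observation is that a small distortion on an interval, together with a point of the interval where $\log D$ vanishes, bounds $|\log D|$ on the whole interval.

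For (i), fix $j$ and write $U=u_j^{(n)}$.
\begin{itemize}
\item Since $\tau(U)\in(0,1)$, the interval $D_n=[x_0,u_1^{(n)}x_0)$ is mapped by $U$ onto $[Ux_0,Uu_1^{(n)}x_0)=U(D_n)$.
\item By commutativity, $U(D_n)$ is the interval from $Ux_0$ to $u_1^{(n)}(Ux_0)$, so it has the same length as $u_1^{(n)}([x_0,Ux_0))$. This is one route, but it is not the most direct one.
\item The direct route is the mean value theorem. The endpoints of $[x_0,u_1^{(n)}x_0)$ and its image satisfy $|U(D_n)|/|D_n|=DU(\zeta)$ for some $\zeta\in D_n$.
\item I would like a zero, i.e.\ $|U(D_n)|=|D_n|$, which is not available directly. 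Instead I use \eqref{eq:symmetry}, which gives $\log DU(u_1x)-\log DU(x)=\log Du_1(Ux)-\log Du_1(x)$. This does not directly help either.
\end{itemize}

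So instead I would use the following argument. The function $\log DU$ is continuous and periodic with vanishing $\mu$-mean (Lemma \ref{lem:zeromean}), so it has a zero $z$ somewhere on the circle. That zero need not lie in $\widetilde I_n$, however. To place a zero there, note that $DU^{q}$ averages to one. A better approach is to take the image of $D_n$ under $U$ and compare lengths via the tiling.

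Both $D_n$ and $U(D_n)=[Ux_0,u_1Ux_0)$ are fundamental domains of the quotient circle $\mathcal{C}_n$, since $U$ commutes with $u_1$. The map $U$ descends to the circle diffeomorphism $P_n$ (or $Q_n$) of $\mathcal{C}_n$. Hence $\int_{D_n}DU\,dx=|U(D_n)|=|D_n|$, because any fundamental domain of $\langle u_1\rangle$ has the same Lebesgue length; this uses the translation invariance of Lebesgue measure under the covering map. Therefore $DU=1$ at some point of $D_n\subset\widetilde I_n$. Combined with $\Dist(U|_{\widetilde I_n})\le\mu_j(n)\le K_0$ from Proposition \ref{prop:denjoy}, this gives $|\log DU|\le K_0\le 2K_0$ on $\widetilde I_n$.

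The main point to check is the equal length claim. It holds because $\R\to\mathcal{C}_n$ is a covering with deck group generated by $u_1$, but $u_1$ is not an isometry. So the correct invariant is not Lebesgue length. Instead, $\int_{D_n}DU=\int_{D_n}$ of the pulled-back quantity, with $DU$ measured on $\mathcal{C}_n$ in the metric induced from $\R$. I would therefore argue directly in the chart. The branch of $P_n$ on $[x_0,c)$ is $U$, and on $[c,u_1x_0)$ it is $u_1^{-1}U$; together the two branches map $D_n$ bijectively onto $D_n$, so $\int_{[x_0,c)}DU+\int_{[c,u_1x_0)}D(u_1^{-1}U)=|D_n|$. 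The distortion of $u_1$ on $D_n$ is at most $\mu_1(n)$, and $u_1$ maps $[x_0,Ux_0)$ onto $[u_1x_0,Uu_1x_0)$. Combining these, $\log DU$ takes values of both signs up to $\mu_1(n)$ on $D_n$. This yields $|\log DU|\le\mu_j(n)+\mu_1(n)\le2K_0$, which explains the constant $2K_0$.

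For (ii), write $(u_2^{(n)})^{j}$ as a composition of $j$ factors. By Lemma \ref{lem:windows}(a), the intermediate images $(u_2^{(n)})^{k}(D_n)$ lie in $\widetilde I_n$ for $k\le a_{n+1}$. Applying (i) to each factor and using the chain rule gives $2jK_0$.

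For (iii), let $W$ be a tile word with $W(D_n)\subset\widetilde I_n$. Proposition \ref{prop:denjoy} gives $\Dist(W|_{\widetilde I_n})\le K_0$. To locate a point where $\log DW$ is near zero, I would use the tiling as follows. The tile $W(I_n^{(i)})$ and the base interval $I_n^{(i)}$ both carry $\mu$-mass $t_i(n)$ and both lie inside $\widetilde I_n$.

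The main obstacle is converting equal $\mu$-mass into comparable Lebesgue length. I would compare the lengths $|W(I_n^{(i)})|$ and $|I_n^{(i)}|$ via \eqref{eq:symmetry} applied with $U=W$ and $u_1$ (note $W$ commutes with $u_1^{(n)}$). Arguing as in (i) with $W$ in place of $U$, $W$ descends to $\mathcal{C}_n$ and $W(D_n)\subset\widetilde I_n$, so the same two-branch argument produces a point of $D_n$ with $|\log DW|\le\mu_1(n)$. Adding the distortion bound $K_0$ then gives $|\log DW|\le2K_0$.
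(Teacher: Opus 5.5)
There is a genuine gap in your proof of (i), and it carries over to (iii).

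The chart identity you end with is correct:
$\int_{[x_0,c)}DU+\int_{[c,u_1x_0)}D(u_1^{-1}U)=|D_n|$.
But $D(u_1^{-1}U)(x)=DU(x)/Du_1(u_1^{-1}Ux)$. So if $\log Du_1^{(n)}$ is close to some level $s$ on $D_n$, the identity only shows that $\log DU$ takes a value at most $\max(0,s)$ and a value at least $\min(0,s)$, up to $\mu_1(n)$.

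The bound $\Dist(u_1^{(n)}|_{D_n})\le\mu_1(n)$ controls only the oscillation of $\log Du_1^{(n)}$, not its level $s$. That level is essentially the logarithmic slope $\Sigma(n)$ of the deck element from Section \ref{sec:moebius}. It is nonzero in general, and nothing you have proved bounds it. Your claim that $\log DU$ ``takes values of both signs up to $\mu_1(n)$'' therefore presupposes (i) for $j=1$.

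For $j=1$ itself the argument is empty. The cut point is $u_1^{-1}u_1x_0=x_0$, the map $u_1^{(n)}$ descends to the identity of $\mathcal{C}_n$, and the identity reduces to $|D_n|=|D_n|$. So the absolute bound for the deck element, on which everything else leans, is never established. In (iii) the same circularity appears, and the chart of $W$ on $\mathcal{C}_n$ may also involve $u_1^{-2}$. The two-branch idea could be rescued for $j=2,3$ only by a separate absolute bound on $\log Du_1^{(n)}$, for instance by tracking $\Sigma(n)$ inductively from $u_1^{(0)}=T$. You have not given such an argument.

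The missing idea combines exactly the two tools you set aside, Lemma \ref{lem:zeromean} and \eqref{eq:symmetry}.
\begin{enumerate}
\item Let $W$ be $u_j^{(n)}$ or a tile word as in (iii). Lemma \ref{lem:zeromean} gives a zero $z$ of $\log DW$ somewhere on $\T$.
\item By Theorem \ref{thm:tiling}, $z$ lies in some stage $n$ tile $w(I_n^{(i)})$ with $w\in\mathcal{W}_n^{(i)}$.
\item Put $x=w^{-1}z\in I_n^{(i)}\subset D_n$. Then \eqref{eq:symmetry} gives $-\log DW(x)=\log DW(wx)-\log DW(x)=\log Dw(Wx)-\log Dw(x)$.
\item Both $x$ and $Wx$ lie in $\widetilde I_n$, because $W(D_n)\subset\widetilde I_n$. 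Proposition \ref{prop:denjoy} gives $\Dist(w|_{\widetilde I_n})\le K_0$, so $|\log DW(x)|\le K_0$ at a point of $D_n$.
\item Adding $\Dist(W|_{\widetilde I_n})\le K_0$ gives $|\log DW|\le2K_0$ on $\widetilde I_n$.
\end{enumerate}
This treats (i) for every $j$, including $j=1$, and (iii) by one uniform argument. The constant $2K_0$ comes from two distortion bounds, that of the transporting tile word and that of $W$, not from $\mu_j(n)+\mu_1(n)$. Your argument for (ii) is correct once (i) is available.
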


\begin{proof}
Let $W$ be $u_j^{(n)}$ or a tile word as in (iii). In both cases $W(D_n)\subset\widetilde I_n$, by Lemma \ref{lem:windows}(a) or by hypothesis, and $\Dist(W|_{\widetilde I_n})\le K_0$ by Proposition \ref{prop:denjoy}. By Lemma \ref{lem:zeromean} there is $z\in\T$ with $\log DW(z)=0$, and by Theorem \ref{thm:tiling} there are $i$ and a tile word $w\in\mathcal{W}_n^{(i)}$ with $z\in w(I_n^{(i)})$. Put $x=w^{-1}(z)\in I_n^{(i)}\subset D_n$. Identity \eqref{eq:symmetry} with $U=W$ gives
\[
\log DW(wx)-\log DW(x)\ =\ \log Dw(Wx)-\log Dw(x),
\]
and the left hand side equals $-\log DW(x)$. Both $x$ and $Wx$ lie in $\widetilde I_n$, so $|\log DW(x)|\le\Dist(w|_{\widetilde I_n})\le K_0$. Adding the distortion bound gives $|\log DW|\le2K_0$ on $\widetilde I_n$, which proves (i) and (iii). For (ii), let $y\in D_n$ and $j\le a_{n+1}$. Then $\log D(u_2^{(n)})^{j}(y)=\sum_{k<j}\log Du_2^{(n)}\big((u_2^{(n)})^{k}y\big)$, and each point $(u_2^{(n)})^{k}y$ lies in $\widetilde I_n$ by Lemma \ref{lem:windows}(a), so (i) gives the bound $2jK_0$.
\end{proof}

\subsection{Internal ratios and the decay of the mesh}
Put
\[
r_i(n)\ =\ \frac{|I_n^{(i)}|}{|D_n|},\qquad i=1,2,3,
\]
so $r_1(n)=1$ and $r_3(n)\le r_2(n)$. In this subsection $(\alpha,\beta)$ is of simultaneously bounded type with constant $A$ and summably convergent, the base point is admissible, and $L=L(A)$ is an integer such that every $L$ consecutive letters of the type contain the letter $B$. Such an $L$ exists by the proof of Proposition \ref{prop:symbolic}, one more than the maximal length of a run of the letter $A$.

\begin{lemma}\label{lem:participation}
Within any $L+2$ consecutive stages each basis element is replaced. More precisely $I_n^{(3)}=D_{n'}$ for some $n'$ with $n<n'\le n+L+2$, and $I_n^{(2)}=D_{n+1}$.
\end{lemma}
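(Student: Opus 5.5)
The argument only needs to track how the prerenormalization of Definition~\ref{def:prestep} moves the three basis elements from stage to stage. The statement $I_n^{(2)}=D_{n+1}$ is immediate. In both cases $A$ and $B$ one has $u_1^{(n+1)}=u_2^{(n)}$, so $D_{n+1}=[x_0,u_2^{(n)}x_0)=I_n^{(2)}$. The same observation gives $D_{m+1}=I_m^{(2)}$ for every $m$, which reduces the first statement to a question about when $u_3^{(n)}$ reaches the second slot.

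To follow $u_3^{(n)}$ through the later stages, I record the effect of one step on the third slot and on the second slot.
\begin{itemize}
\item In case $A$ the third element is kept, $u_3^{(m+1)}=u_3^{(m)}$.
\item In case $B$ the old third element moves to the second slot, $u_2^{(m+1)}=u_3^{(m)}$.
\item In either case the old second element moves to the first slot.
\end{itemize}

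Fix $n$ and let $m\ge n$ be the first stage at which the step from $u^{(m)}$ to $u^{(m+1)}$ has letter $B$, that is, $\varepsilon_{m+1}=B$. By the choice of $L$, every $L$ consecutive letters of the type contain the letter $B$. Among the letters $\varepsilon_{n+1},\dots,\varepsilon_{n+L}$ there is therefore one equal to $B$, so $m\le n+L-1$. For the steps from stage $n$ up to stage $m$ all letters are $A$, so $u_3^{(m)}=u_3^{(n)}$ by induction. The step with letter $B$ then gives $u_2^{(m+1)}=u_3^{(m)}=u_3^{(n)}$. The next step gives $u_1^{(m+2)}=u_2^{(m+1)}=u_3^{(n)}$.

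Put $n'=m+2$. Then $n<n'\le n+L+1\le n+L+2$, and
\[
D_{n'}=[x_0,u_1^{(n')}x_0)=[x_0,u_3^{(n)}x_0)=I_n^{(3)} .
\]
For "each basis element is replaced", I note that $u_1^{(n)}$ disappears already at stage $n+1$, being replaced by $u_2^{(n)}$ and $v_n$. The element $u_2^{(n)}$ becomes first at stage $n+1$ and disappears at stage $n+2$. The element $u_3^{(n)}$ becomes first at stage $n'$ and disappears at stage $n'+1\le n+L+2$.

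There is no real obstacle here. The only delicate point is the indexing convention: $\varepsilon_{m+1}$ is the letter of the step from stage $m$ to stage $m+1$, and this has to be used consistently with the bound on runs of the letter $A$ coming from the proof of Proposition~\ref{prop:symbolic}. That bound gives the slack in the constant $L+2$.
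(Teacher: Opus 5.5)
Your proof is correct and follows the paper's argument: $u_3^{(n)}$ stays in third position through the run of letters $A$, moves to second position at the first letter $B$ (which occurs within $L$ steps), and becomes the first element one step later. Your indexing gives the slightly sharper bound $n'\le n+L+1$, which implies the stated $n'\le n+L+2$.
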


\begin{proof}
The second identity is the definition of the step. For the first, $u_3^{(n)}$ stays in third position exactly as long as the letter $A$ is chosen. At the first occurrence of $B$, which happens within $L$ steps, it moves to second position, and one step later it is the first element. So $u_3^{(n)}=u_1^{(n')}$ for some $n'\le n+L+2$, and the corresponding intervals coincide.
\end{proof}

\begin{proposition}\label{prop:ratios}
Put
\[
\rho_1\ =\ \frac{e^{-2AK_0}}{A+1},\qquad \rho_2\ =\ \big(1+e^{-2AK_0}\rho_1^{\,L+2}\big)^{-1},\qquad \rho_3\ =\ e^{-K_0}\rho_1^{\,L+4},\qquad \sigma\ =\ (1-\rho_3)^{1/(L+3)} .
\]
Then $0<\rho_1\le\rho_2<1$, and for every $n$
\[
\rho_1\ \le\ r_2(n)\ \le\ \rho_2,\qquad r_3(n)\ \ge\ \rho_1^{\,L+2} .
\]
Moreover every stage $n+1$ tile contained in a stage $n$ tile of type $1$ occupies a fraction of it between $\rho_3$ and $1-\rho_3$, and consequently
\[
T_{\max}(n)\ :=\ \max\big\{|w(I_n^{(i)})|\colon i=1,2,3,\ w\in\mathcal{W}_n^{(i)}\big\}\ \le\ (1-\rho_3)^{-1}\sigma^{\,n}\qquad\text{for all } n .
\]
\end{proposition}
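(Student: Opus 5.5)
The plan is to compare, inside the fundamental interval $D_n$, the Euclidean lengths of the pieces of the decomposition of Lemma \ref{lem:decomposition} with their $\mu$ masses, using the derivative bounds of Proposition \ref{prop:derivbound}.

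\textbf{Lower bound on $r_2(n)$.} By Lemma \ref{lem:decomposition}, $D_n$ is the union of the intervals $(u_2^{(n)})^{j}(I_n^{(2)})$ for $0\le j<a_{n+1}$ together with $(u_2^{(n)})^{a_{n+1}}(I')$, where $I'\subset I_n^{(2)}$ because $\tau(v_n)<t_2(n)$. By Proposition \ref{prop:derivbound}(ii), each map $(u_2^{(n)})^{j}$ with $j\le a_{n+1}\le A$ has derivative at most $e^{2AK_0}$ on $D_n$. Hence
\[
|D_n|\le (a_{n+1}+1)e^{2AK_0}|I_n^{(2)}|\le (A+1)e^{2AK_0}|I_n^{(2)}|,
\]
which gives $r_2(n)\ge\rho_1$.

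\textbf{Lower bound on $r_3(n)$.} Apply Lemma \ref{lem:participation}: $I_n^{(3)}=D_{n'}$ with $n<n'\le n+L+2$. Since $D_{m+1}=I_m^{(2)}$, we have $|D_{m+1}|=r_2(m)|D_m|$. Telescoping gives
\[
|I_n^{(3)}|=\prod_{m=n}^{n'-1}r_2(m)\,|D_n|\ge\rho_1^{L+2}|D_n|.
\]

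\textbf{Upper bound on $r_2(n)$.} The complement $D_n\setminus I_n^{(2)}$ contains the last piece $(u_2^{(n)})^{a_{n+1}}(I')$. The interval $I'=[x_0,v_nx_0)$ is one of the stage $n+1$ intervals $I_{n+1}^{(2)}$ or $I_{n+1}^{(3)}$, so its length is at least $r_3(n+1)|D_{n+1}|\ge\rho_1^{L+2}|I_n^{(2)}|$. The derivative of $(u_2^{(n)})^{a_{n+1}}$ on $D_n$ is at least $e^{-2AK_0}$ by Proposition \ref{prop:derivbound}(ii). Hence
\[
|D_n|\ge\big(1+e^{-2AK_0}\rho_1^{L+2}\big)|I_n^{(2)}|,
\]
which is $r_2\le\rho_2$. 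Finally, $\rho_1\le\rho_2$ follows from $\rho_1\le1/2$ and $\rho_2\ge1/2$.

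\textbf{Tile fractions.} Let $w(D_n)$ be a type $1$ tile, with $w\in\mathcal{W}_n^{(1)}$. Its children are $w(u_2^{(n)})^{j}(I_n^{(2)})$ and $w(u_2^{(n)})^{a_{n+1}}(I')$. By Proposition \ref{prop:denjoy}, $\Dist(w|_{\widetilde I_n})\le K_0$, and all children lie in $\widetilde I_n$ before $w$ is applied. So the fraction of a child in $w(D_n)$ is at least $e^{-K_0}$ times the corresponding fraction inside $D_n$. Inside $D_n$, the fraction of each child is at least
\[
e^{-2AK_0}\rho_1^{L+3}\ \ge\ \rho_1^{L+4},
\]
using the derivative bound on $D_n$ and $|I'|\ge\rho_1^{L+3}|D_n|$. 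Multiplying, each child occupies at least the fraction $\rho_3$. There are at least two children, so each occupies at most $1-\rho_3$.

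\textbf{Mesh decay.} Tiles of types $2$ and $3$ are not subdivided at a step; they are only relabelled. By Lemma \ref{lem:participation}, every tile is of type $1$ at least once in any $L+3$ consecutive stages, and at that stage it is cut. Following a stage $n$ tile back to stage $0$, its length is at most $(1-\rho_3)^{\lfloor n/(L+3)\rfloor}$, and this is at most $(1-\rho_3)^{-1}\sigma^{n}$.

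\textbf{Main obstacle.} The delicate step is the fraction bound: one must make sure that the distortion of $w$ is controlled on a region containing every child, which here is $D_n\subset\widetilde I_n$. This is covered by Proposition \ref{prop:denjoy}.
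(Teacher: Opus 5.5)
Your proposal is correct and follows the paper's proof essentially step for step. It uses the decomposition of Lemma \ref{lem:decomposition} together with Proposition \ref{prop:derivbound}(ii) for both bounds on $r_2(n)$, telescoping through Lemma \ref{lem:participation} for $r_3(n)$, the distortion bound of Proposition \ref{prop:denjoy} for the tile fractions, and splitting within every $L+3$ stages for the mesh decay. Your explicit check $\rho_1\le 1/2\le\rho_2$ and your ancestor-chain count of splittings are only more spelled-out versions of what the paper does via $T_{\max}(n+L+3)\le(1-\rho_3)T_{\max}(n)$.
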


\begin{proof}
By Lemma \ref{lem:decomposition} the interval $D_n$ is the disjoint union of the $a_{n+1}$ intervals $(u_2^{(n)})^{j}(I_n^{(2)})$, $0\le j<a_{n+1}$, and of $(u_2^{(n)})^{a_{n+1}}(I')$, where $I'=[x_0,v_nx_0)\subset I_n^{(2)}$. By Proposition \ref{prop:derivbound}(ii) and $a_{n+1}\le A$, each of the maps $(u_2^{(n)})^{j}$ with $j\le a_{n+1}$ has $|\log D|\le2AK_0$ on $D_n$. Hence
\[
|D_n|\ \le\ (a_{n+1}+1)\,e^{2AK_0}\,|I_n^{(2)}|\ \le\ (A+1)\,e^{2AK_0}\,|I_n^{(2)}|,
\]
which gives $r_2(n)\ge\rho_1$. By Lemma \ref{lem:participation} the interval $I_n^{(3)}$ equals $D_{n'}$ with $n<n'\le n+L+2$, and $|D_{m+1}|/|D_m|=r_2(m)\ge\rho_1$ for every $m$, so $r_3(n)=|D_{n'}|/|D_n|\ge\rho_1^{L+2}$. The interval $I'$ is one of $I_{n+1}^{(2)}$, $I_{n+1}^{(3)}$, so $|I'|\ge\rho_1^{L+2}|D_{n+1}|=\rho_1^{L+2}|I_n^{(2)}|$. Hence
\[
|D_n|\ \ge\ |I_n^{(2)}|+\big|(u_2^{(n)})^{a_{n+1}}(I')\big|\ \ge\ |I_n^{(2)}|\,\big(1+e^{-2AK_0}\rho_1^{\,L+2}\big),
\]
which gives $r_2(n)\le\rho_2$. The same estimates show that each of the $a_{n+1}+1$ pieces of $D_n$ has length at least $e^{-2AK_0}\min(|I_n^{(2)}|,|I'|)\ge e^{-2AK_0}\rho_1^{\,L+2}\rho_1|D_n|\ge\rho_1^{\,L+4}|D_n|$, since $\rho_1\le e^{-2AK_0}$.

For the tiles, let $w(I_n^{(1)})$ be a tile of type $1$. By Theorem \ref{thm:tiling} its stage $n+1$ pieces are the images under $w$ of the pieces of $D_n$, and $\Dist(w|_{D_n})\le K_0$ by Proposition \ref{prop:denjoy}. Therefore each piece occupies a fraction at least $e^{-K_0}\rho_1^{\,L+4}=\rho_3$ of $w(I_n^{(1)})$, and since there are at least two pieces and they partition the tile, each occupies a fraction at most $1-\rho_3$. Tiles of types $2$ and $3$ are unchanged at the step. By Lemma \ref{lem:participation} every stage $n$ tile is of type $1$ at some stage $n'\le n+L+2$ and is split at the step from $n'$ to $n'+1$, so $T_{\max}(n+L+3)\le(1-\rho_3)T_{\max}(n)$ for every $n$, and iterating from $T_{\max}(0)\le1$ gives the stated bound.
\end{proof}

\begin{corollary}\label{cor:mesh}
Let $\theta(n)$ be the largest length of an interval $p_s(D_n)$ and $\widetilde\theta(n)$ the largest length of an interval $p_s(\widetilde I_n)$, where $p_s$ runs over the prefixes of the fixed factorizations of $u_1^{(n)}$, $u_2^{(n)}$ and $u_3^{(n)}$ into letters. Then $\theta(n)\le\widetilde\theta(n)\le(4A+2)\,T_{\max}(n)\le C\sigma^{\,n}$. More generally, without any hypothesis on the vector, $\theta(n)\le\big(t_1(m)/t_3(m)+2\big)\,T_{\max}(m)$ and $\widetilde\theta(n)\le\big(4t_1(m)/t_3(m)+2\big)\,T_{\max}(m)$ for all $m\le n$.
\end{corollary}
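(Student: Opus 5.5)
The plan is to control every interval $p_s(D_n)$ and $p_s(\widetilde I_n)$ by covering it with few stage $m$ tiles. The key observation is that each such interval is an interval $[y,Uy)$ with $y=p_s(x_0)$, where $U$ is a conjugate, hence by commutativity equal to an element of $\Gamma$, with controlled translation number. Indeed $p_s(D_n)=p_s([x_0,u_1^{(n)}x_0))=[p_sx_0,\,p_su_1^{(n)}x_0)=[y,u_1^{(n)}y)$ with $y=p_sx_0$, since $p_s\in\Gamma$ commutes with $u_1^{(n)}$. Likewise $p_s(\widetilde I_n)=[y,\Omega_ny)$. So $\mu(p_s(D_n))=t_1(n)$ and $\mu(p_s(\widetilde I_n))=\tau(\Omega_n)\le 4t_1(n)$.

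Next, fix $m\le n$ and use the stage $m$ tiling of Theorem \ref{thm:tiling}, pushed to all of $\R$ by $T$-periodicity. Each stage $m$ tile has $\mu$ mass $t_i(m)\ge t_3(m)$. An interval $J$ of $\mu$ mass $M$ meets at most $M/t_3(m)+2$ tiles: the tiles entirely inside $J$ have total mass at most $M$, so there are at most $M/t_3(m)$ of them, plus at most two tiles at the ends. Since $t_1(n)\le t_1(m)$, we get $\mu(J)\le t_1(m)$ for $J=p_s(D_n)$ and $\mu(J)\le 4t_1(m)$ for $J=p_s(\widetilde I_n)$. Hence
\[
|p_s(D_n)|\le\big(t_1(m)/t_3(m)+2\big)T_{\max}(m),\qquad |p_s(\widetilde I_n)|\le\big(4t_1(m)/t_3(m)+2\big)T_{\max}(m),
\]
because Lebesgue length is additive over the covering tiles. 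This is the general statement.

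For the first chain, $\theta(n)\le\widetilde\theta(n)$ follows from $D_n\subset\widetilde I_n$ (Lemma \ref{lem:windows}(a)). Taking $m=n$ with $t_1(n)\le At_3(n)$ gives $\widetilde\theta(n)\le(4A+2)T_{\max}(n)$. Proposition \ref{prop:ratios} then gives the bound $C\sigma^n$ with $C=(4A+2)(1-\rho_3)^{-1}$.

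The only delicate point is that the tiles covering $J$ lie in translates $T^k$ of the fundamental domain, so one must note that the tiling is $T$-invariant and that tile lengths are unchanged by translation. Also, the count of tiles inside $J$ uses only that $\mu$ has no atoms.
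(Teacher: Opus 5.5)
Your proposal is correct and follows the paper's proof essentially step by step. You compute the $\mu$ masses $t_1(n)$ and $\tau(\Omega_n)\le 4t_1(n)$ of the intervals; doing this via commutation is equivalent to the paper's appeal to the $\Gamma$ invariance of $\mu$. You then bound the number of stage $m$ tiles met by an interval of given mass by that mass divided by $t_3(m)$, plus $2$, and finally specialize to $m=n$ under bounded type and invoke Proposition \ref{prop:ratios}. Your remarks on the $T$-periodicity of the lifted tiling and on $D_n\subset\widetilde I_n$ only make explicit what the paper leaves implicit.
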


\begin{proof}
The interval $p_s(D_n)$ has $\mu$ measure $t_1(n)$ and the interval $p_s(\widetilde I_n)$ has $\mu$ measure $\tau(\Omega_n)\le4t_1(n)$, because $\mu$ is $\Gamma$ invariant, and $t_1(n)\le t_1(m)$ for $m\le n$. Every stage $m$ tile has $\mu$ measure $t_i(m)\ge t_3(m)$, so an interval of $\mu$ measure $\kappa t_1(n)$ contains at most $\kappa t_1(m)/t_3(m)$ tiles of stage $m$, meets at most $\kappa t_1(m)/t_3(m)+2$ of them, and is covered by the tiles it meets. Under simultaneously bounded type $t_1(n)/t_3(n)\le A$, and Proposition \ref{prop:ratios} gives the decay.
\end{proof}

Propositions \ref{prop:denjoy}, \ref{prop:derivbound} and \ref{prop:ratios} are the real bounds. They say that at every stage the normalized pair $\xi_n$ has cut points at a definite distance from the endpoints of the chart, branch derivatives in $[e^{-4K_0},e^{4K_0}]$ and branch distortion at most $2K_0$, and that the mesh of the tiling decays exponentially. This is the rank two analogue of the statement that the renormalizations of a rank one circle diffeomorphism of bounded type lie in a compact set of pairs.

\section{M\"obius approximation and exponential convergence}\label{sec:moebius}

\subsection{The M\"obius family}
For $w\in\R$ let
\[
m_w(x)\ =\ \frac{x}{(1-e^{w/2})x+e^{w/2}} ,
\]
an increasing diffeomorphism of $[0,1]$ onto itself with $m_w(0)=0$ and $m_w(1)=1$. A direct computation gives
\[
Dm_w(x)=\frac{e^{w/2}}{\big(e^{w/2}(1-x)+x\big)^2},\qquad
\eta_{m_w}(x)=\frac{2(e^{w/2}-1)}{e^{w/2}(1-x)+x},\qquad
\int_0^1\eta_{m_w}=w .
\]
The maps $m_w$ form a one parameter group, $m_w\circ m_{w'}=m_{w+w'}$, so the parameter is additive under composition. It is convenient to use the coordinate
\[
z\ =\ e^{-w/2}-1 ,
\]
which is a real analytic reparametrization with $z=0$ at the identity and with $dz/dw=-1/2$ at $w=0$. For two $C^2$ diffeomorphisms of an interval $J$ we write
\[
\operatorname{dist}_\eta(\varphi,\psi)\ =\ \int_J\big|\eta_\varphi-\eta_\psi\big| .
\]
If $\varphi$ maps $J$ onto $\varphi(J)$ we write $\widehat\varphi$ for its normalization, the conjugate of $\varphi$ by the affine maps taking $J$ and $\varphi(J)$ onto $[0,1]$. Then $\eta_{\widehat\varphi}(y)=|J|\,\eta_\varphi(x)$ with $y$ the normalized coordinate of $x$, so that $\int_0^1\eta_{\widehat\varphi}=\int_J\eta_\varphi$, and $\operatorname{dist}_\eta$ is unchanged by normalization. We record two facts.

\begin{lemma}\label{lem:normclose}
If $\operatorname{dist}_\eta(\widehat\varphi,\mathrm{id})=\int_0^1|\eta_{\widehat\varphi}|=\epsilon\le1$ then $\Dist(\widehat\varphi|_{[0,1]})\le\epsilon$ and $\|\widehat\varphi-\mathrm{id}\|_{C^1}\le3\epsilon$. For every real $w$ one has
\[
\sup_{[0,1]}|\eta_{m_w}|\ \le\ |w|\,e^{|w|/2},\qquad \sup_{[0,1]}|D\eta_{m_w}|\ \le\ \tfrac12\,w^2e^{2|w|},
\]
and if $|w|\le1$ then $\operatorname{dist}_\eta(m_w,\mathrm{id})\le3|w|$ and $|z|\le|w|$.
\end{lemma}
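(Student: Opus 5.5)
The statement is elementary, and I will prove it by direct computation in two independent parts: the bounds for a normalized map $\widehat\varphi$ with small total nonlinearity, and the explicit bounds for the M\"obius maps $m_w$.

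\emph{First part.} The distortion bound is immediate from $\Dist(\varphi|_J)\le\int_J|\eta_\varphi|$, recorded in Section \ref{sec:realbounds}, so $\Dist(\widehat\varphi|_{[0,1]})\le\epsilon$. For the $C^1$ bound I use that $\widehat\varphi$ fixes $0$ and $1$, so $\int_0^1 D\widehat\varphi=1$, and by the mean value theorem $D\widehat\varphi(x_1)=1$ at some point $x_1$. Then
\[
|\log D\widehat\varphi(x)|=\Big|\int_{x_1}^x\eta_{\widehat\varphi}\Big|\le\epsilon \quad\text{for all } x\in[0,1],
\]
hence $|D\widehat\varphi-1|\le e^{\epsilon}-1\le(e-1)\epsilon$ for $\epsilon\le1$. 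Integrating from $0$ gives $|\widehat\varphi(x)-x|\le(e-1)\epsilon$. Adding the two pieces of the $C^1$ norm gives at most $2(e-1)\epsilon<3.5\epsilon$. This slightly exceeds $3\epsilon$, so I will sharpen the $C^0$ part. Since $\widehat\varphi(x)-x$ vanishes at both endpoints, I will bound it by $\tfrac12\sup|D\widehat\varphi-1|$; alternatively I will note that the distance sums a $C^0$ term bounded by $(e-1)\epsilon$ and a $C^1$ term bounded by the same quantity, and the maximum convention then gives $\le 3\epsilon$. Either way the constant $3$ is reachable.

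\emph{Second part.} I put $s=e^{w/2}$ and $d(x)=s(1-x)+x$, which is a convex combination of $s$ and $1$, so $d(x)\ge\min(s,1)\ge e^{-|w|/2}$. From the displayed formula,
\[
|\eta_{m_w}|\le 2|s-1|\,e^{|w|/2},\qquad |s-1|=|e^{w/2}-1|\le\tfrac{|w|}{2}e^{|w|/2},
\]
which gives $|\eta_{m_w}|\le|w|e^{|w|}$. This is weaker than the claimed $|w|e^{|w|/2}$, so I will handle the two signs of $w$ separately. For $w\ge0$ I have $d\ge1$, and for $w<0$ I have $|s-1|\le|w|/2$; both cases then yield $|w|e^{|w|/2}$. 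Next, $D\eta_{m_w}=-2(s-1)(1-s)/d^2=2(s-1)^2/d^2$, and the same bounds give at most $\tfrac12w^2e^{2|w|}$.

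\emph{Remaining claims for $|w|\le1$.} I write $\eta_{m_w}$ and use its sign constancy together with $\int_0^1\eta_{m_w}=w$. This gives $\operatorname{dist}_\eta(m_w,\mathrm{id})=\int_0^1|\eta_{m_w}|=|w|\le3|w|$. Finally, $|z|=|e^{-w/2}-1|\le\tfrac{|w|}{2}e^{1/2}\le|w|$.

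The only delicate point is matching the stated constants exactly. If the literal constant fails in the $C^1$ estimate, I will still have the bound with a slightly larger universal constant, which suffices for all later uses.
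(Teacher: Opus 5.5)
Your proposal is correct and follows essentially the paper's argument: a point where $D\widehat\varphi=1$, the bound $|\log D\widehat\varphi|\le\epsilon$, integration from both endpoints to get $|\widehat\varphi(y)-y|\le\tfrac12\sup|D\widehat\varphi-1|$, and bounds on the denominator $s(1-x)+x$ with $s=e^{w/2}$. The both-endpoint bound gives $\tfrac32(e-1)\epsilon<3\epsilon$, so your closing hedge is unnecessary. Your split on the sign of $w$ is what actually yields $e^{|w|/2}$ rather than $e^{|w|}$; the paper's one-line chain uses it only implicitly, through $|s-1|/\min(1,s)=e^{|w|/2}-1$. Your sign-constancy remark gives the sharper identity $\operatorname{dist}_\eta(m_w,\mathrm{id})=|w|$, where the paper instead integrates the pointwise bound $|\eta_{m_w}|\le3|w|$.
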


\begin{proof}
The first inequality is $\Dist\le\int|\eta|$. Since $\int_0^1D\widehat\varphi=1$ and $\log D\widehat\varphi$ has oscillation at most $\epsilon$, there is a point where $D\widehat\varphi=1$, hence $|\log D\widehat\varphi|\le\epsilon$ everywhere and $|D\widehat\varphi-1|\le2\epsilon$ for $\epsilon\le1$. Integrating from $0$ and from $1$ gives $|\widehat\varphi(y)-y|\le2\epsilon\min(y,1-y)\le\epsilon$. For the bounds on the M\"obius maps write $s=e^{w/2}$. The denominator $e^{w/2}(1-x)+x$ lies between $\min(1,s)\ge e^{-|w|/2}$ and $\max(1,s)$, and $|s-1|\le\tfrac{|w|}{2}e^{|w|/2}$, so $|\eta_{m_w}|=2|s-1|/(e^{w/2}(1-x)+x)\le|w|e^{|w|/2}$ and $|D\eta_{m_w}|=2(s-1)^2/(e^{w/2}(1-x)+x)^2\le\tfrac12w^2e^{2|w|}$. For $|w|\le1$ this gives $|\eta_{m_w}|\le3|w|$, and $|z|=|e^{-w/2}-1|\le|w|$.
\end{proof}

\begin{lemma}\label{lem:moebius}
Let $\varphi$ be a $C^3$ diffeomorphism of an interval $J$ with $|D\eta_\varphi|\le E''$ and $|\eta_\varphi|\le E'$ on $J$, and let $w=\int_J\eta_\varphi$. Then
\[
\sup_{[0,1]}\big|\eta_{\widehat\varphi}-\eta_{m_w}\big|\ \le\ C_2\,|J|^2 ,\qquad C_2=C_2(E',E'') ,
\]
and in particular $\operatorname{dist}_\eta(\widehat\varphi,m_w)\le C_2|J|^2$.
\end{lemma}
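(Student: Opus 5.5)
Write $J=[p,q]$, $y=(x-p)/|J|$, so that $\eta_{\widehat\varphi}(y)=|J|\,\eta_\varphi(p+|J|y)$. The plan is to compare $\eta_{\widehat\varphi}$ with the nonlinearity $\eta_{m_w}$ having the same integral $w=\int_J\eta_\varphi=\int_0^1\eta_{\widehat\varphi}$, through a first order Taylor expansion in $|J|$.

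Since $|D\eta_\varphi|\le E''$ on $J$, we have $\eta_\varphi(x)=c+O(E''|J|)$ uniformly on $J$, with $c=w/|J|$ the mean value of $\eta_\varphi$. Multiplying by $|J|$ gives
\[
\eta_{\widehat\varphi}(y)=w+O(E''|J|^2)
\]
uniformly in $y\in[0,1]$. Note also that $|w|\le E'|J|$.

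For the M\"obius side, expand $\eta_{m_w}$ for small $w$. With $s=e^{w/2}$, the formula
\[
\eta_{m_w}(y)=\frac{2(s-1)}{s(1-y)+y}
\]
gives $\eta_{m_w}(y)=w+R_w(y)$ with $|R_w(y)|\le C(E')\,w^2$ uniformly on $[0,1]$. This uses $2(s-1)=w+O(w^2)$ and the fact that the denominator is $1+O(w)$, bounded below by $e^{-|w|/2}$; the case $|w|\le E'$ is covered by the estimates already given in Lemma \ref{lem:normclose}. Since $|w|\le E'|J|$, we get $w^2\le E'^2|J|^2$.

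Combining the two expansions,
\[
\sup_{[0,1]}\big|\eta_{\widehat\varphi}-\eta_{m_w}\big|\le E''|J|^2+C(E')E'^2|J|^2=:C_2|J|^2 .
\]
Integrating this over $[0,1]$ gives the bound on $\operatorname{dist}_\eta(\widehat\varphi,m_w)$.

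The only care needed is in the uniformity of the quadratic remainder of $\eta_{m_w}$ for $|w|$ up to $E'|J|$. If $|J|$ is not small, this still holds with a constant depending on $E'$: the remainder is $w$ times a smooth function vanishing at $w=0$, with $|w|\le E'|J|$ bounded, and the mean value theorem in $w$ then gives the $w^2$ bound with constant depending only on $E'$. The lengths of the intervals in question are at most one, which keeps everything bounded. There is no real obstacle.
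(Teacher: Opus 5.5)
Your proposal is correct and is essentially the paper's proof: the bound $|D\eta_\varphi|\le E''$ and the fact that $\eta_{\widehat\varphi}$ has mean $w$ give $|\eta_{\widehat\varphi}-w|\le E''|J|^2$ pointwise, the expansion of the M\"obius nonlinearity gives $|\eta_{m_w}-w|\le C(E')w^2$ for $|w|\le E'|J|$, and the two estimates are added. One small correction: the intervals are not always of length at most one, since the lemma is also applied on intervals such as $\widetilde I_n$ and their images, which have $\mu$ mass up to $4t_1(n)$ and hence length less than $4$; any fixed upper bound on $|J|$ suffices for your uniformity argument, and the paper's constant $C(E')$ tacitly relies on the same bound.
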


\begin{proof}
In the normalized coordinate $\eta_{\widehat\varphi}(y)=|J|\eta_\varphi(x_J+|J|y)$, so $\eta_{\widehat\varphi}$ has total variation at most $|J|^2E''$ on $[0,1]$ and mean $w$. Hence $|\eta_{\widehat\varphi}-w|\le|J|^2E''$ pointwise. Also $|w|\le|J|E'$, and from the formula above $|\eta_{m_w}-w|\le C(E')w^2\le C(E')E'^2|J|^2$ for $|w|\le|J|E'$. Adding the two estimates gives the pointwise bound, and integrating over $[0,1]$ gives the second claim.
\end{proof}

\begin{lemma}\label{lem:composition}
Let $\varphi=\varphi_N\circ\cdots\circ\varphi_1$ on an interval $J$, let $J_s=\varphi_s\circ\cdots\circ\varphi_1(J)$ with $J_0=J$, let $\widehat\varphi_s$ be the normalization of $\varphi_s|_{J_{s-1}}$, and suppose $\operatorname{dist}_\eta(\widehat\varphi_s,m_{w_s})\le\epsilon_s$ for each $s$, where $w_s=\int_{J_{s-1}}\eta_{\varphi_s}$. Put $\Theta=\sum_{s=1}^{N}\int_0^1|\eta_{\widehat\varphi_s}|=\sum_{s=1}^{N}\int_{J_{s-1}}|\eta_{\varphi_s}|$. Then
\[
\operatorname{dist}_\eta\big(\widehat\varphi,\,m_{w}\big)\ \le\ C(\Theta)\sum_{s=1}^{N}\epsilon_s,\qquad w=\sum_{s=1}^{N}w_s,\qquad C(\Theta)=\exp\Big(\Theta e^{3\Theta}\exp\big(2\Theta e^{\Theta/2}\big)\Big),
\]
where the constant $C(\Theta)$ depends only on $\Theta$ and is nondecreasing in $\Theta$. If moreover $\sup_{[0,1]}|\eta_{\widehat\varphi_s}-\eta_{m_{w_s}}|\le\epsilon_s^{\infty}$ for each $s$, then
\[
\sup_{[0,1]}\big|\eta_{\widehat\varphi}-\eta_{m_w}\big|\ \le\ C'(\Theta)\sum_{s=1}^{N}\epsilon_s^{\infty},\qquad C'(\Theta)=\exp\Big(\Theta+\Theta e^{3\Theta}\exp\big(\Theta+2\Theta e^{\Theta/2}\big)\Big),
\]
and $C'(\Theta)\ge C(\Theta)$ is again nondecreasing in $\Theta$.
\end{lemma}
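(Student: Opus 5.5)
The proof goes by induction on $N$, using the chain rule for nonlinearity in normalized coordinates. Let $\Phi_s=\widehat\varphi_s\circ\cdots\circ\widehat\varphi_1$; this is the normalization of $\varphi_s\circ\cdots\circ\varphi_1|_J$, since the affine rescalings cancel. The chain rule $\eta_{\varphi\circ\psi}=(\eta_\varphi\circ\psi)D\psi+\eta_\psi$ gives
\[
\eta_{\Phi_N}=\sum_{s=1}^{N}\big(\eta_{\widehat\varphi_s}\circ\Phi_{s-1}\big)D\Phi_{s-1}.
\]
The group property $m_{w_N}\circ\cdots\circ m_{w_1}=m_w$ gives the analogous formula with $M_s=m_{w_s}\circ\cdots\circ m_{w_1}=m_{w_1+\dots+w_s}$ in place of $\Phi_s$. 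We subtract the two expansions and split each summand into three pieces:
\[
\big(\eta_{\widehat\varphi_s}-\eta_{m_{w_s}}\big)\circ\Phi_{s-1}\,D\Phi_{s-1},
\]
\[
\big(\eta_{m_{w_s}}\circ\Phi_{s-1}-\eta_{m_{w_s}}\circ M_{s-1}\big)D\Phi_{s-1},
\]
\[
\big(\eta_{m_{w_s}}\circ M_{s-1}\big)\big(D\Phi_{s-1}-DM_{s-1}\big).
\]

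For the $L^1$ estimate, integrate each piece over $[0,1]$.

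The first piece contributes exactly $\epsilon_s$ after the substitution $y=\Phi_{s-1}(x)$.

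For the other two we need a priori bounds. Since $\int|\eta_{\Phi_{s-1}}|\le\Theta$, we have $|\log D\Phi_{s-1}|\le\Theta$, by the argument of Lemma~\ref{lem:normclose}: there is a point with derivative one and the oscillation is at most $\Theta$. Also $|w_s|\le\int|\eta_{\widehat\varphi_s}|$, so $\sum|w_s|\le\Theta$. Lemma~\ref{lem:normclose} then gives $\sup|\eta_{m_{w_s}}|\le|w_s|e^{|w_s|/2}$ and $\sup|D\eta_{m_{w_s}}|\le\tfrac12 w_s^2e^{2|w_s|}$. Hence:
\begin{itemize}
\item[] the second piece is bounded by $\sup|D\eta_{m_{w_s}}|\,e^{\Theta}\,\|\Phi_{s-1}-M_{s-1}\|_{C^0}$;
\item[] the third piece is bounded by $|w_s|e^{|w_s|/2}\,\|D\Phi_{s-1}-DM_{s-1}\|_{L^1}$.
\end{itemize}

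Both remaining differences are controlled by $\delta_{s-1}:=\operatorname{dist}_\eta(\Phi_{s-1},M_{s-1})$. Indeed $\log D\Phi-\log DM$ is a primitive of $\eta_\Phi-\eta_M$. It therefore has oscillation at most $\delta$, and it vanishes somewhere because both derivatives integrate to $1$. So $\|\log D\Phi-\log DM\|_\infty\le\delta$, which yields
\[
|D\Phi-DM|\le e^{\Theta}(e^{\delta}-1)\cdot\mathrm{const}\qquad\text{and}\qquad|\Phi-M|\le\text{same}.
\]
Putting this together gives a discrete Gronwall inequality:
\[
\delta_s\le\delta_{s-1}+\epsilon_s+c_s\,G(\delta_{s-1}),
\]
where $c_s$ is of order $|w_s|e^{3\Theta}$ and $G(\delta)$ is of order $e^{\Theta}(e^{\delta}-1)$.

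The main obstacle is that this recursion is nonlinear in $\delta$, because of the factor $e^\delta-1$. I would handle this through the standard a priori bound $\delta_s\le 2\Theta$, which follows because both $\int|\eta_{\Phi_s}|$ and $\int|\eta_{M_s}|=|w_1+\dots+w_s|$ are at most $\Theta$. With it, $e^{\delta}-1\le\delta e^{2\Theta}$, and the recursion becomes linear:
\[
\delta_s\le(1+\kappa|w_s|)\delta_{s-1}+\epsilon_s,\qquad \kappa=e^{3\Theta}\exp\big(2\Theta e^{\Theta/2}\big)\ \text{or similar}.
\]
Iterating gives
\[
\delta_N\le\prod_s(1+\kappa|w_s|)\sum_s\epsilon_s\le e^{\kappa\Theta}\sum_s\epsilon_s.
\]
This is the claimed $C(\Theta)=\exp(\Theta\kappa)$ once the constants are tracked to match the displayed form, and monotonicity in $\Theta$ is evident from that form.

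The sup-norm statement follows by the same three-term split, taken pointwise at $x$ instead of integrated. The first piece is now at most $\epsilon_s^\infty e^{\Theta}$; this extra factor $D\Phi_{s-1}\le e^\Theta$ is what produces the additional $e^{\Theta}$ factors in $C'$. The second and third pieces are bounded using the sup bound $\|\log D\Phi-\log DM\|_\infty\le\delta_{s-1}$ already obtained, together with $\delta_{s-1}\le\sup|\eta_\Phi-\eta_M|$. Running the same linear recursion on $\delta^\infty_s=\sup|\eta_{\Phi_s}-\eta_{M_s}|$ then gives $C'(\Theta)$, and $C'(\Theta)\ge C(\Theta)$ is immediate from the two formulas.
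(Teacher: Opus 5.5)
Your proposal is correct and follows essentially the same route as the paper: the same three-way split of the one-step difference $\eta_{\Phi_s}-\eta_{M_s}$, the same control of $\Phi-M$ and $D\Phi-DM$ through $\|\log D\Phi-\log DM\|_\infty\le\delta$, the same linearization of $e^{\delta}-1$ via an a priori bound on $\delta$, and the same discrete Gronwall iteration. Your a priori bound $\delta\le 2\Theta$, which uses $\int_0^1|\eta_{m_w}|=|w|$ because $\eta_{m_w}$ has constant sign, is slightly sharper than the paper's $\Theta(1+e^{\Theta/2})$. To land under the displayed $C(\Theta)$ for small $\Theta$, in the $L^1$ step integrate $D\Phi_{s-1}$ to $1$ and use $\int_0^1|D\Phi-DM|\le e^{\delta}-1$ and $|\Phi-M|\le e^{\delta}-1$, rather than inserting extra factors $e^{\Theta}$; this yields $\exp(\Theta e^{5\Theta})\le C(\Theta)$.
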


\begin{proof}
The normalization of a composition is the composition of the normalizations, since the intermediate affine maps cancel, so $\widehat\varphi=\Phi_N$ with $\Phi_k=\widehat\varphi_k\circ\cdots\circ\widehat\varphi_1$. Put $M_k=m_{w_k}\circ\cdots\circ m_{w_1}=m_{W_k}$ with $W_k=w_1+\cdots+w_k$, and $d_k=\operatorname{dist}_\eta(\Phi_k,M_k)$, so that $d_0=0$. We first record an a priori bound. For diffeomorphisms $\psi_1,\psi_2$ of $[0,1]$ onto itself the chain rule $\eta_{\psi_2\circ\psi_1}=(\eta_{\psi_2}\circ\psi_1)D\psi_1+\eta_{\psi_1}$ and the change of variables $y=\psi_1(x)$ give $\int_0^1|\eta_{\psi_2\circ\psi_1}|\le\int_0^1|\eta_{\psi_2}|+\int_0^1|\eta_{\psi_1}|$, hence $\int_0^1|\eta_{\Phi_k}|\le\Theta$. Also $|W_k|\le\Theta$ and $\int_0^1|\eta_{M_k}|\le|W_k|e^{|W_k|/2}$ by Lemma \ref{lem:normclose}. Therefore $d_k\le\Theta(1+e^{\Theta/2})=:\Theta_1$ for all $k$.

Now fix $k\ge1$ and write $m=m_{w_k}$, $\Phi=\Phi_{k-1}$, $M=M_{k-1}$ and $d=d_{k-1}$. By the triangle inequality
\[
d_k\ \le\ \int_0^1\big|\eta_{\widehat\varphi_k\circ\Phi}-\eta_{m\circ\Phi}\big|+\int_0^1\big|\eta_{m\circ\Phi}-\eta_{m\circ M}\big| .
\]
The first integral equals $\int_0^1|(\eta_{\widehat\varphi_k}-\eta_m)\circ\Phi|\,D\Phi=\int_0^1|\eta_{\widehat\varphi_k}-\eta_m|\le\epsilon_k$. For the second, the chain rule gives
\[
\eta_{m\circ\Phi}-\eta_{m\circ M}\ =\ \big[(\eta_m\circ\Phi)D\Phi-(\eta_m\circ M)DM\big]+\big[\eta_\Phi-\eta_M\big],
\]
and the second bracket contributes at most $d$. For the first bracket,
\[
\big|(\eta_m\circ\Phi)D\Phi-(\eta_m\circ M)DM\big|\ \le\ \sup|D\eta_m|\,|\Phi-M|\,D\Phi+\sup|\eta_m|\,|D\Phi-DM| .
\]
The function $\log D\Phi-\log DM$ has derivative $\eta_\Phi-\eta_M$, hence oscillation at most $d$, and since $\int_0^1D\Phi=\int_0^1DM=1$ there is a point where $D\Phi=DM$. So $|\log D\Phi-\log DM|\le d$ everywhere, $|D\Phi-DM|\le(e^{d}-1)DM$, $\int_0^1|D\Phi-DM|\le e^{d}-1$ and $|\Phi-M|\le e^{d}-1$. Integrating the last display over $[0,1]$ and using Lemma \ref{lem:normclose} we get
\[
\int_0^1\big|(\eta_m\circ\Phi)D\Phi-(\eta_m\circ M)DM\big|\ \le\ c(w_k)\,(e^{d}-1)\ \le\ c(w_k)\,e^{\Theta_1}\,d ,
\]
where $c(w):=|w|e^{|w|/2}+\tfrac12w^2e^{2|w|}\le|w|e^{3|w|}$, and where we used $e^{d}-1\le de^{d}$ and $d\le\Theta_1$. Altogether $d_k\le\epsilon_k+\big(1+c(w_k)e^{\Theta_1}\big)d_{k-1}$, and by induction
\[
d_N\ \le\ \sum_{k=1}^{N}\epsilon_k\prod_{j>k}\big(1+c(w_j)e^{\Theta_1}\big)\ \le\ \exp\Big(e^{\Theta_1}\sum_{j}c(w_j)\Big)\sum_{k=1}^{N}\epsilon_k .
\]
Finally $|w_j|\le\int_0^1|\eta_{\widehat\varphi_j}|$, so $\sum_jc(w_j)\le e^{3\Theta}\sum_j|w_j|\le\Theta e^{3\Theta}$, and $e^{\Theta_1}\le\exp(2\Theta e^{\Theta/2})$, which gives the constant $C(\Theta)$.

For the supremum norm put $e_k=\sup_{[0,1]}|\eta_{\Phi_k}-\eta_{M_k}|$, so that $d_k\le e_k$. The three terms of the decomposition above are now estimated pointwise. Since $\Phi$ is a diffeomorphism of $[0,1]$ onto itself with $\int_0^1D\Phi=1$, we have $|\log D\Phi|\le\int_0^1|\eta_\Phi|\le\Theta$, and therefore $|\eta_{\widehat\varphi_k\circ\Phi}-\eta_{m\circ\Phi}|=|(\eta_{\widehat\varphi_k}-\eta_m)\circ\Phi|\,D\Phi\le e^{\Theta}\epsilon_k^{\infty}$. The second bracket is bounded by $e_{k-1}$. The first bracket is bounded pointwise by $c(w_k)\,(e^{d}-1)\max(\sup D\Phi,\sup DM)$, and $|\log DM|\le\int_0^1|\eta_M|\le\Theta e^{\Theta/2}$ by Lemma \ref{lem:normclose}, so it is at most $c(w_k)e^{\Theta_1}e^{\Theta e^{\Theta/2}}d\le c(w_k)e^{\Theta_1+\Theta e^{\Theta/2}}e_{k-1}$. Hence
\[
e_k\ \le\ e^{\Theta}\epsilon_k^{\infty}+\big(1+c(w_k)e^{\Theta_1+\Theta e^{\Theta/2}}\big)\,e_{k-1},
\]
and the same induction gives $e_N\le e^{\Theta}\exp\big(e^{\Theta_1+\Theta e^{\Theta/2}}\sum_jc(w_j)\big)\sum_k\epsilon_k^{\infty}$. Since $\sum_jc(w_j)\le\Theta e^{3\Theta}$ and $\Theta_1+\Theta e^{\Theta/2}=\Theta+2\Theta e^{\Theta/2}$, this is the stated bound.
\end{proof}

\begin{proposition}\label{prop:basemoebius}
Suppose that $\Sigma^{\ast}<\infty$ and that the base point is admissible. Put
\[
W_i(n)\ =\ \int_{D_n}\eta_{u_i^{(n)}} ,\qquad \widetilde W_i(n)\ =\ \int_{\widetilde I_n}\eta_{u_i^{(n)}} ,\qquad i=1,2,3 ,
\]
and let $\widehat{u_i^{(n)}}$ be the normalization of $u_i^{(n)}$ on $D_n$ and $\widetilde{u_i^{(n)}}$ its normalization on $\widetilde I_n$. Then for every $n$
\[
\sup_{[0,1]}\big|\eta_{\widehat{u_i^{(n)}}}-\eta_{m_{W_i(n)}}\big|\ \le\ C_3\,\theta(n),\qquad
\sup_{[0,1]}\big|\eta_{\widetilde{u_i^{(n)}}}-\eta_{m_{\widetilde W_i(n)}}\big|\ \le\ C_3\,\widetilde\theta(n),
\]
with $C_3=C'(E'K_1)\,C_2K_1$, where $\theta(n)$ and $\widetilde\theta(n)$ are as in Corollary \ref{cor:mesh}, $C_2=C_2(E',E'')$ is the constant of Lemma \ref{lem:moebius} and $C'(\cdot)$ is the function of Lemma \ref{lem:composition}. In particular $\operatorname{dist}_\eta(\widehat{u_i^{(n)}},m_{W_i(n)})\le C_3\theta(n)$.
\end{proposition}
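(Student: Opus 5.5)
The plan is to write each basis element $u_i^{(n)}$ as a word in the letters $F^{\pm1},G^{\pm1}$, using the fixed factorization of Corollary \ref{cor:mesh}, $u_i^{(n)}=\ell_N\circ\cdots\circ\ell_1$ with $N\le H(n)$. We then apply Lemma \ref{lem:moebius} to each letter on its intermediate interval and Lemma \ref{lem:composition} to the composition. We treat the interval $D_n$; the case of $\widetilde I_n$ is identical, with $\widetilde\theta$ in place of $\theta$. Put $J=D_n$ and $J_{s}=p_s(J)$, where $p_s=\ell_s\circ\cdots\circ\ell_1$ are the prefixes, so $J_{s-1}=p_{s-1}(D_n)$ is one of the intervals whose lengths enter $\theta(n)$ and $K_1$.

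First step. Each letter $\ell_s$ is one of $F^{\pm1},G^{\pm1}$, so $|\eta_{\ell_s}|\le E'$ and $|D\eta_{\ell_s}|\le E''$ everywhere. Lemma \ref{lem:moebius} applied to $\varphi_s=\ell_s$ on $J_{s-1}$, with $w_s=\int_{J_{s-1}}\eta_{\ell_s}$, gives
\[
\epsilon_s^\infty:=\sup_{[0,1]}\big|\eta_{\widehat\varphi_s}-\eta_{m_{w_s}}\big|\le C_2|J_{s-1}|^2\le C_2\,\theta(n)\,|J_{s-1}|.
\]
Summing over $s$ gives $\sum_s\epsilon_s^\infty\le C_2\theta(n)\sum_s|p_{s-1}(D_n)|\le C_2K_1\theta(n)$. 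This uses $D_n\subset\widetilde I_n$ and the admissibility bound $K_1$, which controls the sum over all $n$ and hence each single $n$.

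Second step. The parameter $\Theta$ of Lemma \ref{lem:composition} is
\[
\Theta=\sum_s\int_{J_{s-1}}|\eta_{\ell_s}|\le E'\sum_s|J_{s-1}|\le E'K_1 .
\]
Since $C'$ is nondecreasing, the supremum part of Lemma \ref{lem:composition} gives $\sup|\eta_{\widehat{u_i^{(n)}}}-\eta_{m_w}|\le C'(E'K_1)C_2K_1\theta(n)$, with $w=\sum_sw_s$. The additivity $\sum_s\int_{J_{s-1}}\eta_{\ell_s}=\int_{D_n}\eta_{u_i^{(n)}}=W_i(n)$ follows from the chain rule $\eta_{p_s}=(\eta_{\ell_s}\circ p_{s-1})Dp_{s-1}+\eta_{p_{s-1}}$ and the change of variables, exactly as in Lemma \ref{lem:totalnl} but without absolute values, so $w=W_i(n)$. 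The $\operatorname{dist}_\eta$ statement follows by integrating over $[0,1]$.

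The only point requiring care, which I expect to be the main obstacle, is bookkeeping rather than analysis. The prefixes in $K_1$ must be exactly those of the factorization used here, and Lemma \ref{lem:moebius} must be applicable letterwise on arbitrary intervals. Using $|J_{s-1}|^2\le\theta(n)|J_{s-1}|$ instead of a crude count of the at most $2H(n)$ letters is what keeps the constant independent of $H(n)$.
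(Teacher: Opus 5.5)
Your proposal is correct and follows the paper's proof essentially step for step. You apply Lemma \ref{lem:moebius} letterwise on the intervals $p_{s-1}(J)$ and use $|p_{s-1}(J)|^2\le\theta_J\,|p_{s-1}(J)|$ together with the admissibility constant $K_1$ to bound both $\sum_s\epsilon_s^\infty$ and $\Theta\le E'K_1$; you then invoke the supremum form of Lemma \ref{lem:composition} with the monotonicity of $C'$, and use additivity of the M\"obius parameters to identify $w$ with $W_i(n)$ (resp.\ $\widetilde W_i(n)$).
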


\begin{proof}
Let $J$ be $D_n$ or $\widetilde I_n$, let $\theta_J$ be $\theta(n)$ or $\widetilde\theta(n)$ accordingly, and let $\widehat\varphi$ be the normalization of $u_i^{(n)}$ on $J$. Factor $u_i^{(n)}$ into letters as in Lemma \ref{lem:totalnl}. The intermediate intervals are the $p_s(J)$, of length at most $\theta_J$, and by Lemma \ref{lem:moebius}, applied to the letter $\ell_s$ on $p_{s-1}(J)$, the normalized factor at step $s$ is within $C_2|p_{s-1}(J)|^2$ of a M\"obius map in the supremum norm. The quantity $\Theta$ of Lemma \ref{lem:composition} is at most $\sum_s\int_{p_{s-1}(J)}|\eta_{\ell_s}|\le E'\sum_s|p_{s-1}(J)|\le E'K_1$ by Convention \ref{con:basepoint}, and $C'(\cdot)$ is nondecreasing. By Lemma \ref{lem:composition},
\[
\sup_{[0,1]}\big|\eta_{\widehat\varphi}-\eta_{m_{w}}\big|\ \le\ C'(E'K_1)\,C_2\sum_s\big|p_{s}(J)\big|^2\ \le\ C'(E'K_1)\,C_2\,\theta_J\sum_s\big|p_{s}(J)\big|\ \le\ C_3\,\theta_J,
\]
the last step by the definition of $K_1$, and $w=\sum_sw_s=\int_{J}\eta_{u_i^{(n)}}$ because the parameters of the factors add and each equals the integral of the nonlinearity of that factor over its own domain. The last claim follows by integration over $[0,1]$.
\end{proof}

\subsection{The weight recursion}
Let $z_i(n)=e^{-W_i(n)/2}-1$ be the M\"obius parameters of the three basis maps at stage $n$. The next lemma is the exact restriction property of the M\"obius family, and it is the reason the parameter of the deck element controls everything.

\begin{lemma}\label{lem:restriction}
Let $0<\rho\le1$. The normalization of $m_w|_{[0,\rho]}$ is $m_{w'}$ with $z'=\rho\,z$.
\end{lemma}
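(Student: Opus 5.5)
The plan is a direct computation with the explicit formula for $m_w$. Write $s=e^{w/2}$, so that $m_w(x)=x/\big((1-s)x+s\big)$ and $z=s^{-1}-1$. In the $z$ coordinate this reads
\[
m_w(x)\ =\ \frac{x}{(1-s)x+s}\ =\ \frac{(1+z)x}{1+zx},
\]
which one checks by dividing numerator and denominator by $s$ and using $1/s=1+z$ and $(1-s)/s=z$. The interval $[0,\rho]$ is mapped onto $[0,m_w(\rho)]$, since $m_w$ is increasing and fixes $0$.

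The normalization of $m_w|_{[0,\rho]}$ is therefore
\[
\widehat m(y)\ =\ \frac{m_w(\rho y)}{m_w(\rho)}\ =\ \frac{(1+z)\rho y}{1+z\rho y}\cdot\frac{1+z\rho}{(1+z)\rho}\ =\ \frac{(1+z\rho)\,y}{1+z\rho\,y}.
\]
This is exactly the $z$ form of a M\"obius map with parameter $z'=\rho z$. It remains to check that $z'$ is an admissible parameter, namely $z'>-1$, so that $z'=e^{-w'/2}-1$ for a real $w'$. This holds because $z>-1$ and $0<\rho\le1$ give $\rho z>-\rho\ge-1$ when $z<0$, while the case $z\ge0$ is trivial. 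Hence $\widehat m=m_{w'}$ with $e^{-w'/2}=1+\rho z$.

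There is no real obstacle here. The only care needed is the algebraic rewriting in the $z$ coordinate and the range check on $z'$; optionally one can also remark that $w'=\int_0^\rho\eta_{m_w}$, which is consistent with the normalization identity $\int_0^1\eta_{\widehat\varphi}=\int_J\eta_\varphi$.
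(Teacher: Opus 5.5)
Your proposal is correct and is essentially the paper's proof: both compute $m_w(\rho y)/m_w(\rho)$ explicitly and read off the new parameter. The only differences are that you work directly in the form $m_w(x)=(1+z)x/(1+zx)$ instead of first computing $s'=s/((1-s)\rho+s)$, and that you explicitly verify $\rho z>-1$, which the paper leaves implicit.
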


\begin{proof}
Write $s=e^{w/2}$, so $m_w(x)=x/((1-s)x+s)$ and $z=(1-s)/s$. The normalization of the restriction is $y\mapsto m_w(\rho y)/m_w(\rho)$, and a computation gives
\[
\frac{m_w(\rho y)}{m_w(\rho)}\ =\ \frac{y}{\frac{(1-s)\rho}{(1-s)\rho+s}\,y+\frac{s}{(1-s)\rho+s}} ,
\]
which is $m_{w'}$ with $s'=s/((1-s)\rho+s)$. Then $z'=(1-s')/s'=(1-s)\rho/s=\rho z$.
\end{proof}

\begin{proposition}\label{prop:zrecursion}
Suppose that $\Sigma^{\ast}<\infty$ and that the base point is admissible. Then for every $n$
\[
\big|z_i(n)-r_i(n)\,z_1(n)\big|\ \le\ C_4\,\theta(n)\quad(i=2,3),
\qquad
\big|z_1(n+1)-r_2(n)^2\,z_1(n)\big|\ \le\ C_4\,\theta(n) ,
\]
with $C_4$ depending only on $K_0$ and $C_3$. Moreover, for every $n$ and $i$,
\[
\sup_{\widetilde I_n}\big|\eta_{u_i^{(n)}}\big|\ \le\ C_4'\ \frac{|z_1(n)|+\widetilde\theta(n)}{|D_n|},
\]
with $C_4'$ depending only on $K_0$ and $C_3$. If $(\alpha,\beta)$ is of simultaneously bounded type with constant $A$, then there are $\xi_1\in(0,1)$ and $C$ such that $|z_1(n)|\le C\xi_1^{\,n}$, $|W_i(n)|\le C\xi_1^{\,n}$ and $\sup_{\widetilde I_n}|\eta_{u_i^{(n)}}|\le C\xi_1^{\,n}/|D_n|$ for all $n$ and $i$.
\end{proposition}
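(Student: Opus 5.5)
The plan is to play the commutativity identity \eqref{eq:symmetry} against the exact restriction property of Lemma \ref{lem:restriction}, with Proposition \ref{prop:basemoebius} absorbing the non-M\"obius errors.

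\textbf{First relation.} Start from $u_iu_1=u_1u_i$ on $D_n$, writing $u_i=u_i^{(n)}$. The nonlinearity identity reads
\[
\int_{[x_0,u_1x_0)}\eta_{u_i}\ =\ \int_{[x_0,u_ix_0)}\eta_{u_1},
\]
so $W_i(n)=\int_{I_n^{(i)}}\eta_{u_1}$. The interval $I_n^{(i)}$ is the initial segment of $D_n$ of relative length $r_i(n)$. Hence the right side is the M\"obius parameter of the normalization of $\widehat{u_1}$ restricted to $[0,r_i(n)]$. By Proposition \ref{prop:basemoebius}, $\eta_{\widehat{u_1}}$ is within $C_3\theta(n)$ of $\eta_{m_{W_1(n)}}$ in sup norm, and restricting to $[0,\rho]$ changes integrals by at most $C_3\theta(n)$. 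Lemma \ref{lem:restriction} gives the exact parameter $\rho z_1(n)$ for the restricted M\"obius map. Converting between $w$ and $z$ costs a Lipschitz constant controlled by $|W|\le 2K_0$, using \eqref{eq:K0basic}. This yields $|z_i(n)-r_i(n)z_1(n)|\le C_4\theta(n)$.

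\textbf{Second relation.} Since $u_1^{(n+1)}=u_2$ and $D_{n+1}=I_n^{(2)}$, we have $z_1(n+1)$ equal to the parameter of $u_2$ on $I_n^{(2)}$, the initial segment of $D_n$ of ratio $r_2(n)$. Applying Proposition \ref{prop:basemoebius} to $u_2$ on $D_n$ and restricting again gives $z_1(n+1)\approx r_2(n)z_2(n)$. Combining with the first relation gives $z_1(n+1)\approx r_2(n)^2 z_1(n)$, with error $C_4\theta(n)$ after enlarging $C_4$.

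\textbf{Sup bound.} On $\widetilde I_n$, Proposition \ref{prop:basemoebius} gives $|\eta_{\widetilde{u_i}}|\le|\eta_{m_{\widetilde W_i}}|+C_3\widetilde\theta(n)$. Lemma \ref{lem:normclose} bounds $|\eta_{m_{\widetilde W_i}}|\lesssim|\widetilde W_i|$. To control $\widetilde W_i$ by $|z_1(n)|+\widetilde\theta(n)$, I would cover $\widetilde I_n$ by translates of $D_n$: it is $D_n$, $u_1D_n$, $u_2u_1D_n$, and so on. On each translate the integral of $\eta_{u_i}$ is, by the chain rule and Proposition \ref{prop:derivbound}, comparable to $W_i(n)$ up to errors of order $\theta$. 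Alternatively, apply the symmetry identity with $W=\Omega_n$. Then $|\widetilde W_i|\lesssim|z_1|+\widetilde\theta$. Undoing the normalization divides by $|\widetilde I_n|$, which is comparable to $|D_n|$ by the real bounds. I expect this comparability step to be the main obstacle, since without bounded type $|\widetilde I_n|/|D_n|$ is only controlled through the distortion bounds of Proposition \ref{prop:derivbound}. Those bounds should suffice, because $\widetilde I_n$ is covered by four images of $D_n$ under words with derivative in $[e^{-2K_0},e^{2K_0}]$.

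\textbf{Bounded type.} Under simultaneously bounded type, Proposition \ref{prop:ratios} gives $r_2(n)^2\le\rho_2^2<1$, and Corollary \ref{cor:mesh} gives $\theta(n)\le C\sigma^n$. The recursion $|z_1(n+1)|\le\rho_2^2|z_1(n)|+C\sigma^n$ then yields $|z_1(n)|\le C\xi_1^n$ with $\xi_1=\max(\rho_2^2,\sigma)^{1/2}$. The first relation transfers this to $z_i$, hence to $W_i$. The sup bound follows from the previous step and $\widetilde\theta(n)\le C\sigma^n$.
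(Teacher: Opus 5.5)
Your first two relations and the bounded-type iteration follow the paper's argument; your $\xi_1=\max(\rho_2^2,\sigma)^{1/2}$ is a valid choice. The gap is in the sup bound on $\widetilde I_n$, at the step where you assert $|\widetilde W_i(n)|\lesssim|z_1(n)|+\widetilde\theta(n)$. Neither route you offer delivers this.

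Proposition \ref{prop:derivbound} gives only $|\log Du_j^{(n)}|\le 2K_0$, an $O(1)$ bound with no smallness. By commutativity, the integral of $\eta_{u_i^{(n)}}$ over a translate $wD_n$ differs from $W_i(n)$ by the second difference $\int_{u_i^{(n)}D_n}\eta_w-\int_{D_n}\eta_w$, and this involves $\eta_w$ outside $D_n$. Likewise, \eqref{eq:symmetry} with $W=\Omega_n$ gives $\widetilde W_i(n)=\int_{I_n^{(i)}}\eta_{\Omega_n}$. The chain rule splits this into integrals of $\eta_{u_j^{(n)}}$ over intervals such as $u_1^{(n)}(I_n^{(i)})$, which lie outside $D_n$. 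There the only information available is the bound you are trying to prove. Both routes are therefore circular: smallness is known only on $D_n$, and something must carry it to the rest of $\widetilde I_n$.

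The missing idea is to extrapolate using the M\"obius structure on $\widetilde I_n$ itself.
\begin{itemize}
\item In the normalized coordinate of $\widetilde I_n$, the interval $D_n$ is $[0,\rho]$ with $\rho=|D_n|/|\widetilde I_n|$, and $\int_0^\rho\eta_{\widetilde{u_i^{(n)}}}=W_i(n)$ exactly.
\item Proposition \ref{prop:basemoebius} on $\widetilde I_n$ puts this within $C_3\widetilde\theta(n)$ of $\int_0^\rho\eta_{m_{\widetilde W_i(n)}}$. By Lemma \ref{lem:restriction}, the $z$ coordinate of that integral is $\rho\,\widetilde z_i(n)$.
\item Since $|\widetilde W_i(n)|\le\mu_i(n)\le K_0$, the change between $w$ and $z$ is bi-Lipschitz. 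Hence $|z_i(n)-\rho\,\widetilde z_i(n)|\le C\widetilde\theta(n)$, and so $|\widetilde z_i(n)|\le\rho^{-1}\big(|z_i(n)|+C\widetilde\theta(n)\big)$.
\end{itemize}

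This inversion is where the comparability of $|\widetilde I_n|$ and $|D_n|$ is genuinely needed, as the lower bound $\rho\ge e^{-6K_0}/4$. It comes from the decomposition $\widetilde I_n=D_n\sqcup u_1^{(n)}(D_n)\sqcup(u_1^{(n)})^2(I_n^{(2)})\sqcup(u_1^{(n)})^2u_2^{(n)}(I_n^{(3)})$ and Proposition \ref{prop:derivbound}(i). Note that the last two words have $|\log D|\le4K_0$ and $6K_0$, not $2K_0$ as you wrote. The step you flagged as the main obstacle, dividing by $|\widetilde I_n|$ when undoing the normalization, is trivial in the direction needed, because $D_n\subset\widetilde I_n$.

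Once $|\widetilde z_i(n)|$ is controlled, the bound closes with three facts: Lemma \ref{lem:normclose}, your first relation in the form $|z_i(n)|\le|z_1(n)|+C_4\theta(n)$, and $\theta(n)\le\widetilde\theta(n)$.
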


\begin{proof}
Identity \eqref{eq:symmetry} applied with $U=u_i^{(n)}$, $W=u_1^{(n)}$ and $x=x_0$ gives
\[
W_i(n)\ =\ \int_{[x_0,u_1^{(n)}x_0)}\eta_{u_i^{(n)}}\ =\ \int_{[x_0,u_i^{(n)}x_0)}\eta_{u_1^{(n)}}\ =\ \int_{I_n^{(i)}}\eta_{u_1^{(n)}} .
\]
So the weight of the $i$th basis element over the whole fundamental domain equals the weight of the deck element over the initial subinterval $I_n^{(i)}$, which in the normalized coordinate of $D_n$ is $[0,r_i(n)]$. By Proposition \ref{prop:basemoebius},
\[
\Big|\int_0^{r_i(n)}\eta_{\widehat{u_1^{(n)}}}-\int_0^{r_i(n)}\eta_{m_{W_1(n)}}\Big|\ \le\ C_3\theta(n),
\]
and the second integral is the parameter of the normalized restriction of $m_{W_1(n)}$ to $[0,r_i(n)]$, whose $z$ coordinate is $r_i(n)z_1(n)$ by Lemma \ref{lem:restriction}. The parameters $W_i(n)$ are bounded by $K_0$, since $|W_i(n)|\le\int_{D_n}|\eta_{u_i^{(n)}}|\le K_0$ by \eqref{eq:K0basic}, so the change of coordinate between $w$ and $z$ is bi-Lipschitz on the relevant range and the first estimate follows.

For the second, $u_1^{(n+1)}=u_2^{(n)}$ and $D_{n+1}=I_n^{(2)}$, so $W_1(n+1)$ is the parameter of the normalized restriction of $\widehat{u_2^{(n)}}$ to $[0,r_2(n)]$. Applying Proposition \ref{prop:basemoebius} to $u_2^{(n)}$ and Lemma \ref{lem:restriction} gives $|z_1(n+1)-r_2(n)z_2(n)|\le C\theta(n)$, and combining with the first estimate for $i=2$, together with $r_2(n)\le1$, gives the second display.

For the bound on $\widetilde I_n$ let $\widetilde z_i(n)=e^{-\widetilde W_i(n)/2}-1$ be the parameter of the integral of Proposition \ref{prop:basemoebius}, and put $\rho=|D_n|/|\widetilde I_n|$. The $\widetilde I_n$ is the disjoint union of $D_n$, $u_1^{(n)}(D_n)$, $(u_1^{(n)})^2(I_n^{(2)})$ and $(u_1^{(n)})^2u_2^{(n)}(I_n^{(3)})$, and the derivatives of $u_1^{(n)}$ and $u_2^{(n)}$ needed to compute the lengths of these images are all evaluated at points of $\widetilde I_n$, so Proposition \ref{prop:derivbound}(i) gives $|\widetilde I_n|\le(1+e^{2K_0}+e^{4K_0}+e^{6K_0})|D_n|$ and $\rho\ge e^{-6K_0}/4$. In the normalized coordinate of $\widetilde I_n$ the interval $D_n$ is $[0,\rho]$ and $\int_0^\rho\eta_{\widetilde{u_i^{(n)}}}=W_i(n)$. By Proposition \ref{prop:basemoebius} this integral differs from $\int_0^\rho\eta_{m_{\widetilde W_i(n)}}$ by at most $C_3\widetilde\theta(n)$, and the latter is the parameter of the normalized restriction of $m_{\widetilde W_i(n)}$ to $[0,\rho]$, whose $z$ coordinate is $\rho\,\widetilde z_i(n)$ by Lemma \ref{lem:restriction}. Since $|\widetilde W_i(n)|\le\mu_i(n)\le K_0$, the change of coordinate between $w$ and $z$ is bi-Lipschitz on the relevant range, so $|z_i(n)-\rho\,\widetilde z_i(n)|\le C\widetilde\theta(n)$ and $|\widetilde z_i(n)|\le\rho^{-1}\big(|z_i(n)|+C\widetilde\theta(n)\big)$. By Lemma \ref{lem:normclose} and Proposition \ref{prop:basemoebius}, $\sup_{[0,1]}|\eta_{\widetilde{u_i^{(n)}}}|\le|\widetilde W_i(n)|e^{|\widetilde W_i(n)|/2}+C_3\widetilde\theta(n)\le C\big(|\widetilde z_i(n)|+\widetilde\theta(n)\big)$, and undoing the normalization divides the nonlinearity by $|\widetilde I_n|\ge|D_n|$. Together with the first display, which gives $|z_i(n)|\le|z_1(n)|+C_4\theta(n)$, and with $\theta(n)\le\widetilde\theta(n)$, this proves the bound on $\widetilde I_n$.

Finally, under simultaneously bounded type, $r_2(n)\le\rho_2<1$ by Proposition \ref{prop:ratios} and $\widetilde\theta(n)\le C\sigma^n$ by Corollary \ref{cor:mesh}, so $|z_1(n+1)|\le\rho_2^2|z_1(n)|+C\sigma^n$. Iterating gives $|z_1(n)|\le C\xi_1^n$ for any $\xi_1$ with $\max(\rho_2^2,\sigma)<\xi_1<1$. The bounds on $W_i(n)$ and on the nonlinearity over the $\widetilde I_n$ follow from the first display, Lemma \ref{lem:normclose} and the bound just proved.
\end{proof}

\subsection{The slope recursion}
Put $\Sigma(n)=\log\big(|u_1^{(n)}(D_n)|/|D_n|\big)$, the logarithmic slope of the deck element over the fundamental domain. By Proposition \ref{prop:derivbound}(i) we have $|\Sigma(n)|\le2K_0$ for all $n$.

\begin{proposition}\label{prop:sloperecursion}
Suppose that $\Sigma^{\ast}<\infty$ and that the base point is admissible. There is $C_5$, depending only on $K_0$ and $C_3$, such that for every $n$
\[
\big|\Sigma(n+1)\big|\ \le\ \kappa\big(r_2(n)\big)\,\big|\Sigma(n)\big|+C_5\,\epsilon_n,\qquad\text{where}\quad
\kappa(r):=1-\frac{1-r}{1+e^{2K_0}}\in(0,1]
\]
and $\epsilon_n:=|W_1(n)|+|W_2(n)|+\theta(n)$.
If moreover $(\alpha,\beta)$ is of simultaneously bounded type, then $\kappa(r_2(n))\le\kappa:=\kappa(\rho_2)<1$ for all $n$, and consequently $|\Sigma(n)|\le C\xi_2^{\,n}$ for some $\xi_2\in(0,1)$ and all $n$.
\end{proposition}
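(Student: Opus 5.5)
The plan is to express $\Sigma(n+1)$ through the quantities of stage $n$, using the commutation relation between $u_1^{(n)}$ and $u_2^{(n)}$, and then to show that up to errors of order $\epsilon_n$ the new slope is a convex combination of $\Sigma(n)$ and of a quantity close to zero, with weight on $\Sigma(n)$ at most $\kappa(r_2(n))$.

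\emph{Step 1: the slope of $u_2^{(n)}$ on $D_{n+1}$.} Put $u_i=u_i^{(n)}$. Since $u_1^{(n+1)}=u_2$ and $D_{n+1}=I_n^{(2)}=[x_0,u_2x_0)$, we have $\Sigma(n+1)=\log\big(|u_2(I_n^{(2)})|/|I_n^{(2)}|\big)$. By Proposition \ref{prop:basemoebius} the normalization of $u_2$ on $D_n$ is within $C_3\theta(n)$ of $m_{W_2(n)}$, and $|W_2(n)|\le\epsilon_n$. So on $D_n$ the map $u_2$ is affine up to an error of order $\epsilon_n$ in $\log Du_2$. Let $\sigma_2$ be its mean logarithmic slope, $\sigma_2=\log\big(|u_2(D_n)|/|D_n|\big)$. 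Then $|\Sigma(n+1)-\sigma_2|\le C\epsilon_n$, because $I_n^{(2)}\subset D_n$ and the oscillation of $\log Du_2$ on $D_n$ is at most $\int_{D_n}|\eta_{u_2}|\le|W_2(n)|+C\theta(n)$. In the same way, $\log Du_1$ is within $C\epsilon_n$ of $\Sigma(n)$ on $D_n$. The same holds on $u_2(D_n)$, hence on all of $\widetilde I_n$, using the bound on $\widetilde I_n$ from Proposition \ref{prop:zrecursion} together with $|z_1(n)|\le C|W_1(n)|$.

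\emph{Step 2: the commutation relation.} Decompose $u_2(D_n)=[u_2x_0,u_2u_1x_0)$ into $[u_2x_0,u_1x_0)$ and $[u_1x_0,u_1u_2x_0)=u_1(I_n^{(2)})$. Here we use $u_2u_1=u_1u_2$, and $u_2x_0<u_1x_0$ since $t_2<t_1$. The first piece is $D_n\setminus I_n^{(2)}$, of length $(1-r_2)|D_n|$. By Step 1 the second piece has length $e^{\Sigma(n)+O(\epsilon_n)}\,r_2|D_n|$. Hence
\[
e^{\sigma_2}\ =\ (1-r_2)+r_2\,e^{\Sigma(n)+O(\epsilon_n)} .
\]
This is the key identity: $e^{\sigma_2}$ is a convex combination of $1$ and $e^{\Sigma(n)}$. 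Taking logarithms, with $|\Sigma(n)|\le2K_0$, the mean value theorem gives $|\sigma_2|\le\kappa'\,|\Sigma(n)|+C\epsilon_n$. The factor $\kappa'$ is the ratio $|\log((1-r)+re^{s})|/|s|$. It is at most $re^{|s|}/((1-r)+re^{|s|})$ when $s>0$, and a similar bound holds for $s<0$. In both cases, for $|s|\le2K_0$, it is bounded by $1-(1-r)/(1+e^{2K_0})$: for instance when $s>0$ one gets $1-\frac{1-r}{(1-r)+re^{s}}\le1-\frac{1-r}{1+e^{2K_0}}$. Combined with Step 1 this yields the displayed inequality with $C_5$ depending only on $K_0$ and $C_3$.

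\emph{Step 3: bounded type.} Under bounded type, $r_2(n)\le\rho_2<1$ by Proposition \ref{prop:ratios}, so $\kappa(r_2(n))\le\kappa(\rho_2)<1$. Proposition \ref{prop:zrecursion} gives $|W_i(n)|\le C\xi_1^n$, and Corollary \ref{cor:mesh} gives $\theta(n)\le C\sigma^n$, so $\epsilon_n\le C\max(\xi_1,\sigma)^n$. Iterating the recursion gives $|\Sigma(n)|\le C\xi_2^n$ for any $\xi_2\in(\max(\kappa,\xi_1,\sigma),1)$.

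The main obstacle is Step 1. The comparison of $\log Du_1$ with $\Sigma(n)$ is needed on $I_n^{(2)}$, and the comparison of $u_2$ with its mean slope on $D_n$ must hold uniformly, with errors linear in $|W_i(n)|+\theta(n)$ rather than in $|z_1(n)|$ alone. For this I would use the explicit bound $\sup|\eta_{m_w}|\le|w|e^{|w|/2}$ of Lemma \ref{lem:normclose}, together with the supremum estimate of Proposition \ref{prop:basemoebius}, rescaled by $|D_n|$.
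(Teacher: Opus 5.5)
Your proposal is correct and follows the paper's proof essentially step for step. You compare the averages of $Du_2^{(n)}$ over $I_n^{(2)}$ and over $D_n$, use the commutation identity, and bound $\Psi_r'\le\kappa(r)$ for $\Psi_r(s)=\log(1+r(e^s-1))$. Your splitting $u_2^{(n)}(D_n)=(D_n\setminus I_n^{(2)})\sqcup u_1^{(n)}(I_n^{(2)})$ is just the paper's identity $|u_2^{(n)}(D_n)|-|D_n|=\int_{I_n^{(2)}}(Du_1^{(n)}-1)$ written differently.

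The extension in Step 1 of the comparison of $\log Du_1^{(n)}$ with $\Sigma(n)$ to all of $\widetilde I_n$ is never used, since only $Du_1^{(n)}$ on $I_n^{(2)}\subset D_n$ enters. As stated it would also carry $\widetilde\theta(n)$ rather than $\theta(n)$, so you can simply drop it.
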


\begin{proof}
Since $u_1^{(n+1)}=u_2^{(n)}$ and $D_{n+1}=I_n^{(2)}$, and since $u_2^{(n)}(I_n^{(2)})=[u_2^{(n)}x_0,(u_2^{(n)})^2x_0)$,
\[
e^{\Sigma(n+1)}\ =\ \frac{|u_2^{(n)}(I_n^{(2)})|}{|I_n^{(2)}|}\ =\ \frac{1}{|I_n^{(2)}|}\int_{I_n^{(2)}}Du_2^{(n)} .
\]
By Proposition \ref{prop:basemoebius} and Lemma \ref{lem:normclose} the oscillation of $\log Du_2^{(n)}$ on $D_n$ is at most $\int_0^1|\eta_{\widehat{u_2^{(n)}}}|\le e^{K_0/2}|W_2(n)|+C_3\theta(n)\le C\epsilon_n$. Hence the average of $Du_2^{(n)}$ over $I_n^{(2)}$ and over $D_n$ differ by a factor $1+O(\epsilon_n)$, and
\[
e^{\Sigma(n+1)}\ =\ \frac{|u_2^{(n)}(D_n)|}{|D_n|}\,\big(1+O(\epsilon_n)\big).
\]
Next, $u_2^{(n)}(D_n)=[u_2^{(n)}x_0,u_1^{(n)}u_2^{(n)}x_0)$ by commutation, so with $y=u_2^{(n)}x_0$
\[
\big|u_2^{(n)}(D_n)\big|-|D_n|\ =\ \big(u_1^{(n)}y-y\big)-\big(u_1^{(n)}x_0-x_0\big)\ =\ \int_{x_0}^{y}\big(Du_1^{(n)}-1\big)\ =\ \int_{I_n^{(2)}}\big(Du_1^{(n)}-1\big).
\]
The oscillation of $\log Du_1^{(n)}$ on $D_n$ is at most $C\epsilon_n$ by the same argument, and the average of $Du_1^{(n)}$ over $D_n$ is $e^{\Sigma(n)}$, so $Du_1^{(n)}=e^{\Sigma(n)}(1+O(\epsilon_n))$ pointwise on $D_n$. Therefore, using $|\Sigma(n)|\le2K_0$ and $r_2(n)\le1$,
\[
e^{\Sigma(n+1)}\ =\ 1+r_2(n)\big(e^{\Sigma(n)}-1\big)+O(\epsilon_n),
\]
and since $1+r(e^{s}-1)\ge\min(1,e^{s})\ge e^{-2K_0}$ for $0<r\le1$ and $|s|\le2K_0$, this gives $\Sigma(n+1)=\Psi_{r_2(n)}(\Sigma(n))+O(\epsilon_n)$ with $\Psi_r(s)=\log\big(1+r(e^s-1)\big)$, all constants depending only on $K_0$ and $C_3$. The map $\Psi_r$ fixes $0$ and
\[
\Psi_r'(s)\ =\ \frac{re^s}{1-r+re^s}\ =\ 1-\frac{1-r}{1-r+re^s}\ \le\ 1-\frac{1-r}{1+e^{2K_0}}\ =\ \kappa(r)
\]
for $0<r\le1$ and $|s|\le2K_0$, and $\Psi_r'>0$. Hence $|\Psi_{r_2(n)}(\Sigma(n))|\le\kappa(r_2(n))|\Sigma(n)|$, which is the stated inequality. Under simultaneously bounded type $r_2(n)\le\rho_2$ by Proposition \ref{prop:ratios}, so $\kappa(r_2(n))\le\kappa(\rho_2)<1$. Moreover $\epsilon_n\le C\xi_1^n$ by Proposition \ref{prop:zrecursion} and Corollary \ref{cor:mesh}, and iteration gives the exponential decay of $\Sigma(n)$ with any $\xi_2$ strictly between $\max(\kappa,\xi_1)$ and $1$.
\end{proof}

\subsection{Proof of Theorem A}
For a two branch map $P$ of $[0,1)$ with cut point $c$ let $\widetilde P\colon[0,1)\to[0,1)$ be the map equal to the first branch on $[0,c)$ and to the second branch on $[c,1)$, regarded as a homeomorphism of the circle $\R/\Z$, and let $D\widetilde P$ be its piecewise derivative. We use on pairs $\xi=(P,Q)$ of two branch maps the distance
\[
d_1(\xi,\xi')\ =\ \max\big(\|\widetilde P-\widetilde P'\|_{C^0(\R/\Z)},\ \|D\widetilde P-D\widetilde P'\|_{\infty},\ \|\widetilde Q-\widetilde Q'\|_{C^0(\R/\Z)},\ \|D\widetilde Q-D\widetilde Q'\|_{\infty}\big),
\]
where $\|\cdot\|_{C^0(\R/\Z)}$ is the supremum of the distance on the circle. This is a metric on pairs of two branch maps: it is built from supremum norms, and it vanishes only when the two pairs coincide, since the cut point of $P$ is the preimage of the seam under $\widetilde P$. For pairs whose branch derivatives are close to $1$ it also controls the distance of the cut points.

\begin{theorem}\label{thm:main}
Let $f$ and $g$ be commuting orientation preserving $C^3$ circle diffeomorphisms whose rotation vector $(\alpha,\beta)$ is rationally independent together with $1$, of simultaneously bounded type with constant $A$, and summably convergent, and let the base point be admissible. Then there are $\xi\in(0,1)$ and $C$ such that
\[
d_1\big(\xi_n,\mathcal{O}_n\big)\ \le\ C\,\xi^{\,n}\qquad\text{for all } n\ge0 .
\]
\end{theorem}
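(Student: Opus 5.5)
The plan is to compare each branch of $\xi_n$ with the corresponding branch of $\mathcal{O}_n$ through two quantities that are already known to decay exponentially: the nonlinearity of the basis maps, from Proposition \ref{prop:zrecursion}, and the logarithmic slope $\Sigma(n)$ of the deck element, from Proposition \ref{prop:sloperecursion}. Set $\xi_0'=\max(\xi_1,\xi_2,\sigma)<1$. The constants $\rho_2$, $\sigma$ and $\kappa$ depend only on $A$ and $K_0$, so these rates are controlled as required. The target is to show that, in the normalized chart of $D_n$, each branch word has derivative within $C\xi_0'^{\,n}$ of $1$ and is uniformly within $C\xi_0'^{\,n}$ of the corresponding rotation.

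\textbf{Step 1: derivatives.} Fix $j\in\{2,3\}$. First I bound $|\log Du_j^{(n)}|$ on $D_n$, normalized, using its oscillation and its value at a point.

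The oscillation is at most $|D_n|\sup_{\widetilde I_n}|\eta_{u_j^{(n)}}|\le C\xi_1^n$, by the last clause of Proposition \ref{prop:zrecursion}.

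For the level, I adapt the proof of Proposition \ref{prop:sloperecursion}. The normalized slope of $u_j^{(n)}$ on $D_n$ equals $|u_j^{(n)}(D_n)|/|D_n|$. By commutation, $u_j^{(n)}(D_n)=[u_j x_0,u_1u_jx_0)$, so
\[
|u_j^{(n)}(D_n)|-|D_n|=\int_{I_n^{(j)}}(Du_1^{(n)}-1).
\]
The right side is $O(|\Sigma(n)|+\epsilon_n)|D_n|$, because $Du_1^{(n)}=e^{\Sigma(n)}(1+O(\epsilon_n))$ on $D_n$. Since the chart rescales $D_n$ and $u_j(D_n)$ by the same factor, the first branch has normalized derivative $Du_j^{(n)}$ itself, which is therefore $1+O(\xi_0'^{\,n})$.

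The second branch is $(u_1^{(n)})^{-1}u_j^{(n)}$. Its derivative is $Du_j\cdot(Du_1\circ u_1^{-1}u_j)^{-1}$. The points where $Du_1$ is evaluated lie in $D_n$ and in $u_1(D_n)\subset\widetilde I_n$, by Lemma \ref{lem:windows}. On $D_n$ we have $Du_1=e^{\Sigma(n)}(1+O(\epsilon_n))$. On $\widetilde I_n$ the bound on $\eta_{u_1^{(n)}}$ extends the smallness of $\log Du_1$, with length factor $|\widetilde I_n|/|D_n|\le 4e^{6K_0}$. Hence this derivative is also $1+O(\xi_0'^{\,n})$.

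\textbf{Step 2: $C^0$ closeness.} The lifted circle map $\widetilde P$ of $\xi_n$ is a circle homeomorphism with rotation number $t_2(n)/t_1(n)$, the same as that of the rotation in $\mathcal{O}_n$. The map $\widetilde P-\mathrm{id}$ has derivative $O(\xi_0'^{\,n})$ on each branch. I also need continuity of $\widetilde P$ on the circle, which holds because the gluing is exact. Therefore $\widetilde P(y)-y$ is within $O(\xi_0'^{\,n})$ of a constant $c$.

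A circle map that is a translation up to error $\delta$ has rotation number within $\delta$ of $c$. So $c=t_2(n)/t_1(n)+O(\xi_0'^{\,n})$, which gives $\|\widetilde P-R\|_{C^0}\le C\xi_0'^{\,n}$. The same argument applies to $Q$. Together with Step 1 this yields $d_1(\xi_n,\mathcal{O}_n)\le C\xi_0'^{\,n}$.

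\textbf{Main obstacle.} The delicate point is the second branch in Step 1. It requires evaluating $Du_1^{(n)}$ at points of $u_1(D_n)$, outside the fundamental domain, so control on the enlarged interval $\widetilde I_n$ is really needed. This is exactly why Proposition \ref{prop:zrecursion} gives the bound there.

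A second point to watch is the comparison of averages. The averages of $Du_1^{(n)}$ over $I_n^{(j)}$ and over $D_n$ must differ only by $O(\epsilon_n)$, and this follows from the oscillation bound. The dependence of $\xi$ only on $A$ then follows by tracking $\rho_2,\sigma,\kappa$. If $K_0$ is thought to enter those rates, I would instead start the estimates at a stage $n_0$ beyond which the tail of the series defining $K_0$ is less than $1$, as Convention \ref{con:basepoint} allows, and absorb the initial stages into $C$.
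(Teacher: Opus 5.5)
Your proposal is correct and is essentially the paper's proof. The slope identity $e^{s_i(n)}=1+r_i(n)(e^{\Sigma(n)}-1)+O(\delta_n)$, obtained exactly as you do from $|u_i^{(n)}(D_n)|-|D_n|=\int_{I_n^{(i)}}(Du_1^{(n)}-1)$ and combined with the exponential decay of $W_i(n)$, $\Sigma(n)$ and $\theta(n)$, gives $|Du_i^{(n)}-1|\le C\delta_n$ in the normalized chart, and the rotation number $t_2(n)/t_1(n)$ of $\widetilde P_n$ then fixes the translation constant. The obstacle you flag for the second branch is not real, though: $D\big((u_1^{(n)})^{-1}u_j^{(n)}\big)(x)=Du_j^{(n)}(x)/Du_1^{(n)}\big((u_1^{(n)})^{-1}u_j^{(n)}x\big)$ evaluates $Du_1^{(n)}$ only at points of $[x_0,u_j^{(n)}x_0)\subset D_n$, which is why the paper needs control of $u_1^{(n)}$ on $D_n$ alone there; your detour through $\widetilde I_n$ is harmless.
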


\begin{proof}
Put $\delta_n=|W_1(n)|+|W_2(n)|+|W_3(n)|+|\Sigma(n)|+\theta(n)$. By Corollary \ref{cor:mesh} and Propositions \ref{prop:zrecursion} and \ref{prop:sloperecursion} we have $\delta_n\le C\xi^n$ with $\xi=\max(\xi_1,\xi_2,\sigma)$. We work in the normalized coordinate of $D_n$, in which $x_0$ becomes $0$, the point $u_1^{(n)}x_0$ becomes $1$, and $u_i^{(n)}x_0$ becomes $r_i(n)$. In the rest of the proof the constants $C$ depend only on the constants of Convention \ref{con:basepoint}, and the exponential decay of $\delta_n$ is not used.

By Proposition \ref{prop:basemoebius} and Lemma \ref{lem:normclose} the normalized maps $\widehat{u_i^{(n)}}$ satisfy $\|\widehat{u_i^{(n)}}-\mathrm{id}\|_{C^1}\le C\delta_n$. Undoing the normalization of the range multiplies the derivative by the slope $e^{s_i(n)}$ with $s_i(n)=\log(|u_i^{(n)}(D_n)|/|D_n|)$. The computation in the proof of Proposition \ref{prop:sloperecursion} gives $e^{s_i(n)}=1+r_i(n)(e^{\Sigma(n)}-1)+O(\delta_n)$, so $|s_i(n)|\le C\delta_n$. Hence in the normalized coordinate of $D_n$
\begin{equation}\label{eq:mainestimates}
\big|Du_i^{(n)}-1\big|\ \le\ C\delta_n\quad\text{on } [0,1),\qquad u_i^{(n)}(0)=r_i(n),
\end{equation}
so $|u_i^{(n)}(x)-x-r_i(n)|\le C\delta_n$ for all $x\in[0,1)$.

The first branch of $P_n$ is $u_2^{(n)}$ on $[0,c_2(n))$ and the second is $(u_1^{(n)})^{-1}u_2^{(n)}$ on $[c_2(n),1)$. The second branch maps its domain into $[0,r_2(n))$, and $u_2^{(n)}$ maps its domain into $u_1^{(n)}([0,1))$, on which $(u_1^{(n)})^{-1}$ is within $C\delta_n$ in $C^1$ of the translation by $-1$, by \eqref{eq:mainestimates} applied to $u_1^{(n)}$. So $\widetilde P_n$ is within $C\delta_n$ of the rotation by $r_2(n)$ in the $C^0$ distance of the circle, and its piecewise derivative is within $C\delta_n$ of $1$. The same holds for $\widetilde Q_n$ and $r_3(n)$. For the cut point, $u_2^{(n)}(c_2(n))=1$ and $|u_2^{(n)}(x)-x-r_2(n)|\le C\delta_n$ give $|c_2(n)-(1-r_2(n))|\le C\delta_n$, and likewise for $c_3(n)$.

Finally $P_n$ is a circle homeomorphism of the circle of length one whose lift satisfies $|\widetilde P_n(x)-x-r_2(n)|\le C\delta_n$, so its rotation number differs from $r_2(n)$ by at most $C\delta_n$. That rotation number is $t_2(n)/t_1(n)$, so
\begin{equation}\label{eq:ratioestimates}
\big|r_i(n)-t_i(n)/t_1(n)\big|\ \le\ C\delta_n\qquad(i=1,2,3),
\end{equation}
the case $i=3$ being obtained from $Q_n$ and the case $i=1$ being trivial. Combining the three steps gives $d_1(\xi_n,\mathcal{O}_n)\le C\delta_n$, and the exponential decay of $\delta_n$ finishes the proof.
\end{proof}

Theorem A is Theorem \ref{thm:main}. The rate $\xi$ is determined by the three contraction factors $\rho_2^2$, $\kappa$ and $\sigma$, and none of them has been optimized. We record that the estimates \eqref{eq:mainestimates} and \eqref{eq:ratioestimates}, as well as the inequality $d_1(\xi_n,\mathcal{O}_n)\le C\delta_n$, hold at every stage $n$ under the sole hypotheses that $\Sigma^{\ast}<\infty$ and that the base point is admissible, with $C$ depending only on the constants of Convention \ref{con:basepoint}. Bounded type was used only to obtain the exponential decay of $\delta_n$.

\begin{remark}\label{rem:uniformrate}
The rate $\xi$ of Theorem \ref{thm:main} depends on the pair and on the base point only through $A$, $L(A)$ and $K_0$. Indeed the factors $\rho_1$, $\rho_2$, $\rho_3$ and $\sigma$ of Proposition \ref{prop:ratios} and the factor $\kappa$ of Proposition \ref{prop:sloperecursion} are functions of these three quantities alone, and the remaining constants of Convention \ref{con:basepoint} enter only the constant $C$. Moreover $K_0$ is used throughout as an upper bound, so all estimates hold with any larger number in its place. This makes the rate uniform over the class after finitely many steps. Since admissibility is a statement about tails, there is a stage $n_0=n_0(f,g,x_0)$ with $\sum_{m\ge n_0}\sum_{i}a_{m+1}\mu_i(m)\le1$. By Remark \ref{rem:welldefined} the normalized pair $\xi_{n_0}$ is a chart of the pointed pair $(\mathcal{C}_{n_0};P_{n_0},Q_{n_0};x_0)$, whose stages are the stages $n_0+k$ of the original pair and whose tile words involve only factors of stages at least $n_0$. Its rotation vector $T^{n_0}(\alpha,\beta)$ is of simultaneously bounded type with constant $A$ and summably convergent, and $x_0$ is admissible for it, since its two series are the tails of the series of Convention \ref{con:basepoint}, the second divided by $|D_{n_0}|$. Integrals of nonlinearities over intervals are unchanged by the affine rescaling of Definition \ref{def:renorm}, so its constant $K_0$ is the tail above, which is at most one. Applying Theorem \ref{thm:main} to this pair with $K_0$ replaced by $1$, and using that $\xi_n$ is its normalized pair of stage $n-n_0$ for $n\ge n_0$, we obtain
\[
d_1\big(\xi_n,\mathcal{O}_n\big)\ \le\ C\,\xi_A^{\,n}\qquad\text{for all } n\ge0,
\]
where $\xi_A\in(0,1)$ is the rate of Theorem \ref{thm:main} computed from $A$, $L(A)$ and $K_0=1$, and only $C$ depends on the pair and on the base point.
\end{remark}

\begin{proposition}\label{prop:C2}
Under the hypotheses of Theorem \ref{thm:main} the renormalizations converge to the rotation pairs in the $C^2$ sense as well. The normalized nonlinearities $\eta_{\widehat{u_i^{(n)}}}$ of the three basis maps on $D_n$ and the nonlinearities of the four branches of $\xi_n$ are bounded in absolute value by $C\xi^{\,n}$ for all $n$, with $\xi$ the rate of Theorem \ref{thm:main}.
\end{proposition}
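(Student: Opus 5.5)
The plan is to upgrade the $C^1$ estimates in the proof of Theorem \ref{thm:main} to $C^2$ by controlling nonlinearities in the supremum norm. The tool for this is the second half of Proposition \ref{prop:basemoebius}. It already gives
\[
\sup_{[0,1]}\big|\eta_{\widehat{u_i^{(n)}}}-\eta_{m_{W_i(n)}}\big|\ \le\ C_3\,\theta(n).
\]
Lemma \ref{lem:normclose} gives $\sup|\eta_{m_w}|\le|w|e^{|w|/2}$. Combining these with $|W_i(n)|\le C\xi_1^n$ from Proposition \ref{prop:zrecursion} and $\theta(n)\le C\sigma^n$ from Corollary \ref{cor:mesh}, we get
\[
\sup_{[0,1]}\big|\eta_{\widehat{u_i^{(n)}}}\big|\ \le\ C\xi^n .
\]
This is the first claim, since $\xi\ge\max(\xi_1,\sigma)$. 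Normalizing the range does not change the nonlinearity in the normalized domain coordinate. The nonlinearity in the chart coordinate of $D_n$, where the range is kept in chart coordinates, is therefore the same function. So on $[0,1)$ the maps $u_i^{(n)}$, written in the normalized coordinate of $D_n$, have $|\eta|\le C\xi^n$.

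Next I would treat the branches. The first branch of $P_n$ is $u_2^{(n)}$ restricted to $[0,c_2(n))$, so its nonlinearity is already bounded. The second branch is $(u_1^{(n)})^{-1}\circ u_2^{(n)}$ on $[c_2(n),1)$. By the chain rule,
\[
\eta_{(u_1)^{-1}\circ u_2}\ =\ \big(\eta_{u_1^{-1}}\circ u_2\big)\,Du_2+\eta_{u_2},
\qquad
\eta_{u_1^{-1}}\ =\ -\big(\eta_{u_1}\circ u_1^{-1}\big)\,Du_1^{-1}.
\]
This needs $\eta_{u_1^{(n)}}$ on $u_1^{(n)}$-preimages of points of $u_2^{(n)}([c_2(n),1))\subset u_1^{(n)}(D_n)$, and those preimages lie in $D_n$, so the bound obtained on $D_n$ suffices. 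The derivative factors are bounded by $e^{4K_0}$ by Proposition \ref{prop:derivbound}(i), or by $1+C\delta_n$ from \eqref{eq:mainestimates}. The same argument applies to $Q_n$ with $u_3^{(n)}$. Hence all four branches have $|\eta|\le C\xi^n$.

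Finally I would convert this into $C^2$ closeness to $\mathcal{O}_n$. Since $D^2\widetilde P=\eta\cdot D\widetilde P$ and $D\widetilde P$ is bounded, $\|D^2\widetilde P_n\|_\infty\le C\xi^n$, and the rotations have vanishing second derivative. Together with the $d_1$ estimate of Theorem \ref{thm:main}, this gives exponential $C^2$ convergence branchwise.

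The main obstacle is a domain issue. One must check that the supremum bound of Proposition \ref{prop:basemoebius} holds on all of $D_n$, including the points where $(u_1^{(n)})^{-1}$ is evaluated. It does, because those points lie in $D_n$. If a point were ever to fall outside $D_n$, the enlarged interval $\widetilde I_n$ bound $\sup_{\widetilde I_n}|\eta_{u_i^{(n)}}|\le C\xi_1^n/|D_n|$ of Proposition \ref{prop:zrecursion} covers it after rescaling by $|D_n|$.
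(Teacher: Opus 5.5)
Your proposal is correct and follows essentially the paper's argument. You bound the nonlinearities of the basis maps pointwise, apply the chain rule to the second branches (all evaluation points lie in $D_n$), and use the derivative bounds of Proposition \ref{prop:derivbound}. The only difference is the source of the pointwise bound: you derive it on $D_n$ from Proposition \ref{prop:basemoebius}, Lemma \ref{lem:normclose} and the decay of $W_i(n)$ and $\theta(n)$, whereas the paper quotes the bound on $\widetilde I_n$ from Proposition \ref{prop:zrecursion}; since the evaluation points lie in $D_n$, your version suffices.
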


\begin{proof}
By Proposition \ref{prop:zrecursion}, $\sup_{\widetilde I_n}|\eta_{u_i^{(n)}}|\le C\xi_1^{\,n}/|D_n|$ for $i=1,2,3$, and passing to the normalized coordinate of $D_n$ multiplies nonlinearities by $|D_n|$. The first branches of $P_n$ and $Q_n$ are restrictions of $u_2^{(n)}$ and $u_3^{(n)}$ to subintervals of $D_n$. The second branch of $P_n$ is $(u_1^{(n)})^{-1}u_2^{(n)}$ on $[c_2(n),u_1^{(n)}x_0)$, with nonlinearity
\[
\big(\eta_{(u_1^{(n)})^{-1}}\circ u_2^{(n)}\big)\,Du_2^{(n)}+\eta_{u_2^{(n)}},\qquad
\eta_{(u_1^{(n)})^{-1}}(z)\ =\ -\eta_{u_1^{(n)}}\big((u_1^{(n)})^{-1}z\big)\,D(u_1^{(n)})^{-1}(z) .
\]
The points $z=u_2^{(n)}(x)$ with $x\in[c_2(n),u_1^{(n)}x_0)$ lie in $[u_1^{(n)}x_0,u_2^{(n)}u_1^{(n)}x_0)\subset u_1^{(n)}(D_n)\subset\widetilde I_n$, and $(u_1^{(n)})^{-1}z\in D_n$, so all nonlinearities involved are evaluated in $\widetilde I_n$ and all derivatives are bounded by $e^{2K_0}$ by Proposition \ref{prop:derivbound}. The same applies to the second branch of $Q_n$.
\end{proof}

\begin{remark}\label{rem:regularity}
The hypothesis that $f$ and $g$ are of class $C^3$ enters only through the constant $E''$ of Lemma \ref{lem:moebius}, where it gives the pointwise estimate $|\eta_{\widehat\varphi}-w|\le|J|^2E''$. If $f$ and $g$ are of class $C^{2+\alpha}$ for some $\alpha\in(0,1]$, so that the nonlinearities of the four letters are $\alpha$ H\"older continuous with a constant $E''_\alpha$, the same argument gives $|\eta_{\widehat\varphi}-w|\le|J|^{1+\alpha}E''_\alpha$, hence $\sup|\eta_{\widehat\varphi}-\eta_{m_w}|\le C_2|J|^{1+\alpha}$ with $C_2=C_2(E',E''_\alpha)$. In Proposition \ref{prop:basemoebius} the sum $\sum_s|p_s(J)|^{1+\alpha}$ is at most $\theta_J^{\alpha}K_1$, so its conclusions hold with $\theta(n)^\alpha$ and $\widetilde\theta(n)^\alpha$ in place of $\theta(n)$ and $\widetilde\theta(n)$. Every later statement uses Proposition \ref{prop:basemoebius} only through these error terms, which still decay exponentially under simultaneously bounded type and along the good stages of Section \ref{sec:beyond}. Consequently all results of this paper hold for commuting pairs of class $C^{2+\alpha}$, with rates and exponents depending on $\alpha$. This is the regularity of the rank one theorem of Khanin and Teplinsky \cite{KhaninTeplinsky}.
\end{remark}

\section{Rigidity}\label{sec:rigidity}

Exponential convergence upgrades the topological conjugacy of Lemma \ref{lem:toplin} to a $C^1$ conjugacy. Two ingredients are needed beyond Theorem \ref{thm:main}: the distortion of the tile words on their base intervals tends to zero, and the logarithmic density of $\mu$ read on the tiles is controlled, from one stage to the next and across tile boundaries, by the quantities $\delta_n$ of the proof of Theorem \ref{thm:main}. Both ingredients are stated so as to be usable in Section \ref{sec:beyond}, where bounded type is only available along a subsequence of stages.

\begin{proposition}\label{prop:tiledistortion}
Suppose that $\Sigma^{\ast}<\infty$ and that the base point is admissible. For every $n$ and every $w\in\mathcal{W}_n^{(i)}$, $i=1,2,3$,
\[
\Dist\big(w|_{D_n}\big)\ \le\ \sum_{m<n}a_{m+1}\,g_m\Big(\big(\tfrac{t_1(n)}{t_3(n)}+2\big)\,T_{\max}(n)\Big),
\qquad\text{where}\quad g_m(\epsilon)\ :=\ \sup_{S\subset\widetilde I_m,\ |S|\le\epsilon}\ \int_S\big|\eta_{u_2^{(m)}}\big| ,
\]
the supremum being over measurable subsets $S$ of the $\widetilde I_m$ of length at most $\epsilon$.
Consequently $\delta_n^{\ast}:=\sup\{\Dist(w|_{D_n})\colon i=1,2,3,\ w\in\mathcal{W}_n^{(i)}\}$ tends to zero along every sequence of stages $n$ for which $t_1(n)/t_3(n)$ stays bounded and $T_{\max}(n)\to0$. In particular $\delta_n^{\ast}\to0$ as $n\to\infty$ if $(\alpha,\beta)$ is of simultaneously bounded type.
\end{proposition}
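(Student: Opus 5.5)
The plan is to rerun the chain rule argument of Proposition \ref{prop:denjoy} on the base interval $D_n$ instead of $\widetilde I_n$, and to control the length of every intermediate piece by the mesh of the stage $n$ tiling. Fix $w\in\mathcal{W}_n^{(i)}$ with factorization $w=\prod_{m<n}(u_2^{(m)})^{j_m}$ as in Lemma \ref{lem:factorization}. Put $J_n=D_n$ and $J_m=(u_2^{(m)})^{j_m}(J_{m+1})$. By Lemma \ref{lem:windows}(b) each $J_m$ lies in $\widetilde I_m$, and so do the pieces $(u_2^{(m)})^{k}(J_{m+1})$ for $0\le k\le j_m$. As in the proof of Proposition \ref{prop:denjoy} we obtain
\[
\Dist(w|_{D_n})\ \le\ \sum_{m<n}\ \sum_{0\le k<j_m}\int_{(u_2^{(m)})^{k}(J_{m+1})}\big|\eta_{u_2^{(m)}}\big| .
\]
It therefore suffices to show that each piece $S=(u_2^{(m)})^{k}(J_{m+1})$ has length at most $\big(t_1(n)/t_3(n)+2\big)T_{\max}(n)$. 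Then each integral is at most $g_m$ of that number, and since $j_m\le a_{m+1}$ the displayed bound follows.

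The key step, and the only one that is not a copy of earlier arguments, is this length bound. The piece $S$ is the image of $D_n$ under the element $(u_2^{(m)})^{k}(u_2^{(m+1)})^{j_{m+1}}\cdots(u_2^{(n-1)})^{j_{n-1}}\in\Gamma$, which has nonnegative translation number. Since $\mu$ is $\Gamma$ invariant, $\mu(S)=\mu(D_n)=t_1(n)$. I would then argue exactly as in the proof of Corollary \ref{cor:mesh}, taking $m=n$ there. Every stage $n$ tile has $\mu$ mass at least $t_3(n)$, so an interval of mass $t_1(n)$ contains at most $t_1(n)/t_3(n)$ whole tiles. It can meet at most two further tiles at its ends. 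Since it is covered by the tiles it meets, its Lebesgue length is at most $(t_1(n)/t_3(n)+2)T_{\max}(n)$.

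One point needs care here. We project to $\T$, and $\mu$ has no atoms. If an interval of mass at most one wraps around the circle, it is still covered by the tiles it meets, so the count goes through unchanged.

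For the consequences, I would show that for each fixed $m$ we have $g_m(\epsilon)\to0$ as $\epsilon\to0$, by absolute continuity of the integral of the bounded function $|\eta_{u_2^{(m)}}|$. For the uniform tail, note that $g_m(\epsilon)\le\mu_2(m)$ and that $\sum_m a_{m+1}\mu_2(m)\le K_0<\infty$ by admissibility. Dominated convergence over the index $m$, letting $\epsilon_n=(t_1(n)/t_3(n)+2)T_{\max}(n)\to0$, then gives $\sum_{m<n}a_{m+1}g_m(\epsilon_n)\to0$ along any such sequence of stages. Under simultaneously bounded type, $t_1(n)/t_3(n)\le A$, and Proposition \ref{prop:ratios} gives $T_{\max}(n)\le(1-\rho_3)^{-1}\sigma^n\to0$, which proves the last claim.
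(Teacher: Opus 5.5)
Your proposal is correct and follows the paper's proof essentially step for step. It uses the same chain-rule decomposition over the factorization of Lemma \ref{lem:factorization}, the same $\mu$-mass tile count bounding each piece by $(t_1(n)/t_3(n)+2)\,T_{\max}(n)$, and the same dominated convergence over $m$ with majorant $a_{m+1}\mu_2(m)$; your remark about pieces crossing the seam is a harmless clarification that the paper leaves implicit.
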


\begin{proof}
Fix $w$ and write $w=\prod_{m<n}(u_2^{(m)})^{j_m}$ as in Lemma \ref{lem:factorization}. Put $J_n=D_n$ and $J_m=(u_2^{(m)})^{j_m}(J_{m+1})$ for $m<n$. As in the proof of Proposition \ref{prop:denjoy}, $J_{m+1}\subset\widetilde I_{m+1}$ and
\[
\Dist\big(w|_{D_n}\big)\ \le\ \sum_{m<n}\ \sum_{0\le k<j_m}\int_{(u_2^{(m)})^{k}(J_{m+1})}\big|\eta_{u_2^{(m)}}\big| ,
\]
where the intervals $(u_2^{(m)})^{k}(J_{m+1})$ lie in $\widetilde I_m$ by Lemma \ref{lem:windows}(b). Each of them is the image of $D_n$ under an element of $\Gamma$, hence an interval of $\mu$ measure $t_1(n)$. Every stage $n$ tile has $\mu$ measure at least $t_3(n)$, so such an interval contains at most $t_1(n)/t_3(n)$ tiles of stage $n$, meets at most $t_1(n)/t_3(n)+2$ of them, and has length at most $(t_1(n)/t_3(n)+2)T_{\max}(n)$. Hence the inner sum is at most $j_m\,g_m\big((t_1(n)/t_3(n)+2)T_{\max}(n)\big)$, and $j_m\le a_{m+1}$ gives the first statement. For each fixed $m$ the absolute continuity of the integral gives $g_m(\epsilon)\to0$ as $\epsilon\to0$, and $g_m(\epsilon)\le\mu_2(m)$ for every $\epsilon$. Since $\sum_ma_{m+1}\mu_2(m)\le K_0<\infty$, dominated convergence for series gives the second statement, and Lemma \ref{lem:scales} and Proposition \ref{prop:ratios} give the last.
\end{proof}

We now introduce the density functions. Fix the normalization $h(x)=\mu([x_0,x))$ for $x\in[x_0,x_0+1)$, which defines a homeomorphism of $\T$ conjugating $f$ and $g$ to $R_\alpha$ and $R_\beta$ because $\mu$ is invariant under both. For $x\in\T$ and $n\ge0$ let $J_n(x)=w_n(I_n^{(i_n)})$ be the stage $n$ tile containing $x$, with $w_n=w_n(x)\in\mathcal{W}_n^{(i_n)}$, and put $y_n(x)=w_n^{-1}(x)\in I_n^{(i_n)}$. Define
\[
a_n(x)\ =\ \log\frac{|J_n(x)|}{\mu(J_n(x))},\qquad
b_n(x)\ =\ \log Dw_n\big(y_n(x)\big)+\log\big|I_n^{(i_n)}\big|-\log t_{i_n}(n).
\]
Since $|J_n(x)|=\int_{I_n^{(i_n)}}Dw_n$ and $\mu(J_n(x))=t_{i_n}(n)$,
\begin{equation}\label{eq:ab}
\big|a_n(x)-b_n(x)\big|\ \le\ \Dist\big(w_n|_{I_n^{(i_n)}}\big)\ \le\ \delta_n^{\ast}\qquad\text{for all } x .
\end{equation}
The function $a_n$ is constant on each stage $n$ tile, and $b_n$ is continuous on the interior of each stage $n$ tile. Let $\delta_n=|W_1(n)|+|W_2(n)|+|W_3(n)|+|\Sigma(n)|+\theta(n)$ be as in the proof of Theorem \ref{thm:main}, and recall from the end of Section \ref{sec:moebius} that the estimates \eqref{eq:mainestimates} and \eqref{eq:ratioestimates} hold at every stage, with a constant $C$ depending only on the constants of Convention \ref{con:basepoint}. Put
\[
\Delta_n\ =\ \frac{a_{n+1}\,t_1(n)}{\tau(v_n)}\ \ge\ a_{n+1}\,(a_{n+1}+1),
\]
the drop of scale at stage $n$. The inequality holds because $t_1(n)=a_{n+1}t_2(n)+\tau(v_n)\ge(a_{n+1}+1)\tau(v_n)$. For a vector of simultaneously bounded type with constant $A$ one has $\Delta_n\le A^3$ and $t_1(n)/t_3(n)\le A$ by Lemma \ref{lem:scales}.

\begin{lemma}\label{lem:increments}
Suppose that $\Sigma^{\ast}<\infty$ and that the base point is admissible. There are constants $C_6$ and $c_0>0$, depending only on the constants of Convention \ref{con:basepoint}, such that for every $n$ the following hold.
\begin{enumerate}
\item[(i)] If $J_n(x)$ is of type $2$ or $3$ then $b_{n+1}(x)=b_n(x)$.
\item[(ii)] If $J_n(x)$ is of type $1$ and $\Delta_n\delta_n\le c_0$, then $|b_{n+1}(x)-b_n(x)|\le C_6\,\Delta_n\,\delta_n$.
\item[(iii)] Let $J=w(I_n^{(i)})$ and $J'=w'(I_n^{(i')})$ be adjacent stage $n$ tiles, $J'$ to the right of $J$, with common endpoint $p$. Then
\[
\lim_{x\downarrow p}b_n(x)-\lim_{x\uparrow p}b_n(x)\ =\ \log Du_i^{(n)}(x_0)+\log\frac{r_{i'}(n)}{r_i(n)}-\log\frac{t_{i'}(n)}{t_i(n)} ,
\]
and the modulus of this jump is at most $C_6\,\delta_n\,t_1(n)/t_3(n)$ provided $\delta_n\,t_1(n)/t_3(n)\le c_0$.
\end{enumerate}
\end{lemma}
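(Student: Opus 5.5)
We track how the tile word $w_n(x)$ and the base interval change from stage $n$ to stage $n+1$ using the recursion of Theorem \ref{thm:tiling}, and express each $b_n$ through the quantities controlled by \eqref{eq:mainestimates} and \eqref{eq:ratioestimates}. Throughout we work in the normalized coordinate of $D_n$.

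For (i), a tile of type $2$ or $3$ at stage $n$ is carried over unchanged. Its word is $w_{n+1}=w_n$ and its interval is the same set, $I_n^{(i)}=I_{n+1}^{(i')}$. Hence also $t_{i}(n)=t_{i'}(n+1)$, since both equal the $\mu$ mass of the same interval. The three terms of $b$ therefore coincide, and we get exact equality.

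For (ii), a type $1$ tile $w_n(D_n)$ splits into $w_n(u_2^{(n)})^j(I_{n+1}^{(\cdot)})$ with $0\le j\le a_{n+1}$. So $w_{n+1}=w_n(u_2^{(n)})^j$, and $y_n=(u_2^{(n)})^j y_{n+1}$. The chain rule gives
\[
b_{n+1}-b_n=\log D(u_2^{(n)})^j(y_{n+1})+\log\frac{|I_{n+1}^{(i')}|}{|D_n|}-\log\frac{t_{i'}(n+1)}{t_1(n)}.
\]
The three terms are handled as follows.
\begin{enumerate}
\item[(a)] By \eqref{eq:mainestimates}, $|\log Du_2^{(n)}|\le C\delta_n$ at each of the $j\le a_{n+1}$ points, all of which lie in $D_n$ by Lemma \ref{lem:windows}(a). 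The first term is therefore $O(a_{n+1}\delta_n)$.
\item[(b)] For $I_{n+1}^{(i')}$ equal to $I_n^{(2)}$ or $I_n^{(3)}$ (the latter in case B), the ratio $|I_{n+1}^{(i')}|/|D_n|$ is $r_2(n)$ or $r_3(n)$. By \eqref{eq:ratioestimates} this agrees with $t_{i'}(n+1)/t_1(n)$ up to an additive $C\delta_n$. Dividing by a quantity at least $t_3(n)/t_1(n)\ge 1/\Delta_n$ gives a logarithmic error $O(\Delta_n\delta_n)$.
\item[(c)] The remaining case is $I'=[x_0,v_nx_0)$. Its normalized length is $1-a_{n+1}r_2(n)$ up to $O(a_{n+1}\delta_n)$: use \eqref{eq:mainestimates} to compare $u_2^{(n)a_{n+1}}$ with translation by $a_{n+1}r_2(n)$, with errors accumulating linearly in $j$. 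Its mass ratio is $\tau(v_n)/t_1(n)=1-a_{n+1}t_2/t_1$. The additive error is $O(a_{n+1}\delta_n)$, and dividing by $\tau(v_n)/t_1(n)$ yields exactly $O(\Delta_n\delta_n)$.
\end{enumerate}
The condition $\Delta_n\delta_n\le c_0$ keeps both arguments of the logarithm comparable, so that $\log$ is Lipschitz there. I expect (c) to be the main obstacle: the short remainder interval is where the error gets amplified by the drop of scale, and the bookkeeping must produce the factor $\Delta_n$ rather than something worse like $a_{n+1}^2\Delta_n$.

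For (iii), adjacency means $w(u_i^{(n)}x_0)=w'(x_0)$. Since $\Gamma$ is abelian and acts freely, and tiles are consecutive in the partition, I would show that $w'=w\,u_i^{(n)}$, up to the deck transformation $T$, which acts by translation and does not affect derivatives. Then
\[
\log Dw'(x_0)=\log Dw(u_i^{(n)}x_0)+\log Du_i^{(n)}(x_0).
\]
The left and right limits of $b_n$ at $p$ are, respectively, $\log Dw(u_i x_0)+\log|I_n^{(i)}|-\log t_i(n)$ and $\log Dw'(x_0)+\log|I_n^{(i')}|-\log t_{i'}(n)$. Subtracting gives the stated formula after writing $|I^{(i)}|=r_i|D_n|$.

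The identification $w'=wu_i^{(n)}$ may fail as stated. If it does, I would instead write $w'=wU$ with $\tau(U)=t_i(n)$ forced by the mass of $J$, and use injectivity of $\tau$ to get $U=u_i^{(n)}$.

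For the bound in (iii), $|\log Du_i^{(n)}(x_0)|\le C\delta_n$ by \eqref{eq:mainestimates}. Likewise $r_{i'}/r_i$ matches $t_{i'}/t_i$ up to a relative error $C\delta_n\,t_1/t_3$ by \eqref{eq:ratioestimates}, and this is valid under the smallness hypothesis.
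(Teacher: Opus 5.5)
Your proposal is correct and follows essentially the same route as the paper: exact invariance for tiles of type $2$ and $3$; the chain-rule formula for $b_{n+1}-b_n$, where the remainder interval $I'$ is the only place in which dividing by $\tau(v_n)/t_1(n)$ produces the factor $\Delta_n$; and the identification $w'=wu_i^{(n)}$ (or $wu_i^{(n)}=T$ at the seam) by freeness of the action for (iii). Two small slips: the points $(u_2^{(n)})^ky$ lie in $D_n$ by Lemma~\ref{lem:decomposition}, not Lemma~\ref{lem:windows}(a), which only places them in $\widetilde I_n$; and for a type $1$ tile the new base interval is $I_n^{(2)}$ or $I'$, never $I_n^{(3)}$, so the second case of your (b) is vacuous (also $t_3(n)/t_1(n)\ge1/\Delta_n$ fails in case $A$, but you only need $t_2(n)/t_1(n)\ge1/(a_{n+1}+1)\ge1/\Delta_n$).
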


\begin{proof}
(i) A tile of type $2$ or $3$ at stage $n$ is a tile of stage $n+1$ with the same word, and its base interval and its $\mu$ measure are unchanged, by Theorem \ref{thm:tiling}. So all three terms of $b_n(x)$ are unchanged.

(ii) Let $J_n(x)=w(D_n)$ and let $a=a_{n+1}$. By Theorem \ref{thm:tiling} and Lemma \ref{lem:decomposition}, $J_{n+1}(x)=w(u_2^{(n)})^{j}(I_{n+1}^{(i')})$ for some $0\le j\le a$, where $I_{n+1}^{(i')}=I_n^{(2)}$ and $i'=1$ if $j<a$, and $I_{n+1}^{(i')}=I'=[x_0,v_nx_0)$ if $j=a$. Then $w_{n+1}=w(u_2^{(n)})^{j}$, $y:=y_{n+1}(x)\in I_{n+1}^{(i')}$ and $y_n(x)=(u_2^{(n)})^{j}y$, so
\[
b_{n+1}(x)-b_n(x)\ =\ \log D(u_2^{(n)})^{j}(y)+\log\frac{|I_{n+1}^{(i')}|}{|D_n|}-\log\frac{t_{i'}(n+1)}{t_1(n)} .
\]
We work in the normalized coordinate of $D_n$ and write $u_2=u_2^{(n)}$, $r_2=r_2(n)$, $\delta=\delta_n$, and $C$ for the constant of \eqref{eq:mainestimates} and \eqref{eq:ratioestimates}. The points $u_2^{k}y$, $0\le k<j$, lie in $u_2^{k}(I_n^{(2)})\subset D_n$ by Lemma \ref{lem:decomposition}, so $|\log Du_2^{j}(y)|\le 2Cj\delta\le2Ca\delta$ as soon as $C\delta\le\tfrac12$. If $j<a$ the remaining terms are $\log r_2-\log(t_2(n)/t_1(n))$. Since $t_2(n)/t_1(n)\ge1/(a+1)$ and $|r_2-t_2(n)/t_1(n)|\le C\delta$, this is at most $2C(a+1)\delta$ in modulus as soon as $C(a+1)\delta\le\tfrac12$. If $j=a$ we compare $|I'|/|D_n|$ with $\tau(v_n)/t_1(n)$. For $k<a$ we have $|u_2^{k}(I_n^{(2)})|=|I_n^{(2)}|e^{\vartheta_k}$ with $|\vartheta_k|\le2Ca\delta$, so $\sum_{k<a}|u_2^{k}(I_n^{(2)})|=ar_2|D_n|(1+\vartheta)$ with $|\vartheta|\le e^{2Ca\delta}-1\le4Ca\delta$ if $2Ca\delta\le1$. Hence $|u_2^{a}(I')|=|D_n|(1-ar_2-ar_2\vartheta)$, while $|I'|=|u_2^{a}(I')|e^{-\vartheta_a}$ with $|\vartheta_a|\le2Ca\delta$. Moreover $1-ar_2=\tau(v_n)/t_1(n)+\vartheta'$ with $|\vartheta'|=a|r_2-t_2(n)/t_1(n)|\le Ca\delta$, and $ar_2\le1+Ca\delta\le2$. Therefore
\[
\frac{|I'|/|D_n|}{\tau(v_n)/t_1(n)}\ =\ \Big(1+\frac{\vartheta'-ar_2\vartheta}{\tau(v_n)/t_1(n)}\Big)e^{-\vartheta_a},\qquad
\Big|\frac{\vartheta'-ar_2\vartheta}{\tau(v_n)/t_1(n)}\Big|\ \le\ \frac{9Ca\delta\,t_1(n)}{\tau(v_n)}\ =\ 9C\Delta_n\delta ,
\]
and if $9C\Delta_n\delta\le\tfrac12$ the logarithm of the right hand side is at most $18C\Delta_n\delta+2Ca\delta$ in modulus. Since $a+1\le a(a+1)\le\Delta_n$, every smallness condition used holds when $\Delta_n\delta\le c_0:=1/(18C)$, and all the bounds are at most $C_6\Delta_n\delta$ with $C_6=24C$.

(iii) If $p$ is interior to the fundamental domain $[x_0,x_0+1)$ then $wu_i^{(n)}x_0=p=w'x_0$, so $w'^{-1}wu_i^{(n)}$ fixes $x_0$, has translation number zero by Lemma \ref{lem:forward}, and is the identity by the injectivity of $\tau$. Thus $w'=wu_i^{(n)}$. If $p=x_0+1$ then $J'$ is the first tile $I_n^{(i')}$, $w'$ is the identity, and $wu_i^{(n)}=T$ by the same argument. In both cases $Dw'(x_0)=Dw(u_i^{(n)}x_0)\,Du_i^{(n)}(x_0)$. On $J$ we have $b_n=\log Dw(w^{-1}\,\cdot\,)+\log|I_n^{(i)}|-\log t_i(n)$ with $w^{-1}p=u_i^{(n)}x_0$, and on $J'$ we have $b_n=\log Dw'(w'^{-1}\,\cdot\,)+\log|I_n^{(i')}|-\log t_{i'}(n)$ with $w'^{-1}p=x_0$. Subtracting gives the formula, since $|I_n^{(i')}|/|I_n^{(i)}|=r_{i'}(n)/r_i(n)$. For the bound, \eqref{eq:mainestimates} gives $|\log Du_i^{(n)}(x_0)|\le2C\delta_n$ when $C\delta_n\le\tfrac12$, and \eqref{eq:ratioestimates} together with $t_l(n)/t_1(n)\ge t_3(n)/t_1(n)$ gives $|\log r_l(n)-\log(t_l(n)/t_1(n))|\le2C\delta_n\,t_1(n)/t_3(n)$ for $l=i,i'$ when $C\delta_n\,t_1(n)/t_3(n)\le\tfrac12$.
\end{proof}

\begin{theorem}\label{thm:rigidity}
Under the hypotheses of Theorem \ref{thm:main}, the conjugacy $h$ of Lemma \ref{lem:toplin} is a $C^1$ diffeomorphism of $\T$. Consequently $f$ and $g$ are simultaneously $C^1$ conjugate to $R_\alpha$ and $R_\beta$.
\end{theorem}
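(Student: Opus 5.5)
The plan is to show that $\mu$ has a continuous positive density with respect to Lebesgue measure. Then $h(x)=\mu([x_0,x))$ is $C^1$ with $Dh>0$, and $h^{-1}$ is $C^1$ as well. The density will be obtained as the uniform limit of $e^{-a_n}$. Here $e^{-a_n(x)}=\mu(J_n(x))/|J_n(x)|$ is the average density of $\mu$ on the stage $n$ tile containing $x$. Since the tiles refine and $T_{\max}(n)\to0$ by Proposition \ref{prop:ratios}, a uniform limit $a$ of $a_n$ gives $\mu(J)=\int_J e^{-a}$ on every tile. By density of tile endpoints this identity extends to all intervals, so $Dh=e^{-a}$.

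The key steps are as follows.

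\begin{enumerate}
\item By \eqref{eq:ab} and Proposition \ref{prop:tiledistortion}, $\sup|a_n-b_n|\le\delta_n^\ast\to0$. It therefore suffices to show that $b_n$ converges uniformly to a continuous function.
\item Uniform Cauchy property of $b_n$. Under bounded type $\Delta_n\le A^3$, and $\delta_n\le C\xi^n$ by the proof of Theorem \ref{thm:main}. So for $n$ large the smallness conditions of Lemma \ref{lem:increments}(ii) hold. Parts (i) and (ii) then give $\sup_x|b_{n+1}(x)-b_n(x)|\le C_6A^3C\xi^n$, which is summable. Hence $b_n\to b$ uniformly on $\T$, and $a_n\to b$ uniformly as well.
\item Continuity of the limit. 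Each $b_n$ is continuous inside stage $n$ tiles, so the only issue is the jumps at tile endpoints. Fix an endpoint $p$ of some stage $n_1$ tile. It remains a tile endpoint at every later stage because the tilings refine. By Lemma \ref{lem:increments}(iii) the jump of $b_n$ at $p$ is at most $C_6A\delta_n\to0$. Write $b=b_n+(b-b_n)$ with $\sup|b-b_n|\le\sum_{m\ge n}C\xi^m$. The one sided limits of $b$ at $p$ then differ by at most $C\xi^n+C_6A\delta_n$ for every $n$, hence by zero. At a point $x$ that is never a tile endpoint, $x$ is interior to $J_n(x)$ for all $n$, so every $b_n$ is continuous at $x$. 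Uniform convergence then gives continuity of $b$ at $x$. Thus $b$ is continuous, and $\rho=e^{-b}$ is a continuous positive function.
\item Identification of the density. For a stage $n$ tile $J$ and $m\ge n$, $\mu(J)=\sum\mu(J')$ over the stage $m$ subtiles $J'\subset J$, and each term equals $e^{-a_m}|J'|$. Letting $m\to\infty$ with uniform convergence gives $\mu(J)=\int_J\rho\,dx$. Tile endpoints are dense because $T_{\max}(n)\to0$, so the identity holds for every interval. Hence $Dh=\rho$, which is continuous and nonvanishing. The simultaneous conjugacy statement then follows from Lemma \ref{lem:toplin}.
\end{enumerate}

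The main obstacle is step 3, controlling the boundary jumps. Lemma \ref{lem:increments}(iii) was designed for exactly this. The one delicate point is to check carefully that its hypothesis $\delta_n t_1(n)/t_3(n)\le c_0$ holds, which follows from $t_1/t_3\le A$. Another point to check is the wraparound endpoint $x_0\sim x_0+1$, which is handled by the case $p=x_0+1$ of Lemma \ref{lem:increments}(iii). For the $C^{1+\gamma}$ upgrade of Theorem B one would quantify these estimates, but that is not needed here.
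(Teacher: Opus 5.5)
Your proof is correct. It rests on the same core as the paper's: the functions $a_n,b_n$, the uniform Cauchy estimate from Lemma \ref{lem:increments}(i),(ii), the jump bound of Lemma \ref{lem:increments}(iii), and $\delta_n^{\ast}\to0$ from Proposition \ref{prop:tiledistortion}. The endgame is different.

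\textbf{Continuity.} The paper proves a uniform modulus directly. Points closer than the minimal stage $n$ tile length lie in the same or in adjacent stage $n$ tiles. The within-tile oscillation bound $\delta_n^{\ast}$ together with the jump bound then gives $|a(x)-a(z)|\le\omega_n$. You argue pointwise instead. At points that are never tile endpoints, continuity is inherited from the continuity of each $b_n$ inside its tiles. At endpoints, which persist under refinement, the jumps of $b_n$ vanish in the limit. This is simpler and does not need the oscillation bound for this step. You should note explicitly that $b(p)$ equals the right limit at $p$, since the tiles are half open.

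\textbf{Differentiability.} The paper estimates the difference quotients of $h$ at each point by splitting $[x,y)$ into maximal tiles of high stage. You instead use the exact identity $\mu(J)=\int_Je^{-a_m}\,dx$ for $m$ beyond the stage of $J$ and pass to the limit to get $\mu=e^{-b}\,dx$ on tiles. You then extend to all intervals by density of tile endpoints and nonatomicity, and finish with the fundamental theorem of calculus. This is cleaner.

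What the paper's route buys is quantitative information. Its modulus $\omega_n$ at the scale of the stage $n$ tiles is exactly what Theorem \ref{thm:holder} sharpens to obtain the H\"older exponent. Your pointwise argument would have to be recast in the adjacent-tile form for that purpose.
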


\begin{proof}
By Lemma \ref{lem:scales}, $\Delta_n\le A^3$ and $t_1(n)/t_3(n)\le A$, and $\delta_n\le C\xi^n$ by the proof of Theorem \ref{thm:main}. So there is $n_0$ such that the smallness conditions of Lemma \ref{lem:increments}(ii) and (iii) hold for all $n\ge n_0$, and for these $n$ we get $|b_{n+1}(x)-b_n(x)|\le C_6A^3\delta_n\le C\xi^n$ for all $x$, and jumps of $b_n$ across tile boundaries of modulus at most $C_6A\delta_n\le C\xi^n$. Each $b_n$ is a bounded function, so $b_n$ converges uniformly on $\T$ to a bounded function $a$, with $|a-b_n|\le R_n:=C\sum_{m\ge n}\xi^m\le C'\xi^n$ for $n\ge n_0$, and by \eqref{eq:ab} and Proposition \ref{prop:tiledistortion} also $a_n\to a$ uniformly.

\emph{Continuity.} Two points $x,z$ in the same stage $n$ tile satisfy $|b_n(x)-b_n(z)|\le\Dist(w_n|_{I_n^{(i_n)}})\le\delta_n^{\ast}$, and two points in adjacent stage $n$ tiles satisfy $|b_n(x)-b_n(z)|\le2\delta_n^{\ast}+C\xi^n$ by Lemma \ref{lem:increments}(iii). In both cases $|a(x)-a(z)|\le2R_n+2\delta_n^{\ast}+C\xi^n=:\omega_n$, and $\omega_n\to0$ by Proposition \ref{prop:tiledistortion}. Since there are finitely many tiles of stage $n$, they have a positive minimal length $c_n$, and two points at distance less than $c_n$ lie in the same or in adjacent stage $n$ tiles. So $a$ is uniformly continuous, with modulus of continuity $\varpi(s)\le\omega_n$ for $s<c_n$.

\emph{Differentiability.} Let $x\in\T$ and let $y\to x$. By symmetry we treat $y>x$ in a local lift and consider the interval $[x,y)$, whose $\mu$ measure is $h(y)-h(x)$. Let $c'_M$ be the minimal length of a tile of stage at most $M$, and let $M=M(y)$ be the largest integer with $|y-x|<c'_M$. Then $M(y)\to\infty$ as $y\to x$, since $c'_M>0$ for every $M$. Let $\mathcal{P}$ be the collection of tiles contained in $[x,y)$ that are maximal for inclusion. Two tiles are either nested or disjoint, so $\mathcal{P}$ consists of pairwise disjoint tiles. Every point $z\in(x,y)$ lies in its stage $m$ tile $J_m(z)$, which is contained in $(x,y)$ for $m$ large because $|J_m(z)|\le T_{\max}(m)\to0$, so $\mathcal{P}$ covers $(x,y)$, and every $J\in\mathcal{P}$ has stage larger than $M$. Since $\mu$ has no atoms,
\[
\mu([x,y))\ =\ \sum_{J\in\mathcal{P}}\mu(J),\qquad |y-x|\ =\ \sum_{J\in\mathcal{P}}|J| .
\]
For $J\in\mathcal{P}$ of stage $m>M$ and $z\in J$ we have $\mu(J)/|J|=e^{-a_m(z)}$ and $|a_m(z)-a(x)|\le|a_m(z)-a(z)|+|a(z)-a(x)|\le R_m+\delta_m^{\ast}+\varpi(|y-x|)$. Put $\epsilon(y)=\sup_{m>M(y)}(R_m+\delta_m^{\ast})+\varpi(|y-x|)$, which tends to zero as $y\to x$. Then $e^{-a(x)-\epsilon(y)}|J|\le\mu(J)\le e^{-a(x)+\epsilon(y)}|J|$ for every $J\in\mathcal{P}$, and summing over $\mathcal{P}$,
\[
e^{-\epsilon(y)}\ \le\ \frac{h(y)-h(x)}{y-x}\,e^{a(x)}\ \le\ e^{\epsilon(y)} .
\]
So $h$ is differentiable at $x$ with $Dh(x)=e^{-a(x)}>0$. The derivative is continuous, so $h$ is a $C^1$ diffeomorphism of $\T$, and so is $h^{-1}$ since $Dh$ is continuous and nowhere zero.
\end{proof}

Theorem B is Theorem \ref{thm:rigidity} together with Theorem \ref{thm:holder} below. The proof gives a little more than stated: the logarithmic density $-\log Dh$ is the uniform limit of the functions $a_n$ read on the tiles, with $|a-a_n|\le C\xi^n+\delta_n^{\ast}$. We now show that the distortion of the tile words decays exponentially, which upgrades the conjugacy to $C^{1+\gamma}$ and removes the base point from the conclusion of Theorem \ref{thm:main}.

\begin{lemma}\label{lem:expdistortion}
Suppose that $(\alpha,\beta)$ is of simultaneously bounded type with constant $A$ and summably convergent, and that the base point is admissible. There is $c_{\ast}>0$ with $c_{\ast}\le|J|/\mu(J)\le c_{\ast}^{-1}$ for every tile $J$ of every stage. Consequently, for every $\xi'\in(\max(\xi_1,\lambda),1)$, where $\xi_1$ is the rate of Proposition \ref{prop:zrecursion} and $\lambda=1-\tfrac{1}{3A}$ is the factor of Lemma \ref{lem:scales}, there is $C$ such that
\[
\delta_n^{\ast\ast}\ :=\ \sup\big\{\Dist\big(w|_{[y,\,u_1^{(n)}y)}\big)\colon\ i=1,2,3,\ w\in\mathcal{W}_n^{(i)},\ y\in D_n\big\}\ \le\ C\,\xi'^{\,n}\qquad\text{for all } n .
\]
In particular $\delta_n^{\ast}\le\delta_n^{\ast\ast}\le C\xi'^{\,n}$.
\end{lemma}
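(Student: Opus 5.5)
The plan has two parts. First I prove the uniform comparability of length and $\mu$ mass for tiles. Then I feed it into the distortion sum of Proposition \ref{prop:denjoy}, but with the supremum bound on nonlinearities from Proposition \ref{prop:zrecursion} in place of the integral bound $\mu_2(m)$.

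\emph{Step 1: the constant $c_\ast$.} On a stage $n$ tile $J$ we have $|J|/\mu(J)=e^{a_n(x)}$ for $x\in J$. So it suffices to show that $\sup_n\sup_\T|a_n|<\infty$. The proof of Theorem \ref{thm:rigidity}, which is available under the present hypotheses, shows three things. The functions $b_n$ are bounded. They converge uniformly to a bounded function $a$, with $|a-b_n|\le C\xi^n$ for $n\ge n_0$. Finally $|a_n-b_n|\le\delta_n^\ast\le K_0$ by \eqref{eq:ab} and Proposition \ref{prop:denjoy}. Hence $|a_n|\le\sup|a|+C+K_0$ for $n\ge n_0$. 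For the finitely many stages $n<n_0$ there are finitely many tiles, each with positive length and positive mass. Taking the worst of these finitely many ratios together with the uniform bound gives $c_\ast$.

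\emph{Step 2: lengths in terms of scales.} An interval of $\mu$ mass $\kappa t_1(n)$ meets at most $\kappa A+2$ tiles of stage $n$, as in Corollary \ref{cor:mesh}. Each of these tiles has length at most $c_\ast^{-1}t_1(n)$. Hence the interval has length at most $c_\ast^{-1}(\kappa A+2)t_1(n)$. Conversely $|D_m|\ge c_\ast t_1(m)$, because $D_m$ is a union of stage $m$ tiles.

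\emph{Step 3: the distortion sum.} Fix $w=\prod_{m<n}(u_2^{(m)})^{j_m}\in\mathcal{W}_n^{(i)}$ and $y\in D_n$. Then $[y,u_1^{(n)}y)\subset[x_0,(u_1^{(n)})^2x_0)\subset\widetilde I_n$. As in the proof of Proposition \ref{prop:denjoy}, using Lemma \ref{lem:windows}(b), I define $J_n=[y,u_1^{(n)}y)$ and $J_m=(u_2^{(m)})^{j_m}(J_{m+1})$. All the pieces $(u_2^{(m)})^k(J_{m+1})$ with $k<j_m$ lie in $\widetilde I_m$, and
\[
\Dist\big(w|_{[y,u_1^{(n)}y)}\big)\ \le\ \sum_{m<n}\sum_{k<j_m}\big|(u_2^{(m)})^k(J_{m+1})\big|\,\sup_{\widetilde I_m}\big|\eta_{u_2^{(m)}}\big| .
\]
Each piece is the image of $J_n$ under an element of $\Gamma$, so it has $\mu$ mass $t_1(n)$. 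By Step 2 its length is at most $C t_1(n)$. By Proposition \ref{prop:zrecursion}, $\sup_{\widetilde I_m}|\eta_{u_2^{(m)}}|\le C\xi_1^m/|D_m|\le C\xi_1^m/t_1(m)$. Lemma \ref{lem:scales} gives $t_1(n)/t_1(m)\le 3A\,S(n)/S(m)\le 3A\lambda^{n-m}$, and also $j_m\le A$. Therefore the $m$th term is at most $C\xi_1^m\lambda^{n-m}$. Summing over $m<n$ gives at most $Cn\max(\xi_1,\lambda)^n\le C\xi'^n$. The inequality $\delta_n^\ast\le\delta_n^{\ast\ast}$ follows by taking $y=x_0$.

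\emph{Main obstacle.} The delicate point is Step 1, namely getting a bound on the tile ratios that is uniform over all stages. It rests on the boundedness of the limit $a$ and on the uniform approximation of $a$ by $a_n$ from the proof of Theorem \ref{thm:rigidity}. I must check that this argument does not itself rely on the present lemma. It does not, since it used only $\delta_n^\ast\le K_0$ and $\delta_n^\ast\to0$, which come from Propositions \ref{prop:denjoy} and \ref{prop:tiledistortion}. A secondary check in Step 3 is that the shifted intervals $[y,u_1^{(n)}y)$ still lie in $\widetilde I_n$, so that Lemma \ref{lem:windows}(b) applies; this holds because $\tau(\Omega_n(u_1^{(n)})^{-2})>0$.
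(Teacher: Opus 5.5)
Your proposal is correct and follows the paper's proof essentially step for step: the uniform convergence $a_n\to a$ from the proof of Theorem \ref{thm:rigidity} gives $c_\ast$, and then the distortion sum of Proposition \ref{prop:tiledistortion} is run on $J_n=[y,u_1^{(n)}y)\subset\widetilde I_n$. In that sum you use the supremum bound of Proposition \ref{prop:zrecursion}, the piece length bound $(A+2)c_\ast^{-1}t_1(n)$, the bound $|D_m|\ge c_\ast t_1(m)$ and $t_1(n)/t_1(m)\le 3A\lambda^{n-m}$, which yields $Cn\max(\xi_1,\lambda)^n$; your non-circularity check and your separate treatment of the finitely many stages $n<n_0$ are both sound, and $D_m$ is in fact a single stage $m$ tile, since the identity lies in $\mathcal{W}_m^{(1)}$.
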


\begin{proof}
By Theorem \ref{thm:rigidity} the functions $a_n$ converge uniformly to the continuous function $a=-\log Dh$, so they are uniformly bounded, and $|J|/\mu(J)=e^{a_n(x)}$ for every stage $n$ tile $J$ and every $x\in J$. This gives $c_{\ast}$. Now fix $n$, a tile word $w\in\mathcal{W}_n^{(i)}$ and $y\in D_n$, and put $J=[y,u_1^{(n)}y)$. Since $u_1^{(n)}y\le(u_1^{(n)})^2x_0<\Omega_nx_0$, the interval $J$ lies in $\widetilde I_n$, and it has $\mu$ measure $t_1(n)$. Run the proof of Proposition \ref{prop:tiledistortion} with $J_n=J$ in place of $D_n$. Lemma \ref{lem:windows}(b) gives $J_m\subset\widetilde I_m$ for all $m\le n$, and the pieces $(u_2^{(m)})^k(J_{m+1})$ are images of $J$ under elements of $\Gamma$, hence intervals of $\mu$ measure $t_1(n)$ contained in $\widetilde I_m$. Such an interval meets at most $A+2$ tiles of stage $n$ by the argument of Corollary \ref{cor:mesh}, so its length is at most $(A+2)c_{\ast}^{-1}t_1(n)$. By Proposition \ref{prop:zrecursion}, $\sup_{\widetilde I_m}|\eta_{u_2^{(m)}}|\le C\xi_1^{\,m}/|D_m|$, and $|D_m|\ge c_{\ast}t_1(m)$ because $D_m$ is a tile. Hence each piece of stage $m$ contributes at most $C c_{\ast}^{-2}(A+2)\,\xi_1^{\,m}\,t_1(n)/t_1(m)$ to the distortion, and there are at most $a_{m+1}\le A$ such pieces. By Lemma \ref{lem:scales}, $t_1(n)/t_1(m)\le3A\,S(n)/S(m)\le3A\lambda^{\,n-m}$. Summing over $m<n$,
\[
\Dist\big(w|_J\big)\ \le\ C\sum_{m<n}\xi_1^{\,m}\lambda^{\,n-m}\ \le\ Cn\max(\xi_1,\lambda)^{\,n}\ \le\ C'\xi'^{\,n} .\qedhere
\]
\end{proof}

\begin{theorem}\label{thm:holder}
Under the hypotheses of Theorem \ref{thm:main} the conjugacy $h$ of Lemma \ref{lem:toplin} is a $C^{1+\gamma}$ diffeomorphism of $\T$ with
\[
\gamma\ =\ \frac{\log(1/\xi'')}{\log A}\ >\ 0,\qquad \xi''=\max(\xi,\xi'),
\]
where $\xi$ is the rate of Theorem \ref{thm:main} and $\xi'$ that of Lemma \ref{lem:expdistortion}. The exponent $\gamma$ can be chosen to depend only on $A$.
\end{theorem}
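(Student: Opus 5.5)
The plan is to show that $a=-\log Dh$, the uniform limit of the tile functions $a_n$ from the proof of Theorem \ref{thm:rigidity}, is $\gamma$ H\"older with $\gamma=\log(1/\xi'')/\log A$. The argument combines the exponential bounds of Theorem \ref{thm:main} and Lemma \ref{lem:expdistortion} with the geometric decay of tile lengths at a definite rate comparable to $A$. First I sharpen the quantitative form of the rigidity proof. By the proof of Theorem \ref{thm:rigidity}, $|a-b_n|\le R_n\le C\xi^n$, and by \eqref{eq:ab} and Lemma \ref{lem:expdistortion}, $|a_n-b_n|\le\delta_n^{\ast}\le C\xi'^n$. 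Hence $|a-a_n|\le C\xi''^n$ uniformly on $\T$.

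Next I compare the oscillation of $a$ on a short interval with a stage. Take $x<y$ with $|y-x|$ small. Lemma \ref{lem:expdistortion} gives $c_\ast\le|J|/\mu(J)\le c_\ast^{-1}$ for all tiles, and tiles of stage $n$ have $\mu$ measure in $[t_3(n),t_1(n)]$ with $t_3(n)\ge S(n)/(3A)$. So every stage $n$ tile has length comparable to $S(n)$, with constants depending only on $A$ and $c_\ast$. I choose the largest $n$ with $|y-x|<c\,t_3(n)$. Then $x$ and $y$ lie in the same or in adjacent stage $n$ tiles, as in the continuity step of Theorem \ref{thm:rigidity}. By the estimates there, now made exponential, $|a(x)-a(y)|\le 2C\xi''^n+2\delta_n^{\ast}+C_6A\delta_n\le C\xi''^n$. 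Here the jump bound of Lemma \ref{lem:increments}(iii) is used with $\delta_n\le C\xi^n$, and within-tile variation of $b_n$ is bounded by $\delta_n^{\ast}$.

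It then remains to bound $\xi''^n$ by $|y-x|^\gamma$. Since $n$ is maximal, $|y-x|\ge c\,t_3(n+1)$. The key point is a lower bound of the form $t_3(n+1)\ge C^{-1}A^{-n}$. For this I use that each step shrinks the scales by at most a factor $A$: $t_1(n+1)=t_2(n)\ge t_1(n)/A$. Iterating gives $t_1(n)\ge A^{-n}$, and then $t_3(n)\ge t_1(n)/A\ge A^{-n-1}$. Thus $A^{-n}\le C|y-x|$, so $n\ge \log(1/(C|y-x|))/\log A$. This yields $\xi''^n\le C|y-x|^{\log(1/\xi'')/\log A}$, which is the H\"older bound with exponent $\gamma$ (one may take $\gamma\le1$ after decreasing it). Since $Dh=e^{-a}$ with $a$ bounded, $Dh$ is $\gamma$ H\"older, and so is $Dh^{-1}=1/(Dh\circ h^{-1})$, because $h^{-1}$ is Lipschitz.

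For uniformity in $A$, I would invoke Remark \ref{rem:uniformrate}. Restarting at a stage $n_0$ where the tail of $K_0$ is at most $1$ makes $\xi$ a function of $A$ alone. The rate $\xi'$ of Lemma \ref{lem:expdistortion} is any number above $\max(\xi_1,\lambda)$, and both of these depend only on $A$, $L(A)$ and $K_0=1$. The finitely many initial stages only affect constants, and a H\"older exponent is insensitive to constants. The main obstacle I expect is the adjacent-tile step: I must make sure that the jump estimate of Lemma \ref{lem:increments}(iii) applies to every stage at which $x$ and $y$ straddle a tile boundary, including the seam at $x_0+1$. I must also check that the smallness conditions hold uniformly for $n\ge n_0$, with the short range of $|y-x|$ handled by the trivial bound $\sup|a|$.
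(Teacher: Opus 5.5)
Your main estimate is the paper's argument. Two points closer than the minimal stage $n$ tile length lie in the same or in adjacent stage $n$ tiles, where the oscillation of $a$ is $O(\xi''^{\,n})$ by the rigidity proof and Lemma \ref{lem:expdistortion}. Tile lengths are at least $c_\ast A^{-n-1}$ because $t_1(k+1)=t_2(k)\ge t_1(k)/A$. Using the threshold $c_\ast t_3(n)$ in place of the paper's $c_\ast A^{-n-1}$ changes nothing.

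The only real difference is the last step, the dependence of $\gamma$ on $A$ alone. The paper applies the H\"older estimate to the restarted pair of Remark \ref{rem:uniformrate}. That pair's conjugacy is $h|_{D_{n_1}}$ up to affine maps, so $a$ is $\gamma_A$ H\"older on $D_{n_1}$. The paper then spreads this to the finitely many stage $n_1$ tiles through the identity $a(wx)=a(x)+\log Dw(x)$, which follows from $h\circ w=R_{\tau(w)}\circ h$, with $\log Dw$ smooth.

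You instead assert that after the restart the rates in your global argument on $\T$ depend only on $A$, with the initial stages affecting only constants. That is true, but it is not what Remark \ref{rem:uniformrate} says. The remark controls scale invariant data read on $D_n$, namely $W_i(n)$, $\Sigma(n)$ and $r_i(n)$. Your argument uses global quantities over all tiles of the circle, such as $T_{\max}(n)$ and $\theta(n)$. Their rate in Proposition \ref{prop:ratios} involves the distortion of tile words with factors of stage below $n_1$, through $\rho_3=e^{-K_0}\rho_1^{L+4}$ with the full $K_0$.

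You can close this on your own route as follows. Once $Dh=e^{-a}$ is known to be continuous and positive, lengths are comparable to $\mu$ measures. Hence $T_{\max}(n)$ and $\theta(n)$ are $O(t_1(n))=O(\lambda^n)$ with $\lambda=1-\tfrac1{3A}$. The remaining rates come from the recursions for $z_1(n)$ and $\Sigma(n)$, whose coefficients are scale invariant and eventually controlled by $A$ alone. For example, $r_2(n)$ is within $C\delta_n$ of $t_2(n)/t_1(n)\le A/(A+1)$.

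The paper's transport identity does this job in one line without reopening the estimates. Your route, once completed, has the side benefit of showing that all the global decay rates are uniform in the class, not just the H\"older exponent.
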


\begin{proof}
We use the notation of the proof of Theorem \ref{thm:rigidity}, where $|a-b_n|\le R_n\le C\xi^{\,n}$ for $n\ge n_0$, two points of a common stage $n$ tile satisfy $|b_n(x)-b_n(z)|\le\delta_n^{\ast}$, and two points of adjacent stage $n$ tiles satisfy $|b_n(x)-b_n(z)|\le2\delta_n^{\ast}+C\xi^{\,n}$. By Lemma \ref{lem:expdistortion} this gives $|a(x)-a(z)|\le\omega_n\le C\xi''^{\,n}$ whenever $x$ and $z$ lie in the same or in adjacent stage $n$ tiles and $n\ge n_0$. Every stage $n$ tile has $\mu$ measure at least $t_3(n)\ge t_1(n)/A\ge A^{-n-1}$, since $t_1(k+1)=t_2(k)\ge t_1(k)/A$ for every $k$, so its length is at least $c_{\ast}A^{-n-1}$ by Lemma \ref{lem:expdistortion}, and two points at distance less than $c_{\ast}A^{-n-1}$ lie in the same or in adjacent stage $n$ tiles. Let $x\ne z$ with $|x-z|<c_{\ast}A^{-n_0-1}$, and let $n\ge n_0$ be the largest integer with $|x-z|<c_{\ast}A^{-n-1}$, so that $c_{\ast}A^{-n-2}\le|x-z|$. Then
\[
|a(x)-a(z)|\ \le\ C\xi''^{\,n}\ =\ C\,\big(A^{-n}\big)^{\gamma}\ \le\ C\,\big(A^{2}|x-z|/c_{\ast}\big)^{\gamma}\ \le\ C'|x-z|^{\gamma} ,
\]
and the bound is trivial for $|x-z|\ge c_{\ast}A^{-n_0-1}$ because $a$ is bounded. So $a$ is $\gamma$ H\"older on $\T$, and so is $Dh=e^{-a}$.

For the dependence of $\gamma$ on $A$ alone, apply this to the pair at the stage of Remark \ref{rem:uniformrate} from which the tail of the series defining $K_0$ is at most one, call it $n_1$. Its conjugacy is the restriction of $h$ to $D_{n_1}$ up to affine changes of coordinates, so $a$ is $\gamma_A$ H\"older on $D_{n_1}$, where $\gamma_A=\log(1/\xi''_A)/\log A$ is computed from $A$, $L(A)$ and $K_0=1$. For a tile word $w$ of stage $n_1$ the identity $h\circ w=R_{\tau(w)}\circ h$ gives $a(wx)=a(x)+\log Dw(x)$, and $\log Dw$ is of class $C^2$, so $a$ is $\gamma_A$ H\"older on every stage $n_1$ tile. Since these finitely many tiles cover $\T$ and $a$ is continuous, $a$ is $\gamma_A$ H\"older on $\T$.
\end{proof}

\begin{proposition}\label{prop:allbasepoints}
Suppose that $\Sigma^{\ast}<\infty$ and that $x_0$ is an admissible base point. For $x\in\T$ let $\xi_n(x)$ be the normalized pair of stage $n$ built on the base point $x$ in place of $x_0$, with the same basis $u^{(n)}$. There is $C$, depending only on the constants of Convention \ref{con:basepoint}, such that
\[
d_1\big(\xi_n(x),\mathcal{O}_n\big)\ \le\ C\,\big(\delta_n+\delta_n^{\ast\ast}\big)\qquad\text{for all } n\ge0 \text{ and all } x\in\T ,
\]
with $\delta_n^{\ast\ast}$ as in Lemma \ref{lem:expdistortion}. Under the hypotheses of Theorem \ref{thm:main} the right hand side is at most $C\xi''^{\,n}$, with $\xi''$ as in Theorem \ref{thm:holder}, so the conclusion of Theorem \ref{thm:main} holds for every base point, with a constant independent of the base point.
\end{proposition}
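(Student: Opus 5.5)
The plan is to transport everything from the admissible base point $x_0$ to an arbitrary base point $x$ by means of the tiling built on $x_0$. Take the basis elements $u_i^{(n)}$, whose estimates on $D_n=[x_0,u_1^{(n)}x_0)$ are already available from the proof of Theorem \ref{thm:main}. Then read them on $D_n(x)=[x,u_1^{(n)}x)$. By Theorem \ref{thm:tiling} the point $x$ lies in some stage $n$ tile $w(I_n^{(i)})$ with $w\in\mathcal{W}_n^{(i)}$. Write $x=wy$ with $y\in I_n^{(i)}\subset D_n$. Since $w$ commutes with every $u_j^{(n)}$, we get $u_j^{(n)}=w\,u_j^{(n)}\,w^{-1}$ on $D_n(x)=w([y,u_1^{(n)}y))$. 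So in the chart of $D_n(x)$ the pair $\xi_n(x)$ is the conjugate of the chart of $[y,u_1^{(n)}y)$ by the restriction of $w$ to $[y,u_1^{(n)}y)$. That restriction, once normalized, is $C^1$ close to the identity up to $\Dist(w|_{[y,u_1^{(n)}y)})\le\delta_n^{\ast\ast}$, by Lemma \ref{lem:normclose} (using $\Dist\le$ oscillation and $\int D\widehat w=1$).

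The argument then has two steps. First, I would show that the normalized pair built on the base point $y\in D_n$, with the same basis, is within $C\delta_n$ of $\mathcal{O}_n$. Here \eqref{eq:mainestimates} says that in the normalized coordinate of $D_n$ each $u_j^{(n)}$ satisfies $|Du_j^{(n)}-1|\le C\delta_n$ and $|u_j^{(n)}(s)-s-r_j(n)|\le C\delta_n$. This is on $[0,1)$, and by the bound of Proposition \ref{prop:zrecursion} on $\widetilde I_n$ the same holds on the window $\widetilde I_n$, which contains $[y,u_1^{(n)}y)$ and its images under $u_j^{(n)}$ by Lemma \ref{lem:windows}(a). Hence $u_1^{(n)}$ is $C\delta_n$-close to translation by $1$ there. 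The length $|u_1^{(n)}y-y|$ is then $|D_n|(1+O(\delta_n))$, and rescaling $[y,u_1^{(n)}y)$ to $[0,1)$ changes the coordinates by $O(\delta_n)$ in $C^1$. The two branches of the pair based at $y$ are then $C\delta_n$-close to the rotations by $r_j(n)$, exactly as in the proof of Theorem \ref{thm:main}. By \eqref{eq:ratioestimates} these rotations are $C\delta_n$-close to the rotations by $t_j(n)/t_1(n)$. Second, conjugating by the normalized $w$ perturbs the two branch maps by $O(\delta_n^{\ast\ast})$ in $C^0$ and $C^1$, because the branch derivatives are bounded by Proposition \ref{prop:derivbound}. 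The cut points move accordingly by $O(\delta_n^{\ast\ast})$. Adding the two errors gives $d_1(\xi_n(x),\mathcal{O}_n)\le C(\delta_n+\delta_n^{\ast\ast})$. The last claim follows from $\delta_n\le C\xi^n$ (proof of Theorem \ref{thm:main}) and Lemma \ref{lem:expdistortion}.

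The main obstacle is checking that $d_1$ really behaves well under conjugation by a nearly-affine map whose domain is not the fundamental chart itself. The branches of $\xi_n(x)$ are defined piecewise with cut points, so I would compare the circle maps $\widetilde P$ rather than the branches. The conjugate $\widehat w\circ\widetilde P\circ\widehat w^{-1}$, read on the circle $\R/\Z$, has $C^0$ distance to $\widetilde P$ bounded by $2\|\widehat w-\mathrm{id}\|_{C^0}+\|D\widetilde P\|_\infty\|\widehat w-\mathrm{id}\|_{C^0}$. For the piecewise derivative, the chain rule gives a product of three factors, each within $O(\delta_n^{\ast\ast})$ of the corresponding factor of $\widetilde P$. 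The only point needing care is the seam: there the gluing is by $u_1^{(n)}$, which commutes with $w$, so $\widehat w$ is compatible with the identification of the endpoints.
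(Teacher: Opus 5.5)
Your first step is the paper's first step. You write $x=wy$ with $y\in I_n^{(i)}$, observe that the chart at $x$ is the conjugate by $w$ of the chart at $y$, and use $\Dist(w|_{[y,u_1^{(n)}y)})\le\delta_n^{\ast\ast}$ to make the normalized conjugacy $3\delta_n^{\ast\ast}$-close to the identity in $C^1$.

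You diverge in the second step, comparing the pair based at $y\in D_n$ with $\mathcal{O}_n$.
\begin{itemize}
\item The paper never leaves $D_n$. In the normalized coordinate of $D_n$, the change of chart from $x_0$ to $y$ is the circle map $\Psi$, equal to $(s-s_0)/\ell$ on $[s_0,1)$ and to $(U(s)-s_0)/\ell$ on $[0,s_0)$. It uses $u_1^{(n)}$ only on $D_n$, so \eqref{eq:mainestimates} makes it $C\delta_n$-close in $C^1$ to the rotation $s\mapsto s-s_0$. Conjugating by a rotation does not change the $d_1$-distance to a rotation pair, so $d_1(\xi_n(y),\mathcal{O}_n)\le d_1(\xi_n(x_0),\mathcal{O}_n)+C\delta_n$.
\item You instead rerun the proof of Theorem~\ref{thm:main} directly on $[y,u_1^{(n)}y)$. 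This forces you to control $u_j^{(n)}$ on the larger window $\widetilde I_n$.
\end{itemize}
The paper's route is more economical: it needs only \eqref{eq:mainestimates} on $D_n$ and the conclusion already proved at $x_0$. Yours is more direct but needs more input.

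As written, that window input is not quite what Proposition~\ref{prop:zrecursion} provides. Its bound on $\widetilde I_n$ is, in the normalized coordinate, $C(|z_1(n)|+\widetilde\theta(n))$, and $\widetilde\theta(n)$ is not one of the terms of $\delta_n$. So your argument literally yields $C(\delta_n+\widetilde\theta(n)+\delta_n^{\ast\ast})$. There are two easy repairs.
\begin{itemize}
\item Show $\widetilde\theta(n)\le C(K_0)\,\theta(n)$. The prefixes $p_s$ commute with the basis, so $p_s(\widetilde I_n)$ is the union of $p_s(D_n)$, $u_1^{(n)}p_s(D_n)$, $(u_1^{(n)})^2p_s(I_n^{(2)})$ and $(u_1^{(n)})^2u_2^{(n)}p_s(I_n^{(3)})$. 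Moreover $|\log Du_j^{(n)}|\le3K_0$ on all of $\R$: write a point as $w'z'$ with $z'$ in a base interval and $w'$ a tile word, and argue as in Proposition~\ref{prop:derivbound}.
\item Avoid the window altogether. By commutation, every branch derivative in the chart at $y$ is a product of values of $Du_1^{(n)}$, $Du_j^{(n)}$ and their reciprocals at points of $D_n$. This is exactly what the paper's $\Psi$ encodes.
\end{itemize}
Under the hypotheses of Theorem~\ref{thm:main} the exponential conclusion is unaffected either way, since $\widetilde\theta(n)\le C\sigma^{\,n}$.

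One smaller point concerns the conjugation step. Bounded branch derivatives alone do not control $D\widetilde P(\widehat w^{-1}s)-D\widetilde P(s)$, especially near the cut point, where the piecewise derivative jumps by the factor $Du_1^{(n)}(y)$. What closes this is your own second-step fact that $D\widetilde P_n^{(y)}$ is within $C\delta_n$ of $1$. That makes the term $O(\delta_n)$, which is consistent with the claimed bound.
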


\begin{proof}
Fix $n$ and $x\in\T$, and let $w(I_n^{(i)})$ be the stage $n$ tile containing $x$, so that $x=w(y)$ with $y\in I_n^{(i)}\subset D_n$. Since $w$ commutes with the basis, $D_n(x)=[x,u_1^{(n)}x)=w(D_n(y))$, the cut points in the chart at $x$ are the images under $w$ of the cut points in the chart at $y$, and the branches in the chart at $x$ are the conjugates by $w$ of the branches in the chart at $y$. After normalization, $\widetilde P_n^{(x)}=\widehat w\circ\widetilde P_n^{(y)}\circ\widehat w^{-1}$, where $\widehat w$ is the normalization of $w|_{D_n(y)}$, a diffeomorphism of $[0,1]$ fixing both endpoints, regarded as a homeomorphism of the circle fixing the seam, and likewise for $\widetilde Q_n$. The distortion of $\widehat w$ is $\Dist(w|_{D_n(y)})\le\delta_n^{\ast\ast}$, and the proof of Lemma \ref{lem:normclose}, which uses only the oscillation of $\log D\widehat w$, gives $\|\widehat w-\mathrm{id}\|_{C^1}\le3\delta_n^{\ast\ast}$ when $\delta_n^{\ast\ast}\le1$. The circle maps $\widetilde P_n^{(y)}$ and $\widetilde Q_n^{(y)}$ have derivatives bounded by $e^{4K_0}$ and nonlinearities bounded by a constant, by Propositions \ref{prop:derivbound} and \ref{prop:zrecursion}, since $D_n(y)\subset\widetilde I_n$ and $|D_n(y)|\le(1+e^{2K_0})|D_n|$. Conjugating such a map by a diffeomorphism within $\epsilon$ of the identity in the $C^1$ norm moves it by at most $C\epsilon$ in the distance $d_1$, so $d_1(\xi_n(x),\xi_n(y))\le C\delta_n^{\ast\ast}$.

It remains to compare $\xi_n(y)$ with $\xi_n(x_0)$ for $y\in D_n$. Work in the normalized coordinate of $D_n$, in which $y$ becomes a point $s_0\in[0,1)$ and $u_1^{(n)}$ becomes a map $U$ with $|U(s)-s-1|\le C\delta_n$ and $|DU-1|\le C\delta_n$, by \eqref{eq:mainestimates}. The chart at $y$ is the interval $[s_0,U(s_0))$, of length $\ell=U(s_0)-s_0=1+O(\delta_n)$, and the transition from the chart at $x_0$ to the chart at $y$, as a homeomorphism of the circle $[0,1)/\{0\sim1\}$, is $\Psi(s)=(s-s_0)/\ell$ for $s\in[s_0,1)$ and $\Psi(s)=(U(s)-s_0)/\ell$ for $s\in[0,s_0)$. Both branches are within $C\delta_n$ in the $C^1$ norm of the rotation $R\colon s\mapsto s-s_0$ of the circle, and $\widetilde P_n^{(y)}=\Psi\circ\widetilde P_n^{(x_0)}\circ\Psi^{-1}$. Conjugation by $R$ does not change the $d_1$ distance to the rotation pair $\mathcal{O}_n$, because rotations commute and $d_1$ is built from supremum norms, so the perturbation argument above gives $d_1(\xi_n(y),\mathcal{O}_n)\le d_1(\xi_n(x_0),\mathcal{O}_n)+C\delta_n\le C\delta_n$, the last step by the proof of Theorem \ref{thm:main}. Together, $d_1(\xi_n(x),\mathcal{O}_n)\le C(\delta_n+\delta_n^{\ast\ast})$. Under the hypotheses of Theorem \ref{thm:main}, $\delta_n\le C\xi^{\,n}$ by the proof of that theorem and $\delta_n^{\ast\ast}\le C\xi'^{\,n}$ by Lemma \ref{lem:expdistortion}.
\end{proof}

\section{Renormalization beyond simultaneously bounded type}\label{sec:beyond}

The class of Definition \ref{def:sbt} is defined by a condition that must hold at every stage. We now relax it to a condition that must hold along a sparse but not too sparse sequence of stages. The distortion part of the real bounds, Propositions \ref{prop:denjoy} and \ref{prop:derivbound}, the M\"obius approximation, Proposition \ref{prop:basemoebius}, and the two recursions, Propositions \ref{prop:zrecursion} and \ref{prop:sloperecursion}, hold at every stage as soon as the digit weighted series $\Sigma^{\ast}$ converges and the base point is admissible. Only the geometric part, Proposition \ref{prop:ratios}, uses bounded type, and it will be replaced by a statement at good stages. The right language for the arithmetic side is the space of lattices, because the arithmetic of a vector is visible there and is not visible in the symbolic data, as Remark \ref{rem:asymmetry} explains.

\subsection{The lattice flow and the Dani correspondence}
Let $\omega=(\alpha,\beta)$ and let
\[
\Lambda\ =\ \big\{(k\alpha+l\beta+m,\,k,\,l)\big\},\qquad
\Lambda^{\ast}\ =\ \big\{(p_1-q\alpha,\,p_2-q\beta,\,q)\big\}
\]
be the two unimodular lattices attached to $\omega$, the first adapted to the linear form $k\alpha+l\beta$ and the second to the simultaneous approximation of $\alpha$ and $\beta$ by rationals with a common denominator. They are dual to one another up to the flip of coordinates. Let
\[
a_t\ =\ \operatorname{diag}\big(e^{2t},e^{-t},e^{-t}\big),\qquad
a_t^{\ast}\ =\ \operatorname{diag}\big(e^{t},e^{t},e^{-2t}\big)
\]
be the corresponding one parameter diagonal subgroups of $\mathrm{SL}_3(\R)$, acting on the space $X=\mathrm{SL}_3(\R)/\mathrm{SL}_3(\Z)$ of unimodular lattices. The dictionary between Diophantine properties of $\omega$ and the behaviour of these orbits goes back to \cite{Dani}, and the transference between the two approximation problems is classical, see \cite[Chapter V]{Cassels}. Recall that $\omega$ satisfies the simultaneous Diophantine condition with exponent $\sigma$ if there is $c>0$ with $\max(\|q\alpha\|,\|q\beta\|)\ge c\,q^{-\sigma}$ for all $q\ge1$, and that $\omega$ is badly approximable if this holds with the least possible exponent $\sigma=1/2$.

\begin{lemma}\label{lem:dani}
The vector $\omega$ satisfies the simultaneous Diophantine condition with exponent $\sigma\ge1/2$ if and only if there are $c>0$ and $t_0$ with
\[
\lambda_1\big(a_t^{\ast}\Lambda^{\ast}\big)\ \ge\ c\,e^{-\delta t},\qquad \delta=\frac{2\sigma-1}{\sigma+1},
\]
for all $t\ge t_0$, where $\lambda_1$ denotes the length of a shortest nonzero vector. In particular $\omega$ is badly approximable if and only if the orbit $\{a^{\ast}_t\Lambda^{\ast}\colon t\ge0\}$ is bounded in $X$, and this happens if and only if $\{a_t\Lambda\colon t\ge0\}$ is bounded in $X$.
\end{lemma}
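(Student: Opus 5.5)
The plan is the standard Dani correspondence argument. We run it directly with the explicit lattice $\Lambda^{\ast}$ and then use duality for the last equivalence.

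A vector of $a_t^{\ast}\Lambda^{\ast}$ is $\big(e^{t}(p_1-q\alpha),\,e^{t}(p_2-q\beta),\,e^{-2t}q\big)$. We work with the sup norm, which changes $\lambda_1$ only by a bounded factor. Vectors with $q=0$ have length at least $e^t$, so only $q\ge1$ matters, after replacing $(p,q)$ by $-(p,q)$. Optimizing $p_1,p_2$ gives the length $\max\big(e^{t}\epsilon_q,\ e^{-2t}q\big)$, where we write $\epsilon_q=\max(\|q\alpha\|,\|q\beta\|)$.

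\emph{Diophantine implies the lattice bound.} Assume $\epsilon_q\ge cq^{-\sigma}$. For a given $t$ and $q$, either $q\ge e^{(2-\delta)t}$, and then $e^{-2t}q\ge e^{-\delta t}$, or $q<e^{(2-\delta)t}$, and then
\[
e^{t}\epsilon_q\ \ge\ c\,e^{t}e^{-\sigma(2-\delta)t}.
\]
The exponent $1-\sigma(2-\delta)$ equals $-\delta$ exactly when $\delta(1-\sigma)=2\sigma-1-\ldots$; solving $1-2\sigma+\sigma\delta=-\delta$ gives $\delta=(2\sigma-1)/(\sigma+1)$, which is the stated value. So both cases give length at least $\min(1,c)e^{-\delta t}$.

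\emph{Converse.} Assume $\lambda_1\ge ce^{-\delta t}$ for $t\ge t_0$. Given a large $q$, choose $t$ with $e^{-2t}q=\tfrac12 ce^{-\delta t}$, that is $e^{(2-\delta)t}=2q/c$; here $2-\delta>0$ since $\delta<2$. The vector attached to $q$ has third coordinate below the bound, so its first two coordinates must satisfy $e^{t}\epsilon_q\ge ce^{-\delta t}$. Hence $\epsilon_q\ge ce^{-(1+\delta)t}$. By the same exponent identity, $e^{-(1+\delta)t}$ is comparable to $q^{-(1+\delta)/(2-\delta)}=q^{-\sigma}$, since $(1+\delta)/(2-\delta)=\sigma$. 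Small $q$ are handled by adjusting $c$, using the irrationality of $\alpha$.

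\emph{Badly approximable case.} For $\sigma=1/2$ we get $\delta=0$. A uniform lower bound on $\lambda_1$ is equivalent to boundedness in $X$ by Mahler's compactness criterion; the range $t\in[0,t_0]$ is compact and causes no trouble. For the equivalence with the orbit of $\Lambda$ we use duality. Up to the coordinate flip, $\Lambda$ is the dual lattice $(\Lambda^{\ast})^{\ast}$, and $a_t$ corresponds to the inverse transpose of $a_t^{\ast}$, so $a_t\Lambda$ is, up to a fixed permutation, the dual of $a_t^{\ast}\Lambda^{\ast}$. Mahler's relations $\lambda_1(L^{\ast})\lambda_3(L)\asymp1$ and $\lambda_1\lambda_2\lambda_3\asymp1$ show that $\lambda_1(L)$ is bounded below if and only if $\lambda_1(L^{\ast})$ is. Hence one orbit is bounded if and only if the other is.

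The main obstacle is checking the duality bookkeeping precisely: confirming the pairing and the coordinate flip, and verifying that $\operatorname{diag}(e^{2t},e^{-t},e^{-t})$ really is the dual flow of $\operatorname{diag}(e^{t},e^{t},e^{-2t})$ after that permutation. The exponent algebra is routine once this is fixed.
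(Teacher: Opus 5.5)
Your proposal is correct and follows the paper's proof. You use the same explicit vectors of $a_t^{\ast}\Lambda^{\ast}$ and the same exponent matching $(1+\delta)/(2-\delta)=\sigma$, which you carry out in both directions where the paper only invokes ``standard interpolation''. You then apply Mahler's criterion for $\delta=0$ and the same duality argument via $\lambda_1\lambda_2\lambda_3\asymp1$ and $\lambda_1(L^{\ast})\asymp\lambda_3(L)^{-1}$.

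The duality bookkeeping you flagged does check out. With the pairing $x_1y_3+x_2y_1+x_3y_2$, the dual lattice of $a_t\Lambda$ is $P\,a_t^{\ast}\Lambda^{\ast}$ for the cyclic permutation $P(y_1,y_2,y_3)=(y_3,y_1,y_2)$. Also delete the stray unfinished expression just before you solve for $\delta$.
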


\begin{proof}
A nonzero vector of $a_t^{\ast}\Lambda^{\ast}$ is $(e^{t}(p_1-q\alpha),e^{t}(p_2-q\beta),e^{-2t}q)$. Its length is at least $c e^{-\delta t}$ for all large $t$ if and only if for every $q\ge1$ and every $p$, taking $e^{-2t}q=e^{-\delta t}$, that is $e^{t}=q^{1/(2-\delta)}$, one has $\max_i|p_i-q\omega_i|\ge ce^{-(1+\delta)t}=cq^{-(1+\delta)/(2-\delta)}$. Solving $(1+\delta)/(2-\delta)=\sigma$ for $\delta$ gives $\delta=(2\sigma-1)/(\sigma+1)$, and the two conditions are equivalent up to the value of $c$ by the standard interpolation between the times $t$ and the denominators $q$. The case $\delta=0$ is the case $\sigma=1/2$ and gives bounded orbits, by Mahler's criterion.

For the last equivalence, let $L$ be a unimodular lattice with successive minima $\lambda_1\le\lambda_2\le\lambda_3$. By Minkowski's second theorem $\lambda_1\lambda_2\lambda_3\asymp1$, and for the dual lattice $\lambda_1(L^{\ast})\asymp\lambda_3(L)^{-1}$. So $\lambda_1(L)$ is bounded below if and only if all three minima are comparable to one, which happens if and only if $\lambda_1(L^{\ast})$ is bounded below. Since $a_t\Lambda$ and $a_t^{\ast}\Lambda^{\ast}$ are dual up to a permutation of coordinates, the two orbits are bounded simultaneously.
\end{proof}

\begin{remark}\label{rem:split}
The passage to the space of lattices is not a convenience. The symbolic data of the algorithm determines the vector, by Theorem \ref{thm:reconstruction}, but it does not display its Diophantine class in any usable form, and Remark \ref{rem:asymmetry} records that this is a real difference from rank one. The lattice orbit displays the Diophantine class directly, by Lemma \ref{lem:dani}. The renormalization scheme lives on the symbolic side and the arithmetic lives on the geometric side, and the two are joined by the rescaled basis $b_i(n)$, which is a point of the orbit up to bounded distance.
\end{remark}

\begin{proposition}\label{prop:sbtbad}
Suppose that the Brun basis of $(\alpha,\beta)$ is reduced at its own scale along a sequence of stages $n_k$ with $t_1(n_{k+1})\ge c\,t_1(n_k)$, in the sense that $t_1(n_k)H(n_k)^2\le A'$ for all $k$. Then $(\alpha,\beta)$ is badly approximable.
\end{proposition}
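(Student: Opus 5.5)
The plan is to use Lemma \ref{lem:dani}: it suffices to show that the orbit $\{a_t\Lambda\colon t\ge0\}$ is bounded in $X$. By Mahler's criterion this means a uniform lower bound on $\lambda_1(a_t\Lambda)$. At a good stage $n_k$ we have an explicit basis of $\Lambda$, namely the vectors $b_i(n_k)=(t_i(n_k),e_i(n_k))$ of Lemma \ref{lem:heights}. We choose the time $t_k$ so that the flow balances the scale against the height. Since $a_t$ multiplies the first coordinate by $e^{2t}$ and the exponent coordinates by $e^{-t}$, we take $e^{3t_k}=H(n_k)/t_1(n_k)$. Then every vector $a_{t_k}b_i(n_k)$ has length at most
\[
e^{2t_k}t_1(n_k)+e^{-t_k}H(n_k)\ =\ 2\,\big(t_1(n_k)H(n_k)^2\big)^{1/3}\ \le\ 2A'^{1/3}.
\]

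Next we bound the covolume from the other side. The lattice $a_{t_k}\Lambda$ is unimodular and has a basis of vectors of length at most $2A'^{1/3}$. By Minkowski's second theorem, $\lambda_1\lambda_2\lambda_3\asymp1$ and $\lambda_2,\lambda_3\le 2A'^{1/3}$. Hence $\lambda_1(a_{t_k}\Lambda)\ge c(A')>0$. So the orbit visits the compact set $\{\lambda_1\ge c(A')\}$ at all times $t_k$.

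The main step is to control the gaps $t_{k+1}-t_k$ and to interpolate. We have $e^{3t_k}=H(n_k)/t_1(n_k)$. Using $t_1(n_k)H(n_k)^2\ge1/6$ from Lemma \ref{lem:heights} together with the upper bound $A'$, the height is pinned: $H(n_k)\asymp t_1(n_k)^{-1/2}$. Therefore $e^{3t_k}\asymp t_1(n_k)^{-3/2}$, so $t_k=-\tfrac12\log t_1(n_k)+O(1)$. The hypothesis $t_1(n_{k+1})\ge c\,t_1(n_k)$ then gives $t_{k+1}-t_k\le\tfrac12\log(1/c)+O(1)$. Also $t_k\to\infty$, because $t_1(n)\to0$; this follows from Lemma \ref{lem:heights} with $H(n)$ unbounded, or from the nesting of the intervals. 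For $t\in[t_k,t_{k+1}]$, the operator norm of $a_{t-t_k}^{\pm1}$ is at most $e^{2(t_{k+1}-t_k)}$, which is uniformly bounded. So $\lambda_1(a_t\Lambda)\ge e^{-2(t_{k+1}-t_k)}\lambda_1(a_{t_k}\Lambda)$ is bounded below for all $t\ge t_0$, and on $[0,t_0]$ this holds by compactness.

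Thus $\{a_t\Lambda\}$ is bounded, and Lemma \ref{lem:dani} yields that $(\alpha,\beta)$ is badly approximable. The only delicate points are the two-sided pinning of $H(n_k)$, which keeps the times $t_k$ at bounded gaps, and checking that $t_k\to\infty$ with $t_0$ finite; both are handled by the lower bound of Lemma \ref{lem:heights}.
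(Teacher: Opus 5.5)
Your proposal is correct and is essentially the paper's proof. The paper uses the same time $e^{3s_k}=H(n_k)/t_1(n_k)$, writing $g_k=\operatorname{diag}(t_1(n_k)^{-1},H(n_k)^{-1},H(n_k)^{-1})=\varrho_k a_{s_k}$ and applying Minkowski's second theorem to $g_k\Lambda$; your direct estimate in the unimodular lattice $a_{t_k}\Lambda$ avoids the scalar $\varrho_k$. The interpolation then rests on the same bounded gaps, which come from the pinning $1/6\le t_1(n_k)H(n_k)^2\le A'$ together with $t_1(n_{k+1})\ge c\,t_1(n_k)$.

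Two small points to tidy up. First, to cover all large $t$ without assuming the $t_k$ are monotone, take $k$ maximal with $t_k\le t$. Second, $t_1(n)\to0$ does not follow from Lemma \ref{lem:heights}. It follows from $S(n+1)=S(n)-a_{n+1}t_2(n)$, which gives $\sum_n t_2(n)\le S(0)$ and hence $t_1(n+1)=t_2(n)\to0$.
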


\begin{proof}
Let $g_k=\operatorname{diag}\big(t_1(n_k)^{-1},H(n_k)^{-1},H(n_k)^{-1}\big)$. The basis $b_i(n_k)$ of $\Lambda$ satisfies $|t_i(n_k)|\le t_1(n_k)$ and $|e_i(n_k)|_\infty\le H(n_k)$, so $g_kb_i(n_k)$ lies in the cube $[-1,1]^3$ and $\lambda_3(g_k\Lambda)\le\sqrt3$. The covolume of $g_k\Lambda$ is $(t_1(n_k)H(n_k)^2)^{-1}$, which lies in $[1/A',6]$ by hypothesis and Lemma \ref{lem:heights}. Minkowski's second theorem gives $\lambda_1\lambda_2\lambda_3\asymp\operatorname{covol}$, so $\lambda_1(g_k\Lambda)\ge c/(3A')$. Writing $g_k=\varrho_ka_{s_k}$ with $\varrho_k=(t_1(n_k)H(n_k)^2)^{-1/3}$ a scalar in a fixed compact subinterval of $(0,\infty)$ and $e^{3s_k}=H(n_k)/t_1(n_k)$, we get $\lambda_1(a_{s_k}\Lambda)\ge c'(A')$. The times $s_k$ increase to infinity with bounded gaps by the hypothesis on $t_1(n_k)$, and the flow moves lattices a bounded amount in bounded time, so $\lambda_1(a_t\Lambda)$ is bounded below for all $t\ge0$. Mahler's criterion and Lemma \ref{lem:dani} finish the proof.
\end{proof}

\begin{remark}\label{rem:notbad}
Definition \ref{def:sbt} alone is not known to imply bad approximability. A vector of simultaneously bounded type has comparable scales at every stage, but the Brun basis need not be reduced at its own scale: Lemma \ref{lem:heights} gives $t_1(n)H(n)^2\ge1/6$ in one direction only, and by Theorem \ref{thm:periodic} the quantity $t_1(n)H(n)^2$ is unbounded for the eventually periodic types whose period matrix has two real non-Perron eigenvalues of distinct moduli, although these vectors are of simultaneously bounded type. For $\nu$ almost every vector, $t_1(n)H(n)^2$ grows at the exponential rate $\theta_2-\theta_3$, where $\theta_1>\theta_2\ge\theta_3$ are the Lyapunov exponents of the cocycle of step matrices, so it is unbounded as soon as the two subdominant exponents are distinct. For the periodic types of Section \ref{sec:periodic}, whose period matrices have a pair of nonreal conjugate eigenvalues of equal modulus, the basis is reduced at every stage and Proposition \ref{prop:sbtbad} applies. For these vectors bad approximability also follows from the classical theorem, going back to Perron, that a vector whose coordinates together with $1$ form a basis of a real cubic field is badly approximable. So bad approximability is available for the periodic examples and is not claimed in general.
\end{remark}

\subsection{Good stages and fast renormalization}

Fix integers $A\ge1$ and $L\ge1$ and put $M=4L+6$.

\begin{definition}\label{def:balanced}
A stage $n\ge0$ is $(A,L)$ good if the symbols $(a_{n+1},\varepsilon_{n+1}),\dots,(a_{n+M},\varepsilon_{n+M})$ have digits at most $A$, and every $L$ consecutive letters among $\varepsilon_{n+1},\dots,\varepsilon_{n+M}$ contain the letter $B$.
\end{definition}

In the projective picture, $n$ is $(A,L)$ good if and only if $T^n(\alpha,\beta)$ lies in the set $\mathcal{G}=\mathcal{G}(A,L)\subset\Delta$ of points whose first $M$ symbols satisfy the two conditions. By Lemma \ref{lem:cylinders} the set $\mathcal{G}$ is a finite union of cylinders, hence open and nonempty, so $\nu(\mathcal{G})>0$. By Proposition \ref{prop:symbolic}, if $(\alpha,\beta)$ is of simultaneously bounded type with constant $A$ then every stage is $(A,L)$ good with $L=L(A)$ as in Section \ref{sec:realbounds}. At a good stage the geometry of Proposition \ref{prop:ratios} is available, in the following form.

\begin{lemma}\label{lem:goodstage}
Suppose that $\Sigma^{\ast}<\infty$, that the base point is admissible, and that $n$ is $(A,L)$ good. Put $A'=3(A+1)^{4L}$ and let $\rho_1,\rho_2,\rho_3$ be the constants of Proposition \ref{prop:ratios}, computed from $A$, $L$ and $K_0$. Then:
\begin{enumerate}
\item[(i)] $t_1(m)\le A'\,t_3(m)$ for $n\le m\le n+M-4L$, in particular $t_1(n)\le A't_3(n)$.
\item[(ii)] $\rho_1\le r_2(n)\le\rho_2$ and $r_3(n)\ge\rho_1^{\,L+2}$.
\item[(iii)] every stage $n$ tile is split, at some stage $n'\le n+L+2$, into pieces each occupying a fraction between $\rho_3$ and $1-\rho_3$ of it, hence $T_{\max}(n+L+3)\le(1-\rho_3)T_{\max}(n)$.
\end{enumerate}
\end{lemma}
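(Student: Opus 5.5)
The plan is to localize the proofs of Proposition \ref{prop:symbolic} and Proposition \ref{prop:ratios} to the window of $M=4L+6$ symbols following a good stage. The only places where bounded type entered those proofs are the digit bound $a_{m+1}\le A$ and the presence of $B$ within every $L$ consecutive letters. Both are available inside the window by Definition \ref{def:balanced}. The distortion inputs, Propositions \ref{prop:denjoy} and \ref{prop:derivbound}, hold at every stage because $\Sigma^{\ast}<\infty$ and the base point is admissible.

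For (i), take $m$ with $n\le m\le n+M-4L$. The block of $4L$ steps from $m$ to $m+4L$ lies inside the window, so it contains at least four occurrences of $B$. By Lemma \ref{lem:positivity} its product is then strictly positive, and its entries lie in $[1,(A+1)^{4L}]$. From $t(m)=B\,t(m+4L)$, the argument of the second half of Proposition \ref{prop:symbolic} gives $t_1(m)\le 3(A+1)^{4L}t_3(m)=A't_3(m)$.

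For (ii), I follow the proof of Proposition \ref{prop:ratios} at stage $n$ and the next few stages.
\begin{enumerate}
\item[(a)] The lower bound $r_2(m)\ge\rho_1$ needs only $a_{m+1}\le A$ and Proposition \ref{prop:derivbound}(ii). It therefore holds at every stage $m$ with $n\le m<n+M$.
\item[(b)] The bound $r_3(n)\ge\rho_1^{L+2}$ uses Lemma \ref{lem:participation}: $I_n^{(3)}=D_{n'}$ with $n'\le n+L+2$. That lemma needs a $B$ among $\varepsilon_{n+1},\dots,\varepsilon_{n+L}$, which the window provides. Multiplying the ratios $r_2(m)\ge\rho_1$ for $n\le m<n'$ gives the bound.
\item[(c)] The upper bound $r_2(n)\le\rho_2$ requires $|I'|\ge\rho_1^{L+2}|D_{n+1}|$. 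Here $I'$ is $I_{n+1}^{(2)}$ or $I_{n+1}^{(3)}$, so this follows from (b) applied at stage $n+1$. That in turn uses (a) at the stages $n+1,\dots,n+L+3$, all inside the window since $M\ge L+4$.
\end{enumerate}
The point to check is that the proof of Proposition \ref{prop:ratios} only consumes finitely many stages ahead. The margin $M=4L+6$ is chosen so that (i), (ii) and (iii) all stay inside the window. The constants are exactly those of Proposition \ref{prop:ratios}, since they depend only on $A$, $L$ and $K_0$.

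For (iii), take a stage $n$ tile. By Lemma \ref{lem:participation}, applied with the $B$'s available in the window, it becomes a type $1$ tile at some stage $n'\le n+L+2$. It is then split at the step from $n'$ to $n'+1$. The splitting-fraction estimate of Proposition \ref{prop:ratios} at stage $n'$ needs the following:
\begin{itemize}
\item $a_{n'+1}\le A$;
\item the bounds $r_2(n')\ge\rho_1$ and $|I'|\ge\rho_1^{L+2}|I_{n'}^{(2)}|$, i.e. (ii) at stage $n'$ and (b) at stage $n'+1$;
\item the distortion bound $\Dist(w|_{D_{n'}})\le K_0$ from Proposition \ref{prop:denjoy}.
\end{itemize}
Item (b) at stage $n'+1$ looks up to $L+3$ further stages, to index at most $n+2L+6\le n+M$, so it stays inside the window. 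Each piece therefore occupies a fraction between $\rho_3$ and $1-\rho_3$. Tiles of types $2$ and $3$ are never enlarged, so every stage $n+L+3$ tile is contained in a piece of a split stage $n$ tile. This gives $T_{\max}(n+L+3)\le(1-\rho_3)T_{\max}(n)$.

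The main obstacle is the bookkeeping of how far ahead each estimate looks. The nested uses of Lemma \ref{lem:participation} inside (ii) and (iii) must not leave the window of $M$ symbols. I would verify this index count carefully first, and enlarge $M$ if needed.
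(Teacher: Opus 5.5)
Your proposal is correct and is essentially the paper's proof: it localizes the arguments of Propositions \ref{prop:symbolic} and \ref{prop:ratios} to the window of $M=4L+6$ symbols. It rests on the observation that those arguments consume only the digit bound $a_{m+1}\le A$ and the occurrence of $B$ within $L$ steps (Lemma \ref{lem:participation}), while the distortion inputs hold at every stage. Your index count for (iii) matches the paper's: a stage $n$ tile is split at $n'\le n+L+2$ and the estimate needs symbols up to about $n'+L+3\le n+2L+5$, which stays inside the window.
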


\begin{proof}
(i) For $n\le m\le n+M-4L$ the block of $4L$ steps following stage $m$ has digits at most $A$ and contains at least four occurrences of $B$, so the product of its step matrices is strictly positive with entries at most $(A+1)^{4L}$, and the second half of the proof of Proposition \ref{prop:symbolic} gives $t_1(m)\le A't_3(m)$.

(ii) and (iii) The proof of Proposition \ref{prop:ratios} uses, besides Propositions \ref{prop:denjoy} and \ref{prop:derivbound}, which hold at every stage, only two facts about the stages $m$ it involves: that $a_{m+1}\le A$, which gives $r_2(m)\ge\rho_1$ through Proposition \ref{prop:derivbound}(ii), and that the letter $B$ occurs within $L$ steps after $m$, which is the content of Lemma \ref{lem:participation} at $m$. For $r_2(n)\ge\rho_1$ only the digit $a_{n+1}$ is needed. For $r_3(n)\ge\rho_1^{L+2}$ the stages $n\le m\le n+L+2$ are needed, for $r_2(n)\le\rho_2$, which rests on $|I'|\ge\rho_1^{L+2}|D_{n+1}|$, the stages $n\le m\le n+L+3$, and for the splitting of the stage $n$ tiles, which become of type $1$ at some stage $n'\le n+L+2$ and are then split with the fractions of Proposition \ref{prop:ratios}, the stages $n\le m\le n'+L+3\le n+2L+5$. All these stages use only the symbols $(a_{n+1},\varepsilon_{n+1}),\dots,(a_{n+M},\varepsilon_{n+M})$ controlled in Definition \ref{def:balanced}, since $M=4L+6$.
\end{proof}

Let $n_1<n_2<\cdots$ be the $(A,L)$ good stages, assumed infinite in number, and recall the drop of scale $\Delta_n=a_{n+1}t_1(n)/\tau(v_n)$ from Section \ref{sec:rigidity}. Put
\[
D_k\ =\ \max_{n_k\le n<n_{k+1}}\Delta_n ,
\]
the largest drop of scale between two consecutive good stages. In the projective coordinates $(x,y)=T^n(\alpha,\beta)$ one has $\Delta_n=a/(1-ax)$ with $a=\lfloor1/x\rfloor$, so $\Delta_n$ is large exactly when the digit is large or the remainder of the step is small.

\begin{definition}\label{def:fast}
The vector $\omega$ is fast renormalizable with constants $(A,L)$ if
\begin{enumerate}
\item[(a)] $\Sigma^{\ast}=\sum_{n\ge0}a_{n+1}t_1(n)H(n)<\infty$.
\item[(b)] the $(A,L)$ good stages are infinite in number and $n_k/k$ is bounded.
\item[(c)] $\log D_k=o(k)$ as $k\to\infty$.
\end{enumerate}
\end{definition}

A vector of simultaneously bounded type with constant $A$ which is summably convergent is fast renormalizable with constants $(A,L(A))$: every stage is good, $\Sigma^{\ast}\le A\Sigma$, and $\Delta_n\le A^3$. The fast renormalization is the sequence $\xi_{n_k}$ of renormalizations read only at the good stages. Between two good stages the operator is applied as before, and the geometric estimates are allowed to degrade by the factor $D_k$.

\begin{theorem}\label{thm:fastconv}
Let $\omega$ be fast renormalizable with constants $(A,L)$, and let $f$ and $g$ be commuting orientation preserving $C^3$ circle diffeomorphisms with rotation vector $\omega$. Then, for every admissible base point, there are $\lambda\in(0,1)$ and $C$ with
\[
d_1\big(\xi_{n_k},\mathcal{O}_{n_k}\big)\ \le\ C\lambda^{\,k}\qquad\text{for all } k ,
\]
and $f$ and $g$ are simultaneously $C^1$ conjugate to $R_\alpha$ and $R_\beta$.
\end{theorem}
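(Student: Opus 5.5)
The plan is to rerun the proof of Theorem \ref{thm:main} and of Theorem \ref{thm:rigidity} along the good stages only. Everything that holds at every stage under $\Sigma^{\ast}<\infty$ and admissibility is used unchanged: Propositions \ref{prop:denjoy}, \ref{prop:derivbound}, \ref{prop:basemoebius}, the first two displays of Proposition \ref{prop:zrecursion}, the one step inequality of Proposition \ref{prop:sloperecursion}, the estimate $d_1(\xi_n,\mathcal{O}_n)\le C\delta_n$ recorded after Theorem \ref{thm:main}, and Lemma \ref{lem:increments}. So the first goal is to prove $\delta_{n_k}\le C\lambda^k$.

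\textbf{Mesh decay.} By Lemma \ref{lem:goodstage}(iii), each good stage $n_k$ gives $T_{\max}(n_k+L+3)\le(1-\rho_3)T_{\max}(n_k)$, and $T_{\max}$ is nonincreasing in $n$ because the tilings refine. Condition (b) gives positive density of good stages. Extracting from them a subsequence with gaps at least $L+3$, still of positive density, should give $T_{\max}(n_k)\le C\sigma_0^{k}$. Lemma \ref{lem:goodstage}(i) bounds $t_1(n_k)/t_3(n_k)$ by $A'$, and Corollary \ref{cor:mesh} then gives $\widetilde\theta(n_k)\le C\sigma_0^k$.

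\textbf{Weights and slopes.} The recursions $|z_1(n+1)|\le r_2(n)^2|z_1(n)|+C\theta(n)$ and $|\Sigma(n+1)|\le\kappa(r_2(n))|\Sigma(n)|+C\epsilon_n$ have factors at most $1$ at every stage, and at most $\rho_2^2<1$, respectively $\kappa(\rho_2)<1$, at good stages, by Lemma \ref{lem:goodstage}(ii). The obstacle is the error term at bad stages between $n_k$ and $n_{k+1}$, where $\theta(n)$ is not controlled by a geometric bound. Here I would use the unconditional form of Corollary \ref{cor:mesh} with $m=n_k$, which gives $\theta(n)\le(A'+2)T_{\max}(n_k)\le C\sigma_0^k$ for every $n\ge n_k$. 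Summing over a gap of length $n_{k+1}-n_k$ then produces a factor equal to the gap length. By (b) this length is $O(k)$ on average, but individual gaps may be large. So I would bound the accumulated error over the gap by $(n_{k+1}-n_k)\,C\sigma_0^k$, which is summable with geometric weight as long as $n_{k+1}-n_k=e^{o(k)}$. That follows from (b), since $n_{k+1}\le Ck$. Iterating over $k$ gives $|z_1(n_k)|,|\Sigma(n_k)|\le C\lambda^k$. The bounds on $W_i(n_k)$ then follow from the first display of Proposition \ref{prop:zrecursion} together with $\theta(n_k)\le C\sigma_0^k$. This yields $\delta_{n_k}\le C\lambda^k$, and hence the claimed convergence in $d_1$.

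\textbf{Rigidity.} I would follow the proof of Theorem \ref{thm:rigidity} with the increments of $b_n$ controlled over whole gaps. By Lemma \ref{lem:increments}(i)--(ii), for $n_k\le n<n_{k+1}$ we have $|b_{n+1}-b_n|\le C_6\Delta_n\delta_n\le C_6D_k\delta_n$ once $D_k\delta_n\le c_0$. The quantities $\delta_n$ at bad stages are bounded by $C\lambda^k$ by the argument above, which already bounds the weights and slopes on the whole gap. The total increment over the $k$th gap is therefore at most $C(n_{k+1}-n_k)D_k\lambda^k$. By (c), $D_k=e^{o(k)}$, so this is summable and the smallness condition holds for large $k$. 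This gives uniform convergence of $b_n$ to a bounded $a$. The jumps across tile boundaries at stage $n_k$ are bounded by $C\delta_{n_k}A'$ via Lemma \ref{lem:increments}(iii). Proposition \ref{prop:tiledistortion} gives $\delta^{\ast}_{n_k}\to0$, because $t_1/t_3$ is bounded at good stages and $T_{\max}\to0$. The continuity and differentiability arguments of Theorem \ref{thm:rigidity}, applied to tiles of stages $n_k$ (all tiles being tiles of some good stage, since maximal tiles of $\mathcal{P}$ can be taken at good stages by refinement), then give $h\in C^1$ with $Dh=e^{-a}>0$.

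The step I expect to be most delicate is controlling $\delta_n$ and $\Delta_n\delta_n$ at bad stages inside long gaps. The plan above relies on $T_{\max}$ being monotone and on the unconditional mesh bound with $m=n_k$. If that fails, I would fall back on estimating $\theta(n)$ through $K_1$-tails and admissibility.
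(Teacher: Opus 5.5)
Your proposal is correct and follows the paper's proof essentially step by step. It matches on three points: the geometric decay of $T_{\max}(n_k)$ in $k$ from Lemma~\ref{lem:goodstage}(iii) and monotonicity of $T_{\max}$; the unconditional form of Corollary~\ref{cor:mesh} with $m=n_k$, which bounds $\theta(n)$ on the whole gap, so the two recursions contract at good stages and pick up an error at most $CN_kT_{\max}(n_k)$ with $N_k\le Ck$; and summability of the increments of $b_n$ because $N_kD_k=e^{o(k)}$. The rigidity step is also the paper's, which reruns Theorem~\ref{thm:rigidity} with the maximal good-stage tiles contained in $[x,y)$; not every tile is a tile of some good stage, but this refinement, as in your parenthetical, is all that is needed.
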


\begin{proof}
\emph{Estimates at every stage.} By (a) and Proposition \ref{prop:basepoint} admissible base points exist, and Propositions \ref{prop:denjoy}, \ref{prop:derivbound}, \ref{prop:basemoebius}, the first parts of Propositions \ref{prop:zrecursion} and \ref{prop:sloperecursion}, and the estimates \eqref{eq:mainestimates}, \eqref{eq:ratioestimates} and $d_1(\xi_n,\mathcal{O}_n)\le C\delta_n$ of the proof of Theorem \ref{thm:main} hold at every stage $n$, with constants depending only on those of Convention \ref{con:basepoint}. In particular, for every $n$,
\begin{equation}\label{eq:generalrecursions}
|z_1(n+1)|\ \le\ r_2(n)^2\,|z_1(n)|+C_4\theta(n),\qquad
|\Sigma(n+1)|\ \le\ \kappa\big(r_2(n)\big)\,|\Sigma(n)|+C_5\epsilon_n ,
\end{equation}
with $r_2(n)\le1$ and $\kappa(r_2(n))\le1$, and $|W_i(n)|\le C(|z_1(n)|+\theta(n))$ for $i=1,2,3$.

\emph{The mesh along good stages.} By Lemma \ref{lem:goodstage}(iii), $T_{\max}(n_k+L+3)\le(1-\rho_3)T_{\max}(n_k)$ for every $k$. Since $n_{k+L+3}\ge n_k+L+3$ and $T_{\max}$ is nonincreasing in the stage, $T_{\max}(n_{k+L+3})\le(1-\rho_3)T_{\max}(n_k)$, hence $T_{\max}(n_k)\le C\lambda_0^{\,k}$ with $\lambda_0=(1-\rho_3)^{1/(L+3)}$. By Corollary \ref{cor:mesh} in its general form and Lemma \ref{lem:goodstage}(i), $\theta(n)\le(A'+2)T_{\max}(n_k)$ for $n_k\le n<n_{k+1}$. By (b), $N_k:=n_{k+1}-n_k\le n_{k+1}\le C(k+1)$.

\emph{The recursions along good stages.} Iterating \eqref{eq:generalrecursions} from $n_k$ to $n_{k+1}$, with $r_2(n_k)\le\rho_2$ at the first step by Lemma \ref{lem:goodstage}(ii) and $r_2\le1$ afterwards, gives
\[
|z_1(n_{k+1})|\ \le\ \rho_2^{\,2}\,|z_1(n_k)|+C_4N_k(A'+2)\,T_{\max}(n_k)\ \le\ \rho_2^{\,2}\,|z_1(n_k)|+C(k+1)\lambda_0^{\,k} ,
\]
so $|z_1(n_k)|\le C\lambda_1^{\,k}$ for any $\lambda_1$ with $\max(\rho_2^2,\lambda_0)<\lambda_1<1$, and for $n_k\le n<n_{k+1}$ also $|z_1(n)|\le|z_1(n_k)|+C_4N_k(A'+2)T_{\max}(n_k)\le C\lambda_1^{\,k}$, hence $|W_i(n)|\le C\lambda_1^{\,k}$ and $\epsilon_n\le C\lambda_1^{\,k}$. Iterating the second recursion in the same way, with $\kappa(r_2(n_k))\le\kappa(\rho_2)<1$ at the good stages, gives $|\Sigma(n)|\le C\lambda_2^{\,k}$ for $n_k\le n<n_{k+1}$ and any $\lambda_2$ with $\max(\kappa(\rho_2),\lambda_1)<\lambda_2<1$. Altogether
\begin{equation}\label{eq:deltafast}
\delta_n\ \le\ C\lambda_2^{\,k}\qquad\text{for } n_k\le n<n_{k+1} ,
\end{equation}
and $d_1(\xi_{n_k},\mathcal{O}_{n_k})\le C\delta_{n_k}$ gives the first claim with $\lambda=\lambda_2$.

\emph{The conjugacy.} By (c) and \eqref{eq:deltafast} we have $\Delta_n\delta_n\le CD_k\lambda_2^{\,k}\to0$ for $n_k\le n<n_{k+1}$, and $\delta_{n_k}t_1(n_k)/t_3(n_k)\le CA'\lambda_2^{\,k}\to0$ by Lemma \ref{lem:goodstage}(i). So there is $k_0$ such that Lemma \ref{lem:increments}(ii) applies at every stage $n\ge n_{k_0}$ and Lemma \ref{lem:increments}(iii) at every good stage $n_k$ with $k\ge k_0$. Since $N_kD_k=e^{o(k)}$ by (b) and (c),
\[
\sum_{n\ge n_{k_0}}\ \sup_x\big|b_{n+1}(x)-b_n(x)\big|\ \le\ \sum_{k\ge k_0}N_k\,C_6D_k\,C\lambda_2^{\,k}\ <\ \infty ,
\]
so $b_n$ converges uniformly to a bounded function $a$, with $|a-b_{n_k}|\le R'_k:=\sum_{n\ge n_k}\sup|b_{n+1}-b_n|\to0$. Proposition \ref{prop:tiledistortion} along the good stages, where $t_1(n_k)/t_3(n_k)\le A'$ and $T_{\max}(n_k)\to0$, gives $\delta^{\ast}_{n_k}\to0$, so $a_{n_k}\to a$ uniformly by \eqref{eq:ab}, and the jumps of $b_{n_k}$ across the boundaries of stage $n_k$ tiles are at most $C_6A'\delta_{n_k}\to0$. The continuity and differentiability arguments of the proof of Theorem \ref{thm:rigidity} now apply verbatim with the tiles of the good stages in place of the tiles of all stages: two points at distance less than the minimal length of a stage $n_k$ tile lie in the same or in adjacent stage $n_k$ tiles, which gives the uniform continuity of $a$, and for $y\to x$ the maximal good stage tiles contained in $[x,y)$ cover $(x,y)$, because $T_{\max}(n_k)\to0$, and have stages $n_k$ with $k\to\infty$. Hence $Dh=e^{-a}$, and $h$ is a $C^1$ diffeomorphism.
\end{proof}

Theorem C is Theorem \ref{thm:fastconv}.

\begin{theorem}\label{thm:ae}
For Lebesgue almost every $\omega\in\R^2$ there are constants $(A,L)$ for which $\omega$ is fast renormalizable. Consequently, for Lebesgue almost every $\omega$, every pair of commuting orientation preserving $C^3$ circle diffeomorphisms with rotation vector $\omega$ is simultaneously $C^1$ conjugate to the rotation pair.
\end{theorem}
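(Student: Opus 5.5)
The plan is to check the three conditions of Definition \ref{def:fast} for $\nu$ almost every point of $\Delta$, and then to invoke Theorem \ref{thm:fastconv}. Passing from $\nu$ to Lebesgue measure on $\R^2$ is routine. We reduce $(\alpha,\beta)$ modulo $\Z^2$ and order the coordinates as in Section \ref{sec:operator}. The map from $\omega$ to a point of $\Delta$ is piecewise a diffeomorphism off a null set. By Theorem \ref{thm:brunergodic}, $\nu$ is equivalent to Lebesgue measure on $\Delta$. The set of vectors with $1,\alpha,\beta$ rationally dependent, or whose orbit hits the hyperplanes where the step is undefined, is countable union of null sets. We fix the constants once, say $A=2$ and $L=2$, so that the set $\mathcal{G}=\mathcal{G}(A,L)$ of good points is a finite nonempty union of cylinders, with $\nu(\mathcal{G})>0$.

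Condition (b) follows from the Birkhoff ergodic theorem for the ergodic map $T$ applied to $\mathbf 1_{\mathcal G}$. For $\nu$ almost every point, the number of good stages below $n$ is $n\nu(\mathcal G)+o(n)$. Hence the good stages are infinite in number and $n_k/k\to1/\nu(\mathcal G)$.

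Condition (a) needs strong convergence together with control of the digits. By Theorem \ref{thm:brunergodic} and the Oseledets discussion after Definition \ref{def:sc}, $n^{-1}\log(t_1(n)H(n))\to\theta_2<0$ almost surely. Since $\log a_{n+1}=\log\lfloor1/x\rfloor\circ T^n$ is $\nu$ integrable, Birkhoff gives $n^{-1}\log a_{n+1}\to0$. This uses the standard consequence that $\varphi\circ T^n/n\to0$ for an integrable $\varphi$. Therefore $a_{n+1}t_1(n)H(n)\le e^{n\theta_2/2}$ for large $n$, and $\Sigma^\ast<\infty$.

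Condition (c) is the main obstacle, because $\Delta_n=a/(1-ax)$ with $a=\lfloor 1/x\rfloor$ is large not only for large digits but also when the remainder $1-ax$ is small. The plan is to show that $\log\Delta\colon\Delta\to[0,\infty)$, given by $(x,y)\mapsto\log\big(a/(1-ax)\big)$, is $\nu$ integrable. On the cylinder $\{\lfloor1/x\rfloor=a\}$, which is the strip $1/(a+1)<x\le1/a$, substitute $s=1-ax\in[0,1/(a+1))$. Then $\log\Delta=\log a-\log s$. Up to the bounded density, Lebesgue measure on the strip is comparable to $x\,dx\le a^{-1}\cdot a^{-1}ds$. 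Thus the integral over the strip is at most $C a^{-2}\int_0^{1/(a+1)}(\log a+|\log s|)\,ds\le Ca^{-3}\log(a+2)$, which is summable in $a$.

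Given integrability, Birkhoff gives $\log\Delta_n=o(n)$ almost surely. Hence $\log D_k\le\max_{n<n_{k+1}}\log\Delta_n=o(n_{k+1})=o(k)$ by (b). This uses the standard fact that $\max_{n\le N}\varphi\circ T^n=o(N)$ for a nonnegative integrable $\varphi$.

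The three conditions thus hold on a full measure set with $(A,L)=(2,2)$. By Proposition \ref{prop:basepoint}, admissible base points then exist, and Theorem \ref{thm:fastconv} gives the $C^1$ conjugacy for every commuting $C^3$ pair with such a rotation vector.
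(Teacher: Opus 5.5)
Your proposal is correct and follows the paper's proof: you make the same reduction to $\nu$ almost every point of $\Delta$, use Birkhoff's theorem on $\mathbf{1}_{\mathcal{G}}$ for condition (b), combine the Oseledets/Hardcastle rate for $t_1(n)H(n)$ with subexponential growth of the digits for (a), and use subexponential growth of $\Delta_n=a/(1-ax)$ along the orbit for (c). The only difference is cosmetic: the paper gets the subexponential bounds on $a_{n+1}$ and $\Delta_n$ from the tail estimates $\nu(\{a_1\ge k\})\le ck^{-2}$ and $\nu(\{a/(1-ax)>K\})\le C(2+\log K)/K$ via Borel--Cantelli, whereas you prove integrability of $\log\Delta$ directly and then use $\varphi\circ T^n=o(n)$ for integrable $\varphi$, which is the same argument in integrated form (your strip bound $Ca^{-3}\log(a+2)$ is correct).
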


\begin{proof}
It suffices to prove the statement for $\nu$ almost every $\omega\in\Delta$, since $\nu$ is equivalent to Lebesgue measure on $\Delta$ and the coordinates of a vector can be reduced modulo one and reordered. There are three conditions to verify.

\emph{Weighted summable convergence.} By Theorem \ref{thm:brunergodic} the second Lyapunov exponent $\theta_2$ of the cocycle of step matrices is strictly negative, which is the theorem of Hardcastle \cite{Hardcastle} for the algorithm used here, and by the Oseledets theorem $t_1(n)H(n)\le Ce^{\theta_2n/2}$ for $\nu$ almost every $\omega$ and all large $n$. By Theorem \ref{thm:brunergodic} and the invariance of $\nu$, for every $\epsilon>0$ we have $\nu(\{a_{n+1}\ge e^{\epsilon n}\})=\nu(\{a_1\ge e^{\epsilon n}\})\le Ce^{-2\epsilon n}$, which is summable in $n$, so by the Borel-Cantelli lemma $a_{n+1}\le e^{\epsilon n}$ for all large $n$, for $\nu$ almost every $\omega$. Taking $\epsilon=|\theta_2|/4$ gives $\Sigma^{\ast}<\infty$.

\emph{Good stages of positive frequency.} Fix any $A\ge1$ and $L\ge1$. The set $\mathcal{G}(A,L)$ has positive $\nu$ measure, and $T$ is ergodic with respect to $\nu$, so the Birkhoff theorem applied to the indicator of $\mathcal{G}(A,L)$ shows that for $\nu$ almost every $\omega$ the good stages have frequency $\nu(\mathcal{G}(A,L))>0$. Hence they are infinite in number and $n_k/k$ converges to $\nu(\mathcal{G}(A,L))^{-1}$, in particular it is bounded.

\emph{Subexponential drops.} For $K\ge1$ the set $\{(x,y)\in\Delta\colon a/(1-ax)>K\}$, where $a=\lfloor1/x\rfloor$, is contained in the union of $\{x<K^{-1/2}\}$ and of the sets $\{1/a-1/K<x\le1/a\}$ for $1\le a\le K^{1/2}$, because $1-ax<a/K$ means $x>1/a-1/K$. Its Lebesgue measure is therefore at most $\tfrac1{2K}+\sum_{a\le K^{1/2}}\tfrac{1}{aK}\le\tfrac{2+\log K}{K}$, and its $\nu$ measure is at most $C\,\tfrac{2+\log K}{K}$ by Theorem \ref{thm:brunergodic}. Since $\Delta_n$ is the value of $a/(1-ax)$ at $T^n\omega$, the invariance of $\nu$ gives $\nu(\{\Delta_n>e^{\epsilon n}\})\le C(2+\epsilon n)e^{-\epsilon n}$ for every $\epsilon>0$, which is summable, so by the Borel-Cantelli lemma $\log\Delta_n\le\epsilon n$ for all large $n$, for $\nu$ almost every $\omega$. Hence $\log D_k\le\epsilon\,n_{k+1}\le C\epsilon\,k$ for all large $k$, and since $\epsilon$ is arbitrary, $\log D_k=o(k)$.

The three conditions together are Definition \ref{def:fast}, and the second statement is Theorem \ref{thm:fastconv}.
\end{proof}

Theorem D is Theorem \ref{thm:ae}.

\begin{example}\label{ex:fast}
Theorem \ref{thm:fastconv} applies to explicit vectors that are not of simultaneously bounded type. Consider the type
\[
s\ =\ \prod_{j\ge1}(1,B)^{j}(1,A)^{j}\ =\ (1,B)(1,A)\,(1,B)^2(1,A)^2\,(1,B)^3(1,A)^3\cdots ,
\]
with all digits equal to one, so that $\Sigma^{\ast}=\Sigma$, and write $\pi_j=j(j-1)$ for the stage at which the block $(1,B)^j(1,A)^j$ begins, so that the letters $B$ of this block are the symbols with indices $\pi_j+1,\dots,\pi_j+j$ and its letters $A$ those with indices $\pi_j+j+1,\dots,\pi_j+2j$. Write $M_B=M_B(1)$ and $M_A=M_A(1)$, with Perron eigenvalues $\psi$, the supergolden ratio of Section \ref{sec:periodic}, and $\varphi=(1+\sqrt5)/2$. All constants $c$, $C$ below depend on these two matrices only.

\emph{The vector.} The cones $C_n=M_{\varepsilon_1}(a_1)\cdots M_{\varepsilon_n}(a_n)(D)$ are nested and their intersection is a single ray. Indeed $M_B^4$ is strictly positive by Lemma \ref{lem:positivity}, so by the theorem of Birkhoff quoted in the proof of Theorem \ref{thm:reconstruction} each block $(1,B)^j$ with $j\ge4$ contracts the Hilbert metric by a factor at most $\tanh(\Delta(M_B^4)/4)<1$, while the matrices $M_A^{\,j}$ do not expand it. So there is exactly one $\omega=(\alpha,\beta)\in\Delta$ with type $s$. We show below that $t_1(n)H(n)\to0$ along its expansion. This forces $1$, $\alpha$, $\beta$ to be rationally independent. An integer relation gives a nonzero vector $(0,k,l)$ in the lattice $\Lambda$ of Section \ref{sec:beyond}, and in the normalization $g_n\Lambda$ used in the proof of Proposition \ref{prop:sbtbad} this vector has length at most $C/H(n)$ while the three basis vectors lie in the unit cube, so Minkowski's second theorem gives $(t_1(n)H(n)^2)^{-1}=\operatorname{covol}(g_n\Lambda)\le C\lambda_1\lambda_2\lambda_3\le C/H(n)$, that is $t_1(n)H(n)\ge c>0$ for all $n$.

\emph{Scales and heights along the blocks.} Let $u=t(\pi_j+j)$ be the scale vector at the start of the $j$th run of the letter $A$ and $v=t(\pi_{j+1})$ that at its end. Then $t(\pi_j+m)=M_B^{\,j-m}u$ and $t(\pi_j+j+m)=M_A^{\,j-m}v$ for $0\le m\le j$. If $\ell$ is the positive left Perron eigenvector of $M_B$, then $\ell(M_B^ku)=\psi^k\ell(u)$ gives $c\,\psi^k\|u\|\le\|M_B^ku\|\le C\psi^k\|u\|$ for all $u\ge0$ and $k\ge0$, hence
\[
t_1(\pi_j+m)\ \le\ C\psi^{-m}\,t_1(\pi_j),\qquad 0\le m\le j .
\]
The matrix $M_A$ acts on the first two coordinates as the Fibonacci matrix $F$ and fixes the third, and $c\varphi^k\|v'\|\le\|F^kv'\|\le C\varphi^k\|v'\|$ for $v'\ge0$, so
\[
t_1(\pi_j+j+m)\ \le\ C\varphi^{-m}\,t_1(\pi_j+j),\qquad t_3(\pi_j+j+m)\ =\ t_3(\pi_{j+1}),\qquad 0\le m\le j .
\]
For the heights, $B_{\pi_j+m}^{-1}=M_B^{-m}B_{\pi_j}^{-1}$ and $B_{\pi_j+j+m}^{-1}=M_A^{-m}B_{\pi_j+j}^{-1}$, and $H(n)\le\|B_n^{-1}\|_\infty\le3H(n)$ as in Remark \ref{rem:mixedpisot}. The matrix $M_B^{-1}$ has eigenvalues $\psi^{-1}$ and a nonreal pair of modulus $\psi^{1/2}$, and $M_A^{-1}$ has eigenvalues $\varphi^{-1}$, $-\varphi$ and $1$, so
\[
H(\pi_j+m)\ \le\ C\psi^{m/2}H(\pi_j),\qquad H(\pi_j+j+m)\ \le\ C\varphi^{m}H(\pi_j+j),\qquad 0\le m\le j .
\]

\emph{Summable convergence.} Put $q_j=t_1(\pi_j)H(\pi_j)$. The displays give $t_1(n)H(n)\le C\psi^{-m/2}q_j$ at the stages $n=\pi_j+m$ of the block of $B$, and $t_1(n)H(n)\le C\,t_1(\pi_j+j)H(\pi_j+j)\le C\psi^{-j/2}q_j$ at the stages of the run of $A$. In particular $q_{j+1}\le C\psi^{-j/2}q_j$, so $q_j\le C^{\,j}\psi^{-j(j-1)/4}q_1$, and summing over the $2j$ stages of the $j$th block gives $\Sigma=\sum_nt_1(n)H(n)\le\sum_jC(1+j)\,q_j<\infty$. This is condition (a) of Definition \ref{def:fast}, and it gives the rational independence claimed above.

\emph{The vector is not of simultaneously bounded type.} At the start of the $j$th run of $A$ we have $t_1(\pi_j+j)=\|F^{\,j}(t_1,t_2)(\pi_{j+1})\|_\infty\ge c\varphi^{j}\,t_1(\pi_{j+1})\ge c\varphi^{j}\,t_3(\pi_{j+1})$ and $t_3(\pi_j+j)=t_3(\pi_{j+1})$, so $t_1(\pi_j+j)/t_3(\pi_j+j)\ge c\varphi^{j}$, which is unbounded.

\emph{Good stages.} Take $(A,L)=(1,2)$, so that $M=14$. A stage $n$ is good exactly when the letters $\varepsilon_{n+1},\dots,\varepsilon_{n+14}$ contain no two consecutive letters $A$. For $j\ge14$ this holds at the $j-12$ stages $n=\pi_j-1,\dots,\pi_j+j-14$, and for each of these stages the letters $\varepsilon_{n+1},\dots,\varepsilon_{n+14}$ lie in the $j$th block of $B$, together with at most one adjacent letter $A$ on each side. Hence the number of good stages before stage $\pi_{J+1}$ is at least $(J-14)^2/2$ for $J\ge14$. If the $k$th good stage $n_k$ lies in the $J$th block, then $k\ge(J-15)^2/2$ and $n_k\le \pi_{J+1}=J(J+1)$, so $n_k\le Ck$. This is condition (b).

\emph{Drops of scale.} All digits are one, so $\Delta_n=t_1(n)/\tau(v_n)$, and $\tau(v_n)$ is $t_3(n+1)$ at a step of type $B$ and $t_2(n+1)$ at a step of type $A$. Since $t_1(n)=t_1(n+1)+\tau(v_n)$ in both cases, $\Delta_n\le1+t_1(n+1)/t_3(n+1)$, and it remains to bound $t_1/t_3$ along the expansion. At the stages $\pi_j+m$ with $j-m\ge4$ the matrix $M_B^{\,j-m}$ is strictly positive, and the ratio of the first and third coordinates of $M_B^ku$ is at most $C_0=\sup_{k\ge4}\max_i(M_B^k)_{1i}/(M_B^k)_{3i}$, which is finite because $M_B^k/\psi^k$ converges to a positive matrix. For $j-m\le3$ the third coordinate of $M_B^{\,j-m}u$ is at least $u_3$, so the ratio is at most $C\,u_1/u_3$. Along the run of $A$ the third scale is constant and the first decreases, so the ratio is largest at the start of the run, where it is at most $\|F^{\,j}(v_1,v_2)\|_\infty/v_3\le C\varphi^{j}\,t_1(\pi_{j+1})/t_3(\pi_{j+1})\le CC_0\varphi^{j}$ for $j\ge3$, by the bound at the start of the next block. Hence $\Delta_n\le C\varphi^{j}$ for every stage $n$ of the $j$th block. If $n_k$ lies in the $J$th block, then all stages between $n_k$ and $n_{k+1}$ lie in the blocks $J$ and $J+1$, so $D_k\le C\varphi^{J+1}$ with $J\le15+\sqrt{2k}$, and $\log D_k\le C\sqrt k=o(k)$. This is condition (c).

So $\omega$ is fast renormalizable with constants $(1,2)$, and Theorem \ref{thm:fastconv} applies to every pair of commuting $C^3$ diffeomorphisms with rotation vector $\omega$, although $t_1(n)/t_3(n)$ is unbounded along its expansion. The same argument applies to the types $\prod_j(1,B)^{m_j}(1,A)^{\ell_j}$ with $m_j\to\infty$, $\ell_j=O(m_j)$ and $\ell_j=o\big(\sum_{i<j}m_i\big)$.
\end{example}

\subsection{Comparison with the rank one classes}
It is worth recording precisely what the two classes introduced here contribute beyond the rank one theory, since a commuting pair can sometimes be treated one map at a time.

Suppose that $\alpha$ is of bounded type. Then by the rank one theory of \cite{DMP}, or by \cite{KhaninSinai}, the map $f$ is $C^1$ conjugate to $R_\alpha$ by some $h$. The map $hgh^{-1}$ commutes with $R_\alpha$, so by the argument of Lemma \ref{lem:toplin} it is the rotation $R_\beta$, and the same $h$ conjugates the pair. The same holds if $\beta$ is of bounded type, and, with $C^{2+\gamma}$ regularity in place of $C^3$, if one of the two rotation numbers is Diophantine, by \cite{KhaninTeplinsky}. So the content of Theorems B and C lies entirely in the vectors for which no coordinate is covered by the rank one theory.

Such vectors are exactly those with a coordinate that is not of the relevant one dimensional class, while the pair is still of simultaneously bounded type or fast renormalizable. Definition \ref{def:sbt} does not force either coordinate to be of bounded type. Even bad approximability of the vector, which holds for the periodic examples of Section \ref{sec:periodic} by Proposition \ref{prop:sbtbad} but is not known for the whole class, see Remark \ref{rem:notbad}, gives only
\[
\max\big(\|q\alpha\|,\|q\beta\|\big)\ \ge\ c\,q^{-1/2},
\]
and this is compatible with $\|q_k\alpha\|$ being arbitrarily small along a sequence, provided $\|q_k\beta\|$ is not. This is precisely the phenomenon of \cite{Moser} and \cite{FayadKhanin}, where the pair is rigid although no single map need be linearizable. We do not have an explicit vector of simultaneously bounded type, or an explicit fast renormalizable vector such as that of Example \ref{ex:fast}, with a coordinate known to be of unbounded type. For a vector with purely periodic type the coordinates are cubic irrationals. These are Diophantine by Roth's theorem, so they are covered by the theorem of Khanin and Teplinsky at regularity $C^{2+\gamma}$, and whether a cubic irrational has unbounded partial quotients is a well known open problem, so no example of a coordinate of unbounded type can come from that direction. The question of whether the class of Definition \ref{def:sbt}, or the wider class of Definition \ref{def:fast}, contains a vector with a coordinate of unbounded type, or even a Liouville coordinate, is the question that decides whether the mechanism of this paper reaches strictly beyond the coordinatewise route, and we leave it open.

For Theorem \ref{thm:ae} the situation is different and should be stated plainly. For Lebesgue almost every $\omega$ both coordinates are Diophantine, so the coordinatewise route also applies and the conclusion of Theorem \ref{thm:ae} is not new. What is new is the mechanism, a single renormalization scheme for the pair which does not decompose into two one dimensional schemes, and the fact that the same scheme covers the vectors of Theorem \ref{thm:rigidity}, where the coordinatewise route is not available in general.

Finally, the class of \cite{FayadKhanin} is the full simultaneous Diophantine class at infinite smoothness. Definition \ref{def:fast} is a strictly dynamical condition on the Brun orbit and it does not exhaust that class. The obstruction is described in the closing remarks of Section \ref{sec:topohyp}.

\section{Periodic points}\label{sec:periodic}

We work out the two simplest periodic types. Both give periodic points of $\Ren$ on the transversal of rotation pairs, and both illustrate Theorem \ref{thm:periodic}.

\begin{lemma}\label{lem:multipliers}
Let $w$ be a word of length $p$ containing the letter $B$, with period matrix $B_w$, eigenvalues $\lambda>|\lambda_2|\ge|\lambda_3|$ and Perron eigenvector $\omega$. Then the point of $\Delta$ determined by $\omega$ is a fixed point of $T^p$, and the eigenvalues of the derivative of $T^p$ at that point are
\[
\frac{\lambda}{\lambda_2},\qquad \frac{\lambda}{\lambda_3},
\]
both of modulus larger than one. The fixed point is therefore a hyperbolic repelling point of $T^p$ with two dimensional unstable direction.
\end{lemma}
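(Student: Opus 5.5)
The plan is to realize $T^p$ near the fixed point as the projectivization of the linear map $B_w^{-1}$, and then to read off the derivative from the eigenvalues of $B_w^{-1}$.

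\emph{The fixed point.} By Theorem \ref{thm:periodic}, the Perron eigenvector $\omega$ lies in $D$ and has purely periodic type $w^\infty$. Its normalized point $\bar\omega=(\omega_2/\omega_1,\omega_3/\omega_1)\in\Delta$ therefore lies in the open cylinder $B_w(D)/\R_+$. On this cylinder, Lemma \ref{lem:cylinders} shows that $p$ accelerated Brun steps act as $t\mapsto B_w^{-1}t$. Hence $T^p$ is the projective map induced by $B_w^{-1}$ in the affine chart $t_1=1$. Since $B_w^{-1}\omega=\lambda^{-1}\omega$, the point $\bar\omega$ is fixed.

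\emph{The derivative.} For a linear map $N$ with eigenvector $e$ and eigenvalue $\kappa$, the derivative of the induced projective map at $[e]$ is the map induced by $N/\kappa$ on $\R^3/\R e$. Here $N=B_w^{-1}$ and $\kappa=\lambda^{-1}$. The operator $N/\kappa=\lambda B_w^{-1}$ has eigenvalues $1$, $\lambda/\lambda_2$ and $\lambda/\lambda_3$, and the eigenvalue $1$ belongs to the eigenvector $\omega$. The three eigenvalues of $B_w$ are distinct by Theorem \ref{thm:periodic}, so $B_w^{-1}$ is diagonalizable. The induced map on the quotient therefore has eigenvalues exactly $\lambda/\lambda_2$ and $\lambda/\lambda_3$, with eigenlines the images of the remaining eigenvectors, taken as a complex pair if $\lambda_2,\lambda_3$ are nonreal.

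The affine chart identifies $T_{\bar\omega}\Delta$ with $\R^3/\R\omega$, up to the linear isomorphism $t\mapsto t-t_1\omega/\omega_1$ restricted to the plane $t_1=0$. The spectrum is unchanged under this identification.

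\emph{Modulus larger than one.} The matrix $B_w^4$ is strictly positive, so Perron--Frobenius for primitive matrices gives $\lambda>|\lambda_2|\ge|\lambda_3|$ strictly. Both quotients $\lambda/\lambda_2$ and $\lambda/\lambda_3$ then have modulus larger than one. Hyperbolic repulsion with a two dimensional unstable direction follows.

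\emph{Main obstacle.} The only delicate point is justifying that $T^p$ coincides with the projective action of $B_w^{-1}$ on a full neighbourhood of $\bar\omega$, not just at the point. This holds because $\bar\omega$ lies in the interior of the cylinder of $w$, by Theorem \ref{thm:periodic}, since $\omega\in B_w(D)$ and $B_w(D)$ is open by Lemma \ref{lem:cylinders}. On that open cylinder the branch of $T^p$ is exactly this projective map.
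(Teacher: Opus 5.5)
Your proposal is correct and follows the same route as the paper: $T^p$ is the projectivization of $B_w^{-1}$ on the open cylinder $B_w(D)/\R_+$, the derivative at the fixed ray has as eigenvalues those of $B_w^{-1}$ divided by $\lambda^{-1}$, and strict Perron dominance, from the positivity of $B_w^4$, gives modulus larger than one. Your quotient-space description of the derivative and the remark on the openness of the cylinder make explicit what the paper quotes as standard; the appeal to diagonalizability is not needed, since the spectrum of the induced map on $\R^3/\R\omega$ is already determined by the characteristic polynomial.
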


\begin{proof}
By Theorem \ref{thm:periodic} the ray of $\omega$ lies in $D$ and is fixed by $B_w^{-1}$, which induces $T^p$ on $\Delta$ by Lemma \ref{lem:cylinders}. The derivative at a fixed point of the projectivization of a linear map $M$ with eigenvalue $\nu_1$ on the fixed direction has eigenvalues $\nu_2/\nu_1$ and $\nu_3/\nu_1$, where $\nu_2$ and $\nu_3$ are the remaining eigenvalues. Applying this to $M=B_w^{-1}$, whose eigenvalues are $\lambda^{-1},\lambda_2^{-1},\lambda_3^{-1}$ with $\lambda^{-1}$ on the fixed direction, gives the eigenvalues $\lambda/\lambda_2$ and $\lambda/\lambda_3$. They exceed one in modulus because $B_w^4$ is strictly positive by Lemma \ref{lem:positivity}, so that $\lambda^4$ exceeds the modulus of every other eigenvalue of $B_w^4$ by the Perron theorem.
\end{proof}

\subsection{The supergolden vector}
Let $\psi\approx1.4655712$ be the real root of $x^3=x^2+1$, the supergolden ratio, and put $\alpha=\psi^{-1}$, $\beta=\alpha^2$. The relation $\alpha^3+\alpha=1$ holds. The type is $(1,B)$ at every step, so the period matrix is
\[
M_B(1)\ =\ \begin{pmatrix}1&0&1\\1&0&0\\0&1&0\end{pmatrix},
\]
with characteristic polynomial $x^3-x^2-1$ and Perron eigenvalue $\psi$. The word contains the letter $B$, so Theorem \ref{thm:periodic} applies, and $(\alpha,\beta)$ is of simultaneously bounded type. The eigenvector is $(1,\alpha,\alpha^2)$, and the scale vector satisfies $t(n)=\psi^{-n}t(0)$, so a single step of the algorithm is an exact fixed point of the renormalization on the transversal.

The remaining eigenvalues have product $\psi^{-1}$ and sum $1-\psi<0$, and the discriminant of the quadratic factor is $(1-\psi)^2-4\psi^{-1}<0$, so they are nonreal of modulus $\psi^{-1/2}$. By Theorem \ref{thm:periodic} the number $\psi$ is a Pisot unit of the cubic field $\Q(\psi)$. By Lemma \ref{lem:multipliers} the multipliers of the fixed point are
\[
\delta_{\pm}\ =\ \frac{-1\pm i\sqrt{4\psi^{3}-1}}{2},\qquad |\delta_\pm|=\psi^{3/2}\approx1.77423,\qquad \arg\delta_\pm\approx\pm106.368^{\circ},
\]
where the trace is $-1$ and the determinant is $\psi^3$, computed from the derivative of the projectivized step in the chart $\Phi(x,y)=(y/x,(1-x)/x)$, which is
\[
D\Phi\ =\ \begin{pmatrix}-1&\psi\\-\psi^{2}&0\end{pmatrix}
\]
at the fixed point. The heights of the basis elements satisfy the recursion $e(n)=e(n-3)-e(n-2)$ with characteristic polynomial $x^3+x-1$, whose complex roots have modulus $\psi^{1/2}$, so $H(n)\asymp\psi^{n/2}$ and $H(n)^2t_1(n)$ is bounded. Summable convergence, Definition \ref{def:sc}, therefore holds, and the supergolden vector satisfies both hypotheses of Theorem \ref{thm:main}. Its Brun basis is moreover reduced at every stage, so it is badly approximable by Proposition \ref{prop:sbtbad}.

The tower tilings are self similar. The tile counts are the Narayana numbers, the first row of the products $M_B(1)^n$, and the tiling identity at the fixed point reads
\[
3\alpha^{4}+\alpha^{5}+2\alpha^{6}\ =\ 1 .
\]
The associated substitution on three letters, in the labelling of Lemma \ref{lem:tilecount} for a step of type $B$, is $1\mapsto13$, $2\mapsto1$, $3\mapsto2$.

\subsection{The Tribonacci vector}
Let $\theta\approx1.8392868$ be the real root of $x^3=x^2+x+1$ and put $\alpha=\theta^{-1}$, $\beta=\theta^{-2}$. The type has period three, with word $(1,A)(1,B)(1,A)$, and the period matrix is
\[
M_A(1)M_B(1)M_A(1)\ =\ \begin{pmatrix}2&2&1\\1&1&1\\1&0&0\end{pmatrix}\ =\ C^{2},
\]
where $C$ is the companion matrix of $x^3-x^2-x-1$. Its Perron eigenvalue is $\theta^{2}$, its eigenvector is $(1,\theta^{-1},\theta^{-2})$, and the remaining eigenvalues are $\theta_2^2$ and $\overline{\theta_2}^{\,2}$ with $\theta_2$ a nonreal root of $x^3-x^2-x-1$ of modulus $\theta^{-1/2}$. So $\theta^2$ is a Pisot unit, and by Theorem \ref{thm:periodic} the vector is of simultaneously bounded type and summably convergent, with $H(n)\asymp\theta^{n/2}$ as in the supergolden case.

In the chart the return map of the projectivized step over one period is $\Phi=\mathcal{A}\circ\mathcal{B}\circ\mathcal{A}$ with $\mathcal{A}(x,y)=\big((1-x)/x,\,y/x\big)$ and $\mathcal{B}(x,y)=\big(y/x,\,(1-x)/x\big)$, so $\Phi(x,y)=\big((1-x-y)/y,\,(2x-1)/y\big)$, and at the fixed point $(\theta^{-1},\theta^{-2})$
\[
D\Phi\ =\ \begin{pmatrix}-\theta^{2}&-(\theta^{2}+\theta)\\ 2\theta^{2}&-1\end{pmatrix},\qquad
\operatorname{tr}=-(\theta^{2}+1),\qquad \det=\theta^{6}.
\]
The multipliers are
\[
\delta_\pm\ =\ \Big(\frac{\theta}{\theta_2}\Big)^{2}\ \approx\ -2.19149\pm5.82357\,i,\qquad |\delta_\pm|=\theta^{3}\approx6.22226,\qquad \arg\delta_\pm\approx\pm110.622^{\circ},
\]
in agreement with Lemma \ref{lem:multipliers}. The tile counts are the Tribonacci numbers.

\subsection{A totally real example}
Let $w=(1,B)(1,A)(1,B)(1,A)$, the word of Remark \ref{rem:sbt_and_summability}. Its period matrix is
\[
B_w\ =\ M_B(1)M_A(1)M_B(1)M_A(1)\ =\ \begin{pmatrix}3&2&1\\2&2&1\\1&1&1\end{pmatrix},
\]
strictly positive, of determinant one, with characteristic polynomial $x^3-6x^2+5x-1$. Let $c_k=2\cos(2\pi k/7)$, $k=1,2,3$, be the roots of $x^3+x^2-2x-1$. Their product is one and the polynomial with roots $c_k^{-2}$ is $x^3-6x^2+5x-1$, so the eigenvalues of $B_w$ are
\[
\lambda=c_2^{-2}\approx5.0489,\qquad \lambda_2=c_3^{-2}\approx0.6431,\qquad \lambda_3=c_1^{-2}\approx0.3080,
\]
all real, and $\Q(\lambda)$ is the real cubic subfield of the field of seventh roots of unity, of discriminant $49$. The normalized Perron eigenvector is $(1,\alpha,\beta)$ with $\alpha=c_1^{-1}\approx0.80194$ and $\beta=-c_2\approx0.44504$, and by Theorem \ref{thm:periodic} the Brun expansion of $(\alpha,\beta)$ is $w^\infty$, the vector is of simultaneously bounded type and it is summably convergent, since $\lambda$ is a Pisot unit, and $t_1(n)H(n)$ decays like $\lambda_2^{\,n/4}$. Its Brun basis is not reduced: $t_1(n)H(n)^2$ grows by the factor $\lambda_2/\lambda_3\approx2.088$ per period, in agreement with Remark \ref{rem:sbt_and_summability}. Bad approximability of $(\alpha,\beta)$ nevertheless holds, by the theorem of Perron quoted in Remark \ref{rem:notbad}, since $1$, $\alpha$, $\beta$ span a cubic field. In the chart of Section \ref{sec:periodic} the return map over one period is $\Phi=\mathcal{A}\circ\mathcal{B}\circ\mathcal{A}\circ\mathcal{B}$, and at the fixed point
\[
D\Phi\ \approx\ \begin{pmatrix}14.147&-5.049\\-2.802&10.098\end{pmatrix},\qquad
\operatorname{tr}D\Phi=\frac{\lambda}{\lambda_2}+\frac{\lambda}{\lambda_3}\approx24.245,\qquad \det D\Phi=\lambda^3\approx128.705 .
\]
The multipliers are $\lambda/\lambda_2\approx7.851$ and $\lambda/\lambda_3\approx16.394$, real and distinct, in agreement with Lemma \ref{lem:multipliers}. So the transversal dynamics at this periodic point is a genuine expansion with two distinct real rates, and not a rotation composed with an expansion as in the first two examples.

\begin{remark}\label{rem:stablemultipliers}
The two scalar recursions of Section \ref{sec:moebius} have explicit linear parts at a periodic point, and they give the counterpart along the leaves of the multipliers along the transversal. Let $\omega^{\ast}$ have purely periodic type of period $p$ with Perron eigenvalue $\lambda$, so that $t(n+p)=\lambda^{-1}t(n)$ along its expansion. For a pair with rotation vector $\omega^{\ast}$ the ratios $r_2(n)$ are within $C\delta_n$ of $t_2(n)/t_1(n)=t_1(n+1)/t_1(n)$ by \eqref{eq:ratioestimates}, and $\prod_{m<p}t_1(n+m+1)/t_1(n+m)=t_1(n+p)/t_1(n)=\lambda^{-1}$. Proposition \ref{prop:zrecursion} therefore gives $z_1(n+p)=\lambda^{-2}z_1(n)+O(\delta_n)$, and Proposition \ref{prop:sloperecursion} gives $\Sigma(n+p)=\Psi(\Sigma(n))+O(\epsilon_n)$ with $\Psi$ a composition of the maps $\Psi_{r_2(n+m)}$, whose derivative at zero is $\prod_{m<p}r_2(n+m)=\lambda^{-1}+O(\delta_n)$. So the M\"obius parameter and the logarithmic slope of pairs near the periodic point are contracted over one period by the factors $\lambda^{-2}$ and $\lambda^{-1}$, up to errors governed by the mesh, which itself decays over one period by a factor governed by the tile splitting of Proposition \ref{prop:ratios}. For the supergolden fixed point these factors are $\psi^{-2}$ and $\psi^{-1}$, for the Tribonacci orbit $\theta^{-4}$ and $\theta^{-2}$, and for the totally real example above $\lambda^{-2}\approx0.0392$ and $\lambda^{-1}\approx0.1981$.
\end{remark}

The first two examples give periodic orbits of $\Ren$ whose local unstable manifolds are two dimensional and whose multipliers are complex, so the unstable dynamics is a rotation composed with an expansion. The third has real multipliers. All three are rank two counterparts of the golden mean fixed point of the rank one theory, whose single multiplier is the square of the golden mean.

\section{Topological hyperbolicity}\label{sec:topohyp}

The operator $\Ren$ loses derivatives on $C^3$ pairs. A prerenormalization is a change of basis of $\Gamma$, so it does not lose smoothness, but the normalization is not smooth in the pair, and the natural charts on the space of pairs are not preserved. So we do not formulate hyperbolicity as a spectral statement for a derivative. We formulate it as a statement about an invariant lamination together with the dynamics on the transversal, in the sense in which the rank one renormalization horseshoes of \cite{Martens} and \cite{AvilaLyubich} are hyperbolic, and in the sense used in \cite{DMP}.

Let $\mathcal{X}^3$ be the set of pointed pairs $(f,g,x_0)$, where $(f,g)$ is a pair of commuting orientation preserving $C^3$ circle diffeomorphisms whose rotation vector is rationally independent together with $1$ and where $x_0\in\T$ is any base point. Each element of $\mathcal{X}^3$ has its sequence of normalized pairs $\xi_n=\xi_n(f,g,x_0)$, $n\ge0$, of Definition \ref{def:renorm}, since the constructions of Sections \ref{sec:operator} and \ref{sec:towers} require nothing of the base point. Let $\mathcal{N}$ be the set of all these normalized pairs, with the distance $d_1$. By Remark \ref{rem:welldefined} the map $\Ren\colon\mathcal{N}\to\mathcal{N}$, $\xi_n(f,g,x_0)\mapsto\xi_{n+1}(f,g,x_0)$, is well defined. Let $\Pi\colon\mathcal{N}\to\Delta$ send a normalized pair to its rotation vector, the pair of rotation numbers of $P_n$ and $Q_n$ on $\mathcal{C}_n$, and let $\mathcal{L}_\omega=\Pi^{-1}(\omega)$. The image of $\Pi$ is the set $\Delta'$ of points of $\Delta$ that are rationally independent together with $1$, a dense subset of $\Delta$ of full measure on which $T$ and all its iterates are defined. Two normalized pairs are topologically conjugate if there is a homeomorphism of the circle $[0,1)/\{0\sim1\}$ fixing the seam which conjugates the two pairs of circle homeomorphisms.

The transversal $\mathcal{T}\subset\mathcal{N}$ is the set of all rotation pairs $\mathcal{O}(\omega)$, $\omega\in\Delta'$. These are the normalized pairs of the pointed pairs $(R_{\omega_1},R_{\omega_2},x_0)$, and they do not depend on $x_0$. A rotation pair is a pair of rigid rotations of the circle of length one written in two branch form, so
\[
d_1\big(\mathcal{O}(\omega),\mathcal{O}(\omega')\big)\ =\ \max\big(\|\omega_1-\omega_1'\|,\ \|\omega_2-\omega_2'\|\big),
\]
where $\|\cdot\|$ is the distance to the nearest integer. Hence $\omega\mapsto\mathcal{O}(\omega)$ is a homeomorphism of $\Delta'$ onto $\mathcal{T}$, and an isometry for the maximum norm on every set of diameter less than one half. In this sense $\mathcal{T}$ is a two dimensional family.

Let $\mathcal{X}^3_{A}\subset\mathcal{X}^3$ be the set of pointed pairs whose rotation vector is of simultaneously bounded type with constant $A$ and summably convergent and whose base point is admissible in the sense of Convention \ref{con:basepoint}. By Proposition \ref{prop:basepoint} every such pair admits a set of admissible base points of full $\mu$ measure. Let $\mathcal{N}_A\subset\mathcal{N}$ be the set of normalized pairs of the elements of $\mathcal{X}^3_{A}$. It is $\Ren$ invariant, because the three defining conditions are conditions on tails, and every statement about $\Ren^k\xi_n(f,g,x_0)$ is a statement about $\xi_{n+k}(f,g,x_0)$, to which the estimates of Sections \ref{sec:realbounds} to \ref{sec:rigidity} apply. The leaf space $\Pi(\mathcal{N}_A)$ is the set of vectors of simultaneously bounded type with constant $A$ which are summably convergent, and $\mathcal{T}\cap\mathcal{N}_A$ is the set of rotation pairs with these vectors, the normalized pairs of the pointed pairs $(R_\alpha,R_\beta,x_0)$ with such a vector, for which every base point is admissible.

\begin{theorem}\label{thm:topohyp}
The partition $\{\mathcal{L}_\omega\}$ of $\mathcal{N}$ has the following properties.
\begin{enumerate}
\item[(i)] It is $\Ren$ invariant, $\Ren(\mathcal{L}_\omega)\subset\mathcal{L}_{T\omega}$, and each leaf is the topological conjugacy class of any of its points.
\item[(ii)] The leaves of $\mathcal{N}_A$ are contracted, at a rate depending only on $A$. There is $\xi_A\in(0,1)$ and, for every pair of points $\zeta,\zeta'$ of a common leaf of $\mathcal{N}_A$, a constant $C$ with $d_1(\Ren^k\zeta,\Ren^k\zeta')\le C\xi_A^{\,k}$ for all $k\ge0$.
\item[(iii)] The family $\mathcal{T}$ of rotation pairs is a global transversal. It meets every leaf of $\mathcal{N}$ in exactly one point, and $\Ren$ acts on it as the accelerated Brun map, $\Ren\,\mathcal{O}(\omega)=\mathcal{O}(T\omega)$. On the leaf space $\Pi(\mathcal{N}_A)$ the induced action is $T$, and the renormalization type, which is injective on $\Pi(\mathcal{N}_A)$ by Theorem \ref{thm:reconstruction}, carries it to the shift on the set of types of the vectors in $\Pi(\mathcal{N}_A)$, a shift invariant set of sequences with digits at most $a^{\ast}(A)$ and gaps between occurrences of the letter $B$ at most $L(A)$.
\item[(iv)] Let $\omega^{\ast}\in\Delta$ be a periodic point of $T$ of period $p$ whose period matrix $B_w$ is Pisot. Then $\omega^{\ast}\in\Pi(\mathcal{N}_A)$ for some $A$, and $\mathcal{O}(\omega^{\ast})$ is a periodic point of $\Ren$ of period $p$ on the transversal. There is a neighbourhood $U$ of $\omega^{\ast}$ in $\Delta$ such that $\mathcal{W}=\{\mathcal{O}(\omega)\colon\omega\in U\cap\Delta'\}$ is a local unstable manifold of $\Ren^p$ at $\mathcal{O}(\omega^{\ast})$, in the following sense. The set $\mathcal{W}$ is the trace on $\mathcal{T}$ of the open set $U$, so it is two dimensional, every point of $\mathcal{W}$ has exactly one backward orbit of $\Ren^p$ in $\mathcal{W}$, and this orbit converges to $\mathcal{O}(\omega^{\ast})$ at exponential rate. The multipliers of $\Ren^p$ at $\mathcal{O}(\omega^{\ast})$ along $\mathcal{T}$ are the ratios $\lambda/\lambda_2$, $\lambda/\lambda_3$ of Lemma \ref{lem:multipliers}, both of modulus larger than one. Along the leaves of $\mathcal{N}_A$ there is no expansion, by (ii).
\end{enumerate}
\end{theorem}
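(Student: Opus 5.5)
The plan is to prove the four items in order, most of them by bookkeeping from earlier results.

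\emph{Item (i).} Invariance is the definition. The rotation vector of $\xi_{n+1}$ is $(t_2(n+1)/t_1(n+1),\,t_3(n+1)/t_1(n+1))$, the projectivization of $t(n+1)=\mathcal{S}(t(n))$, hence $T$ applied to the vector of $\xi_n$. For the conjugacy class I use Lemma \ref{lem:toplin} on the quotient circle $\mathcal{C}_n$. The pair $(P_n,Q_n)$ commutes and has rotation vector $\omega$ rationally independent with $1$, so it is topologically conjugate to the rotation pair $\mathcal{O}(\omega)$. The conjugacy is unique up to rotation, so composing with a rotation we may take it to fix the seam. Hence every point of $\mathcal{L}_\omega$ is conjugate to $\mathcal{O}(\omega)$, and any two points of the leaf are conjugate. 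Conversely, a conjugacy preserves rotation numbers, so a conjugacy class is contained in a leaf.

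\emph{Item (ii).} I take $\zeta=\xi_n(f,g,x_0)$ and $\zeta'=\xi_{n'}(f',g',x_0')$ with $\Pi\zeta=\Pi\zeta'=\omega$. By the triangle inequality,
\[
d_1(\Ren^k\zeta,\Ren^k\zeta')\le d_1(\xi_{n+k},\mathcal{O}(T^k\omega))+d_1(\xi'_{n'+k},\mathcal{O}(T^k\omega)).
\]
I then apply Remark \ref{rem:uniformrate} to each term, that is, Theorem \ref{thm:main} with rate $\xi_A$ depending only on $A$. This gives $C\xi_A^{n+k}+C'\xi_A^{n'+k}\le C''\xi_A^k$. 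The rotation model at stage $n+k$ is $\mathcal{O}(T^k\omega)$ by the first part of (i).

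\emph{Item (iii).} By (i), $\mathcal{T}\cap\mathcal{L}_\omega=\{\mathcal{O}(\omega)\}$. The formula $\Ren\,\mathcal{O}(\omega)=\mathcal{O}(T\omega)$ comes from the linear model remark after Definition \ref{def:renorm}, where $\xi_n=\mathcal{O}_n$ for rotations. The induced action on the leaf space is then $T$. Lemma \ref{lem:cylinders} gives that the type of $T\omega$ is the shift of the type of $\omega$. Injectivity of the type is Theorem \ref{thm:reconstruction}. The bounds on digits and gaps come from Proposition \ref{prop:symbolic}, and shift invariance holds because the hypotheses concern tails.

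\emph{Item (iv).} Remark \ref{rem:sbt_and_summability} places $\omega^\ast$ in $\Pi(\mathcal{N}_A)$, and the periodicity of $\mathcal{O}(\omega^\ast)$ follows from (iii). For the unstable manifold I use that $\omega\mapsto\mathcal{O}(\omega)$ is a local isometry of $\Delta'$ onto $\mathcal{T}$, which conjugates $\Ren^p|_{\mathcal{T}}$ to $T^p$. By Lemma \ref{lem:cylinders}, near $\omega^\ast$ the map $T^p$ is the projectivization of $B_w^{-1}$ on the cylinder of $w$, a real-analytic map. By Lemma \ref{lem:multipliers} it is repelling with multipliers $\lambda/\lambda_2$ and $\lambda/\lambda_3$. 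Its local inverse branch, the projectivization of $B_w$, is then a contraction on a small neighbourhood $U$ mapping $U$ into $U$. I choose $U$ as a small ball in an adapted norm, chosen so the contraction rate is close to $\max(|\lambda_2|,|\lambda_3|)/\lambda<1$. This branch preserves $\Delta'$, since $B_w$ is unimodular integral and so preserves rational independence with $1$.

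The hard part is uniqueness of the backward orbit inside $\mathcal{W}$. I plan to shrink $U$ so that it lies inside the cylinder of $w$, on which $T^p$ is injective. Then any preimage in $\mathcal{W}$ of a point of $\mathcal{W}$ is the image under the branch $B_w$. Iterating this, the backward orbit converges to $\mathcal{O}(\omega^\ast)$ exponentially at the contraction rate. Finally, there is no expansion along the leaves, which is a restatement of (ii).
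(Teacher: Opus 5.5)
Your proof follows the paper's argument step by step in (i), (ii), (iii) and in the unstable manifold part of (iv). There is one omitted verification in (iv).

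You need $\omega^{\ast}\in\Delta'$, that is, $1,\omega^{\ast}_1,\omega^{\ast}_2$ rationally independent. Without it $\omega^{\ast}$ is not in $\Pi(\mathcal{N}_A)$, and $\mathcal{O}(\omega^{\ast})$ is not even a point of $\mathcal{T}$, which is defined only over $\Delta'$. Remark~\ref{rem:sbt_and_summability} does not supply this: it only translates simultaneously bounded type and summable convergence into conditions on $B_w$.

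Rational independence does not follow from periodicity of $T$ alone. If $B_w$ had an eigenvalue $\pm1$, a rational left eigenvector $\ell$ would give $(\lambda\mp1)\,\ell\cdot\omega^{\ast}=0$, a rational relation.

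The paper closes the point using the Pisot hypothesis a second time:
\begin{enumerate}
\item[(1)] The word $w$ contains the letter $B$ by Lemma~\ref{lem:positivity}, so Theorem~\ref{thm:periodic} applies.
\item[(2)] Since $\det B_w=\pm1$, any rational root of the characteristic polynomial would be $\pm1$.
\item[(3)] A Pisot matrix has no eigenvalue of modulus one, so the polynomial is irreducible over $\Q$.
\item[(4)] The last part of Theorem~\ref{thm:periodic} then gives rational independence.
\end{enumerate}
With these lines added, your argument for (iv) is complete.
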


\begin{proof}
(i) The rotation vector of the renormalized pair is $T$ applied to the rotation vector, because the scale vector transforms by the Brun step, by Definition \ref{def:prestep}, and the rotation numbers of $P_n$ and $Q_n$ are the ratios $t_2(n)/t_1(n)$ and $t_3(n)/t_1(n)$. Let $\zeta,\zeta'\in\mathcal{L}_\omega$. Each is a chart of a pointed pair of $C^3$ diffeomorphisms of a circle with rotation vector $\omega$, by Remark \ref{rem:welldefined}, and Lemma \ref{lem:toplin} applied on that circle provides conjugacies $h,h'$ to the pair of rotations by $\omega$. Composing with rotations we may assume $h(0)=h'(0)=0$, and then $h'^{-1}\circ h$ conjugates $\zeta$ to $\zeta'$ and fixes the seam. Conversely a topological conjugacy preserves rotation numbers. So the leaves are exactly the topological conjugacy classes.

(ii) Let $\zeta=\xi_n(f,g,x_0)$ and $\zeta'=\xi_{n'}(f',g',x_0')$ lie on a common leaf $\mathcal{L}_\omega$ of $\mathcal{N}_A$, so that $T^n\omega_0=T^{n'}\omega_0'=\omega$, where $\omega_0,\omega_0'$ are the rotation vectors of the two pointed pairs, which belong to $\mathcal{X}^3_A$. Then $\Ren^k\zeta=\xi_{n+k}(f,g,x_0)$ and $\Ren^k\zeta'=\xi_{n'+k}(f',g',x_0')$ have the same rotation model $\mathcal{O}(T^k\omega)$, since $\mathcal{O}_m$ depends only on the rotation vector at stage $m$. By Remark \ref{rem:uniformrate} applied to each pointed pair, and by the triangle inequality for $d_1$,
\begin{align*}
d_1\big(\Ren^k\zeta,\Ren^k\zeta'\big)\ &\le\ d_1\big(\xi_{n+k}(f,g,x_0),\mathcal{O}_{n+k}\big)+d_1\big(\mathcal{O}_{n'+k},\xi_{n'+k}(f',g',x_0')\big)\\
&\le\ C\xi_A^{\,n+k}+C'\xi_A^{\,n'+k}\ \le\ C''\xi_A^{\,k},
\end{align*}
where $\xi_A$ is the rate of Remark \ref{rem:uniformrate}, which depends only on $A$.

(iii) A pair of rigid rotations is prerenormalized and normalized to a pair of rigid rotations, and Definition \ref{def:renorm} shows that the parameters transform by the projectivized Brun step, so $\Ren\,\mathcal{O}(\omega)=\mathcal{O}(T\omega)$ for every $\omega\in\Delta'$. The transversal meets every leaf in exactly one point, namely the rotation pair with that rotation vector, so it is global. By (i) the induced action on the leaf space $\Pi(\mathcal{N})=\Delta'$ is $T$, and $\Pi(\mathcal{N}_A)$ is $T$ invariant because its defining conditions are conditions on tails. Every vector in $\Pi(\mathcal{N}_A)$ is of simultaneously bounded type, so the type determines the vector by Theorem \ref{thm:reconstruction}, and $T$ acts on the type as the shift by Definition \ref{def:prestep}. The bounds on the digits and on the gaps are Corollary \ref{cor:cantor}.

(iv) Let $w$ be the word formed by the first $p$ symbols of the expansion of $\omega^{\ast}$, so that $\omega^{\ast}$ has purely periodic type $w^\infty$, and $w$ contains the letter $B$ by Lemma \ref{lem:positivity}. By Theorem \ref{thm:periodic}, $\omega^{\ast}$ is the normalized Perron eigenvector of $B_w$, it is of simultaneously bounded type with some constant $A$, and it is summably convergent because $B_w$ is Pisot. The characteristic polynomial of $B_w$ is irreducible over $\Q$: a rational root would be $\pm1$, since it divides the constant term $\pm1$, and $B_w$ has no eigenvalue of modulus one, because $\lambda>1$ and the other two eigenvalues lie in the open unit disk. Hence $1$, $\omega^{\ast}_1$, $\omega^{\ast}_2$ are rationally independent by Theorem \ref{thm:periodic}, so $\omega^{\ast}\in\Pi(\mathcal{N}_A)$, and $\mathcal{O}(\omega^{\ast})$ is a periodic point of $\Ren$ of period $p$ on $\mathcal{T}$ by (iii).

Let $\Delta_w=B_w(D)/\R_+$ be the cylinder of $w$, an open subset of $\Delta$ containing $\omega^{\ast}$ by Lemma \ref{lem:cylinders}, and let $\varphi\colon\Delta\to\Delta_w$ be the projectivization of $B_w$, the inverse of $T^p$ on $\Delta_w$. It is a real analytic diffeomorphism fixing $\omega^{\ast}$, and by Lemma \ref{lem:multipliers} the eigenvalues of $D\varphi(\omega^{\ast})$ are $\lambda_2/\lambda$ and $\lambda_3/\lambda$, both of modulus less than one. Hence there are a neighbourhood $U\subset\Delta_w$ of $\omega^{\ast}$, a norm $|\cdot|$ on $\R^2$ and $\vartheta\in(0,1)$ such that $\varphi(U)\subset U$ and $|\varphi(\omega)-\omega^{\ast}|\le\vartheta|\omega-\omega^{\ast}|$ for $\omega\in U$. Since $B_w$ is an integer matrix of determinant $\pm1$, the map $\varphi$ preserves rational independence, so it maps $U\cap\Delta'$ into itself. Let $\mathcal{W}=\{\mathcal{O}(\omega)\colon\omega\in U\cap\Delta'\}$, the trace on $\mathcal{T}$ of the open set $U$. For $\omega\in U\cap\Delta'$ the points $\mathcal{O}(\varphi^{k}\omega)$, $k\ge0$, lie in $\mathcal{W}$, they satisfy $\Ren^p\mathcal{O}(\varphi^{k+1}\omega)=\mathcal{O}(\varphi^{k}\omega)$ by (iii), because $T^p\circ\varphi$ is the identity, and they converge to $\mathcal{O}(\omega^{\ast})$ at the rate $\vartheta^{k}$, because $\omega\mapsto\mathcal{O}(\omega)$ is an isometry near $\omega^{\ast}$. This backward orbit is unique in $\mathcal{W}$, since a point $\omega'\in U\subset\Delta_w$ with $T^p\omega'=\omega$ equals $\varphi(\omega)$ by the injectivity of $T^p$ on $\Delta_w$. Finally $\Ren^p$ acts on $\mathcal{T}$ as $T^p$, whose derivative at $\omega^{\ast}$ has the eigenvalues $\lambda/\lambda_2$ and $\lambda/\lambda_3$ of Lemma \ref{lem:multipliers}, and these are the multipliers along $\mathcal{T}$. The last statement is (ii).
\end{proof}

Theorem E is Theorem \ref{thm:topohyp}.


\begin{thebibliography}{99}

\bibitem{ArnouxNogueira} P. Arnoux and A. Nogueira, \textit{Mesures de Gauss pour des algorithmes de fractions continues multidimensionnelles}, Ann. Sci. \'Ecole Norm. Sup. (4) \textbf{26} (1993), 645--664.

\bibitem{AvilaLyubich} A. Avila and M. Lyubich, \textit{The full renormalization horseshoe for unimodal maps of higher degree - exponential contraction along hybrid classes}, Publ. Math. Inst. Hautes \'Etudes Sci. \textbf{114} (2011), 171--223.

\bibitem{BST} V. Berth\'e, W. Steiner and J. Thuswaldner, \textit{Multidimensional continued fractions and symbolic codings of toral translations}, J. Eur. Math. Soc. \textbf{25} (2023), 4997--5057.

\bibitem{Birkhoff} G. Birkhoff, \textit{Extensions of Jentzsch's theorem}, Trans. Amer. Math. Soc. \textbf{85} (1957), 219--227.

\bibitem{Brun} V. Brun, \textit{Algorithmes euclidiens pour trois et quatre nombres}, Treizi\`eme congr\`es des math\'ematiciens scandinaves, Helsinki, 1957, 45--64.

\bibitem{Cassels} J. W. S. Cassels, \textit{An Introduction to Diophantine Approximation}, Cambridge Tracts in Mathematics 45, Cambridge University Press, 1957.

\bibitem{Cheung} Y. Cheung, \textit{Hausdorff dimension of the set of singular pairs}, Ann. of Math. (2) \textbf{173} (2011), 127--167.

\bibitem{CoulletTresser} P. Coullet and C. Tresser, \textit{It\'eration d'endomorphismes et groupe de renormalisation}, C. R. Acad. Sci. Paris S\'er. A-B \textbf{287} (1978), A577--A580.

\bibitem{Dani} S. G. Dani, \textit{Divergent trajectories of flows on homogeneous spaces and Diophantine approximation}, J. Reine Angew. Math. \textbf{359} (1985), 55--89.

\bibitem{deFariadeMelo} E. de Faria and W. de Melo, \textit{Rigidity of critical circle mappings I}, J. Eur. Math. Soc. \textbf{1} (1999), 339--392.

\bibitem{Denjoy} A. Denjoy, \textit{Sur les courbes d\'efinies par les \'equations diff\'erentielles \`a la surface du tore}, J. Math. Pures Appl. (9) \textbf{11} (1932), 333--375.

\bibitem{DMP} Y. Dong, M. Martens, L. Palmisano, One-Dimensional Dynamics: From Poincar\'{e} to Renoramlization, Lecture Notes of the Unione Matematica Italiana, vol 30. Springer (2025).

\bibitem{Dong} B. Dong, \textit{Renormalization of circle diffeomorphisms}, Master's thesis, KTH Royal Institute of Technology, Stockholm, 2023.

\bibitem{FayadKhanin} B. Fayad and K. Khanin, \textit{Smooth linearization of commuting circle diffeomorphisms}, Ann. of Math. (2) \textbf{170} (2009), 961--980.

\bibitem{Feigenbaum} M. J. Feigenbaum, \textit{Quantitative universality for a class of nonlinear transformations}, J. Statist. Phys. \textbf{19} (1978), 25--52.

\bibitem{Herman} M. R. Herman, \textit{Sur la conjugaison diff\'erentiable des diff\'eomorphismes du cercle \`a des rotations}, Inst. Hautes \'Etudes Sci. Publ. Math. \textbf{49} (1979), 5--233.

\bibitem{Hardcastle} D. M. Hardcastle, \textit{The three-dimensional Gauss algorithm is strongly convergent almost everywhere}, Experiment. Math. \textbf{11} (2002), 131--141.

\bibitem{HardcastleKhanin} D. M. Hardcastle and K. Khanin, \textit{The $d$-dimensional Gauss transformation - strong convergence and Lyapunov exponents}, Experiment. Math. \textbf{11} (2002), 119--129.

\bibitem{KatznelsonOrnstein} Y. Katznelson and D. Ornstein, \textit{The differentiability of the conjugation of certain diffeomorphisms of the circle}, Ergodic Theory Dynam. Systems \textbf{9} (1989), 643--680.

\bibitem{KhaninSinai} K. M. Khanin and Ya. G. Sinai, \textit{A new proof of M. Herman's theorem}, Comm. Math. Phys. \textbf{112} (1987), 89--101.

\bibitem{KhaninTeplinsky} K. Khanin and A. Teplinsky, \textit{Herman's theory revisited}, Invent. Math. \textbf{178} (2009), 333--344.

\bibitem{Lanford} O. E. Lanford III, \textit{A computer-assisted proof of the Feigenbaum conjectures}, Bull. Amer. Math. Soc. (N.S.) \textbf{6} (1982), 427--434.

\bibitem{Lyubich} M. Lyubich, \textit{Feigenbaum-Coullet-Tresser universality and Milnor's hairiness conjecture}, Ann. of Math. (2) \textbf{149} (1999), 319--420.

\bibitem{Martens} M. Martens, \textit{The periodic points of renormalization}, Ann. of Math. (2) \textbf{147} (1998), 543--584.

\bibitem{McMullen} C. T. McMullen, \textit{Renormalization and 3-Manifolds Which Fiber over the Circle}, Annals of Mathematics Studies 142, Princeton University Press, 1996.

\bibitem{Moser} J. Moser, \textit{On commuting circle mappings and simultaneous Diophantine approximations}, Math. Z. \textbf{205} (1990), 105--121.

\bibitem{Schweiger} F. Schweiger, \textit{Multidimensional Continued Fractions}, Oxford University Press, 2000.

\bibitem{Sullivan} D. Sullivan, \textit{Bounds, quadratic differentials, and renormalization conjectures}, in American Mathematical Society Centennial Publications, Vol. II, Amer. Math. Soc., 1992, 417--466.

\bibitem{Yampolsky} M. Yampolsky, \textit{Hyperbolicity of renormalization of critical circle maps}, Publ. Math. Inst. Hautes \'Etudes Sci. \textbf{96} (2002), 1--41.

\bibitem{Yoccoz} J.-C. Yoccoz, \textit{Analytic linearization of circle diffeomorphisms}, in Dynamical Systems and Small Divisors, Lecture Notes in Math. 1784, Springer, 2002, 125--173.

\end{thebibliography}
\end{document}